\newtheorem{thm}{Theorem}[subsection]
\newtheorem{prop}[thm]{Proposition}
\newtheorem{lem}[thm]{Lemma}
\newtheorem{cor}[thm]{Corollary}
\theoremstyle{definition}
\newtheorem{defn}[thm]{Definition}
\theoremstyle{remark}
\newtheorem{remk}[thm]{Remark}
\newtheorem{remks}[thm]{Remarks}
\newtheorem{exm}[thm]{Example}
\newtheorem{exms}[thm]{Examples}
\newtheorem{notat}[thm]{Notation}
\numberwithin{equation}{subsection}
\newcommand{\CH}{{\rm CH}}
\newcommand{\codim}{{\rm codim}}
\newcommand{\Spec}{{\rm Spec \,}}
\newcommand{\Tr}{{\rm Tr}}
\newcommand{\ord}{{\rm ord}}
\newcommand{\ds}{{/\kern-3pt/}}
\newcommand{\res}{{\operatorname{res}}}
\renewcommand{\log}{{\operatorname{log}}}
\newcommand{\Proj}{{\operatorname{Proj}}}
\newcommand{\sgn}{{\operatorname{\rm sgn}}}
\newcommand{\un}{\underline}
\newcommand{\ov}{\overline}
\renewcommand{\dim}{\text{\rm dim}}
\newcommand{\tuborg}{\left\{\begin{array}{ll}}
\newcommand{\sluttuborg}{\end{array}\right.}
\newcommand{\sfs}{{\rm sfs}}
\renewcommand{\mod}{ {\rm \ mod \ } }
\begin{document}
\title{Motivic cohomology of fat points in Milnor range}
\author{Jinhyun Park and Sinan \"Unver}
\address{Department of Mathematical Sciences, KAIST, 291 Daehak-ro Yuseong-gu, Daejeon, 34141, Republic of Korea (South)}
\email{jinhyun@mathsci.kaist.ac.kr; jinhyun@kaist.edu}
\address{Department of Mathematics, Ko\c{c} University, Rumelifeneri Yolu, 34450 Sar{\i}yer-\.{I}stanbul, Turkey}
\email{sunver@ku.edu.tr}

%\maketitle

\keywords{algebraic cycle, higher Chow group, motivic cohomology, Milnor $K$-theory}

\subjclass[2010]{Primary 14C25; Secondary 19D45, 14D15, 11J61}

\begin{abstract}
We introduce a new algebraic-cycle model for the motivic cohomology theory of truncated polynomials $k[t]/(t^m)$ in one variable. This approach uses ideas from the deformation theory and non-archimedean analysis, and is distinct from the approaches via cycles with modulus. We prove that the groups in the Milnor range give the Milnor $K$-groups of $k[t]/(t^m)$, when the base field is of characteristic $0$. Its relative part is the sum of the absolute K\"ahler differential forms.
\end{abstract}

\maketitle

\section{Introduction}

The objective of this paper is to present a new algebraic-cycle model for the motivic cohomology theory of some singular $k$-schemes and to compute its simplest case concretely, to justify the model.

Bloch's higher Chow groups \cite{Bloch HC} of smooth $k$-schemes give the correct motivic cohomology groups as shown by Voevodsky \cite{Voevodsky}, but they fail to do so for singular $k$-schemes. For instance, the motivic cohomology groups are expected to be part of a conjectural Atiyah-Hirzebruch type spectral sequence that converges to higher algebraic $K$-groups \cite{Quillen K} of Quillen. Here the $K$-groups do detect the difference of a scheme $X$ and and its reduced scheme $X_{\rm red}$ (see e.g. \cite{vdK}), while the higher Chow groups do not distinguish $X$ from $X_{\rm red}$. 

The additive higher Chow groups, initiated by Bloch-Esnault \cite{BE2}, were in a sense born as a way to complement this issue for non-reduced schemes. This approach developed further by e.g. \cite{KL}, \cite{KP crys}, \cite{KP Doc}, \cite{Park2}, \cite{Park}, \cite{R}, has had several successful aspects; for instance, they provide understanding of Witt vectors, de Rham-Witt complexes and crystalline cohomology via algebraic cycles based on the moving lemma of \cite{Kai}. They also spawned a variation, ``higher Chow groups with modulus" (see \cite{BS}) that rapidly built connections to various subjects of mathematics such as abelianized fundamental groups \cite{Kerz-Saito}, Somekawa $K$-groups and reciprocity functors \cite{Ivorra-Rulling}, \cite{RY}, and motives with modulus \cite{KSY}, to name a few. However, our further attempts to understand the conjectural motivic cohomology for singular schemes through the cycles with modulus were bumping into increasingly complex technical and philosophical issues. Some of these hindrances encouraged the authors to return to the starting point, and to look for and develop some fundamentally new approaches. 

The new approach of this paper may resolve some of the old issues, while it may create a different set of technical problems. For instance, the Milnor range is now represented by higher dimensional cycles, not by $0$-cycles, so that harder algebro-geometric challenges await us. Nevertheless, we choose to work with this new model, because this seems to be leading us further as well as opening new avenues to handle algebraic cycles via some new means and ideas such as deformation theory or non-archimedean analysis, that were thought to be distant from the subject until now.

The particular case studied in depth in this paper is the truncated polynomial ring $k_m:= k[t]/(t^m)$. We show that the Milnor $K$-groups $K_n ^M (k_m)$ can be expressed in terms of our new cycle groups in the Milnor range. The precursors of these theorems  for higher Chow groups were the theorems of \cite{NS} and \cite{Totaro}, and for additive higher Chow groups, the theorems of \cite{BE2} and \cite{R}. Our theorem in this paper is a unification of all those precursors in \emph{ibids.} in a sense. We repeat however that unlike those precursors, our cycle representatives in the Milnor range are now $1$-cycles. In fact, the $0$-cycles do not appear in our groups (see Remark \ref{remk:0-cycle trivial cor}) at all, so our $1$-cycles in the Milnor range form yet the simplest part.

We retain the notations of the cubical version of higher Chow groups (see \S \ref{sec:HC}) for smooth $k$-schemes. For $k_m$, we redefine $\CH^q (k_m,n)$ in \S \ref{sec:cycles mod t^m}, different from the higher Chow groups of \cite{Bloch HC}, but when $m=1$ so that $k_m = k$, we do have the agreement $\CH^q (k_m, n) = \CH^q (k,m)$. In this new theory, for $m \geq 1$ we can easily define the relative group $\CH^q ((k_m, (t)), n)$ (see Definition \ref{defn:rel Chow}). The main theorem, following immediately from Theorem \ref{thm:Milnor}, is:

\begin{thm}\label{thm:Milnor intro}
Let $k$ be a field of characteristic $0$ and let $m, n \geq 1$ be integers. Let $k_m:= k[t]/(t^m)$. Then the graph homomorphism $K_n ^M (k_m) \to \CH^n (k_m, n)$ to the redefined higher Chow group of $k_m$ is an isomorphism. The isomorphism of the relative parts takes the form $(\Omega_{k/\mathbb{Z}} ^{n-1})^{\oplus (m-1)} \simeq \CH^n ((k_m, (t)), n)$, where the former is isomorphic to the relative Milnor $K$-group $K_n ^M (k_m, (t))$.
\end{thm}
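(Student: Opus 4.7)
The plan is to mimic the Nesterenko-Suslin-Totaro strategy, enhanced by the deformation-theoretic perspective motivating the new cycle model. The statement about the relative part reduces, via the classical identification $K_n^M(k_m, (t)) \cong \bigoplus_{i=1}^{m-1}\Omega^{n-1}_{k/\Z}$ for truncated polynomial rings in characteristic zero, to the isomorphism $K_n^M(k_m) \to \CH^n(k_m, n)$ together with the fact that the graph homomorphism respects the splittings induced by the retraction $k_m \surj k$. The core task is therefore to establish that the graph map is well-defined, surjective, and injective.

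For well-definedness, I would verify the Steinberg relation by exhibiting an explicit one-parameter algebraic family whose boundary realises $\{a, 1-a\} = 0$ at the cycle level. The classical Totaro curve $\{(x, 1-x)\}$ should work after adapting for the non-reducedness of $\Spec k_m$ and checking admissibility in the redefined complex. For surjectivity, a Nesterenko-Suslin-Totaro normalization argument moves a given admissible cycle, via cubical exchange relations on $\P^1 \setminus \{0,1,\infty\}$, to a sum of products of graphs of units in $k_m^*$, that is, to the image of Milnor symbols; one has to check that every move stays within the admissibility conditions defining the new complex, which is the point at which the non-archimedean flavor of the model, with $k_m$ viewed as a truncation of $k[[t]]$, provides the needed flexibility.

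For injectivity I would construct an explicit left inverse, proceeding along the filtration by powers of $(t)$. On the layer $m=1$, the Nesterenko-Suslin residue construction already provides the inverse to $K_n^M(k) \to \CH^n(k, n)$. On the relative layer, each cycle in $\CH^n((k_m, (t)), n)$ admits a natural lift to a one-parameter family over $\Spec k[[t]]$; extracting the coefficient of $t^i$ via a residue of differential forms, in the spirit of Bloch-Esnault and R\"ulling's treatment of additive higher Chow groups, yields the desired map to $\bigoplus_{i=1}^{m-1}\Omega^{n-1}_{k/\Z}$. The hardest part, I expect, is verifying injectivity on the relative part: one must show that nilpotent deformations of symbols are faithfully detected by these residues modulo boundaries, which requires tight control over the interaction of the cubical boundary maps with $t$-adic expansions, and this is precisely where the non-archimedean analytic viewpoint on cycles over $k_m$ becomes indispensable.
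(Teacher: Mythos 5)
Your overall architecture (graph map; surjectivity by a moving/normalization step; injectivity via residue regulators compared against the known computation of $K_n^M(k_m,(t))$) matches the paper's, but the central step is missing. The gap is in surjectivity. You propose to run the Nesterenko--Suslin--Totaro normalization directly inside the new complex, with "the non-archimedean flavor" supplying admissibility. But in this model the Milnor-range classes are \emph{not} $0$-cycles over $k_m$: the group $z^n(k_m,n)$ is by definition a quotient of a group of relative $1$-cycles over $\widehat{\mathcal{O}}=k[[t]]$, proper over $\Spec(\widehat{\mathcal{O}})$, modulo the $\sim_{t^m}$ relation. A direct normalization argument would require a statement like "the graph map $K_n^M(\widehat{\mathcal{O}})\to \CH^n_{\widehat{\mathfrak{m}}}(\widehat{\mathcal{O}},n)^{pc}$ is surjective"; the Elbaz-Vincent--M\"uller-Stach theorem does not apply here because $k[[t]]$ is not essentially of finite type over $k$, and the paper explicitly does not know such a statement over $\widehat{\mathcal{O}}$. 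What the paper proves instead is a new "mod $t^m$ moving lemma": every integral $W\in z^n_{\widehat{\mathfrak{m}}}(\widehat{\mathcal{O}},n)^c$ is $\sim_{t^m}$-equivalent to the completion of a cycle defined over $\mathcal{O}=\mathcal{O}_{\mathbb{A}^1_k,0}$. This is done by choosing a triangular complete-intersection presentation of $W$, perturbing its coefficients, constructing an explicit flat stratum through the given coefficient vector, using openness of properness, dominance, geometric integrality and empty face intersection over that stratum, approximating in the $t$-adic topology by polynomial coefficients, and reducing the non-geometrically-integral case by a Nisnevich cover. Only after this reduction does the EVMS surjectivity of $K_n^M(\mathcal{O})\to \CH^n(\mathcal{O},n)$ yield that $\CH^n(k_m,n)$ is generated by graph cycles. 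Your sketch treats precisely this part as a routine check, whereas it is the main technical content.

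The same omission undermines your injectivity step. The residue maps $\Upsilon_i$ (Parshin--Lomadze residues at the special fiber, with reciprocity proved via the residue theorem on the projective special fiber after point blow-ups) are only shown in the paper to be invariant under $\sim_{t^m}$ on \emph{graph} cycles, by the elementary computation $d\log a_j-d\log b_j\in t^{m-1}\Omega^1_{\widehat{\mathcal{O}}/\mathbb{Z}}$ when $a_j\equiv b_j \bmod t^m$. Without the moving lemma reducing everything to graph cycles, you have no argument that your residue construction descends to $\CH^n(k_m,n)$ at all, so the left inverse is not yet well-defined; "tight control over the interaction of the cubical boundary maps with $t$-adic expansions" names the difficulty but gives no mechanism. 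Once the moving lemma and the graph-cycle reduction are in place, the rest of your plan (the identification $K_n^M(k_m,(t))\simeq\bigoplus_{i=1}^{m-1}\Omega^{n-1}_{k/\mathbb{Z}}$, compatibility with the splitting induced by $k_m\twoheadrightarrow k$, and the computation that $\Upsilon_i$ sends the symbol $\{e^{rt^i},r_2,\cdots,r_n\}$ to $i\,r_1dr_2\wedge\cdots\wedge dr_n$, nonzero in characteristic $0$) does complete the proof as in the paper.
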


We mention a few further new aspects of our  theory. One of them comes from that the ring $k_m$ has several presentations $k[t]/(t^m)$, $\mathcal{O}_{\mathbb{A}^1_k, 0}/(t^m)$ and $k[[t]]/(t^m)$. Usually those who work on algebraic cycles work with the ``algebraic" situations, e.g. cycles over either $k[t]$ or its localization $\mathcal{O}:=\mathcal{O}_{\mathbb{A}^1_k, 0}$. In this paper, we work with cycles over $k[[t]]= \widehat{\mathcal{O}}_{\mathbb{A}^1_k, 0}$, which is henselian. This gives more admissible cycles than the ``algebraic" situation over $\mathcal{O}$, and generally they have a better rationality property. For instance, $y= \sqrt{1+t}$ is of degree $2$ over $\mathcal{O}$, but it is rational over $\widehat{\mathcal{O}}$ because $\sqrt{1+t } = 1 + \frac{t}{2} - \frac{ t^2}{ 8} + \frac{ t^3}{16} - \cdots$. The possibility of using Hensel's lemma could also be a technical benefit. 

On a pair of integral cycles over $k[[t]]$, we put a ``mod $t^m$ equivalence relation" when their pull-backs to $k[[t]]/(t^m)$ are equal (see Definition \ref{defn:mod t^m 2}). This allows us to use intuitions from the deformation theory to study cycles. The other structure that we use in our new model is the non-archimedean $t$-adic metric on $k((t)) = {\rm Frac} (k[[t]])$. This non-archimedean analytic view-point helps us in proving the following Theorem \ref{thm:mod t^m moving intro}, stated in Theorem \ref{thm:mod t^m}. This is a new type moving result up to mod $t^m$ equivalence. This theorem enables us to transport some technical results already known for cycles over $k[t]$ or $\mathcal{O}_{\mathbb{A}^1_k, 0}$, to our cycles over $k[[t]]$:

\begin{thm}\label{thm:mod t^m moving intro}
Let $k$ be a field. Let $\mathcal{O}:= \mathcal{O}_{\mathbb{A}^1_k, 0}$ and let $\widehat{\mathcal{O}}:= \widehat{\mathcal{O}}_{\mathbb{A}^1_k, 0}$. For the completion homomorphism $\xi^n: z^n_{\mathfrak{m}} (\mathcal{O}, n)^c \to z^n_{\widehat{\mathfrak{m}}} (\widehat{\mathcal{O}}, n)^c$, the composition $\xi^n _m : z^n_{\mathfrak{m}} (\mathcal{O}, n)^c \to z^n_{\widehat{\mathfrak{m}}} (\widehat{\mathcal{O}}, n)^c \to z^n (k_m, n):= z^n _{\widehat{\mathfrak{m}}} (\widehat{\mathcal{O}}, n)^c / \sim_{t^m}$ is surjective.
\end{thm}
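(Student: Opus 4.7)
By $\Z$-linearity of $\xi^n_m$, it suffices to lift an individual integral cycle $Z \in z^n_{\widehat{\mathfrak{m}}}(\widehat{\mathcal{O}}, n)^c$ modulo $t^m$. The ``$\widehat{\mathfrak{m}}$'' and ``$c$'' decorations ensure that $Z$ dominates $\Spec \widehat{\mathcal{O}}$ with a zero-dimensional closed fibre $Z_0 \subset \square^n_k$, so in particular $Z$ is one-dimensional and flat over $\widehat{\mathcal{O}}$. The plan is a purely constructive $t$-adic truncation of defining equations, motivated by the non-archimedean viewpoint highlighted in the introduction.

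Cover $\square^n$ by standard affine opens. On each, let $I \subset \widehat{\mathcal{O}}[y_1,\dots,y_n,\dots]$ be the ideal of $Z$ and pick finitely many generators $g_1,\dots,g_r$. Truncate each power-series coefficient of each $g_i$, an element of $k[[t]]$, to its polynomial part of degree $<m$, producing $\bar g_i$ with coefficients in $k[t]\subset\mathcal{O}$; by construction $\bar g_i \equiv g_i \pmod{t^m\widehat{\mathcal{O}}[y]}$. Let $Z'$ be the closed subscheme of $\square^n \times \Spec \mathcal{O}$ cut out by these $\bar g_i$ (glued over the affine cover), equipped with its associated cycle class. Since $\bar g_i \equiv g_i \pmod{t}$, the closed fibre of $Z'$ coincides with $Z_0$ as a subscheme of $\square^n_k$ and is hence zero-dimensional; consequently $Z'$ carries no component supported on the closed fibre and is itself one-dimensional, flat, and dominating $\Spec \mathcal{O}$, of codimension $n$. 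Flatness of $Z'$ over $\mathcal{O}$ identifies the cycle-theoretic base change $[Z']\times_{\mathcal{O}} k_m$ with the scheme $V(I'\bmod t^m) = V(I \bmod t^m) = Z \times_{\widehat{\mathcal{O}}} k_m$, so $\xi^n_m([Z']) = [Z] \bmod t^m$ as desired.

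The main remaining obstacle is verifying $[Z']\in z^n_{\mathfrak{m}}(\mathcal{O},n)^c$, specifically face admissibility. For each face $F \subset \square^n$ of codimension $c$, the intersections $Z\cap F$ and $Z'\cap F$ share closed fibres by the same truncation argument, so by admissibility of $Z$ the closed fibre of $Z'\cap F$ has the correct small dimension. The worry is that excess components of $Z'\cap F$ of too-large dimension might appear on the generic fibre $\Spec k(t)$, breaking proper intersection. To rule this out one invokes non-archimedean openness of the admissibility locus: if a naive truncation modulo $t^m$ fails at some face, refine the truncation to order $t^{m+N}$ for $N$ sufficiently large depending on $Z$, which does not affect the $\sim_{t^m}$-class of $\xi^n(Z')$. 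I expect this last step, together with a careful passage between the various affine charts and the bookkeeping of multiplicities for possibly reducible $V(I')$, to be the genuinely non-trivial part of the argument; this is where the Hensel-type approximation property of $\widehat{\mathcal{O}}$ over $\mathcal{O}$, advertised in the introduction, plays its essential role.
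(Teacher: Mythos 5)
There is a genuine gap, and it sits exactly where you deferred the work. Your construction truncates the coefficients of an \emph{arbitrary} generating set of the ideal of $Z$, and the paper itself shows why this cannot be the whole argument: for a badly chosen generating set, perturbing (in particular truncating) the coefficients can produce a scheme with the wrong dimension, with components inside the generic fibre (e.g.\ of the type $V(ty-1)$, which is closed in $\square^n_{\mathcal{O}}$ but invisible in the special fibre), with components meeting the divisor at infinity (destroying properness over $\mathcal{O}$, i.e.\ the superscript $c$), or failing face admissibility. Your proposed repair --- truncate to order $t^{m+N}$ and invoke ``non-archimedean openness of the admissibility locus'' --- is precisely the statement that needs proof; it is not a consequence of Hensel's lemma or of Artin approximation (the paper explicitly remarks that its perturbation lemmas do not follow from Artin approximation). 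What the paper actually does is first replace the arbitrary generating set by a \emph{triangular complete-intersection} presentation $f_1(y_1),\dots,f_n(y_1,\dots,y_n)$ with unit leading and constant terms (Proposition \ref{prop:ci}); only for such a presentation can one prove that non-emptiness, properness over $\widehat{\mathcal{O}}$, dominance, geometric integrality and empty intersection with all proper faces are Zariski-open conditions on the coefficient space (Proposition \ref{prop:nonempty}, Lemmas \ref{lem:small properness}, \ref{lem:small dominant}, \ref{lem:small int}, \ref{lem:small proper int}, using the codimension count of Corollary \ref{cor:perturb ci} and the explicit flat stratum of Lemma \ref{lem:F-flat}), after which $t$-adic density of $k[t]$ in $k[[t]]$ yields a polynomial-coefficient cycle congruent to $Z$ mod $t^m$. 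Your plan contains no substitute for this structural input.

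A second, independent problem is that your final identification $\xi^n_m([Z'])=[Z]$ mod $t^m$ uses only the equality of the schemes $Z'\otimes k_m$ and $Z\otimes k_m$. But the relation $\sim_{t^m}$ of Definitions \ref{defn:mod t^m} and \ref{defn:mod t^m 2} is deliberately defined only between \emph{integral} closed subschemes (and extended $\mathbb{Z}$-linearly), together with the matching condition on all face intersections; the paper's remark after Definition \ref{defn:mod t^m} explains that equality of possibly reducible truncations is exactly the ``tempting'' finer relation the authors refuse to impose. Since your $Z'$ may be reducible or non-reduced, the scheme-theoretic congruence does not yield $[Z']\sim_{t^m}[Z]$ in the sense used to define $z^n(k_m,n)$. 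This is why the paper arranges for the perturbed cycle $V_\alpha$ to be \emph{geometrically integral} (Lemma \ref{lem:small int}) and, for $Z$ integral but not geometrically integral, reduces to that case by passing to a Nisnevich cover $\Spec(L)\to\Spec(k)$ and using the proper push-forward compatibility with $\sim_{t^m}$ (Proposition \ref{prop:ppf}); your proposal has no counterpart to this reduction.
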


Here, the superscripts ``$c$" signify that the groups consist of the cycles proper over $\mathcal{O}$ and $\widehat{\mathcal{O}}$, respectively. See Definition \ref{defn:cycle proper}. Although there generally are more cycles over $k[[t]]$, this theorem shows that modulo $t^m$ in the Milnor range, we can still approximate them by those of the algebraic origin. This eventually allows a reduction of the proof of Theorem \ref{thm:Milnor intro} to the graph cycles, providing a great technical simplification.

Since this mod $t^m$ moving lemma of Theorem \ref{thm:mod t^m moving intro} is a new type of result for the studies algebraic cycles, to give some motivations to the reader, let us quickly sketch the idea. The essential point behind the proof of Theorem \ref{thm:mod t^m moving intro} is the notion of coefficient perturbations of Definition \ref{defn:coeff perturb}: when $W \in z^n_{\widehat{\mathfrak{m}}} (\widehat{\mathcal{O}}, n)^c$ is an integral cycle, we show that it is possible to choose a suitable system of equations, for which perturbations of the coefficients may produce good deformations of $W$. The base parameter space is the space of all choices of the coefficients. Some property such as non-emptiness of the solutions is an open condition on this base. For some other properties, we need a flat family. For this, we use a trick that is an explicit version of the flattening stratification theorem. Using so-obtained locally closed nonempty base over which we have a flat family, we prove that we can deform geometrically integral $W$ mod $t^m$ with all the desired properties preserved, such that it comes from the ``algebraic world" over $\mathcal{O}$. In the process, we need to resort to the non-archimedean $t$-adic metric topology. The general integral cycle case is reduced to the geometrically integral case by constructing a Nisnevich cover.

Some follow-up works will treat the cases of off-Milnor range of the relative Chow group of $(k_m, (t))$, with a cycle-theoretic version of the regulator maps on the additive polylogarithmic complex constructed and studied in \cite{Unver} and \cite{Unver2}. \emph{cf.} \cite{Unver regulator}. Its comparison with the regulators in \cite{Park2} and \cite{Park} will also be discussed. Other on-going works deal with cycles over Artin local $k$-algebras with embedding dimensions $\geq 1$.

We remark that our cycle complex seems to have a natural generalization that might be a candidate model for the motivic cohomology of any $k$-scheme with singularities. This also seems to offer a way to define the relative version for any pair $(X, Z)$ of a scheme and its closed subscheme. The verification that this is well-defined and is the correct definition will require nontrivial works. We hope that this paper provides an evidence that either our approach or its variation has a potential to reach the goal of constructing the ultimate motivic cohomology theory for all $k$-schemes.

\bigskip

\textbf{Acknowledgments.} Part of this work was conceived while both of the authors were visiting Professor H\'el\`ene Esnault's workgroup at Freie Universit\"at Berlin. The authors wish to express their deep gratitudes to Professor H\'el\`ene Esnault and Dr. Kay R\"ulling for their kind hospitality. The authors also feel very grateful to Professor Spencer Bloch for his continued encouragements and interest in the project. The authors thank the referees for invaluable comments and suggestions that improved the quality of the article, and for pointing out a few errors on an earlier version of the article. During this research, JP was partially supported by the National Research Foundation of Korea (NRF) grant No. NRF-2018R1A2B6002287
funded by the Korean government (Ministry of Science and ICT), and S\"{U} was supported by the Humboldt Fellowship for Experienced Researchers. 
 
 \bigskip

\textbf{Conventions.} For a scheme $X \to \Spec (R)$ over a discrete valuation ring $R$, we always denote the special fiber by $X_s$, and the generic fiber by $X_{\eta}$.

\section{Recollections, new definitions and basic results}
In this section, we recall and prove some basic definitions and results on higher Chow complexes needed in this paper. A new one over the   truncated polynomial rings $k[t]/(t^m)$ will be defined in \S \ref{sec:cycles mod t^m}, which is the main complex we work with.

\subsection{Recollections of higher Chow cycles}\label{sec:HC}
Let $k$ be a field.
We recall the cubical version of Bloch's higher Chow complexes (\emph{cf.} \cite{Bloch HC}). Let $\mathbb{P}^1_k:=  \Proj (k[u_0, u_1])$, and let $\ov{\square}_k := \mathbb{P}^1_k$, with $y:= u_1/u_0$ as the coordinate. Let $\square_k:= \ov{\square}_k \setminus \{1 \}$. We let $\square_k ^0 = \ov{\square}_k ^0 := \Spec (k)$, and for $n \geq 1$, we let $\square_k ^n$ (resp. $\ov{\square}_k ^n$)  be the $n$-fold product of  $\square_k$ (resp. $\ov{\square}_k$) with itself over $k$. A \emph{face} $F$ of $\square_k ^n$ (resp. $\ov{\square}_k ^n$) is defined to be the closed subscheme given by a finite set of equations of the form $\{ y_{i_1} = \epsilon_1, \cdots, y_{i_u} = \epsilon _u \}$, for an increasing sequence of indices $ 1 \leq i_1 < \cdots < i_u \leq n$, and $\epsilon_j \in \{ 0, \infty \}$. We allow the case of the empty set of equations, i.e. having $F=\square_k ^n$. A \emph{codimension $1$ face} is given by a single such equation. We often write $F_i ^{\epsilon}$ to be the face given by $\{y_i = \epsilon\}$.

For a smooth $k$-scheme $X$, we let $\square^n _{X} :=X \times_k \square_k ^n,$  $\ov{\square}^n _{X}:=X \times_k \ov{\square}_k ^n,$ and define the face $F_{X} $ of  $\square^n _{X}$ to be the pull-back $X\times _{k} F,$ of a face $F$ of $\square^n _k.$  We drop the subscript $X$ from $F_{i,X} ^{\epsilon}$  when no confusion arises. Let $\un{z}^q (X, n)$ be the free abelian group on the set of codimension $q$ integral closed subschemes $Z \subseteq  \square^n _{X}$ that intersect each face properly on $\square^n _{X}.$ For each codimension $1$ face $F_{i,X} ^{\epsilon}$, with $1 \leq i \leq n$ and $\epsilon \in \{ 0, \infty\}$, and an irreducible $Z \in \un{z} ^q (X, n)$, we let $\partial_i ^{\epsilon} (Z)$  be the cycle associated to the scheme-theoretic intersection $Z \cap F_{i,X} ^{\epsilon}.$ By definition, $\partial_i ^{\epsilon} (Z) \in \un{z}^q (X, n-1)$. This forms a cubical abelian group $(\un{n} \mapsto \un{z}^q (X, n))$, where $\un{n}=\{ 0, 1, \cdots, n \}$, in the sense of \cite[\S 1.1]{Levine SM}. Let $\partial:= \sum_{i=1} ^n (-1)^i (\partial_i ^{\infty} - \partial_i ^0)$ on $\un{z}^q (X, n)$. One checks immediately from the formalism of cubical abelian groups that $\partial \circ \partial  = 0$  and hence one obtains  the associated nondegenerate complex $z^q (X, \bullet):= \un{z}^q (X, \bullet)/ \un{z}^q (X, \bullet)_{\rm degn}$, where $\un{z}^q (X, n)_{\rm degn}$ is the subgroup of  degenerate cycles, i.e. sums of those obtained by pulling back via one of the standard projections $\square_X ^n \to \square_X ^{n-1},$ for $0 \leq i \leq n,$ which omits one of the coordinates on $\square_X ^n.$ This complex $(z^q (X, \bullet), \partial)$ is called the (cubical) \emph{higher Chow complex} of $X$ and its homology groups  $\CH^q (X, n):= {\rm H}_n (z^q (X, \bullet))$ are  the higher Chow groups of $X$. It is a theorem of Voevodsky \cite{Voevodsky} that the higher Chow groups form a universal bigraded ordinary cohomology theory ${\rm H}_{\mathcal{M}} ^{2q-n} (X, \mathbb{Z}(q)):= \CH^q (X, n)$ on the category of smooth $k$-varieties $X$. 

\subsection{Some subgroups}

If we are given an integral closed subscheme $W \subseteq X$, we have a subcomplex $z^q _{W} (X, \bullet) \subseteq z^q (X, \bullet)$ defined as follows: first, let $\un{z}_{W} ^q (X, n) \subseteq \un{z} ^q (X, n)$ be the subgroup generated by integral closed subschemes $Z \subseteq \square_X ^n$ that intersect each $W\times F$ properly on $\square_X ^n$, as well as each $F_{X}=X\times F$, for every face $F$ of $\square^n_{k}$. More precisely, we require that the codimension of $Z \cap (W \times F)$ in $W \times F$ is at least $q$. Modding out by degenerate cycles, we obtain the subcomplex $z^q_{{W}} (X, \bullet) \subseteq z^q (X, \bullet)$. In this paper, we are interested only in the cases when $(X, W)$ is $( \Spec (\widehat{\mathcal{O}}), \widehat{\mathfrak{m}})$ or $ (\Spec (\mathcal{O}), \mathfrak{m})$ where:

\begin{defn}\label{defn:setup}
Let $\mathcal{O}:= \mathcal{O}_{\mathbb{A}_k ^1, 0}$ and $\mathfrak{m}:= \mathfrak{m}_{|mathbb{A}_k^1, 0}$. Let $\widehat{\mathcal{O}}:= \widehat{\mathcal{O}}_{\mathbb{A}_k^1, 0}$ be the completion of $\mathcal{O}$ at $\mathfrak{m}$, and let $\widehat{\mathfrak{m}}$ be its unique maximal ideal. Here $\widehat{\mathcal{O}} \simeq k[[t]]$. For $m \geq 1$, let $k_m:= \widehat{\mathcal{O}}/ (t^m) = k[[t]]/ (t^m)$. We use these notations throughout this paper.
\end{defn}

\begin{remk}\label{remk:0-cycle trivial}

We have $z^{n+1} _{\widehat{\mathfrak{m}}} (\widehat{\mathcal{O}}, n)  = 0$. To see this, suppose that $z^{n+1} _{\widehat{\mathfrak{m}}} (\widehat{\mathcal{O}}, n)  \not = 0$ and let $\mathfrak{p} \in z^{n+1}_{\widehat{\mathfrak{m}}} (\widehat{\mathcal{O}}, n)$ be a closed point on $\square_{\widehat{\mathcal{O}}} ^n$. Here, $[k(\mathfrak{p}):k] < \infty$ so that the image of the composition $\mathfrak{p} \hookrightarrow \square_{\widehat{\mathcal{O}}} ^n \to \Spec (\widehat{\mathcal{O}})$ must be the unique closed point $\widehat{\mathfrak{m}}$ of $\Spec (\widehat{\mathcal{O}})$, i.e. $\mathfrak{p}$ lies in the special fiber of the morphism $\square_{\widehat{\mathcal{O}}} ^n \to \Spec (\widehat{\mathcal{O}})$, contradicting the assumption that $\mathfrak{p} \in z^{n+1}_{\widehat{\mathfrak{m}}} (\widehat{\mathcal{O}}, n)$. Hence $z^{n+1} _{\widehat{\mathfrak{m}}} (\widehat{\mathcal{O}}, n)  = 0$. 
\end{remk}

\begin{remk}\label{remk:0-cycle trivial 2}
We have $z^{n+1} _{\mathfrak{m}} (\mathcal{O}, n) = 0$ as well. The proof is identical to that of Remark \ref{remk:0-cycle trivial} by simply replacing $(\widehat{\mathcal{O}}, \widehat{\mathfrak{m}})$ by $(\mathcal{O}, \mathfrak{m})$. We use Remarks \ref{remk:0-cycle trivial} and \ref{remk:0-cycle trivial 2} later. % in Corollary \ref{cor:Milnor surj}.
\end{remk}

\begin{cor}\label{cor:no face}
For $n \geq 1$, let $Z \in z^{n}_{\widehat{\mathfrak{m}}} (\widehat{\mathcal{O}}, n)$ be an integral cycle. Then for any proper face $F \subset \square_{\widehat{\mathcal{O}}} ^n$, we have $Z \cap F = \emptyset$. In particular, we have $\partial_i ^{\epsilon} (Z) = 0$ for any $1 \leq i \leq n$ and $i \in \{ 0, \infty\}$, thus $\partial (Z)  = 0$. A similar result holds for $Z \in z^n _{\mathfrak{m}} (\mathcal{O}, n)$. 
\end{cor}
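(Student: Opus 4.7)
The plan is to use both admissibility hypotheses on $Z$: the ambient proper intersection with each face of $\square_{\widehat{\mathcal{O}}}^n$ (codimension $\geq n$ in the face), and the $\widehat{\mathfrak{m}}$-condition (codimension $\geq n$ inside $\widehat{\mathfrak{m}} \times F'$ for each face $F'$ of $\square_k^n$). Writing an arbitrary proper face as $F = \widehat{\mathcal{O}} \times F'$ with $F' \subset \square_k^n$ of codimension $u \geq 1$, we have $\dim F = n - u + 1$ and $\dim(\widehat{\mathfrak{m}} \times F') = n - u$.

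First, since $\dim(\widehat{\mathfrak{m}} \times F') = n - u < n$, the $\widehat{\mathfrak{m}}$-condition forces
\[
Z \cap (\widehat{\mathfrak{m}} \times F') = \emptyset
\]
for purely dimensional reasons: no nonempty closed subscheme of an ambient of dimension less than $n$ can have codimension $\geq n$ there. The same dimension argument applied to $F$ itself settles the case $u \geq 2$: then $\dim F \leq n-1 < n$, so the ambient proper intersection already gives $Z \cap F = \emptyset$.

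The genuine content lies in the case $u = 1$, where $\dim F = n$ exactly and proper intersection only constrains $Z \cap F$ to be zero-dimensional. Here I would argue by contradiction: assume $V$ is an irreducible component of $Z \cap F$. Then $V$ has codimension $n$ in $F$, hence codimension $n+1$ in $\square_{\widehat{\mathcal{O}}}^n$. In any affine chart of $\square_{\widehat{\mathcal{O}}}^n$ containing the generic point of $V$---a ring of the form $\widehat{\mathcal{O}}[y_1, \ldots, y_n]$, possibly after swapping some $y_j$ with $1/y_j$---the generic point of $V$ corresponds to a maximal ideal $\mathfrak{v}$ of height $n+1$. The key step, which directly echoes Remark \ref{remk:0-cycle trivial}, is that such $\mathfrak{v}$ must contain $t$: otherwise inverting $t$ would send $\mathfrak{v}$ to a maximal ideal of $k((t))[y_1, \ldots, y_n]$, a ring of dimension $n$, contradicting $\operatorname{ht} \mathfrak{v} = n+1$. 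Hence $V$ lies in the special fiber $\widehat{\mathfrak{m}} \times \square_k^n$, so $V \subset (Z \cap F) \cap (\widehat{\mathfrak{m}} \times \square_k^n) = Z \cap (\widehat{\mathfrak{m}} \times F') = \emptyset$, contradicting $V \neq \emptyset$.

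Once $Z \cap F = \emptyset$ is established for every proper face, the remaining assertions are formal: each $\partial_i^\epsilon(Z)$ is the cycle class of the empty scheme $Z \cap F^\epsilon_{i,\widehat{\mathcal{O}}}$ and hence vanishes, and then $\partial(Z) = \sum_i (-1)^i (\partial_i^\infty - \partial_i^0)(Z) = 0$. For $Z \in z^n_\mathfrak{m}(\mathcal{O}, n)$ the proof transports verbatim, replacing $\widehat{\mathcal{O}}$ by $\mathcal{O}$ and using Remark \ref{remk:0-cycle trivial 2} in place of Remark \ref{remk:0-cycle trivial}; the height-versus-dimension step goes through because $\mathcal{O}$ is also a DVR with the same sort of uniformizer. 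The only step that is not pure dimension-counting is the codim-one case, which is precisely where one uses the DVR structure of the base rather than just cutting down dimensions.
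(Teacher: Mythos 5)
Your proof is correct and is essentially the paper's argument: the paper reduces to codimension-one faces and invokes Remark~\ref{remk:0-cycle trivial} (i.e.\ $z^{n}_{\widehat{\mathfrak{m}}}(\widehat{\mathcal{O}}, n-1)=0$), whose content---that top-codimension components are forced into the special fiber, contradicting the $\widehat{\mathfrak{m}}$-condition---is exactly what your height/invert-$t$ step re-derives in the ambient $\square^n_{\widehat{\mathcal{O}}}$. The only structural differences are cosmetic: you dispose of faces of codimension $\geq 2$ by a direct dimension count rather than by noting that every proper face lies in a codimension-one face, and you inline (and in fact justify more explicitly) the Remark instead of citing it.
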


\begin{proof}
If $Z \cap F\not = \emptyset$ for a codimension $1$ face $F \subset \square_{\widehat{\mathcal{O}}} ^n$ given by $\{ y_i = \epsilon\}$, then $\partial_i ^{\epsilon} (W) \not = 0$ in $z^n_{\widehat{\mathfrak{m}}} (\widehat{\mathcal{O}}, n-1)$. But this contradicts Remark \ref{remk:0-cycle trivial} that says $z^n_{\widehat{\mathfrak{m}}} (\widehat{\mathcal{O}}, n-1) = 0$. So $Z$ does not intersect any codimension $1$ face. On the other hand, any proper face is contained in a codimension $1$ face, so the corollary follows.
\end{proof}

\begin{defn}\label{defn:cycle proper}
Let $z^q_{\widehat{\mathfrak{m}}} (\widehat{\mathcal{O}}, n)^c \subset z^q _{\widehat{\mathfrak{m}}} (\widehat{\mathcal{O}}, n)$ be the subgroup generated by the integral cycles $Z \in z^q _{\widehat{\mathfrak{m}}} (\widehat{\mathcal{O}}, n)$ that are proper over $\Spec (\widehat{\mathcal{O}})$. Similarly, define $z^q _{\mathfrak{m}} (\mathcal{O}, n)^c \subset z^q _{\mathfrak{m}} (\mathcal{O}, n)$. 
\end{defn}

There are some technical advantages in working with cycles in $z^q_{\widehat{\mathfrak{m}}} (\widehat{\mathcal{O}}, n)^c $. Firstly we have the following, inspired by the finiteness criterion in \cite[\S 2.3]{KP semi} (Its shortcomings follow soon in Lemma \ref{lem:onlyMilnor}.):

\begin{lem}\label{lem:properness iff}
Let $X$ be a $k$-scheme. Let $W \subset \square_{X} ^n$ be a nonempty closed subscheme and let $\ov{W} \subset \ov{\square}_{X} ^n$ be its Zariski closure. Then $W \to X $ is proper if and only if $W = \ov{W}$.
\end{lem}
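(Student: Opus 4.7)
The plan is to prove the two directions separately, treating this as a standard fact about proper morphisms into projective compactifications.

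For the direction $W = \ov{W} \Rightarrow W \to X$ proper, I would note that $\ov{\square}_X^n = X \times_k (\P^1_k)^n$ is proper over $X$, since properness is stable under base change and $(\P^1_k)^n$ is proper over $\Spec(k)$. Then if $W$ equals its Zariski closure in $\ov{\square}_X^n$, it is a closed subscheme of a proper $X$-scheme, hence is itself proper over $X$.

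For the converse direction, I would appeal to the standard lemma that if $h = g \circ f$ is proper and $g$ is separated, then $f$ is proper. Specifically, factor the inclusion $W \hookrightarrow \ov{\square}_X^n$ through the structure map $\pi : \ov{\square}_X^n \to X$: if $f$ denotes the inclusion $W \hookrightarrow \ov{\square}_X^n$ and $g = \pi$, then $g \circ f$ is the structure morphism $W \to X$, which is proper by hypothesis, while $g$ is proper and in particular separated. Therefore $f : W \to \ov{\square}_X^n$ is proper, so its image, which is just $W$ (as $f$ is an immersion), is closed in $\ov{\square}_X^n$. Since $W$ is dense in $\ov{W}$ by definition of the Zariski closure, it follows that $W = \ov{W}$.

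I do not anticipate any real obstacle in this proof; the argument is essentially formal once one recognizes that $\ov{\square}_X^n$ is the correct proper compactification of $\square_X^n$ over $X$. The only subtlety is invoking the standard fact on cancellation of properness against a separated map for the ``only if'' direction, which is a routine consequence of the valuative criterion (or can be proved directly by factoring through the graph $W \to W \times_X \ov{\square}_X^n$, which is a closed immersion by separatedness of $\pi$, and composing with the proper projection $W \times_X \ov{\square}_X^n \to \ov{\square}_X^n$).
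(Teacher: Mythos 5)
Your proposal is correct and follows essentially the same route as the paper: the paper also deduces properness of $W \to X$ from $W$ being closed in the proper $X$-scheme $\ov{\square}_X^n$, and for the converse applies exactly the cancellation property (proper composite against a separated second map, \cite[Corollary II-4.8(e)]{Hartshorne}) to conclude the inclusion $W \hookrightarrow \ov{\square}_X^n$ is proper, hence closed, so $W=\ov{W}$. No gaps.
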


\begin{proof}
$(\Rightarrow)$ The structure morphism $W \to X$ factors into the composite $W \hookrightarrow \ov{\square}_{X} ^n \to X$. Since the composite is assumed to be proper and the second morphism is separated by \cite[Theorem II-4.9, p.103]{Hartshorne}, the inclusion $W \hookrightarrow \ov{\square}_{X} ^n$ is proper by \cite[Corollary II-4.8(e), p.102]{Hartshorne}. In particular $W$ is closed in $\ov{\square}_{X} ^n$ and $W= \ov{W}$.

$(\Leftarrow)$ If $W= \ov{W}$, then $W \hookrightarrow \ov{\square}_{X} ^n$ is closed, in particular it is a proper morphism by \cite[Corollary II-4.9(a), p.102]{Hartshorne}. Hence composed with the proper projective %(thus proper by \cite[Theorem II-4.9, p.103]{Hartshorne}) 
morphism $\ov{\square}_{X} ^n \to X$, the composite $W \to X$ is proper by \cite[Corollary II-4.8(b), p.102]{Hartshorne}.
\end{proof}

\begin{lem}\label{lem:cycle proper}
Let $W \in z^q_{\widehat{\mathfrak{m}}} (\widehat{\mathcal{O}}, n)^c $ be a nonempty integral cycle. Then we have:
\begin{enumerate}
\item $W$ is closed in $\ov{\square}_{\widehat{\mathcal{O}}} ^n$, so that its Zariski closure $\ov{W}$ is $W$ itself. 
\item The structure morphism $W \to \Spec (\widehat{\mathcal{O}})$ is surjective.
\end{enumerate}
A similar assertion holds for $W \in z^q _{\mathfrak{m}} (\mathcal{O}, n)^c$.
\end{lem}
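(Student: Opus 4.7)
Part (1) should be a direct application of \lemref{lem:properness iff}. Indeed, by the definition of the superscript $c$ in \defref{defn:cycle proper}, the structure morphism $W \to \Spec(\widehat{\mathcal{O}})$ is proper, and it factors through the closed immersion $W \hookrightarrow \ov{\square}_{\widehat{\mathcal{O}}}^n$. Taking $X = \Spec(\widehat{\mathcal{O}})$ in \lemref{lem:properness iff}, we conclude $W = \ov{W}$, so $W$ is closed in $\ov{\square}_{\widehat{\mathcal{O}}}^n$. That disposes of (1).

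For (2), my plan is to exploit properness to force the image closed and then rule out by dimension that the image collapses to the closed point, mirroring the contradiction used in \remref{remk:0-cycle trivial}. Let $\pi : W \to \Spec(\widehat{\mathcal{O}})$ denote the structure morphism. Since $\pi$ is proper and $W$ is nonempty, $\pi(W)$ is a nonempty closed subset of $\Spec(\widehat{\mathcal{O}})$. The topological space $\Spec(\widehat{\mathcal{O}})$ has exactly two nonempty closed subsets, namely $\{\widehat{\mathfrak{m}}\}$ and $\Spec(\widehat{\mathcal{O}})$ itself, because $\widehat{\mathcal{O}} = k[[t]]$ is a one-dimensional local domain. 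Thus it suffices to rule out the case $\pi(W) = \{\widehat{\mathfrak{m}}\}$.

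Suppose to the contrary $\pi(W) = \{\widehat{\mathfrak{m}}\}$. Since $W$ is integral (hence reduced), this forces $W$ to sit scheme-theoretically inside the special fiber $\square_k^n \hookrightarrow \square_{\widehat{\mathcal{O}}}^n$. Now $\square_{\widehat{\mathcal{O}}}^n$ is smooth over the regular ring $\widehat{\mathcal{O}}$, so it is regular (hence catenary) of pure dimension $n+1$, and an integral closed subscheme of codimension $q$ has dimension $(n+1)-q$. On the other hand, the assumption $W \in z^q_{\widehat{\mathfrak{m}}}(\widehat{\mathcal{O}}, n)$ applied to the maximal face $F = \square_k^n$ (the empty set of equations) says that $W \cap (\widehat{\mathfrak{m}} \times F) = W$ has codimension $\geq q$ in $\widehat{\mathfrak{m}} \times F = \square_k^n$, equivalently $\dim W \leq n - q$. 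Combining, $n+1-q \leq n-q$, which is absurd. Hence $\pi(W) = \Spec(\widehat{\mathcal{O}})$, i.e.\ $\pi$ is surjective.

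There is no substantive obstacle here; the only thing to be careful about is the dimension formula $\dim W = (n+1) - q$, which relies on catenarity of $\square_{\widehat{\mathcal{O}}}^n$, but this is immediate from its regularity. The analogous statement for $W \in z^q_{\mathfrak{m}}(\mathcal{O}, n)^c$ is proved by the same argument verbatim, replacing $(\widehat{\mathcal{O}}, \widehat{\mathfrak{m}})$ by $(\mathcal{O}, \mathfrak{m})$ and invoking \remref{remk:0-cycle trivial 2} in place of \remref{remk:0-cycle trivial} where appropriate.
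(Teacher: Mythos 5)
Your proof follows the same route as the paper's: part (1) is exactly the paper's appeal to \lemref{lem:properness iff}, and for part (2) the paper argues that if $W \to \Spec (\widehat{\mathcal{O}})$ is not dominant then $W = W_s$, which violates the admissibility condition defining $z^q_{\widehat{\mathfrak{m}}} (\widehat{\mathcal{O}}, n)$, and then uses that a proper dominant morphism is surjective; your closed-image dichotomy plus dimension count simply makes that implicit contradiction explicit. One caveat about the step you yourself flagged: the justification ``regular, hence catenary, so an integral closed subscheme of codimension $q$ has dimension $(n+1)-q$'' is not correct as a blanket statement about $\square_{\widehat{\mathcal{O}}} ^n$, because the dimension formula fails over the base $\Spec (k[[t]])$; for instance the closed subscheme $\{ y_1 = 1+t \} \subset \square_{\widehat{\mathcal{O}}} ^n$ (which lies entirely in the generic fiber) is integral of codimension $1$ but has dimension $n-1$, not $n$. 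Catenarity only gives the inequality $\dim W \leq (n+1) - q$, whereas your contradiction needs the opposite inequality $\dim W \geq (n+1)-q$. Fortunately, in the only case where you use it, namely $W$ contained in the special fiber, this lower bound does hold and the repair is one line: $t$ lies in the prime $\mathfrak{p}_W$ of $W$, so prepending $(0) \subsetneq (t)$ to any chain from $(t)$ to $\mathfrak{p}_W$ shows that $\mathfrak{p}_W/(t)$ has height at most $q-1$ in the coordinate ring of the special fiber $\square_k ^n$, a variety over $k$, whence $\dim W \geq n-(q-1) = n+1-q$, contradicting the admissibility bound $\dim W \leq n-q$ exactly as you intended. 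With that adjustment the argument is complete, and the case of $(\mathcal{O}, \mathfrak{m})$ goes through verbatim as you say.
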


\begin{proof}
(1) is a corollary to Lemma \ref{lem:properness iff}.

If $W \to {\rm Spec} (\widehat{\mathcal{O}})$ is not dominant, then $W = W_s$, which violates the assumption that $W \in z^q _{\widehat{\mathfrak{m}}} (\widehat{\mathcal{O}}, n)$. Hence $W \to \Spec (\widehat{\mathcal{O}})$ is dominant. Now, being proper and dominant, it must be surjective, proving (2).
\end{proof}

The above Lemmas \ref{lem:properness iff} and \ref{lem:cycle proper} give interesting results for nonempty integral cycles in $z^q_{\widehat{\mathfrak{m}}} (\widehat{\mathcal{O}}, n)^c$. However, this non-emptiness assumption is nontrivial:

\begin{lem}\label{lem:onlyMilnor}
The group $z^q_{\widehat{\mathfrak{m}}} (\widehat{\mathcal{O}}, n)^c$ is nonzero if and only if $q=n$.
\end{lem}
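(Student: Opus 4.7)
The plan is to prove both directions separately, the interesting case being a forcing-type argument for $q \leq n-1$, and an explicit construction for $q=n$.

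For the ``only if'' direction I first dispose of the range $q \geq n+1$: when $q = n+1$, Remark~\ref{remk:0-cycle trivial} gives $z^{n+1}_{\widehat{\mathfrak{m}}}(\widehat{\mathcal{O}}, n) = 0$, and when $q \geq n+2$ the ambient scheme $\square^n_{\widehat{\mathcal{O}}}$ has Krull dimension only $n+1$, so the group vanishes trivially. The substantive case is $0 \leq q \leq n-1$, where I argue by contradiction. Let $W$ be a nonempty integral cycle in $z^q_{\widehat{\mathfrak{m}}}(\widehat{\mathcal{O}}, n)^c$. By Lemma~\ref{lem:cycle proper}, $W$ is closed in $\ov{\square}^n_{\widehat{\mathcal{O}}}$ and surjects onto $\Spec(\widehat{\mathcal{O}})$. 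Since $W$ is integral and dominates the DVR $\widehat{\mathcal{O}}$, the uniformizer $t$ is not a zero-divisor on $W$, so Krull's Hauptidealsatz applied to $(t) \subset \mathcal{O}_W$ yields $\dim W_s = \dim W - 1 = n-q$ componentwise, and this is $\geq 1$ under the assumption $q \leq n-1$.

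Pick any irreducible component $Y$ of $W_s$. Then $Y$ is a positive-dimensional irreducible closed subset of $\ov{\square}^n_k = (\mathbb{P}^1_k)^n$ contained in the open subscheme $\square^n_k$, so $Y \cap \{y_i = 1\} = \emptyset$ for every $i$. On the other hand, $Y$ is proper over $k$ (as a closed subscheme of the proper scheme $(\mathbb{P}^1_k)^n$), so each projection $\pi_i \colon Y \to \mathbb{P}^1_k$ has closed irreducible image $\pi_i(Y)$, which is either a closed point or all of $\mathbb{P}^1_k$. If $\pi_i(Y)$ were a closed point for every $i$, then $Y \subseteq \prod_i \pi_i(Y)$ would be contained in a zero-dimensional closed subscheme, contradicting $\dim Y \geq 1$. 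Hence $\pi_i(Y) = \mathbb{P}^1_k$ for some $i$, which forces $Y \cap \{y_i = 1\} \neq \emptyset$, the desired contradiction.

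For the ``if'' direction at $q = n$, I would exhibit an explicit nonzero cycle. The nonempty finite-type $k$-scheme $(\mathbb{P}^1_k \setminus \{0, 1, \infty\})^n$ admits at least one closed point $p$, even over small fields such as $\mathbb{F}_2$ (using irreducible polynomials over $k$). The horizontal cycle $Z := p \times_k \Spec(\widehat{\mathcal{O}})$ is integral of codimension $n$ in $\square^n_{\widehat{\mathcal{O}}}$. Since $p$ is closed in the proper $\ov{\square}^n_k$, the cycle $Z$ is closed in $\ov{\square}^n_{\widehat{\mathcal{O}}}$, hence proper over $\Spec(\widehat{\mathcal{O}})$ by Lemma~\ref{lem:properness iff}. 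Because $p$ avoids $\{y_i = 0\}$, $\{y_i = 1\}$, and $\{y_i = \infty\}$ for every $i$, the cycle $Z$ has empty intersection with every proper face of $\square^n_{\widehat{\mathcal{O}}}$, while its intersection with $\{\widehat{\mathfrak{m}}\} \times \square^n_k$ is zero-dimensional inside an ambient space of dimension $n$, giving codimension $n = q$ as required. So $[Z]$ is a nonzero element of $z^n_{\widehat{\mathfrak{m}}}(\widehat{\mathcal{O}}, n)^c$.

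I expect the main obstacle to be the argument for $0 \leq q \leq n-1$, where one must carefully combine the Zariski closedness of $W$ in $\ov{\square}^n_{\widehat{\mathcal{O}}}$ (i.e.\ properness), the fiber-dimension bound coming from Krull's theorem, and the projection-to-$\mathbb{P}^1_k$ observation that any positive-dimensional closed irreducible subvariety of $(\mathbb{P}^1_k)^n$ must meet some divisor $\{y_i = 1\}$. The remaining two cases reduce to direct appeals to Remark~\ref{remk:0-cycle trivial} and Lemmas~\ref{lem:properness iff}--\ref{lem:cycle proper}, so their verification should be essentially routine.
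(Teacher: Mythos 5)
Your argument is correct in substance but takes a genuinely different route from the paper's for the key case $q<n$. The paper disposes of it in one line: a nonempty integral $W\in z^q_{\widehat{\mathfrak{m}}}(\widehat{\mathcal{O}},n)^c$ gives a proper morphism of affine schemes $W\to\Spec(\widehat{\mathcal{O}})$, which is therefore finite, so $W$ has relative dimension $0$ over the one-dimensional base and its codimension is forced to be $n$. You instead pass to the special fiber: a positive-dimensional irreducible component $Y\subseteq W_s$, closed in $(\mathbb{P}^1_k)^n$ yet contained in $\square^n_k$, must surject onto $\mathbb{P}^1_k$ under some projection and hence meet $\{y_i=1\}$, a contradiction (equivalently: a positive-dimensional irreducible proper $k$-scheme cannot be a closed subscheme of the affine scheme $\square^n_k$). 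Both routes work; the paper's is shorter and needs no fiber-dimension bookkeeping, while yours is more geometric. For the $q=n$ direction the paper simply exhibits graph cycles $\{y_1=c_1,\dots,y_n=c_n\}$ with $c_i\in\widehat{\mathcal{O}}^{\times}$ (e.g.\ $c_i=1+t$), which work over any $k$ and avoid your implicit claim that $\kappa(p)\otimes_k\widehat{\mathcal{O}}$ is a domain (that claim is true, since $k((t))$ is a regular extension of $k$, but it is asserted without proof; alternatively one could just take an irreducible component of your $Z$).

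The one step you should justify more carefully is the assertion that ``Krull's Hauptidealsatz yields $\dim W_s=\dim W-1=n-q$ componentwise.'' Over the base $\widehat{\mathcal{O}}$ (not a field) codimension and dimension are not complementary in general: for instance $\Spec\bigl(\widehat{\mathcal{O}}[y_1]/(ty_1-1)\bigr)$ is a \emph{closed point} of $\square^1_{\widehat{\mathcal{O}}}$ of codimension $1$ in a $2$-dimensional ambient scheme. So ``codimension $q$'' does not by itself give $\dim W=n+1-q$, and a height-one prime containing $t$ need not have quotient of dimension $\dim W-1$. What rescues the claim is precisely the properness you have already invoked: the generic fiber $W_\eta\subseteq\square^n_{k((t))}$ is integral of codimension $q$ over the field $k((t))$, hence $\dim W_\eta=n-q\ge 1$, and then upper semicontinuity of fiber dimension for the proper morphism $W\to\Spec(\widehat{\mathcal{O}})$ (or the dimension formula over the universally catenary ring $\widehat{\mathcal{O}}$, using that properness forces every closed point of $W$ into $W_s$) gives $\dim W_s\ge n-q\ge 1$, which is all your projection argument needs. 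With that repair the proof is complete, though noticeably longer than the paper's finiteness argument.
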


\begin{proof}
If $q \geq n+2$, the group $z^q_{\widehat{\mathfrak{m}}} (\widehat{\mathcal{O}}, n)^c = 0$ since $\dim (\square_{\widehat{\mathcal{O}}} ^n) = n+1$. If $q=n+1$, it is zero by Remark \ref{remk:0-cycle trivial}. Let $q<n$ and suppose that $z^q_{\widehat{\mathfrak{m}}} (\widehat{\mathcal{O}}, n)^c  \not = 0$, so there is a nonempty integral closed subscheme $W \subset \square_{\widehat{\mathcal{O}}} ^n$ that is proper over $\Spec (\widehat{\mathcal{O}})$. Then $W \to \Spec (\widehat{\mathcal{O}})$ is a proper morphism affine schemes, so that it must be finite. (See \cite[Exercise II-4.6, p.106]{Hartshorne}) This forces $q =n$, a contradiction. On the other hand, when $q=n$, certainly $z^q_{\widehat{\mathfrak{m}}} (\widehat{\mathcal{O}}, n)^c $ is nonzero, for instance, they contain all the graph cycles given by $\{y_1 = c_1, \cdots, y_n = c_n \}$ for $c_i \in \widehat{\mathcal{O}} ^{\times}$.
\end{proof}

This apparent shortcoming shows that assuming properness over $\Spec (\widehat{\mathcal{O}})$ for all codimension is too restrictive except for the Milnor range. We thus consider the following a bit weaker cycles with ``partial compactness" defined inductively:

\begin{defn}\label{defn:cycle pproper}
For integers $q \leq n$, define the subgroup $z^q _{\widehat{\mathfrak{m}}} (\widehat{\mathcal{O}}, n)^{pc} \subset z^q_{\widehat{\mathfrak{m}}} (\widehat{\mathcal{O}}, n) $ as follows: 
\begin{enumerate}
\item If $n  \leq q$, we let $z^q _{\widehat{\mathfrak{m}}} (\widehat{\mathcal{O}}, n)^{pc} := z^q _{\widehat{\mathfrak{m}}} (\widehat{\mathcal{O}}, n)^{c}$.
\item Suppose $n > q$. Inductively, suppose $z^q _{\widehat{\mathfrak{m}}} (\widehat{\mathcal{O}}, i)^{pc}$ is defined for each $0 \leq i \leq n-1$. Then let $z^q_{\widehat{\mathfrak{m}}} (\widehat{\mathcal{O}}, n)^{pc}$ be the subgroup of cycles $Z \in z^q_{\widehat{\mathfrak{m}}} (\widehat{\mathcal{O}}, n)$ such that $\partial (Z) \in z^q _{\widehat{\mathfrak{m}}} (\widehat{\mathcal{O}}, n-1)^{pc}$.
\end{enumerate}
By construction, $\partial$ maps $z^q_{\widehat{\mathfrak{m}}} (\widehat{\mathcal{O}}, n)^{pc}$ into $z^q_{\widehat{\mathfrak{m}}} (\widehat{\mathcal{O}}, n-1)^{pc}$, and we have $\partial \circ \partial = 0$ so that $z^q_{\widehat{\mathfrak{m}}} (\widehat{\mathcal{O}}, \bullet)^{pc}$ is a subcomplex of $z^q_{\widehat{\mathfrak{m}}} (\widehat{\mathcal{O}}, \bullet )$. 
We can similarly define $z^q _{\mathfrak{m}} (\mathcal{O}, n)^{pc}$. Define $\CH^q_{\widehat{\mathfrak{m}}} (\widehat{\mathcal{O}}, n)^{pc}:= {\rm H}_n (z^q_{\widehat{\mathfrak{m}}} (\widehat{\mathcal{O}}, \bullet )^{pc} )$ and similarly define $\CH^q_{\mathfrak{m}} (\mathcal{O}, n)^{pc}$.
\end{defn}

\begin{remk}
Our definition does not necessarily imply that for each $1 \leq i \leq n$ and $\epsilon \in \{ 0, \infty \}$, the individual face operator $\partial_i ^{\epsilon}$ maps $z^q_{\widehat{\mathfrak{m}}} (\widehat{\mathcal{O}}, n)^{pc}$ into $z^q_{\widehat{\mathfrak{m}}} (\widehat{\mathcal{O}}, n-1)^{pc}$, unlike the boundary operator $\partial$.
\end{remk}

One good side of our definition is the following:

\begin{cor}\label{cor:pc inj}
We have 
$$\partial (z^n_{\mathfrak{m}} (\mathcal{O}, n+1)^{pc}) = \partial (z^n_{\mathfrak{m}} (\mathcal{O}, n+1)) \cap z^n_{\mathfrak{m}} (\mathcal{O}, n)^c, 
\partial (z^n_{\widehat{\mathfrak{m}}} (\widehat{\mathcal{O}}, n+1)^{pc}) = \partial (z^n_{\widehat{\mathfrak{m}}} (\widehat{\mathcal{O}}, n+1)) \cap z^n_{\widehat{\mathfrak{m}}} (\widehat{\mathcal{O}}, n)^c.$$
So, the maps $\CH^n_{\mathfrak{m}} (\mathcal{O}, n)^{pc} \to \CH^n _{\mathfrak{m}} (\mathcal{O}, n)$ and $\CH^n_{\widehat{\mathfrak{m}}} (\widehat{\mathcal{O}}, n)^{pc} \to \CH^n_{\widehat{\mathfrak{m}}} (\widehat{\mathcal{O}}, n)$ are injections.
\end{cor}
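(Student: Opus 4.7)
The plan is to observe that both statements are almost formal consequences of Definition \ref{defn:cycle pproper} once combined with Corollary \ref{cor:no face}. I will state the two equalities in (1) as parallel arguments (the proofs for $\mathcal{O}$ and $\widehat{\mathcal{O}}$ are literally identical, so I will write out only the $\widehat{\mathcal{O}}$ case) and then deduce (2) as an immediate corollary.

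For (1), the plan is simply to unwind Definition \ref{defn:cycle pproper}. Since $q=n$ satisfies $n \leq q$, we have $z^n_{\widehat{\mathfrak{m}}} (\widehat{\mathcal{O}}, n)^{pc} = z^n_{\widehat{\mathfrak{m}}} (\widehat{\mathcal{O}}, n)^c$. The inductive step of Definition \ref{defn:cycle pproper}(2) then defines $z^n_{\widehat{\mathfrak{m}}} (\widehat{\mathcal{O}}, n+1)^{pc}$ as exactly those $Z \in z^n_{\widehat{\mathfrak{m}}} (\widehat{\mathcal{O}}, n+1)$ with $\partial(Z) \in z^n_{\widehat{\mathfrak{m}}} (\widehat{\mathcal{O}}, n)^c$. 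For the inclusion $\subseteq$, if $Z \in z^n_{\widehat{\mathfrak{m}}} (\widehat{\mathcal{O}}, n+1)^{pc}$ then $\partial(Z)$ lies both in $\partial(z^n_{\widehat{\mathfrak{m}}} (\widehat{\mathcal{O}}, n+1))$ trivially and in $z^n_{\widehat{\mathfrak{m}}} (\widehat{\mathcal{O}}, n)^c$ by definition. For the reverse inclusion, if $\alpha = \partial(Z)$ with $Z \in z^n_{\widehat{\mathfrak{m}}} (\widehat{\mathcal{O}}, n+1)$ and $\alpha \in z^n_{\widehat{\mathfrak{m}}} (\widehat{\mathcal{O}}, n)^c = z^n_{\widehat{\mathfrak{m}}} (\widehat{\mathcal{O}}, n)^{pc}$, then $Z$ satisfies the defining condition of $z^n_{\widehat{\mathfrak{m}}} (\widehat{\mathcal{O}}, n+1)^{pc}$, so $\alpha \in \partial(z^n_{\widehat{\mathfrak{m}}} (\widehat{\mathcal{O}}, n+1)^{pc})$.

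For (2), the plan is to first use Corollary \ref{cor:no face} to identify both homology groups with simple quotients. Indeed, by that corollary the differential $\partial \colon z^n_{\widehat{\mathfrak{m}}}(\widehat{\mathcal{O}}, n) \to z^n_{\widehat{\mathfrak{m}}}(\widehat{\mathcal{O}}, n-1)$ vanishes on every integral generator, hence is identically zero, and the same holds on its subgroup $z^n_{\widehat{\mathfrak{m}}}(\widehat{\mathcal{O}}, n)^{pc} = z^n_{\widehat{\mathfrak{m}}}(\widehat{\mathcal{O}}, n)^c$. Therefore
\[
\CH^n_{\widehat{\mathfrak{m}}}(\widehat{\mathcal{O}}, n) = \frac{z^n_{\widehat{\mathfrak{m}}}(\widehat{\mathcal{O}}, n)}{\partial(z^n_{\widehat{\mathfrak{m}}}(\widehat{\mathcal{O}}, n+1))}, \qquad \CH^n_{\widehat{\mathfrak{m}}}(\widehat{\mathcal{O}}, n)^{pc} = \frac{z^n_{\widehat{\mathfrak{m}}}(\widehat{\mathcal{O}}, n)^c}{\partial(z^n_{\widehat{\mathfrak{m}}}(\widehat{\mathcal{O}}, n+1)^{pc})},
\]
and the map in question is the one induced by the inclusion of numerators. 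Now given a class $[Z]$ in $\CH^n_{\widehat{\mathfrak{m}}}(\widehat{\mathcal{O}}, n)^{pc}$ mapping to zero, the cycle $Z \in z^n_{\widehat{\mathfrak{m}}}(\widehat{\mathcal{O}}, n)^c$ lies in $\partial(z^n_{\widehat{\mathfrak{m}}}(\widehat{\mathcal{O}}, n+1))$; hence it lies in the intersection $\partial(z^n_{\widehat{\mathfrak{m}}}(\widehat{\mathcal{O}}, n+1)) \cap z^n_{\widehat{\mathfrak{m}}}(\widehat{\mathcal{O}}, n)^c$, which by part (1) equals $\partial(z^n_{\widehat{\mathfrak{m}}}(\widehat{\mathcal{O}}, n+1)^{pc})$, so $[Z] = 0$ already in $\CH^n_{\widehat{\mathfrak{m}}}(\widehat{\mathcal{O}}, n)^{pc}$. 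The argument for $\mathcal{O}$ is word-for-word the same, using Remark \ref{remk:0-cycle trivial 2} in place of Remark \ref{remk:0-cycle trivial}.

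Because the whole argument reduces to definition-chasing plus the vanishing statement already recorded in Corollary \ref{cor:no face}, there is no real obstacle; the only point to double-check is that the inclusion $z^n_{\widehat{\mathfrak{m}}}(\widehat{\mathcal{O}}, n)^c \hookrightarrow z^n_{\widehat{\mathfrak{m}}}(\widehat{\mathcal{O}}, n)$ really does descend to a well-defined map on $\CH^n$, which is automatic from $\partial(z^n_{\widehat{\mathfrak{m}}}(\widehat{\mathcal{O}}, n+1)^{pc}) \subseteq \partial(z^n_{\widehat{\mathfrak{m}}}(\widehat{\mathcal{O}}, n+1))$.
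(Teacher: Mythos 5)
Your proof is correct, and it is exactly the definition-unwinding argument the paper has in mind (the corollary is stated there without proof as an immediate consequence of Definition \ref{defn:cycle pproper}, using that $z^n_{\widehat{\mathfrak{m}}}(\widehat{\mathcal{O}},n)^{pc}=z^n_{\widehat{\mathfrak{m}}}(\widehat{\mathcal{O}},n)^{c}$ and that $\partial$ vanishes on the level-$n$ groups by Corollary \ref{cor:no face} and Remarks \ref{remk:0-cycle trivial}, \ref{remk:0-cycle trivial 2}). Both the two displayed equalities and the deduction of injectivity are handled as intended, so there is nothing to correct.
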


We thank the referee for suggesting Corollaries \ref{cor:pc nb} and \ref{cor:pc nb2}:

\begin{cor}\label{cor:pc nb}
For $n=q+1$, we have 
\begin{equation}\label{eqn:pc nb-1}
\tuborg \ker (\partial: z^q _{\mathfrak{m}} (\mathcal{O}, q+1) \to z^q _{\mathfrak{m}} (\mathcal{O}, q)) \subseteq z^q _{\mathfrak{m}} (\mathcal{O}, q+1) ^{pc},\\
\ker (\partial: z^q _{\widehat{\mathfrak{m}}} (\widehat{\mathcal{O}}, q+1) \to z^q _{\widehat{\mathfrak{m}}} (\widehat{\mathcal{O}}, q)) \subseteq z^q _{\widehat{\mathfrak{m}}} (\widehat{\mathcal{O}}, q+1) ^{pc}.\sluttuborg
\end{equation}
For $n \geq q+2$, we have 
\begin{equation}\label{eqn:pc nb-2}
z^q _{\mathfrak{m}} (\mathcal{O}, n) ^{pc} = z^q _{\mathfrak{m}} (\mathcal{O}, n) \mbox{ and } z^q _{\widehat{\mathfrak{m}}} (\widehat{\mathcal{O}}, n) ^{pc} = z^q _{\widehat{\mathfrak{m}}} (\widehat{\mathcal{O}}, n).
\end{equation}
\end{cor}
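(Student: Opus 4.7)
The proof strategy is a direct unwinding of the inductive definition of $z^q_{\widehat{\mathfrak{m}}} (\widehat{\mathcal{O}}, n)^{pc}$ (and its analogue over $\mathcal{O}$), together with the identity $\partial \circ \partial = 0$. I will treat the $\widehat{\mathcal{O}}$ case; the $\mathcal{O}$ case is formally identical.

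First, for \eqref{eqn:pc nb-1}, I would observe that by Definition \ref{defn:cycle pproper}, since $n = q+1 > q$, a cycle $Z \in z^q _{\widehat{\mathfrak{m}}} (\widehat{\mathcal{O}}, q+1)$ lies in $z^q _{\widehat{\mathfrak{m}}} (\widehat{\mathcal{O}}, q+1)^{pc}$ if and only if $\partial(Z) \in z^q _{\widehat{\mathfrak{m}}} (\widehat{\mathcal{O}}, q)^{pc}$, which by Definition \ref{defn:cycle pproper}(1) coincides with $z^q _{\widehat{\mathfrak{m}}} (\widehat{\mathcal{O}}, q)^{c}$. Now if $Z \in \ker \partial$, then $\partial(Z) = 0$, and the zero cycle lies in every subgroup, in particular in $z^q _{\widehat{\mathfrak{m}}} (\widehat{\mathcal{O}}, q)^{c}$. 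Hence $Z \in z^q _{\widehat{\mathfrak{m}}} (\widehat{\mathcal{O}}, q+1)^{pc}$, yielding \eqref{eqn:pc nb-1}.

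Next, for \eqref{eqn:pc nb-2}, I would argue by induction on $n \geq q+2$. The inclusion $z^q _{\widehat{\mathfrak{m}}} (\widehat{\mathcal{O}}, n)^{pc} \subseteq z^q _{\widehat{\mathfrak{m}}} (\widehat{\mathcal{O}}, n)$ is automatic, so only the reverse inclusion needs proof. For the base case $n = q+2$, take $Z \in z^q _{\widehat{\mathfrak{m}}} (\widehat{\mathcal{O}}, q+2)$. Then $\partial(Z) \in z^q _{\widehat{\mathfrak{m}}} (\widehat{\mathcal{O}}, q+1)$ satisfies $\partial(\partial(Z)) = 0$, so by the first part \eqref{eqn:pc nb-1} already proved, $\partial(Z) \in z^q _{\widehat{\mathfrak{m}}} (\widehat{\mathcal{O}}, q+1)^{pc}$. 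By the recursive definition, this means $Z \in z^q _{\widehat{\mathfrak{m}}} (\widehat{\mathcal{O}}, q+2)^{pc}$. For the inductive step with $n > q+2$, assume $z^q _{\widehat{\mathfrak{m}}} (\widehat{\mathcal{O}}, n-1)^{pc} = z^q _{\widehat{\mathfrak{m}}} (\widehat{\mathcal{O}}, n-1)$. Then for any $Z \in z^q _{\widehat{\mathfrak{m}}} (\widehat{\mathcal{O}}, n)$, automatically $\partial(Z) \in z^q _{\widehat{\mathfrak{m}}} (\widehat{\mathcal{O}}, n-1) = z^q _{\widehat{\mathfrak{m}}} (\widehat{\mathcal{O}}, n-1)^{pc}$, so $Z \in z^q _{\widehat{\mathfrak{m}}} (\widehat{\mathcal{O}}, n)^{pc}$.

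No substantial obstacle is anticipated; the whole argument is a tautological use of the definition combined with $\partial^2 = 0$. The only subtlety worth flagging is correctly identifying the boundary index at which the recursion terminates, namely the transition from $n = q+1$ to the base level $n = q$ where ``pc'' is set to ``c''; this is precisely why \eqref{eqn:pc nb-1} holds only as an inclusion rather than an equality, whereas \eqref{eqn:pc nb-2} becomes a genuine equality once $n$ is two or more steps above $q$.
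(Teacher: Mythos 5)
Your proof is correct and follows essentially the same route as the paper: part \eqref{eqn:pc nb-1} comes from noting that $0 \in z^q_{\widehat{\mathfrak{m}}}(\widehat{\mathcal{O}}, q)^{pc}$ (resp. over $\mathcal{O}$), and part \eqref{eqn:pc nb-2} from $\partial \circ \partial = 0$ fed through the inductive definition of the ``pc'' groups. The only difference is cosmetic: the paper compresses the $n \geq q+2$ case into a single line, whereas you make the induction on $n$ explicit, which is a fair and harmless elaboration.
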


\begin{proof}
We do it just for $\mathcal{O}$; the other case is similar. Note that the trivial subgroup of $z^q_{\mathfrak{m}} (\mathcal{O}, q)$ is a subgroup of $ z^q_{\mathfrak{m}} (\mathcal{O}, q)^{pc}$ as well so that the boundaries of the members of $ \ker (\partial: z^q _{\mathfrak{m}} (\mathcal{O}, q+1) \to z^q _{\mathfrak{m}} (\mathcal{O}, q))$ belong to $ z^q_{\mathfrak{m}} (\mathcal{O}, q)^{pc}$. This means $\ker (\partial: z^q _{\mathfrak{m}} (\mathcal{O}, q+1) \to z^q _{\mathfrak{m}} (\mathcal{O}, q)) \subseteq z^q_{\mathfrak{m}} (\mathcal{O}, q+1)^{pc}$, proving the first assertion. 

Since $\partial \circ \partial = 0$, we immediately have $z^q_{\mathfrak{m}} (\mathcal{O}, n) \subseteq z^q_{\mathfrak{m}} (\mathcal{O}, n) ^{pc}$ for $n \geq q+2$. This proves the second assertion.
\end{proof}

\begin{cor}\label{cor:pc nb2}
For $n \not = q$, we have the equalities
$$\CH^q _{\mathfrak{m}} (\mathcal{O}, n) ^{pc} = \CH^q _{\mathfrak{m}} (\mathcal{O}, n) \mbox{ and } \CH^q _{\widehat{\mathfrak{m}}} (\widehat{\mathcal{O}}, n) ^{pc} = \CH^q _{\widehat{\mathfrak{m}}} (\widehat{\mathcal{O}}, n).$$
\end{cor}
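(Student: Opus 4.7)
The plan is to derive the Chow-group equalities directly from the cycle-level identifications in Corollary \ref{cor:pc nb}, splitting the range $n \neq q$ into the two sub-cases $n \leq q-1$ and $n \geq q+1$. I will write the argument over $\mathcal{O}$ throughout, since the case of $\widehat{\mathcal{O}}$ is strictly analogous.

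For $n \leq q-1$, I would first observe that the ambient group $z^q_{\mathfrak{m}}(\mathcal{O}, n)$ is already zero: when $n \leq q-2$, the scheme $\square^n_{\mathcal{O}}$ has dimension $n+1 < q$ so there are no codimension $q$ cycles at all, and when $n = q-1$ the group coincides with $z^{n+1}_{\mathfrak{m}}(\mathcal{O}, n)$ which vanishes by Remark \ref{remk:0-cycle trivial 2}. Since $z^q_{\mathfrak{m}}(\mathcal{O}, n)^{pc} \subseteq z^q_{\mathfrak{m}}(\mathcal{O}, n)$, the $pc$-subgroup is zero as well and both Chow groups vanish.

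For $n \geq q+1$, I would compare numerators and denominators separately. Since $n+1 \geq q+2$, identity \eqref{eqn:pc nb-2} of Corollary \ref{cor:pc nb} gives $z^q_{\mathfrak{m}}(\mathcal{O}, n+1)^{pc} = z^q_{\mathfrak{m}}(\mathcal{O}, n+1)$, so the denominators $\partial(z^q_{\mathfrak{m}}(\mathcal{O}, n+1)^{pc})$ and $\partial(z^q_{\mathfrak{m}}(\mathcal{O}, n+1))$ coincide. For the numerator, the inclusion of the $pc$-kernel into the ambient kernel is immediate because the $pc$-differential is just the restriction of $\partial$. For the reverse inclusion, let $Z \in z^q_{\mathfrak{m}}(\mathcal{O}, n)$ with $\partial Z = 0$: when $n = q+1$, inclusion \eqref{eqn:pc nb-1} places $Z$ in $z^q_{\mathfrak{m}}(\mathcal{O}, q+1)^{pc}$, while when $n \geq q+2$, equation \eqref{eqn:pc nb-2} gives $z^q_{\mathfrak{m}}(\mathcal{O}, n)^{pc} = z^q_{\mathfrak{m}}(\mathcal{O}, n)$ outright, so in either case $Z \in z^q_{\mathfrak{m}}(\mathcal{O}, n)^{pc}$, and of course $\partial Z = 0 \in z^q_{\mathfrak{m}}(\mathcal{O}, n-1)^{pc}$ automatically. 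Thus the numerators also agree.

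The argument is essentially formal once Corollary \ref{cor:pc nb} is in hand, so I do not anticipate any substantive obstacle. The only points that require care are the dimension and $0$-cycle vanishing remarks in the range $n \leq q-1$, and matching each value of $n$ to the correct clause of Corollary \ref{cor:pc nb} when handling the numerator for $n \geq q+1$.
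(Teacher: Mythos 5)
Your proof is correct and follows essentially the same route as the paper: vanishing for $n<q$ via the dimension count and Remark \ref{remk:0-cycle trivial 2}, and for $n>q$ a comparison of kernels and boundary subgroups using \eqref{eqn:pc nb-1} and \eqref{eqn:pc nb-2} of Corollary \ref{cor:pc nb}. The only difference is organizational (you treat $n\geq q+1$ uniformly instead of splitting $n=q+1$ off via Corollary \ref{cor:pc inj}), which changes nothing of substance.
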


\begin{proof}
We do it just for $\mathcal{O}$; the other case is similar. When $n \leq q-2$, we have $z^q_{\mathfrak{m}} (\mathcal{O}, n)^{pc} = z^q _{\mathfrak{m}} (\mathcal{O}, n) = 0$, because $\dim (\square^n _{\mathcal{O}}) = n+1$. When $n= q-1$, we have $z^q_{\mathfrak{m}} (\mathcal{O}, n)^{pc} = z^q _{\mathfrak{m}} (\mathcal{O}, n) = 0$ by Remark \ref{remk:0-cycle trivial 2}. (See Remark \ref{remk:0-cycle trivial} for the case of $\widehat{O}$) Hence $\CH^q_{\mathfrak{m}} (\mathcal{O}, n)^{pc} = \CH^q_{\mathfrak{m}} (\mathcal{O}, n)=0$ for $n < q$. When $n \geq q+2$, by \eqref{eqn:pc nb-2} of Corollary \ref{cor:pc nb}, the equality $\CH^q_{\mathfrak{m}} (\mathcal{O}, n)^{pc} = \CH^q_{\mathfrak{m}} (\mathcal{O}, n)$ holds. When $n= q+1$, the injective map $\CH^q_{\mathfrak{m}} (\mathcal{O}, n)^{pc} \to \CH^q_{\mathfrak{m}} (\mathcal{O}, n)$ of Corollary \ref{cor:pc inj} reads
\begin{equation}\label{eqn:pc nb2-2}
\frac{\ker (\partial: z^q _{\mathfrak{m}} (\mathcal{O}, q+1)^{pc} \to z^q _{\mathfrak{m}} (\mathcal{O}, q)^{pc})}{\partial z^q _{\mathfrak{m}} (\mathcal{O}, q+2)^{pc}} \to \frac{\ker (\partial: z^q _{\mathfrak{m}} (\mathcal{O}, q+1) \to z^q _{\mathfrak{m}} (\mathcal{O}, q))}{\partial z^q _{\mathfrak{m}} (\mathcal{O}, q+2)}.
\end{equation}
By  \eqref{eqn:pc nb-1} and \eqref{eqn:pc nb-2} of Corollary \ref{cor:pc nb}, both the numerators and the denominators of \eqref{eqn:pc nb2-2} are equal. Hence $\CH^q_{\mathfrak{m}} (\mathcal{O}, n)^{pc} \to \CH^q_{\mathfrak{m}} (\mathcal{O}, n)$ is the identity map.
\end{proof}

\begin{remk}\label{remk:n=q}
We guess that the equalities of Corollary \ref{cor:pc nb2} extend to the case when $n=q$ as well, but we have only partial results in this direction. 

When $n=q=1$, we have $z^1_{\mathfrak{m}} (\mathcal{O}, 1) ^{pc} = z^1 _{\mathfrak{m}} (\mathcal{O}, 1)$ and $z^1_{\widehat{\mathfrak{m}}} (\widehat{\mathcal{O}}, 1) ^{pc} =z^1_{\widehat{\mathfrak{m}}} (\widehat{\mathcal{O}}, 1)$. We prove it for $\mathcal{O}$; the case of $\widehat{\mathcal{O}}$ is similar.

An integral cycle $Z \in z^1_{\mathfrak{m}} (\mathcal{O}, 1)$ is given by an irreducible polynomial $f\in \mathcal{O}[y_1]$. Since its intersection with the faces $\{ y_ 1 = 0\}$ and $\{ y_1 =\infty\}$ are both empty, both the leading coefficient $c$ and the constant term are units in $\mathcal{O}^{\times}$. Replacing $f$ by $c^{-1} f$, we see that $f$ is a monic irreducible polynomial. In particular $Z \to \Spec (\mathcal{O})$ is finite, hence proper, so that $Z \in z^1_{\mathfrak{m}} (\mathcal{O}, 1)^{pc}$. Hence we have the equality. 

By the definition of the groups with the superscripts ``pc", this implies that $z^1_{\mathfrak{m}} (\mathcal{O}, 2)^{pc} = z^1_{\mathfrak{m}} (\mathcal{O}, 2)$ and $z^1_{\widehat{\mathfrak{m}}} (\widehat{\mathcal{O}}, 2)^{pc} = z^1_{\widehat{\mathfrak{m}}} (\widehat{\mathcal{O}}, 2)$. Hence we deduce that $\CH^1_{\mathfrak{m}} (\mathcal{O}, 2)^{pc} = \CH^1_{\mathfrak{m}} (\mathcal{O}, 2)$ and $\CH^1_{\widehat{\mathfrak{m}}} (\widehat{\mathcal{O}}, 2)^{pc} = \CH^1_{\widehat{\mathfrak{m}}} (\widehat{\mathcal{O}}, 2)$. 

When $n=q \geq 2$, for $\mathcal{O}$ we have a partial result in Lemma \ref{lem:grOsurj} that when $k$ is infinite the equality holds. However we do not know much about it for $\widehat{\mathcal{O}}$. See Remark \ref{remk:tech diff 1} as well.
\end{remk}

\subsection{Cycles modulo $t^m$}\label{sec:cycles mod t^m}

\begin{defn}\label{defn:mod t^m}
Let $m \geq 1$ be an integer. Let $X$ be an integral ${\rm Spec}(\widehat{\mathcal{O}})$-scheme and let $Z_1, Z_2 \subset X$ be two \emph{integral} closed subschemes of $X$. We allow the case when $Z_1$ or $Z_2$ is the empty scheme. We say that \emph{$Z_1$ and $Z_2$ are equivalent mod $t^m$}, if we have the equality $Z_1 \times_{\Spec (\widehat{\mathcal{O}})} \Spec (\widehat{\mathcal{O}}/ (t^m)) = Z_2 \times_{\Spec (\widehat{\mathcal{O}})} \Spec (\widehat{\mathcal{O}}/ (t^m))$ as closed subschemes of $X \times_{\Spec (\widehat{\mathcal{O}})} \Spec (\widehat{\mathcal{O}}/ (t^m))$. 

We can extend this notion to algebraic cycles on $X$ by extending $\mathbb{Z}$-linearly. 
\end{defn}

\begin{remk}
It might be tempting to define the mod $t^m$-equivalence on each pair of closed subschemes $Z_1$ and $Z_2$ as long as we have $Z_1 \times_{\Spec (\widehat{\mathcal{O}})} \Spec (\widehat{\mathcal{O}}/ (t^m)) = Z_2 \times_{\Spec (\widehat{\mathcal{O}})} \Spec (\widehat{\mathcal{O}}/ (t^m))$. But this finer relation may result in some technically very undesirable effects in dealing with algebraic cycles. One of such problems is that this ``tempting" definition often identifies an irreducible closed subscheme with possibly reducible ones, and this makes an analysis of the behaviors of algebraic cycles very intractable. We thus put this mod $t^m$-equivalence only on pairs of integral closed subschemes with the above equality.
\end{remk}

\begin{defn}\label{defn:mod t^m 2}
For two integral schemes $Z_1, Z_2 \in z^q_{\widehat{\mathfrak{m}}} (\widehat{\mathcal{O}}, n)$, we say that $Z_1$ and $Z_2$ are \emph{naively mod $t^m$-equivalent}, if their Zariski closures $\ov{Z}_1$, $\ov{Z}_2$ in $\ov{\square}_{\widehat{\mathcal{O}}} ^n$ are mod $t^m$-equivalent in the sense of Definition \ref{defn:mod t^m}. Extend this notion $\mathbb{Z}$-linearly to cycles. We say that $Z_1$ and $Z_2$ are \emph{mod $t^m$-equivalent as higher Chow cycles} and write $Z_1 \sim_{t^m} Z_2$, if the pair $(Z_1, Z_2)$ and all pairs of faces $(Z_1 \cap F , Z_2 \cap F)$ for each face $F \subset \square_{\widehat{\mathcal{O}}} ^n$ are all naively mod $t^m$-equivalent. 

For simplicity, when $Z_1, Z_2$ are mod $t^m$-equivalent as higher Chow cycles, we will simply say they are mod $t^m$-equivalent. 
 \end{defn}

The inductive nature of the definition of mod $t^m$-equivalence shows:

\begin{lem}\label{lem:boundary op}
The boundary operator $\partial$ of the complex $z^q_{\widehat{\mathfrak{m}}} (\widehat{\mathcal{O}}, \bullet)$ induces the boundary operator, also denoted by $\partial$, on the mod $t^m$ groups $z^q_{\widehat{\mathfrak{m}}} (\widehat{\mathcal{O}}, n)/\sim_{t^m}$, turning them into a complex. Similarly, we obtain the mod $t^m$ complex $z^q_{\widehat{\mathfrak{m}}} (\widehat{\mathcal{O}}, \bullet)^{pc}/ \sim _{t^m}$.
\end{lem}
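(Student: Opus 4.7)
The plan is to unpack the definitions and observe that the relation $\sim_{t^m}$ of Definition \ref{defn:mod t^m 2} is built from the outset to be stable under intersection with every face, which is exactly the compatibility one needs for $\partial$ to descend to the quotient. By $\mathbb{Z}$-linearity, it would suffice to show: if $Z_1, Z_2 \in z^q_{\widehat{\mathfrak{m}}}(\widehat{\mathcal{O}}, n)$ are integral cycles with $Z_1 \sim_{t^m} Z_2$ as higher Chow cycles, then $\partial Z_1 \sim_{t^m} \partial Z_2$ as higher Chow cycles in $z^q_{\widehat{\mathfrak{m}}}(\widehat{\mathcal{O}}, n-1)$.

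Recalling that $\partial = \sum_{i=1}^{n}(-1)^i(\partial_i^{\infty} - \partial_i^0)$, where $\partial_i^{\epsilon}(Z_j)$ is the cycle associated to the scheme-theoretic intersection $Z_j \cap F_i^{\epsilon}$, I would carry out the verification in two steps. First, I would check that the pair $(\partial Z_1, \partial Z_2)$ is naively mod $t^m$-equivalent in the ambient $\square^{n-1}_{\widehat{\mathcal{O}}}$; term by term, this reduces to each pair $(Z_1 \cap F_i^{\epsilon}, Z_2 \cap F_i^{\epsilon})$ being naively mod $t^m$-equivalent, which is a direct application of the hypothesis to the codimension-one face $F = F_i^{\epsilon}$ of $\square^n_{\widehat{\mathcal{O}}}$. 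Second, I would verify the face compatibility demanded by Definition \ref{defn:mod t^m 2}: for any face $G$ of $\square^{n-1}_{\widehat{\mathcal{O}}}$, the identification of $\square^{n-1}_{\widehat{\mathcal{O}}}$ with the face $F_i^{\epsilon}$ of $\square^n_{\widehat{\mathcal{O}}}$ realizes $G$ as a face $\widetilde{G}$ of $\square^n_{\widehat{\mathcal{O}}}$ contained in $F_i^{\epsilon}$, so that $\partial_i^{\epsilon}(Z_j) \cap G = Z_j \cap \widetilde{G}$, and the required naive mod $t^m$-equivalence becomes the hypothesis $Z_1 \sim_{t^m} Z_2$ applied to the face $F = \widetilde{G}$.

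Once this is in place, the relation $\partial \circ \partial = 0$ on the quotient is inherited automatically from its validity on the unquotiented complex $z^q_{\widehat{\mathfrak{m}}}(\widehat{\mathcal{O}}, \bullet)$. The partially compact variant $z^q_{\widehat{\mathfrak{m}}}(\widehat{\mathcal{O}}, \bullet)^{pc}/\sim_{t^m}$ would then be handled by the same argument, combined with the stability of the $pc$-subgroups under $\partial$ that is built into Definition \ref{defn:cycle pproper}. The one piece requiring genuine care, rather than an obstacle in the usual sense, would be to confirm that base change to $\widehat{\mathcal{O}}/(t^m)$ commutes both with scheme-theoretic intersection against faces and with the formation of associated cycles, so that naive mod $t^m$-equivalence at the level of integral closed subschemes propagates correctly under $\mathbb{Z}$-linear extension when the irreducible components of $Z_j \cap F_i^{\epsilon}$ may appear with multiplicities.
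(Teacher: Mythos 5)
Your argument is correct and is exactly the verification the paper intends: the authors give no proof beyond remarking that the claim follows from the inductive nature of Definition \ref{defn:mod t^m 2}, since the face conditions built into $\sim_{t^m}$ are precisely what make each $\partial_i^{\epsilon}$, and hence $\partial$, descend to the quotient. Your two steps --- naive equivalence of the boundaries via the hypothesis applied to the codimension-one faces, and the face compatibility obtained by viewing each face of $F_i^{\epsilon}$ as a face $\widetilde{G}$ of $\square^n_{\widehat{\mathcal{O}}}$, together with the standard compatibility of iterated face restriction at the level of associated cycles --- are the intended content, and the $pc$-variant follows as you say from Definition \ref{defn:cycle pproper}.
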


To avoid a technical difficulty (see Remark \ref{remk:tech diff 1}), we will use $z^q_{\widehat{\mathfrak{m}}} (\widehat{\mathcal{O}}, \bullet)^{pc}/ \sim _{t^m}$:

\begin{defn}\label{defn:mod t^m cx}
Let $m \geq 1$, $q, n \geq 0$ be integers. Define
\begin{equation}\label{eqn:cycle mod t^m}
z^q (k_m, n):= z^q_{\widehat{\mathfrak{m}}} (\widehat{\mathcal{O}}, n)^{pc}/ \sim_{t^m},
\end{equation}
where $\sim_{t^m}$ is the mod $t^m$-equivalence in Definition \ref{defn:mod t^m 2}. By Lemma \ref{lem:boundary op}, this $z^q (k_m, \bullet)$ is a complex of abelian groups. 
We denote its homology by $\CH^q (k_m, n)$. 
\end{defn}

\begin{remk}\label{remk:0-cycle trivial cor}
The group $z^{n+1} (k_m, n)$ is $0$ because $z^{n+1} _{\widehat{\mathfrak{m}}} (\widehat{\mathcal{O}}, n)  = 0$ by Remark \ref{remk:0-cycle trivial}. Hence the group $z^n (k_m, n)$ is the simplest nontrivial group in our cycle theory.

\end{remk}

\subsection{Relative mod $t^m$ cycle complex}\label{sec:rel cx}
We have $k$-algebra homomorphisms $k \overset{p^{\sharp}}{\to} \widehat{\mathcal{O}} \overset{s^{\sharp}}{\to} k$, where $p^{\sharp}$ is the natural $k$-algebra map and $s^{\sharp}$ is reduction modulo $(t)$. Their composition is the identity of $k$. 
\begin{lem}
The flat structure morphism $p: \Spec (\widehat{\mathcal{O}}) \to \Spec (k)$ given by $p^{\sharp}$ induces the flat pull-back $p^*: z^q (k, \bullet) \to z^q _{\widehat{\mathfrak{m}}} (\widehat{\mathcal{O}}, \bullet)^{pc}$. 
\end{lem}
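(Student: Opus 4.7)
The plan is to verify flatness, define $p^{*}$ as the usual flat cycle pull-back, and then check three things: (i) the face and $\widehat{\mathfrak{m}}$-conditions defining $z^q_{\widehat{\mathfrak{m}}}(\widehat{\mathcal{O}}, \bullet)$, (ii) the partial-compactness condition defining the $pc$-subcomplex, and (iii) the chain map property together with compatibility with degenerations. Since $\widehat{\mathcal{O}} = k[[t]]$ is a $k$-vector space, the morphism $p^{\sharp}:k \to \widehat{\mathcal{O}}$ is flat, so for integral $Z \in z^q(k,n)$ we can set $p^{*}(Z)$ to be the cycle associated to the scheme-theoretic fiber product $Z \times_{k} \Spec (\widehat{\mathcal{O}}) \hookrightarrow \square^n_{\widehat{\mathcal{O}}}$, which has pure codimension $q$ by flatness, and then extend $\mathbb{Z}$-linearly.

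For the face conditions, every face of $\square_{\widehat{\mathcal{O}}}^n$ is of the form $F_{\widehat{\mathcal{O}}} = F_k \times_k \widehat{\mathcal{O}}$, and flatness of $p$ together with the compatibility of intersection with flat base change gives $p^{*}(Z) \cap F_{\widehat{\mathcal{O}}} = (Z \cap F_k)\times_k \widehat{\mathcal{O}}$, which has codimension $\geq q$ in $F_{\widehat{\mathcal{O}}}$ since $Z \cap F_k$ has codimension $\geq q$ in $F_k$ by hypothesis. For the $\widehat{\mathfrak{m}}$-condition, the closed subscheme $\widehat{\mathfrak{m}} \subset \Spec (\widehat{\mathcal{O}})$ is the closed point $\Spec(k)$, so $\widehat{\mathfrak{m}} \times_{\widehat{\mathcal{O}}} F_{\widehat{\mathcal{O}}} \simeq F_k$, and under this identification the intersection $p^{*}(Z) \cap (\widehat{\mathfrak{m}} \times F_{\widehat{\mathcal{O}}})$ is just $Z \cap F_k$, again of codimension $\geq q$. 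Degenerate cycles go to degenerate cycles because flat pull-back commutes with coordinate projections, so $p^{*}$ descends to the nondegenerate quotient.

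For the $pc$-condition I proceed by induction on $n-q$. When $n < q$ both sides are zero by dimension and by Remark \ref{remk:0-cycle trivial}. The base case $n=q$ is the only point that requires genuine argument: an integral cycle $Z \in z^q(k,q)$ is a single closed point $z$ of $\square_k^q$ whose residue field $k(z)$ is a finite extension of $k$ (closed point of a $k$-variety), so
\[
p^{*}(Z) \;=\; \Spec \bigl( k(z) \otimes_k k[[t]] \bigr)
\]
is the spectrum of a free $\widehat{\mathcal{O}}$-module of rank $[k(z):k]$; in particular it is finite, hence proper, over $\Spec(\widehat{\mathcal{O}})$, so $p^{*}(Z) \in z^q_{\widehat{\mathfrak{m}}}(\widehat{\mathcal{O}},q)^{c} = z^q_{\widehat{\mathfrak{m}}}(\widehat{\mathcal{O}},q)^{pc}$. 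For $n > q$, flat pull-back commutes with the individual face operators, hence with $\partial$, so $\partial(p^{*}(Z)) = p^{*}(\partial Z)$, which by induction lies in $z^q_{\widehat{\mathfrak{m}}}(\widehat{\mathcal{O}},n-1)^{pc}$; this is exactly the recursive condition of Definition \ref{defn:cycle pproper}(2). The identity $p^{*}\partial = \partial p^{*}$ simultaneously establishes that $p^{*}$ is a map of complexes.

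The only subtle point is the base case $n=q$ of the $pc$-condition, which relies on the fact that $k(z)/k$ is finite for every closed point $z$ of $\square_k^q$; once this is in hand, properness over $\Spec(\widehat{\mathcal{O}})$ follows automatically from finiteness of the base change, and the inductive step for $n>q$ is formal. All other verifications are routine consequences of flatness of $p^{\sharp}$ and of the fact that faces of $\square^{n}_{\widehat{\mathcal{O}}}$ are themselves obtained by flat base change from faces of $\square^{n}_{k}$.
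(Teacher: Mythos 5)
Your proof is correct and takes essentially the same route as the paper: the paper also handles the $pc$-condition by treating $q=n$ via properness of $p^*(Z)$ over $\Spec(\widehat{\mathcal{O}})$ (stability of properness/finiteness under base change, with $z^n_{\widehat{\mathfrak{m}}}(\widehat{\mathcal{O}},n)^{pc}=z^n_{\widehat{\mathfrak{m}}}(\widehat{\mathcal{O}},n)^{c}$) and the case $q<n$ by induction on the recursive definition using $p^*\partial=\partial p^*$ (cited there from Bloch). The only difference is that you spell out the preliminary verification that $p^*$ lands in $z^q_{\widehat{\mathfrak{m}}}(\widehat{\mathcal{O}},n)$ (face, special-fiber and degeneracy conditions), which the paper records as holding \emph{a priori}.
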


\begin{proof}
\emph{A priori}, the flat pull-back $p^*$ maps $z^q (k, n)$ into $z^q _{\widehat{\mathfrak{m}}} (\widehat{\mathcal{O}}, n)$. It is enough to show that the image lies in $z^q_{\widehat{\mathfrak{m}}} (\widehat{\mathcal{O}}, n)^{pc}$. If $q>n$, then $z^q (k, n) =0$ so that there is nothing to prove. If $q=n$, then $z^n (k,n)$ and $z^n_{\widehat{\mathfrak{m}}} (\widehat{\mathcal{O}}, n)^{pc} = z^n_{\widehat{\mathfrak{m}}} (\widehat{\mathcal{O}}, n)^{c}$. But every irreducible cycle $Z \in z^n  (k, n)$ is proper over $\Spec (k)$ so that its pull-back $p^* (Z)$ is proper over $\Spec (\widehat{\mathcal{O}})$, thus the assertion holds in this case. 

For $q <n$, the group $z^q_{\widehat{\mathfrak{m}}} (\widehat{\mathcal{O}}, n)^{pc}$ is defined inductively via the boundary operator $\partial$. Suppose the statement of the lemma holds for $n-1$, i.e. $p^*$ maps $z^q (k, n-1)$ into $z^q_{\widehat{\mathfrak{m}}} (\widehat{\mathcal{O}}, n-1)^{pc}$. By \cite[Proposition (1.3)]{Bloch HC}, we have a commutative diagram
$$
\xymatrix{
z^q (k, n) \ar[d] ^{p^*} \ar[r]^{\partial \ \ } & z^q (k, n-1) \ar[d] ^{p^*} \\
z^q_{\widehat{\mathfrak{m}}} (\widehat{\mathcal{O}}, n) \ar[r] ^{\partial \ \ } & z^q_{\widehat{\mathfrak{m}}} (\widehat{\mathcal{O}}, n-1).}
$$
By the induction hypothesis, the right vertical map $p^*$ maps into $z^q_{\widehat{\mathfrak{m}}} (\widehat{\mathcal{O}}, n-1)^{pc}$. Since the above diagram commutes, this means $\partial ( p^* (z^q (k, n))) \subset z^q_{\widehat{\mathfrak{m}}} (\widehat{\mathcal{O}}, n-1)^{pc}$. Hence, by definition $p^* (z^q (k, n)) \subset z^q _{\widehat{\mathfrak{m}}} (\widehat{\mathcal{O}}, n)^{pc}$. Hence by induction, the lemma holds.
\end{proof}

The map $s^{\sharp}$ induces the closed immersion $s: \Spec (k) \to \Spec (\widehat{\mathcal{O}})$. This gives the intersection-restriction to the special fiber $s^*: z^q _{\widehat{\mathfrak{m}}} (\widehat{\mathcal{O}}, {\bullet})^{pc} \to z^q (k, \bullet)$. Since $s^* \circ p^* = {\rm Id}$,  we can regard $z^q (k, \bullet)$ as a subcomplex of $z^q_{\widehat{\mathfrak{m}}} (\widehat{\mathcal{O}}, n)^{pc}$ via $p^*$. Going modulo $t^m$ as in Definition \ref{defn:mod t^m cx}, which does not do anything on $z^q (k,  \bullet)$, we obtain $z^q (k, \bullet) \overset{p^*}{\to} z^q (k_m, \bullet) \overset{s^*}{\to} z^q (k, \bullet)$. Here, we still have $s^* \circ p^* = {\rm Id}$. This gives a splitting \begin{equation}\label{eqn:rel Chow cx}
z^q (k_m, \bullet) = z^q (k, \bullet) \oplus \ker s^*,
\end{equation}
of the complex $z^q (k_m, \bullet)$.
 
\begin{defn}\label{defn:rel Chow}
Define the relative mod $t^m$ cycle complex to be $z^q ((k_m, (t)), \bullet): =\ker s^*$, and its homology is denoted by $\CH^q ((k_m, (t)), n):= {\rm H}_n (z^q ((k_m, (t)), \bullet))$.
\end{defn}

\subsection{Schemes of type $X \otimes_k k_m$ and basic functoriality}\label{sec:X_m}

In \S \ref{sec:cycles mod t^m} and \S \ref{sec:rel cx}, we defined cycle complexes associated to $k_m$. Following the referee's suggestion we can attempt to generalize the construction to schemes of type $X \otimes_k k_m= X \times_{\Spec (k)} \Spec (k_m)$ for any $k$-scheme $X$. For simplicity, when $X$ is a $k$-scheme and $m \geq 1$ is an integer, let $X_m:= X \otimes_k k_m$ and $X^{\wedge}:= X \otimes_k \widehat{\mathcal{O}} = X \times_{\Spec (k)} \Spec (\widehat{\mathcal{O}})$. 

In this generality, it is not yet clear which conditions would give us the ``ultimate correct" definition of the cycle groups, but we can still try to push this direction as far as we can. In the future the situation will get clearer. Here is the provisional definition that generalizes the notions in \S \ref{sec:cycles mod t^m}:

\begin{defn}
Let $X$ be a $k$-scheme. Let $\widehat{\mathfrak{m} }\subset \widehat{\mathcal{O}}$ be the maximal ideal and let $X_{\widehat{\mathfrak{m}}} := X \times_k \{\widehat{\mathfrak{m}}\} \subset X^{\wedge}$ be the closed subscheme. Let $z^q_{\{X_{\widehat{\mathfrak{m}}}\}} (X^{\wedge}, n)^c \subseteq z^q_{\{X_{\widehat{\mathfrak{m}}}\}} (X^{\wedge}, n)$ be the subgroup generated by integral closed subschemes $W \subset X^{\wedge} \times_k \square_k ^n$ that are proper over $X^{\wedge}$. Since the morphism $W \to X^{\wedge}$ is proper and affine, it must be finite so that only when $q \geq n$ we may have a possibly nontrivial group.

We define $z^q_{\{X_{\widehat{\mathfrak{m}}}\}} (X^{\wedge}, n)^{pc} \subseteq z^q_{\{X_{\widehat{\mathfrak{m}}}\}} (X^{\wedge}, n)$ inductively, by imitating what we did before. Namely, for $q \geq n$, we define $z^q_{\{X_{\widehat{\mathfrak{m}}}\}} (X^{\wedge}, n)^{pc}:= z^q_{\{X_{\widehat{\mathfrak{m}}}\}} (X^{\wedge}, n)^{c}$. Suppose that $z^q_{\{X_{\widehat{\mathfrak{m}}}\}} (X^{\wedge}, i)^{pc}$ is defined for all $ i \leq n-1$. Then let $z^q_{\{X_{\widehat{\mathfrak{m}}}\}} (X^{\wedge}, n)^{pc}$ be the subgroup of cycles $Z \in z^q_{\{X_{\widehat{\mathfrak{m}}}\}} (X^{\wedge}, n)$ such that $\partial (Z) \in z^q_{\{X_{\widehat{\mathfrak{m}}}\}} (X^{\wedge}, n-1)^{pc}$. By definition, this gives a complex with respect to the boundary operator $\partial$. 

As in Definition \ref{defn:mod t^m 2}, for $m \geq 1$, we define the mod $t^m$-equivalence inductively on integral cycles in $z^q_{\{X_{\widehat{\mathfrak{m}}}\}} (X^{\wedge}, n)^{pc}$ as well using $- \times_{X^{\wedge}} X_m = - \otimes_{\widehat{\mathcal{O}}} k_m$. Define $z^q (X_m, n):= z^q_{\{X_{\widehat{\mathfrak{m}}}\}} (X^{\wedge}, n)^{pc}/ \sim _{t^m}$. One checks immediately that this gives a complex $z^q (X_m, \bullet)$ as in Lemma \ref{lem:boundary op}. We define $\CH^q (X_m, n):= {\rm H}_n (z^q (X_m, \bullet))$. 
\end{defn}

For the rest of \S \ref{sec:X_m}, we discuss some basic functoriality properties, namely the existence of the push-forward for a proper morphism and the pull-back for a flat morphism. The groups $\CH^q (X_m,n)$ have two types of relations: the first is given by the boundaries of cycles from $z^q_{\{X_{\widehat{\mathfrak{m}}}\}} (X^{\wedge}, n+1)^{pc}$ and the second is given by the mod $t^m$-equivalence. So, to discuss some basic functoriality properties, we need to show that the pull-backs and push-forwards respect both those relations. Fortunately, for the usual push-forwards and pull-backs in the sense of \cite[\S 1.4, \S 1.7]{Fulton}, it is already known by \cite[Proposition (1.3)]{Bloch HC} that they respect the first type of relations given by the boundaries, if we ignore the superscripts $pc$. 

\begin{prop}\label{prop:ppf}
Let $f: X \to Y$ be a proper morphism of $k$-schemes and let $f: X^{\wedge} \to Y^{\wedge}$ also denote the induced proper morphism. Then the push-forward $f_* : z^q_{\{ X_{\widehat{\mathfrak{m}}} \}} (X^{\wedge}, n) \to z^q_{\{ Y_{\widehat{\mathfrak{m}}} \}} (Y^{\wedge}, n)$ satisfies the following properties: 
\begin{enumerate}
\item $f_*$ sends $z^q_{\{ X_{\widehat{\mathfrak{m}}} \}} (X^{\wedge}, n)^{pc}$ into $z^q_{\{ Y_{\widehat{\mathfrak{m}}} \}} (Y^{\wedge}, n)^{pc}$, and $f_* \partial = \partial f_*$ so that $f_*$ is a morphism of complexes:
$$
\xymatrix{
z^q_{\{ X_{\widehat{\mathfrak{m}}} \}} (X^{\wedge}, n+1)^{pc} \ar[d] ^{f_*} \ar[r] ^{\partial} & z^q_{\{ X_{\widehat{\mathfrak{m}}} \}} (X^{\wedge}, n)^{pc}  \ar[d] ^{f_*} \\
z^q_{\{ Y_{\widehat{\mathfrak{m}}} \}} (Y^{\wedge}, n+1)^{pc} \ar[r] ^{\partial} & z^q_{\{ Y_{\widehat{\mathfrak{m}}} \}} (Y^{\wedge}, n)^{pc}.}
$$
\item $f_*$ respects the mod $t^m$-equivalence, i.e. it induces the right vertical arrow of the following diagram, that makes the diagram commutes:
$$
\xymatrix{
z^q_{\{ X_{\widehat{\mathfrak{m}}} \}} (X^{\wedge}, n)^{pc}  \ar[d] ^{f_*} \ar[r] & z^q_{\{ X_{\widehat{\mathfrak{m}}} \}} (X^{\wedge}, n)^{pc}  /\sim_{t^m} \ar[d] ^{f_*} \\
z^q_{\{ Y_{\widehat{\mathfrak{m}}} \}} (Y^{\wedge}, n)^{pc}  \ar[r] & z^q_{\{ Y_{\widehat{\mathfrak{m}}} \}} (Y^{\wedge}, n)^{pc}   /\sim_{t^m}. 
}
$$
\end{enumerate}
\end{prop}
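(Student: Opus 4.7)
The commutativity $f_* \partial = \partial f_*$ on the ambient groups $z^q_{\{-_{\widehat{\mathfrak{m}}}\}}(-, n)$ is the classical compatibility of proper push-forward with cycle boundaries, applied to the proper morphism $f \times \id : X^{\wedge} \times \square_k^n \to Y^{\wedge} \times \square_k^n$; this is \cite[Proposition (1.3)]{Bloch HC}. The remaining content of (1) is that $f_*$ preserves the subgroup $(-)^{pc}$, which I would prove by induction on $n$, mirroring the inductive Definition \ref{defn:cycle pproper}.

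For the base case $n \leq q$ (nontrivial only when $n=q$ by Lemma \ref{lem:onlyMilnor}), take an integral $W \in z^q_{\{X_{\widehat{\mathfrak{m}}}\}}(X^{\wedge}, n)^{c}$, so $W \to X^{\wedge}$ is proper; then $W \to X^{\wedge} \overset{f}{\to} Y^{\wedge}$ is proper as a composite. Writing $f_*([W]) = d \cdot [f(W)]$ in the usual way, the closed subscheme $f(W) \subset Y^{\wedge} \times \square^n$ is of finite type and separated over $Y^{\wedge}$, and its structure morphism $f(W) \to Y^{\wedge}$ is universally closed because universal closedness descends along the proper surjection $W \twoheadrightarrow f(W)$; hence $f_*([W]) \in z^q_{\{Y_{\widehat{\mathfrak{m}}}\}}(Y^{\wedge}, n)^{c}$. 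For the inductive step $n > q$, given $Z \in z^q_{\{X_{\widehat{\mathfrak{m}}}\}}(X^{\wedge}, n)^{pc}$ one has $\partial Z \in z^q_{\{X_{\widehat{\mathfrak{m}}}\}}(X^{\wedge}, n-1)^{pc}$; by the induction hypothesis and the $\partial$-compatibility above, $\partial f_*(Z) = f_*(\partial Z)$ lies in $z^q_{\{Y_{\widehat{\mathfrak{m}}}\}}(Y^{\wedge}, n-1)^{pc}$, so $f_*(Z) \in z^q_{\{Y_{\widehat{\mathfrak{m}}}\}}(Y^{\wedge}, n)^{pc}$ by Definition \ref{defn:cycle pproper}(2), completing (1).

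For part (2), since the face operators $\partial_i^{\epsilon}$ commute with $f_*$ and the mod $t^m$-equivalence of higher Chow cycles is assembled from naive mod $t^m$-equivalence after restriction to every face, it suffices to check that naively mod $t^m$-equivalent integral $W_1, W_2$ produce push-forwards $f_*(W_1), f_*(W_2)$ whose difference lies in the naive mod $t^m$-kernel. Set $\tilde{f} := f \times \id_{\ov{\square}^n}$, a proper morphism, and let $\ov{W}_i$ denote the Zariski closure of $W_i$ in $X^{\wedge} \times \ov{\square}^n$. Any nonempty $W_i \in (-)^{pc}$ dominates $\Spec(\widehat{\mathcal{O}})$ by the argument of Lemma \ref{lem:cycle proper}, so $\ov{W}_i$ is integral and flat over the DVR $\widehat{\mathcal{O}}$ (torsion-free over a DVR); by properness of $\tilde{f}$, $\overline{f_*(W_i)} = \tilde{f}_*(\ov{W}_i)$ as cycles on $Y^{\wedge} \times \ov{\square}^n$.

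The closed immersion $i : \Spec(k_m) \hookrightarrow \Spec(\widehat{\mathcal{O}})$ is a regular immersion, and on cycles flat over $\widehat{\mathcal{O}}$ the refined Gysin pullback $i^!$ coincides with the scheme-theoretic base change $(-) \otimes_{\widehat{\mathcal{O}}} k_m$ and commutes with proper push-forward (Fulton's intersection theory). The scheme-theoretic image $\tilde{f}(\ov{W}_i)$ is integral and dominates $\Spec(\widehat{\mathcal{O}})$, hence flat over it. Applying this compatibility to $\ov{W}_1 - \ov{W}_2$, whose mod $t^m$ scheme-theoretic restriction vanishes by hypothesis, yields $\tilde{f}_*(\ov{W}_1) \otimes_{\widehat{\mathcal{O}}} k_m = \tilde{f}_*(\ov{W}_2) \otimes_{\widehat{\mathcal{O}}} k_m$ as cycles on $Y_m \times \ov{\square}^n$; a short multiplicity bookkeeping using the flatness of $\ov{W}_i$ and of $\tilde{f}(\ov{W}_i)$ over $\widehat{\mathcal{O}}$ promotes this to the scheme-theoretic equality $\tilde{f}(\ov{W}_1) \otimes k_m = \tilde{f}(\ov{W}_2) \otimes k_m$, which is the desired naive mod $t^m$-equivalence of $f_*(W_1)$ and $f_*(W_2)$. \emph{The step I expect to require the most care} is this final promotion from cycle-level equality to scheme-theoretic equality; it rests on the fact that the generic multiplicity $d_i = [K(\ov{W}_i) : K(\tilde{f}(\ov{W}_i))]$ is detectable already from the mod $t^m$ data precisely because all relevant cycles are integral and flat over the DVR $\widehat{\mathcal{O}}$.
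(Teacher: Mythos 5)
Your part (1) is essentially the paper's argument: the base case $n\le q$ via properness of the composite $W\to X^{\wedge}\to Y^{\wedge}$ (your extra remark that universal closedness descends to the scheme-theoretic image is the same point, spelled out), and the inductive step via the classical identity $f_*\partial=\partial f_*$ from \cite[Proposition (1.3)]{Bloch HC} together with Definition \ref{defn:cycle pproper}. For part (2), the core of your argument is also the paper's, only phrased in Gysin language: commuting intersection with the effective Cartier divisor of $t^m$ past proper push-forward via $i^!f_*=f_*i^!$ is exactly the projection formula \cite[Proposition 2.3-(c)]{Fulton} that the paper invokes, and in both formulations one must note (as the paper does explicitly) that Fulton's statement is a priori modulo rational equivalence and that the proper-intersection hypotheses are what give the identity at the level of cycles. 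Also, Definition \ref{defn:mod t^m 2} asks for naive equivalence of all face pairs $(Z_1\cap F,\,Z_2\cap F)$, so the same push-pull computation has to be run face by face; you acknowledge this only in passing, but it is the same argument.

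Where you genuinely diverge is the final ``promotion'' from the cycle identity $\tilde f_*(\ov{W}_1)\otimes_{\widehat{\mathcal{O}}}k_m=\tilde f_*(\ov{W}_2)\otimes_{\widehat{\mathcal{O}}}k_m$ to a scheme-theoretic equality of $\tilde f(\ov{W}_1)\otimes k_m$ and $\tilde f(\ov{W}_2)\otimes k_m$. That step is not needed: $f_*(W_i)=d_i\,[\tilde f(\ov{W}_i)]$ is in general a non-integral cycle, and $\sim_{t^m}$ on cycles is only the $\mathbb{Z}$-linear extension of Definition \ref{defn:mod t^m}, so the paper stops precisely at the cycle-level identity $f_*(A_1)\cdot D_Q=f_*(A_2)\cdot D_Q$ and reads it as $f_*(Z_1)\sim_{t^m}f_*(Z_2)$. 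Moreover, as written your promotion is not established: the identity $d_1[V_1\otimes k_m]=d_2[V_2\otimes k_m]$ with $V_1,V_2$ integral and flat over $\widehat{\mathcal{O}}$ does not determine the generic degrees $d_i=[K(\ov{W}_i):K(V_i)]$ (the mod $t^m$ fibre of the image carries no information about them), nor does it force scheme equality of the fibres; for instance, with $m=1$ and a unit $a\neq 0,1$, the cycles $2\,[\{y_1=a\}\otimes k]$ and $1\cdot[\{(y_1-a)^2=t\}\otimes k]$ coincide while the degrees and the fibre subschemes differ. So if the stronger scheme-level statement were what the descent of $f_*$ required, your proof would have a gap exactly at the step you flag; deleting that step leaves an argument equivalent to the paper's.
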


\begin{proof}
Without the superscripts $pc$, we already know by \cite[Proposition (1.3)]{Bloch HC} (which uses \cite[Theorem 6.2(a), p.98]{Fulton}) that the push-forwards $f_*$ are compatible with the boundary maps $\partial$. For (1), we need to check that this still holds after putting the superscripts $pc$. 

For $q \geq n$, an integral cycle $Z \in z^q_{\{ X_{\widehat{\mathfrak{m}}} \}} (X^{\wedge}, n)^{pc} $ is proper over $X^{\wedge}$ by definition. Hence its image under the proper morphism $f: X^{\wedge} \times_k \square_k ^n \to Y ^{\wedge} \times_k \square_k ^n$ is again proper over $Y^{\wedge}$. In particular, $f_* (Z) \in z^q_{\{ Y_{\widehat{\mathfrak{m}}} \}} (Y^{\wedge}, n)^{pc}$. 

For $q < n$, since the groups $z^q_{\{ X_{\widehat{\mathfrak{m}}} \}} (X^{\wedge}, n)^{pc}$ and $z^q_{\{ Y_{\widehat{\mathfrak{m}}} \}} (Y^{\wedge}, n)^{pc}$ are defined inductively in such a way that their images under $\partial$ lie in the previous steps $z^q_{\{ X_{\widehat{\mathfrak{m}}} \}} (X^{\wedge}, n-1)^{pc}$ and $z^q_{\{ Y_{\widehat{\mathfrak{m}}} \}} (Y^{\wedge}, n-1)^{pc}$, the known compability of $f_*$ and $\partial$ for the cycle groups without the superscripts $pc$ and the case of $q \geq n$ imply that $f_*$ maps $z^q_{\{ X_{\widehat{\mathfrak{m}}} \}} (X^{\wedge}, n)^{pc}$ into $z^q_{\{ Y_{\widehat{\mathfrak{m}}} \}} (Y^{\wedge}, n)^{pc}$ and $f_* \partial = \partial f_*$ by induction. This proves (1).

The part (2) is an easy application of the following projection formula: \emph{suppose we have a Cartesian diagram of $k$-schemes
$$\xymatrix{  D_P  \ar@{^(->}[r]^{i_P} \ar[d] ^{p|} & P \ar[d] ^p \\
D_Q \ar@{^(->}[r]^{i_Q} & Q,}$$
where $p$ is a proper morphism, $D_P, D_Q$ are effective divisors such that $D_P = p^* (D_Q)$, and $A \subset P$ is a closed subscheme that intersects $D_P$ properly so that $D_P \cdot A = i_P ^* (A)$ is well-defined. Then $p_* (A) \cdot D_Q = p_* (A \cdot p^* ( D_Q)) = p_* (A \cdot D_P)$.}

Its proof is given in \cite[Proposition 2.3-(c), p.34]{Fulton}. The statement in \emph{loc.cit} is given in the cycle group modulo rational equivalence, but in our case we already suppose the proper intersection condition so that the equality of our cycles  holds on the level of cycles.

We apply the above formula: for two integral closed cycles $Z_1, Z_2 \in z^q_{\{ X_{\widehat{\mathfrak{m}}} \}} (X^{\wedge}, n)^{pc} $ such that $Z_1 \sim_{t^m} Z_2$, we take $P = X^{\wedge} \times_k \ov{\square}_k ^n$, $Q= Y^{\wedge} \times_k  \ov{\square}_k ^n$, $A_i = \ov{Z}_{i}$, $D_Q =$ the divisor of $t^m$ in $Q$, and $D_P=$ the divisor of $t^m$ in $P$. The proper map $p= f$. Hence we have $f_* (A_i) \cdot D_Q = f_* ( A_i \cdot f^*( D_Q)) = f_* (A_i \cdot D_P)$. That $Z_1 \sim_{t^m} Z_2$ means $A_1 \cdot D_P = A_2 \cdot D_P$. Hence we deduce that $f_* (A_1) \cdot D_Q = f_* (A_2) \cdot D_Q$, i.e. $f_* (Z_1) \sim_{t^m} f_* (Z_2)$. This proves (2).
\end{proof}

\begin{prop}\label{prop:fpb}
Let $f: X \to Y$ be a flat morphism of $k$-schemes and let $f : X^{\wedge} \to Y^{\wedge}$ also denote the induced flat morphism. Then the pull-back $f^* : z^q_{\{ Y_{\widehat{\mathfrak{m}}} \}} (Y^{\wedge}, n) \to z^q_{\{ X_{\widehat{\mathfrak{m}}} \}} (X^{\wedge}, n)$ satisfies the following properties:
\begin{enumerate}
\item $f^*$ sends $z^q_{\{ Y_{\widehat{\mathfrak{m}}} \}} (Y^{\wedge}, n)^{pc}$ into $z^q_{\{ X_{\widehat{\mathfrak{m}}} \}} (X^{\wedge}, n)^{pc}$, and $f^* \partial = \partial f^*$, so that $f^*$ is a morphism of complexes:
$$
\xymatrix{
z^q_{\{ Y_{\widehat{\mathfrak{m}}} \}} (Y^{\wedge}, n+1)^{pc} \ar[d] ^{f^*} \ar[r] ^{\partial} & z^q_{\{ Y_{\widehat{\mathfrak{m}}} \}} (Y^{\wedge}, n)^{pc}  \ar[d] ^{f^*} \\
z^q_{\{ X_{\widehat{\mathfrak{m}}} \}} (X^{\wedge}, n+1)^{pc} \ar[r] ^{\partial} & z^q_{\{ X_{\widehat{\mathfrak{m}}} \}} (X^{\wedge}, n)^{pc}.}
$$
\item $f^*$ respects the mod $t^m$-equivalence, i.e. it induces the right vertical arrow of the following diagram, that makes the diagram commutes:
$$
\xymatrix{
z^q_{\{ Y_{\widehat{\mathfrak{m}}} \}} (Y^{\wedge}, n)^{pc}  \ar[d] ^{f^*} \ar[r] & z^q_{\{ Y_{\widehat{\mathfrak{m}}} \}} (Y^{\wedge}, n)^{pc}  /\sim_{t^m} \ar[d] ^{f^*} \\
z^q_{\{ X_{\widehat{\mathfrak{m}}} \}} (X^{\wedge}, n)^{pc}  \ar[r] & z^q_{\{ X_{\widehat{\mathfrak{m}}} \}} (X^{\wedge}, n)^{pc}   /\sim_{t^m}. 
}
$$
\end{enumerate}
\end{prop}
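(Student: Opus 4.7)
The plan is to mirror the proof of Proposition \ref{prop:ppf}, substituting the projection formula (which handled the proper push-forward case) with the base-change behavior of flat pull-backs. The identity $f^* \partial = \partial f^*$ on the cycle groups \emph{without} the superscripts $pc$ is standard by \cite[Proposition (1.3)]{Bloch HC}, so the genuinely new content is: (i) that the properness condition packaged into $pc$ is preserved under flat base change, and (ii) that the mod $t^m$-equivalence is respected.

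For (1), I would induct on $n$. In the base range $q \geq n$, an integral cycle $W \in z^q_{\{Y_{\widehat{\mathfrak{m}}}\}}(Y^{\wedge}, n)^{pc} = z^q_{\{Y_{\widehat{\mathfrak{m}}}\}}(Y^{\wedge}, n)^{c}$ is proper over $Y^{\wedge}$ by definition, and thus its flat pull-back $W \times_{Y^{\wedge}} X^{\wedge}$ is proper over $X^{\wedge}$ since properness is stable under base change (\cite[Corollary II-4.8(c), p.102]{Hartshorne}). Each irreducible component is therefore proper over $X^{\wedge}$ and so lies in $z^q_{\{X_{\widehat{\mathfrak{m}}}\}}(X^{\wedge}, n)^{c}$. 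For $q < n$, I assume the claim for $n-1$; given $Z \in z^q_{\{Y_{\widehat{\mathfrak{m}}}\}}(Y^{\wedge}, n)^{pc}$, by definition $\partial (Z) \in z^q_{\{Y_{\widehat{\mathfrak{m}}}\}}(Y^{\wedge}, n-1)^{pc}$, and the already-established identity $f^* \partial = \partial f^*$ combined with the induction hypothesis gives $\partial (f^* Z) = f^* (\partial Z) \in z^q_{\{X_{\widehat{\mathfrak{m}}}\}}(X^{\wedge}, n-1)^{pc}$. Hence $f^* (Z) \in z^q_{\{X_{\widehat{\mathfrak{m}}}\}}(X^{\wedge}, n)^{pc}$ by the very definition of $pc$, and the commutative square in (1) follows.

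For (2), let $Z_1, Z_2$ be integral cycles in $z^q_{\{Y_{\widehat{\mathfrak{m}}}\}}(Y^{\wedge}, n)^{pc}$ with $Z_1 \sim_{t^m} Z_2$. Flat pull-back along the morphism $\ov{\square}_{X^{\wedge}}^n \to \ov{\square}_{Y^{\wedge}}^n$ commutes with the base change to $\Spec (k_m)$, and, by flatness, also commutes with Zariski closure in $\ov{\square}_{Y^{\wedge}}^n$ and with intersection against each face divisor $\{y_i = \epsilon\}$. Consequently, for $i = 1,2$,
$$
f^*(Z_i) \times_{\Spec (\widehat{\mathcal{O}})} \Spec (k_m) = \left( Z_i \times_{\Spec (\widehat{\mathcal{O}})} \Spec (k_m) \right) \times_{Y_m} X_m,
$$
and the naive mod $t^m$-equivalence $Z_1 \otimes_{\widehat{\mathcal{O}}} k_m = Z_2 \otimes_{\widehat{\mathcal{O}}} k_m$ transports to $f^*(Z_1) \otimes_{\widehat{\mathcal{O}}} k_m = f^*(Z_2) \otimes_{\widehat{\mathcal{O}}} k_m$. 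Applying the same base-change argument to the intersections $Z_i \cap F$ for every face $F \subset \square_{\widehat{\mathcal{O}}}^n$ upgrades this to mod $t^m$-equivalence as higher Chow cycles, yielding the right vertical arrow and the commutativity of the diagram in (2).

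The step deserving the most careful justification is the commutation of flat pull-back with Zariski closure in $\ov{\square}_{Y^{\wedge}}^n$, since in the range $q < n$ the cycles under consideration are not known to be proper over $Y^{\wedge}$; here one must invoke the standard reducedness and flatness compatibilities that are implicit in Bloch's formalism. Once those points are in hand, both (1) and (2) reduce to the inductive step and the base-change identity displayed above.
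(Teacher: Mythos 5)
Your proposal is correct and follows essentially the same route as the paper: part (1) is the same induction on $n$ with the base case $q \geq n$ handled by stability of properness under base change, and part (2) rests on the same compatibility of flat pull-back with the base change to $\Spec (k_m)$ (equivalently, with intersection against the divisor of $t^m$), which the paper packages by citing \cite[Proposition 2.3-(d), p.34]{Fulton} on the compactified spaces $X^{\wedge} \times_k \ov{\square}_k ^n \to Y^{\wedge} \times_k \ov{\square}_k ^n$ rather than arguing directly with fiber products as you do. Your extra care about flat pull-back commuting with Zariski closure and with the face intersections is sound (flatness makes scheme-theoretic closure and the face conditions compatible with base change) and, if anything, makes explicit points the paper's proof leaves implicit.
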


\begin{proof}
Without the superscripts $pc$, we already know by \cite[Proposition (1.3)]{Bloch HC} which uses \cite[Proposition 1.7, p.18]{Fulton} that the pull-backs $f^*$ are compatible with the boundary maps $\partial$. For (2), we need to check that this still holds after putting the superscripts $pc$.

For $q \geq n$, an integral cycle $Z \in z^q_{\{ Y_{\widehat{\mathfrak{m}}} \}} (Y^{\wedge}, n)^{pc} $ is proper over $Y^{\wedge}$ by definition. On the other hand, since the pull-back of a proper morphism is again proper, this time over $X^{\wedge}$, we immediately have  that the flat pull-back $f^*(Z) \in z^q_{\{ X_{\widehat{\mathfrak{m}}} \}} (X^{\wedge}, n)^{pc} $.

For $q < n$, since the groups $z^q_{\{ X_{\widehat{\mathfrak{m}}} \}} (X^{\wedge}, n)^{pc}$ and $z^q_{\{ Y_{\widehat{\mathfrak{m}}} \}} (Y^{\wedge}, n)^{pc}$ are defined inductively in such a way that their images under $\partial$ lie in the previous steps $z^q_{\{ X_{\widehat{\mathfrak{m}}} \}} (X^{\wedge}, n-1)^{pc}$ and $z^q_{\{ Y_{\widehat{\mathfrak{m}}} \}} (Y^{\wedge}, n-1)^{pc}$, the known compability of $f^*$ and $\partial$ for the cycle groups without the superscripts $pc$ and the case of $q \geq n$ imply that $f^*$ maps $z^q_{\{ Y_{\widehat{\mathfrak{m}}} \}} (Y^{\wedge}, n)^{pc}$ into $z^q_{\{ X_{\widehat{\mathfrak{m}}} \}} (X^{\wedge}, n)^{pc}$ and $f^* \partial = \partial f^*$ by induction. This proves (1).

The part (2) is an easy application of the following fact: \emph{suppose we have a Cartesian diagram of $k$-schemes
$$\xymatrix{  D_P  \ar@{^(->}[r]^{i_P} \ar[d] ^{f|} & P \ar[d] ^f \\
D_Q \ar@{^(->}[r]^{i_Q} & Q,}$$
where $f$ is a flat morphism, $D_P, D_Q$ are effective divisors such that $D_P = f^* ( D_Q)$, and $A \subset Q$ is a closed subscheme that intersects $D_Q$ properly so that $D_Q \cdot A = i_Q ^* (A)$ is well-defined. Then $f^* (A) \cdot f^* (D_Q) = f^* ( A \cdot D_Q)$.}

Its proof is given in \cite[Proposition 2.3-(d), p.34]{Fulton}. The statement in \emph{loc.cit} is given in the cycle group modulo rational equivalence, but in our case we already suppose the proper intersection condition so that the equality of our cycles  holds on the level of cycles.

We apply the above formula: for two integral closed cycles $Z_1, Z_2 \in z^q_{\{ Y_{\widehat{\mathfrak{m}}} \}} (Y^{\wedge}, n)^{pc} $ such that $Z_1 \sim_{t^m} Z_2$, we take $P = X^{\wedge} \times_k \ov{\square}_k ^n$, $Q= Y^{\wedge} \times_k  \ov{\square}_k ^n$, $A_i = \ov{Z}_{i}$, $D_Q =$ the divisor of $t^m$ in $Q$, and $D_P=$ the divisor of $t^m$ in $P$. Hence we have $f^* (A_i) \cdot D_P = f^* (A_i \cdot D_Q)$. That $Z_1 \sim_{t^m} Z_2$ means $A_1 \cdot D_Q = A_2 \cdot D_Q$. Hence $f^* (A_1) \cdot D_P  = f^* (A_2) \cdot D_P$, i.e. $f^* (Z_1) \sim_{t^m} f^* (Z_2)$. This proves (2).
\end{proof}

\subsection{The non-archimedean norm}\label{sec:nonarch}
 We recall some facts on the non-archimedean $t$-adic metric topology on the local field $k((t))$, needed in \S \ref{sec:perturbation}. Recall that the field $k((t))$ has a natural discrete valuation $v: k((t)) \to \mathbb{Z}$ given by the order of vanishing function $v=\ord_t$ with $v(0):= \infty$. Its ring of integers $\mathcal{O}_{k((t))} = \widehat{\mathcal{O}} =k[[t]]$ is simply $\{ f \in k((t)) \ | \ v(f) \geq 0\}$. We have a norm $|-|_v: k((t)) \to \mathbb{R}$ given by $|f|_v := e^{-v(f)}$. For any integer $M >0$, we have the supremum norm on the vector space $k((t))^M$ given by $|( f_1, \cdots, f_M)|_v:= \sup_{1 \leq i \leq M } |f_i|_v$. This gives the non-archimedean $t$-adic metric topology, which is finer than the Zariski topology on $k((t))^M = \mathbb{A}^M ( k((t)) )$. For any $\alpha_0 \in k((t))^M$, we let ${\mathcal{B}}_N (\alpha_0)$ be the open ball around $\alpha_0$ of radius $e^{-N}$. Here $k[[t]]^M \subset k((t))^M$ is open, while $k[t] ^M \subset k[[t]]^M$ is dense.

\subsection{Milnor $K$-groups}
Let $R$ be a commutative local ring with unity. Recall that the Milnor $K$-ring $K^M_* (R)$ of $R$ is the graded tensor algebra $T_{\mathbb{Z}} (R^{\times})$ of $R^{\times}$ over $\mathbb{Z}$ modulo the two-sided ideal generated by the elements of the form $\{ a \otimes (1-a) \ | \ a, 1-a \in R^{\times} \}$. Its degree $n$ part is the $n$-th Milnor $K$-group $K^M_n (R)$.

 \section{Milnor range I: reciprocity}\label{sec:Milnor}
 The goal of the paper is to prove the following Theorem \ref{thm:Milnor}. In the case  of additive higher Chow groups over fields, similar results were obtained by Bloch-Esnault \cite{BE2} and R\"ulling \cite{R}.
   
 \begin{thm}\label{thm:Milnor}
 Let $k$ be a field of characteristic $0$ and let $m , n \geq 1$ be integers.
 Then we have $\CH^n ((k_m,(t)), n) \simeq  (\Omega_{k/\mathbb{Z}} ^{n-1}) ^{\oplus (m-1)} $. 
 \end{thm}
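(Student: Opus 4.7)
The plan is to factor the desired isomorphism through the relative Milnor $K$-group: in characteristic zero there is a standard identification $K_n^M(k_m,(t)) \cong (\Omega^{n-1}_{k/\mathbb{Z}})^{\oplus(m-1)}$ (mentioned in the introduction of this very paper), so it suffices to construct mutually inverse homomorphisms between $K_n^M(k_m,(t))$ and $\CH^n((k_m,(t)), n)$, together with a verification that the composite with the Bloch--Van der Kallen map agrees with the graph homomorphism of Theorem~\ref{thm:Milnor intro}.

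For the forward direction, I would build a graph/symbol map $\phi$. Given units $a_1,\ldots,a_n\in k_m^\times$, choose lifts $\tilde a_i\in\widehat{\mathcal{O}}^\times$ (which exist by Hensel) and take the graph cycle $\Gamma=V(y_1-\tilde a_1,\ldots,y_n-\tilde a_n)\subset\square_{\widehat{\mathcal{O}}}^n$; it is finite surjective over $\Spec\widehat{\mathcal{O}}$, misses all faces, and thus belongs to $z^n_{\widehat{\mathfrak{m}}}(\widehat{\mathcal{O}},n)^c=z^n_{\widehat{\mathfrak{m}}}(\widehat{\mathcal{O}},n)^{pc}$. Any two lifts differ by units in $1+t^m\widehat{\mathcal{O}}$, so the resulting graphs are mod-$t^m$-equivalent. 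Multiplicativity in each slot and the Steinberg relation are checked by pulling back the classical Totaro/Nesterenko--Suslin auxiliary $1$-cycles in $\square^{n+1}_{\widehat{\mathcal{O}}}$ whose boundaries realize these relations; these auxiliary cycles lie in $z^n_{\widehat{\mathfrak{m}}}(\widehat{\mathcal{O}},n+1)^{pc}$ by Corollary~\ref{cor:pc nb}. Restricting to symbols of the form $\{1+at^i,b_1,\ldots,b_{n-1}\}$ that generate $K_n^M(k_m,(t))$, the image automatically lands in $\ker s^*=\CH^n((k_m,(t)), n)$, yielding
$$\phi:(\Omega^{n-1}_{k/\mathbb{Z}})^{\oplus(m-1)}\cong K_n^M(k_m,(t))\to\CH^n((k_m,(t)), n).$$

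For surjectivity of $\phi$, start with an integral representative $W\in z^n_{\widehat{\mathfrak{m}}}(\widehat{\mathcal{O}},n)^c$ of an arbitrary class. Theorem~\ref{thm:mod t^m moving intro} produces, mod $t^m$, a lift $W'\in z^n_{\mathfrak{m}}(\mathcal{O},n)^c$ in the ``algebraic'' setting over $\mathcal{O}=\mathcal{O}_{\mathbb{A}^1_k,0}$. Over the local ring $\mathcal{O}$, a Nesterenko--Suslin/Totaro-type moving argument (successive linear changes of coordinates and correction by boundaries of auxiliary $1$-cycles) reduces $W'$ modulo boundaries to a $\mathbb{Z}$-linear combination of graph cycles; subtracting $p^* s^*(W)$ to kill the absolute-fiber contribution exhibits the relative class as $\phi$ of an explicit Milnor symbol.

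For injectivity, I would construct an inverse $\rho:\CH^n((k_m,(t)), n)\to(\Omega^{n-1}_{k/\mathbb{Z}})^{\oplus(m-1)}$ by a residue/trace formula: for integral $W\in z^n_{\widehat{\mathfrak{m}}}(\widehat{\mathcal{O}},n)^c$, Lemma~\ref{lem:cycle proper} makes $W\to\Spec\widehat{\mathcal{O}}$ finite, flat and surjective, so one may form the trace $\Tr_{W/\widehat{\mathcal{O}}}(d\log y_1\wedge\cdots\wedge d\log y_n)\in\Omega^{n}_{\widehat{\mathcal{O}}/\mathbb{Z}}$, extract its $dt$-component in $\Omega^{n-1}_{\widehat{\mathcal{O}}/\mathbb{Z}}$, reduce modulo $t^m$, and expand in the $k$-basis $\{t^{i-1}dt\}_{1\le i\le m-1}$ to obtain $\rho(W)$ (modulo the absolute part $s^*(W)$). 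A direct residue computation on a graph yields $\rho\circ\phi=\mathrm{id}$, providing injectivity. The main obstacle is proving the \emph{reciprocity} for $\rho$, namely that $\rho$ annihilates $\partial(z^n_{\widehat{\mathfrak{m}}}(\widehat{\mathcal{O}},n+1)^{pc})$ and descends through mod-$t^m$-equivalence. For a generically $1$-dimensional $C\in z^n_{\widehat{\mathfrak{m}}}(\widehat{\mathcal{O}},n+1)^{pc}$, this amounts to combining the classical Weil reciprocity on the normalization of $\ov C\subset\ov\square^{n+1}_{\widehat{\mathcal{O}}}$ with a careful accounting of contributions from the special fiber over $\widehat{\mathfrak{m}}$; the partial-compactness condition defining the $pc$-subcomplex is precisely what forces these special-fiber contributions to cancel modulo $t^m$. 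This reciprocity---extending R\"ulling's strategy for additive higher Chow groups to the henselian/non-archimedean setting of $\widehat{\mathcal{O}}$---is the heart of the proof and the source of the section title.
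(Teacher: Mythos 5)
Your route is essentially the paper's: a graph/symbol map out of $K_n^M(k_m,(t))\cong(\Omega^{n-1}_{k/\mathbb{Z}})^{\oplus(m-1)}$, surjectivity via the mod-$t^m$ moving lemma plus reduction to graph cycles over $\mathcal{O}$, and injectivity via residue regulators computed on graphs. The genuine gap is in the step you yourself call the heart. First, the reciprocity $\rho(\partial W)=0$ is not a matter of ``classical Weil reciprocity on the normalization of $\ov{C}$'': a cycle $W\in z^n_{\widehat{\mathfrak{m}}}(\widehat{\mathcal{O}},n+1)$ is a two-dimensional scheme (a relative curve over $\widehat{\mathcal{O}}$), and the regulator of each face is a sum of residues at closed points of special fibres; comparing these requires a two-dimensional residue formalism. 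The paper's Proposition \ref{prop:main reciprocity} does this by blowing up points of $\ov{W}$ until the special fibre and the strict transforms of the faces are in good position, then applying Parshin--Lomadze residues along chains $(\widetilde{W},C_j)$ and the residue theorem on the projective curves $C_j\subset\widetilde{W}_s$. In particular the cancellation comes from properness of $\ov{\square}^{\,n+1}_{\widehat{\mathcal{O}}}$ (one works with closures), \emph{not} from the $pc$ condition: the vanishing holds on all of $z^n_{\widehat{\mathfrak{m}}}(\widehat{\mathcal{O}},n+1)$, so your claim that partial compactness ``is precisely what forces these special-fibre contributions to cancel'' misidentifies the mechanism and leaves the key vanishing unproved. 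Second, your plan to show that $\rho$ descends through mod-$t^m$-equivalence for \emph{arbitrary} integral cycles is more than the paper attempts and is not needed: the paper checks mod-$t^m$ invariance only for graph cycles (Proposition \ref{prop:main mod t^m gr}), which suffices because Theorem \ref{thm:mod t^m} together with the Elbaz-Vincent--M\"uller-Stach theorem \cite{EVMS} shows $\CH^n(k_m,n)$ is generated by graphs; you already invoke exactly these ingredients for surjectivity, so use them again here instead of the general descent.

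Two smaller corrections. The auxiliary Totaro/Nesterenko--Suslin $1$-cycles have nonzero boundary, so Corollary \ref{cor:pc nb} (which in the range $n=q+1$ only covers cycles killed by $\partial$) does not place them in the $pc$ subcomplex; one must check directly, as in Lemma \ref{lem:im_gr}, that every nonzero face is a graph cycle, hence proper over the base, and that it meets the special fibre properly. Also, with the paper's normalization the composite of your $\rho$ with the symbol map is multiplication by $i$ on the weight-$i$ summand rather than the identity; this is harmless, but the invertibility of $i$ is exactly where characteristic $0$ enters, so it should be stated rather than asserted as $\rho\circ\phi=\mathrm{id}$.
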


 The proof of Theorem \ref{thm:Milnor} is largely broken into two parts: the first is to define regulator maps on cycles and  to prove that they vanish on the boundaries, as done in Proposition \ref{prop:main reciprocity} below. The second part, done later in \S \ref{sec:perturbation} and \S \ref{sec:Milnor 2}, is to show that the regulator maps respect the mod $t^m$-equivalence. Here, we emphasize that although we are in the Milnor range,  our representatives are $1$-cycles, unlike the additive Chow group versions  of \cite{BE2} or \cite{R} that used $0$-cycles.  In our discussion, the argument of the first part follows a path similar to one paved in \cite{Park}:

 \begin{prop}\label{prop:main reciprocity}
 Let $k$ be a field of characteristic $0$.
 For each $1 \leq i \leq m-1$, define $\Upsilon_i: z^n _{\widehat{\mathfrak{m}}} (\widehat{\mathcal{O}}, n) \to \Omega_{k/\mathbb{Z}} ^{n-1}$ as follows. Consider the rational form $\gamma_{i,n}:= \frac{ 1}{ t^i} d \log y_1 \wedge \cdots \wedge d \log y_n  \in \Omega^{n}_{\ov{\square}_{\widehat{\mathcal{O}}} ^n/ \mathbb{Z}} ( * \{ t = 0 \})(\log F)$. For each integral $1$-cycle $Z \in z^n _{\widehat{\mathfrak{m}}} (\widehat{\mathcal{O}}, n)$, let $\nu: \widetilde{Z} \to \ov{Z} \hookrightarrow \ov{\square}_{\widehat{\mathcal{O}}} ^n$ be a normalization of the closure $\ov{Z}$ of $Z$ in $\ov{\square}_{\widehat{\mathcal{O}}} ^n$. Define
$\Upsilon_i (Z):= \sum_{ p \in \widetilde{Z}_s} {\rm Tr}_{k(p)/k} \res_p \nu^* \gamma_{i,n} \in \Omega_{k/\mathbb{Z}} ^{n-1},$
and $\mathbb{Z}$-linearly extend $\Upsilon_i$ to all of $z^n_{\widehat{\mathfrak{m}}} (\widehat{\mathcal{O}}, n)$.
Then $\Upsilon_i (\partial W) = 0$ for $W \in z_{\widehat{\mathfrak{m}}} ^n (\widehat{\mathcal{O}} , n+1)$.
 \end{prop}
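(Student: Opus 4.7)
The plan is to prove the reciprocity by a Parshin-style iterated residue argument on the normalization of a compactification of $W$, following the strategy in \cite{Park} and \cite{R} for the additive-cycle setting. By $\Z$-linearity we may assume $W$ is integral. Let $\ov{W} \subset \ov{\square}^{n+1}_{\widehat{\mathcal{O}}}$ be the Zariski closure, which is proper over $\Spec\widehat{\mathcal{O}}$, and let $\nu: \widetilde{W} \to \ov{W}$ be a normalization. Then $\widetilde{W}$ is a two-dimensional normal proper $\widehat{\mathcal{O}}$-scheme. Consider the rational $(n+1)$-form $\omega := \nu^* \gamma_{i,n+1} \in \Omega^{n+1}_{\widetilde{W}/\Z}$, which has logarithmic poles along the preimages of the face divisors $\{y_j = \epsilon\}$ and a pole of order at most $i$ along the special fiber $E := \widetilde{W}_s$.

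For each $(j, \epsilon)$, let $C_{j,\epsilon} \subset \widetilde{W}$ denote the Weil divisor $\nu^{-1}(\ov{W} \cap \{y_j = \epsilon\})$, whose components are (normalizations of) the components of $\partial_j^\epsilon W$, thanks to the face-properness of $W$. A direct local computation using $\res(d\log y_j)|_{y_j=0} = 1$ and $\res(d\log y_j)|_{y_j=\infty} = -1$ yields
$$\res_{C_{j,\epsilon}}(\omega) \;=\; \sgn(\epsilon)\,(-1)^{j-1}\, \nu^*\gamma_{i,n}\big|_{C_{j,\epsilon}},$$
where $\sgn(0) := 1$, $\sgn(\infty) := -1$. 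Combining this with $\partial = \sum_{j} (-1)^j (\partial_j^\infty - \partial_j^0)$ and a short sign-chase, one finds
$$\Upsilon_i(\partial W) \;=\; \sum_{j, \epsilon}\; \sum_{p \in C_{j,\epsilon} \cap E} \Tr_{k(p)/k}\, \res_p\, \res_{C_{j,\epsilon}}(\omega),$$
the inner terms being iterated Parshin residues of $\omega$ at the flags $(p \subset C_{j,\epsilon})$ on $\widetilde{W}$.

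The heart of the argument is Parshin's two-dimensional local reciprocity. At each closed point $p \in E$, the rational form $\omega$ satisfies
$$\sum_{D \ni p} \res_{p, D}(\omega) \;=\; 0,$$
where $D$ ranges over prime divisors of $\widetilde{W}$ through $p$, namely $E$ together with the components of the various $C_{j,\epsilon}$ passing through $p$. Isolating the $E$-contribution, summing over $p \in E$, and applying $\Tr_{k(p)/k}$ gives
$$\sum_{p \in E} \Tr_{k(p)/k}\, \res_{p, E}(\omega) \;+\; \Upsilon_i(\partial W) \;=\; 0.$$
The first sum is a sum of iterated residues along the proper $k$-curve $E$ of an $n$-form arising from $\omega$ along $E$; it vanishes by the classical (trace-compatible) residue theorem for proper curves over $k$, applied componentwise in $\Omega^{\bullet}_{k/\Z}$. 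Hence $\Upsilon_i(\partial W) = 0$.

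The main obstacle is to make the Parshin reciprocity rigorous in the presence of the higher-order pole ($i \geq 2$) of $\omega$ along $E$, since then $\res_E(\omega)$ is not a $1$-form on $E$. The correct framework is the iterated residue symbol in the two-dimensional local field $k(p)((u))((t))$ at the flag $(p \subset E)$, for which reciprocity holds for arbitrary rational forms. Making this precise in the relative setting over $\widehat{\mathcal{O}}$, possibly after a minor regularization of $\widetilde{W}$ at the flags $p \subset D$ so that all Parshin symbols are well-defined without affecting the global sum, will occupy the bulk of the proof. An alternative, closer to the approach of \cite{Park}, is to write $\omega = d\alpha + \beta$ locally near $E$ with $\beta$ having only logarithmic poles along $E$ and $d\alpha$ contributing zero to the sums of residues, whence simple-pole reciprocity applies to $\beta$ directly.
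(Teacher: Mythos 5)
Your overall strategy coincides with the paper's: compactify $W$ in $\ov{\square}_{\widehat{\mathcal{O}}}^{n+1}$, pull back $\gamma_{i,n+1}$, identify $\Upsilon_i(\partial W)$ with a sum of two-dimensional flag residues along the face divisors, cancel these against flag residues along the special fiber via local Parshin reciprocity at closed points, and conclude with the residue theorem on the proper special-fiber curve. The genuine gap is exactly the step you yourself defer: making sense of the intermediate residue along the special fiber $E$ when $\omega$ has a higher-order pole there (note the pole order along a component is $i\cdot\ord_C(t)$, so it can exceed $1$ even for $i=1$ if the special fiber is non-reduced), and then justifying the ``classical residue theorem'' for the resulting object on $E$. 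This is not a routine technicality; it is where the paper's proof does its work. The paper never needs a canonical form $\res_E(\omega)$: it first performs a finite sequence of point blow-ups so that the strict transforms of the face $1$-cycles are regular and in bijection with the components of $\partial_\ell^{\epsilon}(W)$ (so they literally realize the normalizations used in the definition of $\Upsilon_i$), and so that every special-fiber point is of one of three simple types; it then fixes, once and for all, a pseudo-coefficient field $\sigma_j$ at each component $C_j$ of the special fiber and works with the Parshin--Lomadze residues $\Xi_{\sigma_j}=\res_{(\widetilde W,C_j),\sigma_j}\phi^*\gamma_{i,n+1}$ in Yekutieli's framework. These forms depend on the choices, but the curve residue theorem (\cite[Theorem 4.2.15-(b)]{Yekutieli}) holds for each fixed choice, local reciprocity (\cite[Theorem 4.2.15-(a)]{Yekutieli}) handles points where two special-fiber components meet or where a face divisor meets the special fiber, and choice-independence is needed, and available by \cite[Corollary 4.2.13]{Yekutieli}, only along the face divisors, where the pole is logarithmic. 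Your proposed remedies (iterated symbols in two-dimensional local fields, or a local $\omega=d\alpha+\beta$ regularization) would have to reprove precisely this package; as written, the step is missing rather than wrong.

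A secondary inaccuracy: your formula $\res_{C_{j,\epsilon}}(\omega)=\sgn(\epsilon)(-1)^{j-1}\nu^*\gamma_{i,n}|_{C_{j,\epsilon}}$ drops multiplicities. The residue along a component picks up the order of vanishing of $y_j-\epsilon$ there, and these orders, together with a trace compatibility under the finite maps from the components of $C_{j,\epsilon}$ (which need not be normal on $\widetilde W$) to the closures of the components of $\partial_j^{\epsilon}(W)$, are exactly what matches the flag residues with the cycle-theoretic boundary $\partial W$ appearing in $\Upsilon_i$; this is the factor $\iota(G;\ell_0,\epsilon_0)$ in the paper, whose blow-up step also removes the normalization bookkeeping. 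None of this breaks your plan, but it must be accounted for in the ``short sign-chase.''
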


\begin{proof}

It is enough to prove the statement for any integral $W \in z^n _{\widehat{\mathfrak{m}}} (\widehat{\mathcal{O}}, n+1)$. Let $\ov{W} \subset \ov{\square}_{\widehat{\mathcal{O}}} ^{n+1}$ be the Zariski closure of $W$, which is also integral. For each $1 \leq \ell \leq n+1$ and $\epsilon \in \{ 0, \infty\}$, via the codimension $1$ face map $\iota_{\ell, \epsilon}: \ov{\square}_{\widehat{\mathcal{O}}} ^n \hookrightarrow \ov{\square}_{\widehat{\mathcal{O}}} ^{n+1}$, identify the Zariski closure of $\partial_\ell ^{\epsilon} (W)$ in $\ov{\square}_{\widehat{\mathcal{O}}}^n$ with its image $\ov{\partial_\ell ^{\epsilon} (W)}$ in $\ov{\square}_{\widehat{\mathcal{O}}} ^{n+1}$. Consider the divisor $D:= \sum_{\ell, \epsilon} \{ y_\ell = \epsilon\}$ on $\ov{\square}_{\widehat{\mathcal{O}}} ^{n+1}$. We omit the proof of the following claim, which is easily deduced by a standard argument using a finite sequence of point blow-ups and \cite[Exercise II-7.12, p.171]{Hartshorne}:
 
\textbf{Claim:} \emph{There exists a sequence of blow-ups $\widetilde{\phi}: \widetilde{\ov{\square}_{\widehat{\mathcal{O}}}^{n+1}} \to \ov{\square}_{\widehat{\mathcal{O}}}^{n+1}$ centered at points, such that for the strict transform $\widetilde{W} \subset \widetilde{\ov{\square}_{\widehat{\mathcal{O}}}^{n+1}}$ of $\ov{W}$ %, which is not necessarily regular, 
and the restriction $\phi:= \widetilde{\phi}|_{\widetilde{W}}: \widetilde{W} \to \ov{W} \hookrightarrow \ov{\square}_{\widehat{\mathcal{O}}} ^{n+1}$, we have the following properties: $(1)$ each irreducible component of the strict transform $\phi^! ( \ov{\partial_\ell ^{\epsilon} (W)})$ of the $1$-cycle $\ov{\partial_\ell ^{\epsilon} (W)}$ is regular. We let $\phi^{!} (D):= \sum_{\ell, \epsilon} \phi^! ( \ov{\partial_\ell ^{\epsilon} (W)})$, the strict transform of $\ov{D \cap W}$; \  $(2)$ each closed point $p \in \widetilde{W}_s=\phi^* \{ t = 0 \}$ satisfies exactly one of the following three possibilities:
\begin{enumerate}
\item [(2-i)] $p$ belongs to a unique irreducible component of $\widetilde{W}_s$, but does not meet $\phi^! (D)$.
\item [(2-ii)] $p$ belongs to a unique irreducible component of $\widetilde{W}_s$, and belongs to precisely one irreducible component of $\phi^! (D)$.
\item [(2-iii)] $p$ belongs to exactly two irreducible components of $\widetilde{W}_s$, but does not meet $\phi^! (D)$.
\end{enumerate}
}

\bigskip

Going back to the proof of the proposition, notice that the irreducible components of $\phi^! ( \ov{\partial_\ell ^{\epsilon} (W)})$ are all regular, and are in one-to-one correspondence with the irreducible components of $\ov{\partial_\ell ^{\epsilon} (W)}$ via $\phi$. Hence each component of $\phi^! ( \ov{\partial_\ell ^{\epsilon} (W)})$ gives a normalization of the Zariski closure in $\ov{\square}_{\widehat{\mathcal{O}}} ^n$ of the corresponding component of $\partial_\ell ^{\epsilon }( W)$. 
Express the special fiber $\widetilde{W}_s$ as the union of (not necessarily regular) irreducible projective curves $C_1, \cdots, C_M$.

 We use the theory of Parshin-Lomadze residues associated to pseudo-coefficient fields (see \cite[Definitions 4.1.1, 4.1.3]{Yekutieli}). For each generic point of  $C_j$ seen as a point of the scheme $\widetilde{W}$, choose a pseudo-coefficient field $\sigma_j$. Consider the Parshin-Lomadze residue $\Xi_{\sigma_j}:= \res_{(\widetilde{W}, C_j), \sigma_j} \phi^* (\gamma_{i, n+1})$ along the chain $(\widetilde{W}, C_j)$ for the choice of $\sigma_j$. For each $1 \leq j \leq M$, this $\Xi_{\sigma_j}$ is a rational absolute K\"ahler $n$-form on $C_j$.

Let $p \in \widetilde{W}_s$. By our construction of $\widetilde{W}$ in the above claim, for the point $p$, exactly one of (2-i), (2-ii), and (2-iii) holds. 

If (2-i) holds for $p$, then let $C_j$ be the unique component of $\widetilde{W}_s$ with $p \in C_j$. Since $p$ does not lie over any face $\ov{\partial_{\ell} ^{\epsilon} (W)}$ for $1 \leq \ell \leq n+1$, $\epsilon\in \{0, \infty \}$, from the shape of $\gamma_{i, n+1}$, the form $\Xi_{\sigma_j}  = \res_{(\widetilde{W}, C_j), \sigma_j} (\phi^* \gamma_{i, n+1})$ is regular at $p$ so that we have $\res_{p \in C_j} (\Xi_{\sigma_j}) = 0$.

If (2-iii) holds for $p$, then let $C_j, C_{j'}$ with $j \not = j'$ be the two distinct components of $\widetilde{W}_s$ such that $p \in C_j \cap C_{j'}$. Here, again $p$ does not lie over any face $\ov{\partial_{\ell} ^{\epsilon} (W)}$  for $1 \leq \ell \leq n+1$, $\epsilon\in \{0, \infty \}$, therefore by 
\cite[Theorem 4.2.15-(a)]{Yekutieli}, we have  $\res_{p \in C_j} (\Xi _{\sigma_j}) +\res_{p \in C_{j'}} (\Xi_{\sigma_{j'}}) = 0$. 

Now suppose (2-ii) holds for $p$. Thus there exist (i) a unique index $1 \leq j(p) \leq M$ with $p \in C_{j(p)}$, (ii) a unique pair $(\ell_0, \epsilon_0)$ with $1 \leq \ell_0 \leq n+1$, $\epsilon _0 \in \{ 0, \infty \}$, and a unique irreducible component $G \subset \phi^! ( \ov{\partial_{\ell_0} ^{\epsilon_0} (W)}) $ such that $p \in G$.  

From the shape of $\gamma_{i, n+1}$, the form $\phi^* \gamma_{i, n+1}$ on $\widetilde{W}$ has a simple (or logarithmic) pole (see \cite[Definition 4.2.10]{Yekutieli}) along $G$, so that the residue of $\phi^* \gamma_{i, n+1}$ along the chain $ (\widetilde{W}, G)$ is independent of the choice of a pseudo-coefficient field for $G$ by \cite[Corollary 4.2.13]{Yekutieli}. On the other hand, by \cite[Theorem 4.2.15-(a)]{Yekutieli}, we have 
\begin{equation}\label{eqn:Milnor pf 0}
\res_{p \in C_{j(p)}} (\res_{(\widetilde{W}, C_{j(p)}), \sigma_{j(p)}} (\phi^* \gamma_{i, n+1})) = - \res_{p \in G} (\res_{(\widetilde{W}, G)} (\phi^* \gamma_{i, n+1})).
\end{equation}

From the shape of $\gamma_{i, n+1}= \frac{1}{t^i} \frac{dy_1}{y_1} \wedge \cdots \wedge \frac{ dy_{n+1}}{y_{n+1}}$ again, since $G \subset \phi^! ( \ov{\partial_{\ell_0} ^{\epsilon_0} (W)})$, we have
\begin{equation}\label{eqn:Milnor pf 1}
\res_{(\widetilde{W}, G)} (\phi^* \gamma_{i, n+1}) = (-1)^{\ell_0} \cdot \iota (G; \ell_0, \epsilon_0) \cdot \sgn (\epsilon_0) \phi^* (\gamma_{i,n+1}^{\ell_0})|_{G},
\end{equation}
where $\iota(G; \ell_0, \epsilon_0)$ is the intersection multiplicity of $G$ in $\partial_{\ell_0} ^{\epsilon_0} (W)$, $\sgn (0):= 1, \sgn (\infty):= -1$, and $\gamma_{i, n+1} ^{\ell_0}:=\frac{1}{t^i} \frac{dy_1}{y_1} \wedge \cdots \widehat{ \frac{ dy_{\ell_0}}{y_{\ell_0}}}\wedge \cdots \wedge \frac{dy_{n+1}}{y_{n+1}} $. 

Now, by the definition of $\Upsilon_i$, we have
$$
(-1)^{\ell_0} \sgn (\epsilon_0) \Upsilon_i (\partial_{\ell_0} ^{\epsilon_0} (W))= (-1)^{\ell_0} \sgn (\epsilon_0)  \sum_{G} \iota (G; \ell_0, \epsilon_0) \sum_{ p \in G_s} \Tr_{k(p)/k} \res_{p \in G} \phi^* (\gamma_{i,n+1} ^{\ell_0})|_G
$$
$$
 =^{\dagger} \sum_G \sum_{p \in G_s} \Tr_{k(p)/k} \res_{p \in G} ( \res_{ (\widetilde{W}, G)} (\phi^* \gamma_{i, n+1}))
 $$
 $$
 =^{\ddagger} - \sum_G \sum_{p \in G_s} \Tr_{k(p)/k} \res_{p \in C_{j(p)}} ( \res_{ (\widetilde{W}, C_{j(p)}), \sigma_{j(p)}} ( \phi^* \gamma_{i, n+1}))
 $$
 \begin{equation}\label{eqn:Milnor pf 2} 
 =^1  -\sum_G \sum_{p \in G_s} \Tr_{k(p)/k} \res_{p \in C_{j(p)}} (\Xi_{\sigma_{j(p)}}),
 \end{equation}
where $\sum_G$ is the sum over all irreducible components of $\phi^! ( \ov{\partial_{\ell_0} ^{\epsilon_0} (W)})$, $\dagger$ holds by \eqref{eqn:Milnor pf 1}, $\ddagger$ holds by \eqref{eqn:Milnor pf 0}, and $=^1$ holds by definition. Note that the set of all points $p \in G_s$ over all irreducible components $G$ of $\phi^! (D)$ is precisely equal to the set $\widetilde{W}_s ^{\rm{ (2-ii)}}$ of all points of $\widetilde{W}_s$ of type (2-ii) in the claim. Hence, taking the sum of \eqref{eqn:Milnor pf 2} over all $1 \leq \ell_0 \leq n+1$ and $\epsilon_0 \in \{ 0, \infty \}$, we obtain {\small 
\begin{equation}\label{eqn:Milnor pf 3}
\Upsilon_i (\partial W)  = - \sum_{\ell = 1} ^{n+1} \sum_{\epsilon \in \{ 0, \infty\}} (-1)^{\ell} \sgn (\epsilon) \Upsilon_i (\partial_{\ell} ^{\epsilon} (W)) = \sum_{p \in \widetilde{W}_s ^{ \rm{ (2-ii)}}} \Tr_{k(p)/k} \res_{p \in C_{j(p)}} (\Xi_{\sigma_{j(p)}}).
\end{equation}
}
On the other hand, for the points of $\widetilde{W}_s$ of type (2-i) and (2-iii), we saw previously that there is no contribution of residues from them. Hence continuing \eqref{eqn:Milnor pf 3}, we have
$$
\Upsilon_i (\partial W) = \sum_{p \in \widetilde{W}_s ^{ \rm{ (2-ii)}}} \Tr_{k(p)/k} \res_{p \in C_{j(p)}} (\Xi_{\sigma_{j(p)}}) = \sum_{j=1} ^M \sum_{p \in C_j} \Tr_{k(p)/k} \res_{p \in C_{j}} (\Xi_{\sigma_{j}})=^{\dagger} 0,
$$
where $\dagger$ holds by the residue theorem (see \cite[Theorem 4.2.15-(b)]{Yekutieli}), i.e. the sum of all residues of a form over a projective curve $\widetilde{W}_s$ is $0$. This shows $\Upsilon_i (\partial W) = 0$ as desired.
 \end{proof}

 The remaining part of the proof of Theorem \ref{thm:Milnor} is to check that the regulator maps in Proposition \ref{prop:main reciprocity} restricted to $z^n _{\widehat{\mathfrak{m}}} (\widehat{\mathcal{O}}, n)^c$ respect the mod $t^m$-equivalence. This requires  further discussions, and the rest of the paper deals with it.

\section{Some perturbation lemmas and the mod $t^m$ moving lemma}\label{sec:perturbation}

 In working with cycles over the complete local ring $\widehat{\mathcal{O}}$,  it is maybe convenient if one can transfer some known results for cycles over $\mathcal{O}$ to cycles over $\widehat{\mathcal{O}}.$ The completion ring homomorphism $\mathcal{O} \to \widehat{\mathcal{O}}$ induces a natural flat pull-back homomorphism $\xi^n : z^q_{\mathfrak{m}} (\mathcal{O} , n) ^? \to z^q _{\widehat{\mathfrak{m}}} (\widehat{\mathcal{O}}, n)^?$, for $?=  pc, \emptyset$, given by $[Z] \mapsto [\widehat{Z}:=\Spec(  \widehat{\mathcal{O}} ) \times_{\Spec (\mathcal{O})} Z]$, but in general, $\xi^n$ is not surjective. The goal of \S \ref{sec:perturbation} is to prove the ``mod $t^m$ moving lemma" in Theorem \ref{thm:mod t^m}, which states that this natural homomorphism induces a surjection modulo $t^m$ in the Milnor range with $?= pc$. In this case $z^n _{\widehat{\mathfrak{m}}} (\widehat{\mathcal{O}}, n)^{c} = z^n _{\widehat{\mathfrak{m}}} (\widehat{\mathcal{O}}, n)^{pc}$.

In this section we suppose $k$ is any field unless specified otherwise. In \S \ref{sec:preperturb}, we discuss some preparatory results needed in what follows. In \S \ref{subsec:perturb}, we discuss various general position results as in Lemmas \ref{lem:small properness}, \ref{lem:small dominant}, \ref{lem:small int}, and \ref{lem:small proper int}, needed in the proof of the mod $t^m$ moving lemma in \S \ref{sec:mod t^m moving}. These results might appear to be related to the Artin approximation theorem \cite{Artin}, but they do not follow from it. The results are stated in terms of schemes over $\widehat{\mathcal{O}} = \widehat{\mathcal{O}}_{\mathbb{A}^1_k, 0}$, but some of them might work for more general integral $k$-schemes with the methods presented here. We leave such generalizations to the reader.

In what follows in \S \ref{sec:perturbation}, to ease the proof, via the automorphism $y \mapsto 1/(1-y)$ of $\mathbb{P}^1$, we identify $(\square, \{ \infty, 0 \})$ with $ ( \mathbb{A}^1, \{ 0, 1 \})$ so that $\square^n \simeq \mathbb{A}^n$, and the faces of $\square^n$ under this identification are given by a finite set of equations of the form $y_{j} = \epsilon_j$ with $\epsilon_j \in \{ 0,  1 \}$.

\subsection{Some preparatory lemmas}\label{sec:preperturb}

We are interested in understanding ``small changes" of a given integral closed subscheme $W \subseteq \square_{\widehat{\mathcal{O}}} ^n$ when we ``perturb" the coefficients of a generating set of the ideal of $W$. So, we introduce:
 
\begin{defn}\label{defn:coeff perturb}
For a closed subscheme $W\subseteq \square_{\widehat{\mathcal{O}}} ^n$, let $\{f_1, \cdots, f_r \} \subset \widehat{\mathcal{O}}[y_1, \cdots, y_n]$ be a set of generators of the ideal of $W$. 

The \emph{coefficient perturbation} of the set $\{f_1, \cdots, f_r \}$ is the set $\{F_1, \cdots, F_r\}$ of polynomials obtained as follows:  for each nonzero monomial term of each of $f_j$ over $1 \leq j \leq r$, consider an indeterminate and a copy of $\mathbb{A}^1_{\widehat{\mathcal{O}}}$. Replace each nonzero coefficient by the corresponding indeterminate. Let $M$ be the total number of them and let $ F_1, \cdots, F_r \in \widehat{\mathcal{O}}[ x_1, \cdots, x_M][y_1, \cdots, y_n]$ be the so-obtained polynomials from $f_1, \cdots, f_r$, respectively. Let $V \subset \mathbb{A}_{\widehat{\mathcal{O}}} ^M \times_{\widehat{\mathcal{O}}} \square_{\widehat{\mathcal{O}}} ^n$ be the closed subscheme defined by the ideal $(F_1, \cdots, F_r)$. We may also say $V$ is the \emph{coefficient perturbation of $W$ with respect to the generators $\{ f_1, \cdots, f_r \}$}. 

For each $\alpha \in \mathbb{A}_{\widehat{\mathcal{O}}} ^M$, we let $V_{\alpha}$ be the fiber over $\alpha$. If $\alpha_0 \in \widehat{\mathcal{O}} ^M$ is the original sequence of coefficients of $\{ f_1, \cdots, f_r\}$, we have $V_{\alpha_0} = W$.
\end{defn}

\begin{exm}
For $n=2$, consider $\{ f_1, f_2 \} = \{ 3 y_1 y_2 ^2 + y_1 + 2y_2 + 1,  - y_1^2 y_2 -5 y_1 + 3\}$. Then the corresponding coefficient perturbation is given by $\{ F_1, F_2 \} = \{ x_1 y_1 y_2^2 + x_2 y_1 + x_3 y_2 + x_4, x_5 y_1 ^2 y_2 + x_6 y_1 + x_7 \}$. 
\end{exm}

The coefficient perturbation depends on the choice of a generating set $\{f_1, \cdots, f_r \}$. If we make a ``bad" choice, then we might end up having undesirable phenomena:
 
\begin{exm}
For $n=2$, consider $W \subseteq \square_{\widehat{\mathcal{O}}} ^2$ defined by $f_1:= y_1 + 1, f_2 := y_2 + 1$. The ideal of $W$ also contains $f_3:= f_1 f_2 = y_1 y_2 + y_1 + y_2 + 1$. 

If we take the coefficient perturbation with respect to just $\{ f_1, f_2 \}$, then we have $F_1 = x_1 y_1 + x_2$, $F_2 = x_3 y_3 + x_4$. So, if $\alpha = (x_1, x_2, x_3, x_4)$ is in the open subset of $\mathbb{A}_{\widehat{\mathcal{O}}} ^4$ given by $x_1\not = 0$ and $x_3 \not = 0$, then $V_{\alpha}$ is given by $(y_1, y_2) = (- x_2/ x_1, - x_4/ x_3)$. In particular, $V_{\alpha} \not = \emptyset$.

However, this time with respect to $\{ f_1, f_2, f_3\}$, with a redundant generator $f_3$, the corresponding coefficient perturbation is given by $F_1 = x_1 y_1 + x_2, F_2= x_3 y_2 + x_4, F_3 = x_5 y_1 y_2 + x_6 y_1 + x_7 y_2 + x_8$. Unfortunately, for $V_{\alpha}$ to be nonempty, we need a necessary condition. Suppose for a choice $\alpha=(x_1, \cdots, x_8)$, we have $V_{\alpha} \not = \emptyset$. Then $F_1 = 0$ and $F_2 = 0$ give $y_1 = - x_2/x_1, y_2 = - x_4 / x_3$ so that by plugging them into $F_3$, we obtain $x_2 x_4 x_5/ (x_1 x_3) - x_2 x_6/ x_1 - x_4 x_7 / x_3 + x_8 = 0$, i.e. we have an algebraic dependence $x_2 x_4 x_5 - x_2 x_3 x_6 - x_1 x_4 x_7 + x_1 x_3 x_8 = 0$ for $x_1, \cdots, x_8$. Hence we can expect to have a nonempty fiber $V_{\alpha}$  only over this proper closed subset of $\mathbb{A}_{\widehat{\mathcal{O}}} ^8$. This is not desirable for our purposes.
\end{exm}

 An aim of \S \ref{sec:preperturb} is to show that when $W \in z^n _{\widehat{\mathfrak{m}}} (\widehat{\mathcal{O}}, n)^c$ is integral of relative dimension $0$ in the Milnor range, it is possible to choose a ``nice" generating set. We will make this precise in what follows.

\begin{lem}\label{lem:gf rel 0}
Let $W \in z^n_{\widehat{\mathfrak{m}}} (\widehat{\mathcal{O}}, n)^c$ be a nonempty integral cycle.  
Then $(1)$ the structure morphism $f: W \to \Spec (\widehat{\mathcal{O}})$ is surjective, flat, and finite, and $(2)$ the generic fiber $W_{\eta}$ is the singleton given by the generic point $\eta_W$ of $W$.
\end{lem}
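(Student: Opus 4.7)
The plan is to assemble the four claimed properties essentially by combining what has already been established about cycles in $z^n_{\widehat{\mathfrak{m}}}(\widehat{\mathcal{O}}, n)^c$ with the fact that $\widehat{\mathcal{O}}$ is a one-dimensional regular local ring. Surjectivity of $f: W \to \Spec(\widehat{\mathcal{O}})$ is already recorded in \lemref{lem:cycle proper}(2), so I only need to address finiteness, flatness, and the shape of the generic fiber.

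For finiteness: under the identification $\square^n \simeq \mathbb{A}^n$ used in \S\ref{sec:perturbation} (or equivalently, just noting that $\square^n_{\widehat{\mathcal{O}}}$ is an affine open subscheme of $\ov{\square}^n_{\widehat{\mathcal{O}}}$), the scheme $\square^n_{\widehat{\mathcal{O}}}$ is affine. Hence $W \subseteq \square^n_{\widehat{\mathcal{O}}}$ is affine as a closed subscheme of an affine scheme. Since $W \in z^n_{\widehat{\mathfrak{m}}}(\widehat{\mathcal{O}},n)^c$, the morphism $f$ is proper, and a proper morphism between affine schemes is finite (the same Hartshorne reference already invoked in the proof of \lemref{lem:onlyMilnor}). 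This gives the finiteness part of (1).

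For flatness: write $A := \Gamma(W, \mathcal{O}_W)$, so that $W = \Spec(A)$ after the previous step. Since $W$ is integral, $A$ is a domain; since $f$ is surjective (hence in particular dominant), the structural ring map $\widehat{\mathcal{O}} \to A$ is injective, so $A$ is torsion-free as an $\widehat{\mathcal{O}}$-module. Because $\widehat{\mathcal{O}}$ is a discrete valuation ring, torsion-free modules are flat, and therefore $f$ is flat. This completes (1).

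For (2): by (1), $A$ is a finite domain extension of $\widehat{\mathcal{O}}$, hence a one-dimensional noetherian domain, and the generic fiber is
\[
W_\eta \;=\; \Spec\!\bigl(A \otimes_{\widehat{\mathcal{O}}} k((t))\bigr) \;=\; \Spec(A[1/t]).
\]
Since $t$ lies in every maximal ideal of $A$ (as these all sit over the unique maximal ideal $\widehat{\mathfrak{m}}$, using that $f$ is finite, hence closed), the localization $A[1/t]$ is zero-dimensional; being a localization of the domain $A$, it is also a domain, and so a field, necessarily $\mathrm{Frac}(A) = k(\eta_W)$. Therefore $W_\eta$ is the singleton generic point $\eta_W$ of $W$, yielding (2). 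No step here is delicate; the only potentially subtle input is the torsion-free-implies-flat argument, which however is automatic once one remembers that $\widehat{\mathcal{O}}$ is a DVR.
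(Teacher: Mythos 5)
Your proof is correct and follows essentially the same route as the paper's: surjectivity is quoted from Lemma \ref{lem:cycle proper}, finiteness comes from properness of a morphism of affine schemes via the same Hartshorne exercise, flatness from the one-dimensional regular base, and the generic fibre is identified with the single point $\eta_W$. The only cosmetic differences are that you prove flatness directly by torsion-freeness over the DVR $\widehat{\mathcal{O}}$ where the paper cites \cite[Proposition III-9.7, p.257]{Hartshorne} (or EGA IV, (14.5.6)), and you obtain (2) by computing the localization $A[1/t]$ instead of the paper's point-set argument that all closed points of $W$ lie over the closed point of $\Spec(\widehat{\mathcal{O}})$ while $\eta_W$ lies over the generic point; these are equivalent arguments.
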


\begin{proof}
The surjectivity of $f$ was proven in Lemma \ref{lem:cycle proper}. The morphism $f$ is flat by \cite[Proposition (14.5.6), p.217]{EGA4-3} (or \cite[Proposition III-9.7, p.257]{Hartshorne}) because $\Spec (\widehat{\mathcal{O}})$ is a regular scheme of dimension $1$. The morphism $f$ is finite because it is a proper morphism of affine schemes (see \cite[Exercise II-4.6, p.106]{Hartshorne}). This proves (1). 

Since $\dim \ W = 1$, the integral scheme $W$ is the union of the generic point $\eta_W$ of $W$ and its closed points. Here, all the closed points map to the unique closed point of $\Spec (\widehat{\mathcal{O}})$, while $\eta_W$ cannot map to the closed point of $\Spec (\widehat{\mathcal{O}})$ for otherwise $f$ would not be surjective, contradicting Lemma \ref{lem:cycle proper}. Hence $\eta_W$ is the unique point of the generic fiber $W_{\eta}$. This proves (2).
\end{proof}

\begin{prop}\label{prop:ci}
Let $W \in z^n_{\widehat{\mathfrak{m}}} (\widehat{\mathcal{O}}, n)^c$ be a nonempty integral cycle.  
Then $W$ is a complete intersection in $\square_{\widehat{\mathcal{O}}} ^n$ defined by a subset $\{ f_1, \cdots, f_n \} \subseteq \widehat{\mathcal{O}}[y_1, \cdots, y_n]$ of precisely $n$ polynomials of the triangular form
\begin{equation}\label{eqn:triangular O}
\tuborg f_1 (y_1), \\
f_2 (y_1, y_2), \\
\ \ \ \ \ \vdots \\
f_n (y_1, \cdots, y_n), \sluttuborg
\end{equation}
such that $(1)$ $f_i (y_1, \cdots, y_i)$ has $y_i$-degree $\geq 1$ for each $1 \leq i \leq n$, $(2)$ the highest $y_i$-degree term of $f_i$ does not involve any variable other than $y_i$, and $(3)$ the constant term of each $f_i$ is $1$.
\end{prop}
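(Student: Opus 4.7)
The plan is to combine the finiteness and flatness of $W \to \Spec(\widehat{\mathcal{O}})$ (from \lemref{lem:gf rel 0}) with the face conditions (\corref{cor:no face}) and a Cayley--Hamilton construction. Writing $R := \widehat{\mathcal{O}}[y_1, \dots, y_n]$, $I$ for the ideal of $W$, and $A := R/I$, the coordinate ring $A$ is a free $\widehat{\mathcal{O}}$-module of some finite rank $d$ (flat of finite type over a DVR), it is a domain, and it is local since $\widehat{\mathcal{O}}$ is Henselian. Set $R_i := \widehat{\mathcal{O}}[y_1, \dots, y_i]$, $I_i := I \cap R_i$, $A_i := R_i/I_i$; each $A_i$ embeds in $A$, so it inherits the same structural properties (finite free local $\widehat{\mathcal{O}}$-algebra that is a domain). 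Because $W \cap \{y_i = 0\} = \emptyset$, one has $I + (y_i) = R$, so every $\bar y_i$ is a unit in $A$.

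I would then construct the $f_i$ by induction on $i$. For $i = 1$, the ideal $I_1$ is a height-one prime in the UFD $\widehat{\mathcal{O}}[y_1]$, hence principal, generated by some $p_1(y_1)$; integrality of $\bar y_1$ over $\widehat{\mathcal{O}}$ makes $p_1$ monic in $y_1$ up to a unit, and the face condition forces $p_1(0) \in \widehat{\mathcal{O}}^{\times}$, so a scaling produces $f_1$ with constant term $1$. For the inductive step, Cayley--Hamilton applied to multiplication by $\bar y_i$ on the free $\widehat{\mathcal{O}}$-module $A$ yields a monic $\chi_i(T) \in \widehat{\mathcal{O}}[T]$ of degree $d$ with $\chi_i(\bar y_i) = 0$; hence $\chi_i(y_i) \in I_i$, and since $\bar y_i \in A^{\times}$, the constant term $\chi_i(0) = (-1)^d \det(\bar y_i)$ is a unit, so scaling gives the desired normalization. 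To realize the triangular form with the correct degrees, I would take $f_i$ to be a lift in $R_i$ (not only in $\widehat{\mathcal{O}}[y_i]$) of a monic generator of the kernel of $A_{i-1}[T] \surj A_i$, so that $d_i := \deg_{y_i} f_i$ equals the rank of $A_i$ as an $A_{i-1}$-module.

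The main obstacle I anticipate is proving the equality $(f_1, \dots, f_n) = I$. The triangular structure with unit leading $y_i$-coefficients makes $R/(f_1, \dots, f_n)$ into a free $\widehat{\mathcal{O}}$-module of rank $\prod_i d_i$, and the natural surjection onto $A$ (from $f_i \in I$) is an isomorphism as soon as $\prod_i d_i = d$, because any surjection between free $\widehat{\mathcal{O}}$-modules of equal finite rank is automatically injective. So one needs each kernel $\ker(A_{i-1}[T] \surj A_i)$ to be principal, with a generator of $T$-degree equal to the predicted rank. This is automatic when $A_{i-1}$ is integrally closed---the minimal polynomial of $\bar y_i$ over $\mathrm{Frac}(A_{i-1})$ then lies in $A_{i-1}[T]$ and generates the kernel---but in the general Henselian-but-not-normal setting it is subtler. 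I expect the technical heart of the proof to lie precisely here, likely relying on the Henselian structure of $\widehat{\mathcal{O}}$ and perhaps on a judicious choice or relabeling of the coordinates $y_1, \dots, y_n$ to ensure each monogenic step yields a principal ideal of the right degree.
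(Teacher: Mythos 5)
The step you yourself flag as the ``technical heart'' --- that each kernel $\ker\bigl(A_{i-1}[y_i]\to A_i\bigr)$ is principal with a \emph{monic} generator, equivalently that $A_i$ is a monogenically presented free $A_{i-1}$-module --- is a genuine gap, not a deferrable technicality: everything in your outline (the rank count $\prod_i d_i=d$, hence $(f_1,\dots,f_n)=I$) hinges on it, and it can actually fail. Take $n=2$, $\mathrm{char}\,k\ne 2$, and let $W\subset\square^2_{\widehat{\mathcal{O}}}$ be the image of the closed immersion $\Spec k[[u]]\to\square^2_{\widehat{\mathcal{O}}}$ given by $t\mapsto u^2$, $y_1\mapsto 2+u^3$, $y_2\mapsto 2+u$; one checks this is a nonempty integral cycle in $z^2_{\widehat{\mathfrak{m}}}(\widehat{\mathcal{O}},2)^c$, with $I(W)=\bigl(y_1-2-(y_2-2)^3,\ (y_2-2)^2-t\bigr)$. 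Here $A_1=k[[u^2,u^3]]$ is not normal and $A_2=k[[u]]$ is not a free $A_1$-module, and $J:=\ker\bigl(A_1[y_2]\to A_2\bigr)$ is not principal: writing $z=y_2-2$, every $z$-degree-one element of $J$ is $a(z-u)$ with $a\in\mathfrak{m}_{A_1}$, a generator of $J$ would have to divide $u^2(z-u)\in J$ and hence be of this form, and such an element cannot divide $z^2-u^2\in J$ since its leading coefficient is not a unit. Consequently \emph{no} pair $f_1(y_1),f_2(y_1,y_2)$ in the order $(y_1,y_2)$ generates $I(W)$: any triangular $f_1$ lies in $I(W)\cap\widehat{\mathcal{O}}[y_1]=\bigl((y_1-2)^2-t^3\bigr)$, so $(f_1,f_2)=I(W)$ would force $J$ to be principal. (A smaller issue: condition (3) concerns the constant term in $\widehat{\mathcal{O}}[y_1,\dots,y_i]$, which your Cayley--Hamilton determinant argument does not control once you pass to lifts of generators over $A_{i-1}$; the paper handles it by adding $f_1$, whose constant term is $1$, when needed.)

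For what it is worth, your instinct about where the difficulty sits is exactly right, and the paper does not resolve it either: its proof of \propref{prop:ci} runs the same induction over the successive projections $W^{(i)}$ (following Totaro) and simply asserts that, $W^{(i+1)}\to W^{(i)}$ being finite surjective, there is a monic irreducible polynomial in $A_i[y_{i+1}]$ defining $W^{(i+1)}$ --- an assertion that is automatic in Totaro's case (the $A_i$ are fields) and when $A_i$ is normal, but not in general, as the example above shows. In that example the triangular presentation is restored by swapping $y_1$ and $y_2$, and since the later uses of the proposition (Situation $(\star)$ and the moving lemma, \thmref{thm:mod t^m}) are insensitive to permuting the cubical coordinates, a statement ``after a permutation of the coordinates'' (or a version allowing more than $n$ generators of a shape still suitable for the coefficient-perturbation argument) would likely suffice for the rest of the paper; but establishing that some ordering always works is precisely the argument that neither your sketch nor the paper supplies, so as it stands your proposal (like the paper's own proof at this point) is incomplete.
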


\begin{proof}
This is inspired by \cite[Lemma 2]{Totaro}, but we need some modifications as our base ring is $\widehat{\mathcal{O}}$, not a field. For each $1 \leq i \leq n$, let $W^{(i)} \subset \square_{\widehat{\mathcal{O}}} ^i$ be the image of the projection $\square_{\widehat{\mathcal{O}}} ^n \to \square_{\widehat{\mathcal{O}}} ^i$, $(y_1, \cdots, y_n)\mapsto (y_1, \cdots, y_i)$. Let $W^{(0)}:= \Spec (\widehat{\mathcal{O}})$. Since the map $W= W^{(n)} \to W^{(0)} = \Spec (\widehat{\mathcal{O}})$ is finite and surjective by Lemma \ref{lem:gf rel 0}, we deduce that $W^{(i)} \to W^{(j)}$ is finite and surjective for each pair $0 \leq j < i \leq n$ of indices, and each $W^{(i)} \in z^i_{\widehat{\mathcal{O}}} (\widehat{\mathcal{O}}, i)^c$ is a nonempty integral cycle for each $1 \leq i \leq n$. 

We prove the proposition by induction. Since $W^{(1)} \to \Spec (\widehat{\mathcal{O}})$ is finite and surjective, there exists a monic irreducible polynomial $f_1 (y_1) \in \widehat{\mathcal{O}}[y_1]$ of $y_1$-degree $\geq 1$ that defines $W^{(1)}$. Since its intersection with the face $\{y_1 = 0\}$ is empty, the constant term of $f_1 (y_1)$ is a unit $c$ in $\widehat{\mathcal{O}}^{\times}$. Replacing $f_1 $ by $c^{-1} f_1$, we may assume that the constant term of $f_1 (y_1)$ is $1$. This shows (1), (2), (3) for $i=1$. 

Let $n \geq 2$ and let $1 \leq i < n$. Suppose we constructed $f_1 (y_1), f_2 (y_1, y_2), \cdots, f_i (y_1, \cdots, y_i)$ that define $W^{(i)} \in z^i _{\widehat{\mathfrak{m}}} (\widehat{\mathcal{O}}, i)^c$, with the conditions (1), (2), (3). In particular, $\widehat{\mathcal{O}} \to \widehat{\mathcal{O}}[y_1, \cdots, y_i]/(f_1, \cdots, f_i)$ is a finite ring extension. Since $W^{(i+1)} \to W^{(i)}$ is finite surjective, there exists a monic irreducible polynomial in the ring $ (\widehat{\mathcal{O}}[y_1, \cdots, y_i]/(f_1, \cdots, f_i)) [ y_{i+1}]$ in $y_{i+1}$ of $y_{i+1}$-degree $\geq 1$ that defines $W^{(i+1)}$. Choose any lifting of this polynomial in $\widehat{\mathcal{O}}[y_1, \cdots, y_{i+1}]$ such that the coefficient of the highest $y_{i+1}$-degree term does not involve any variable other than $y_{i+1}$. Call it $f_{i+1} (y_1, \cdots, y_{i+1})$. This thus satisfies (1) and (2) by construction. 

If the constant term of $f_{i+1} (y_1, \cdots, y_{i+1})$ is a unit $c$ in $\widehat{\mathcal{O}} ^{\times}$, then replace $f_{i+1}$ by $c^{-1} f_{i+1}$. If the constant term of $f_{i+1} (y_1, \cdots, y_{i+1})$ is not a unit in $\widehat{\mathcal{O}}^{\times}$, then it is divisible by $t \in \widehat{\mathcal{O}}$. Then first replace $f_i$ by $f_i + f_1$. This procedure does not disturb the triangular shape of \eqref{eqn:triangular O}, nor (1) or (2), and now the constant term of the new $f_i$ is a unit $c$ in $\widehat{\mathcal{O}} ^{\times}$, because any element of the form $1 + th \in \widehat{\mathcal{O}}$ is a unit. Replacing $f_{i+1}$ by $c^{-1} f_{i+1}$, we thus make it satisfies (3). 

Hence by induction, we have the triangular shaped generators $f_1, \cdots, f_n$ as in \eqref{eqn:triangular O} satisfying (1), (2) and (3). 
\end{proof}

\begin{cor}\label{cor:perturb ci}
Let $W \in z^n_{\widehat{\mathfrak{m}}} (\widehat{\mathcal{O}}, n)^c$ be a nonempty integral cycle. For a defining set $\{ f_1, \cdots, f_n \} \subseteq \widehat{\mathcal{O}}[y_1, \cdots, y_n]$ of $W$ in Proposition \ref{prop:ci}, consider the corresponding coefficient perturbation $V \subset \mathbb{A}^M_{\widehat{\mathcal{O}}} \times_{\widehat{\mathcal{O}}} \square_{\widehat{\mathcal{O}}} ^n$ given by $\{ F_1, \cdots, F_n \}$ of $\{ f_1, \cdots, f_n \}$ as in Definition \ref{defn:coeff perturb}. Then the codimension of $V$ in $ \mathbb{A}^M_{\widehat{\mathcal{O}}} \times_{\widehat{\mathcal{O}}} \square_{\widehat{\mathcal{O}}} ^n$ is $n$. 
\end{cor}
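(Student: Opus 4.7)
The ambient scheme $\mathbb{A}^M_{\widehat{\mathcal{O}}} \times_{\widehat{\mathcal{O}}} \square^n_{\widehat{\mathcal{O}}}$ is a regular affine scheme of pure dimension $M + n + 1$, and $V$ is cut out by the $n$ elements $F_1, \dots, F_n$. Krull's Hauptidealsatz immediately gives the easy inequality that every irreducible component of $V$ has codimension at most $n$, equivalently dimension at least $M + 1$. The content of the corollary is therefore the opposite inequality $\dim V \leq M + 1$, and I would prove it via a fiber-dimension argument along the projection $q \colon V \to \square^n_{\widehat{\mathcal{O}}}$ onto the $\square$-factor.

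The key structural observation is the following. By Definition \ref{defn:coeff perturb}, every indeterminate $x_j$ arises as the perturbed coefficient of a single monomial of a single $f_i$, so the variables actually appearing in $F_i$ form a subset $X_i \subseteq \{x_1, \dots, x_M\}$, the subsets $X_1, \dots, X_n$ are pairwise disjoint, and $F_i$ is by construction linear in its own variables $X_i$. Condition (3) of Proposition \ref{prop:ci} now becomes crucial: since each $f_i$ has constant term $1$, there is an indeterminate $x_{c(i)} \in X_i$ attached to the constant monomial, and the coefficient of $x_{c(i)}$ in $F_i(x, y)$ is identically $1$. Hence for every scheme-theoretic point $y$ of $\square^n_{\widehat{\mathcal{O}}}$, the specialization $F_i(-, y)$ is a nonzero $\kappa(y)$-linear form in the variables $X_i$, and because the $X_i$'s are disjoint the $n$ forms $F_1(-, y), \dots, F_n(-, y)$ are $\kappa(y)$-linearly independent.

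Consequently the scheme-theoretic fiber $q^{-1}(y)$ is an affine linear subspace of $\mathbb{A}^M_{\kappa(y)}$ cut out by $n$ independent linear forms, hence of pure dimension $M - n$ over $\kappa(y)$ for every $y \in \square^n_{\widehat{\mathcal{O}}}$. Applying the elementary dimension-of-fibers bound componentwise to $q$ (each irreducible component $V'$ of $V$ surjects onto its closed image $Y' \subseteq \square^n_{\widehat{\mathcal{O}}}$ with generic fiber of dimension at most $M - n$), one obtains
\[
\dim V' \;\leq\; \dim Y' + (M - n) \;\leq\; (n + 1) + (M - n) \;=\; M + 1.
\]
Combined with the Krull lower bound, every component of $V$ has dimension exactly $M + 1$, so $V$ is equidimensional of codimension $n$ in the ambient space, as required.

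The only place where the argument uses more than a direct monomial count is the last step, where one passes from the pointwise bound on the fibers of $q$ to the global dimension bound on components of $V$; this is the standard consequence of the upper semicontinuity of fiber dimension for finite-type morphisms of Noetherian schemes. Otherwise, the whole proof reduces transparently to the two ingredients already built into the setup: the presence of a constant term $1$ in each $f_i$ (Proposition \ref{prop:ci}(3)), and the pairwise disjointness of the coefficient-variable sets $X_1, \dots, X_n$ (Definition \ref{defn:coeff perturb}).
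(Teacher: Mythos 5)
Your projection-to-$\square^n_{\widehat{\mathcal{O}}}$ argument is a genuinely different route from the paper's (the paper inducts along the triangular system, cutting by one $F_{i+1}$ at a time and using catenarity to add codimensions), and the half of it that you actually carry out is correct: since each $F_i$ is linear in its own disjoint block of $x$-variables and the coefficient of the variable $x_{c(i)}$ attached to the constant term is identically $1$, every fiber of $q\colon V\to\square^n_{\widehat{\mathcal{O}}}$ is a linear subspace of $\mathbb{A}^M_{\kappa(y)}$ of dimension $M-n$, and the bound $\dim V'\le \dim Y'+(M-n)\le M+1$ for every component $V'$ follows (the precise tool is the local inequality $\dim\mathcal{O}_{X,x}\le\dim\mathcal{O}_{Y,y}+\dim\mathcal{O}_{X_y,x}$ rather than upper semicontinuity of fiber dimension, but since \emph{all} fibers have dimension $M-n$ this is harmless).

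The genuine gap is in the lower bound, i.e.\ in the dictionary you use between codimension and dimension. Over the base $\widehat{\mathcal{O}}=k[[t]]$ the ambient $\mathbb{A}^M_{\widehat{\mathcal{O}}}\times_{\widehat{\mathcal{O}}}\square^n_{\widehat{\mathcal{O}}}$ is \emph{not} biequidimensional, so ``codimension at most $n$'' is not equivalent to ``dimension at least $M+1$'', and ``dimension exactly $M+1$'' does not imply ``codimension $n$''. Concretely, the subscheme cut out by the $n$ elements $tx_1-1,\,x_2,\dots,x_n$ is irreducible of dimension $M$, not $M+1$ (compare $V(ty-1)\subset\mathbb{A}^1_{\widehat{\mathcal{O}}}$, a point of codimension $1$ in a $2$-dimensional scheme). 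Hence Krull's Hauptidealsatz gives only ``every component has height at most $n$''; it provides no dimension lower bound here, and your two proven facts (height $\le n$ and $\dim\le M+1$ for each component) still allow, formally, a component of height $<n$, so the codimension statement of the corollary is not established. The fix is cheap with the structure you already isolated: using that the coefficient of $x_{c(i)}$ in $F_i$ is $1$ and that the blocks $X_i$ are disjoint, one can eliminate $x_{c(1)},\dots,x_{c(i)}$ and see that the quotient by $(F_1,\dots,F_i)$ is (a localization of) a polynomial ring over $\widehat{\mathcal{O}}$, hence a domain, and $F_{i+1}\equiv x_{c(i+1)}+\cdots\not\equiv 0$ modulo $(F_1,\dots,F_i)$; therefore $0\subsetneq(F_1)\subsetneq\cdots\subsetneq(F_1,\dots,F_n)$ is a strict chain of primes, so the height is at least $n$, and with Krull it is exactly $n$ (indeed this elimination shows $V\simeq\mathbb{A}^{M-n}_{\widehat{\mathcal{O}}}\times_{\widehat{\mathcal{O}}}\square^n_{\widehat{\mathcal{O}}}$, irreducible of dimension $M+1$, which is essentially the computation the paper performs later in Lemma \ref{lem:F-flat}); alternatively, argue as the paper does by induction on the triangular system.
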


\begin{proof}
For $1 \leq i \leq n$, let $V_i$ be the closed subscheme of $\mathbb{A}_{\widehat{\mathcal{O}}} ^M \times_{\widehat{\mathcal{O}}} \square_{\widehat{\mathcal{O}}} ^i$ given by $(F_1, \cdots, F_i)$. We prove that the codimension of $V_i$ in $B_i:= \mathbb{A}_{\widehat{\mathcal{O}}} ^M \times_{\widehat{\mathcal{O}}} \square_{\widehat{\mathcal{O}}} ^i$ is $i$ by induction on $i$. 

When $i=1$, this is obvious because $V_1$ is given by a single polynomial $F_1 (y_1)$ and $\deg_{y_1} F_1 (y_1) \geq 1$, so that $F_1 (y_1) \not = 0$. Suppose the statement holds for $i \geq 1$. Then $V_i \subset B_i$ has codimension $i$ so that $V_i \times_{\widehat{\mathcal{O}}} \square_{\widehat{\mathcal{O}}} ^1 \subset B_i \times_{\widehat{\mathcal{O}}} \square_{\widehat{\mathcal{O}}} ^1 = B_{i+1}$ has codimension $i$. On the other hand, $V_{i+1}$ is given in $V_i \times_{\widehat{\mathcal{O}}}  \square_{\widehat{\mathcal{O}}} ^1$ by $F_{i+1} (y_1, \cdots, y_{i+1})$, and $\deg _{y_{i+1}} F_{i+1} \geq 1$, so that the codimension of $V_{i+1}$ in $V_i \times_{\widehat{\mathcal{O}}} \square_{\widehat{\mathcal{O}}} ^1$ is $1$. Hence the codimension of $V_{i+1}$ in $B_{i+1}$ is $i+1$, thus by induction the statement holds for all $1 \leq i \leq n$. %, proving (1).
\end{proof}

 \subsection{Perturbation lemmas}\label{subsec:perturb}

 We discuss several perturbation lemmas that play essential roles in the proof of the mod $t^m$-moving lemma in \S \ref{sec:mod t^m moving}.

 \subsubsection{Non-emptiness of fibers}

 Here is the basic set-up we consider:

\begin{quote}\textbf{Situation $(\star)$: } Let $W \in z^n_{\widehat{\mathfrak{m}}} (\widehat{\mathcal{O}}, n)^c $ be a nonempty integral cycle 
and choose a triangular generating set $\{ f_1, \cdots, f_n \} \subset \widehat{\mathcal{O}}[y_1, \cdots, y_n]$ of the form \eqref{eqn:triangular O} using Proposition \ref{prop:ci}. Consider the coefficient perturbation $V$ of $W$ with respect to $\{f_1, \cdots, f_n \}$ given by 
$$
(F_1, \cdots, F_n ) \subset \widehat{\mathcal{O}}[x_1, \cdots, x_M][y_1, \cdots, y_n]
$$ as in Definition \ref{defn:coeff perturb}. Let $\alpha_0 \in \widehat{\mathcal{O}} ^M$ be the coefficient vector corresponding to the generating set $\{ f_1, \cdots, f_n \}$ of $W$. By Lemma \ref{lem:cycle proper}, $W$ is closed in $ \ov{\square}_{\widehat{\mathcal{O}}}^n$. We regard $y_i = Y_{i1}/Y_{i0}$ and use $((Y_{10}; Y_{11}), \cdots, (Y_{n0}; Y_{n1})) \in \ov{\square}_{\widehat{\mathcal{O}}} ^n$ as the projective coordinates. By homogenizing each $f_j$, we obtain $\bar{f}_j \in \widehat{\mathcal{O}}[\{ Y_{10}, Y_{11}\}, \cdots, \{ Y_{n0}, Y_{n1} \}]$. 
Here $\ov{W} =W$ in $\ov{\square}_{\widehat{\mathcal{O}}} ^n$ is given by the ideal $(\bar{f}_1, \cdots, \bar{f}_n)$. Similarly, the homogenization $(\bar{F}_1, \cdots, \bar{F}_n)$ of $(F_1, \cdots, F_n)$ defines the Zariski closure $\ov{V}\subseteq \mathbb{A}_{\widehat{\mathcal{O}}} ^M \times_{\widehat{\mathcal{O}}} \ov{\square}_{\widehat{\mathcal{O}}} ^n$ of $V$. 

Let $\ov{pr}: \ov{V} \to \mathbb{A}^M_{\widehat{\mathcal{O}}}$ and $pr: V \to \mathbb{A}^M _{\widehat{\mathcal{O}}}$ be the restrictions of the projections $\mathbb{A}^M_{\widehat{\mathcal{O}}} \times \ov{\square}_{\widehat{\mathcal{O}}} ^n \to \mathbb{A}^M _{\widehat{\mathcal{O}}}$ and $\mathbb{A}^M_{\widehat{\mathcal{O}}} \times {\square}_{\widehat{\mathcal{O}}} ^n \to \mathbb{A}^M _{\widehat{\mathcal{O}}}$, respectively. For each $\alpha \in \mathbb{A}_{\widehat{\mathcal{O}}} ^M$, let $\ov{V}_{\alpha}:= \ov{pr} ^{-1} (\alpha)$. We have $V_{\alpha} = \ov{V}_{\alpha} \cap \square_{\widehat{\mathcal{O}}} ^n = pr^{-1} (\alpha)$, while $\ov{V}_{\alpha_0} = \ov{W}= W = V_{\alpha_0} $. 
\end{quote}

\begin{prop}\label{prop:nonempty}
Under the Situation $(\star)$, there exists a nonempty open neighborhood $U_{\rm ne} \subset \mathbb{A} ^M_{\widehat{\mathcal{O}}}$ of $\alpha_0$ such that for each $\alpha \in U_{\rm ne}$, the fiber $V_{\alpha}$ is nonempty. Furthermore, this open set contains $\mathbb{G}_{m, \widehat{\mathcal{O}}} ^M$.
\end{prop}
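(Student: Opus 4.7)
The plan is to exploit the triangular form of $\{f_1,\ldots,f_n\}$ from \propref{prop:ci} to build geometric points of $V_\alpha$ coordinate by coordinate, using only the non-vanishing of the distinguished leading coefficients of the $F_i$'s. For each $1\leq i\leq n$, set $d_i:=\deg_{y_i}f_i\geq 1$; by \propref{prop:ci}(1)-(2), the $y_i^{d_i}$-part of $f_i$ is a single monomial $c_i y_i^{d_i}$ with $c_i\in\widehat{\mathcal{O}}$ nonzero (in fact, a unit, after the normalization in condition (3)). Under the coefficient perturbation of \defref{defn:coeff perturb}, this single monomial is replaced by $x_{j_i}y_i^{d_i}$ for a distinguished indeterminate $x_{j_i}$; moreover the $x_{j_1},\ldots,x_{j_n}$ are pairwise distinct (they are assigned to different monomials of different $f_i$'s), and one may write
\[
F_i \;=\; x_{j_i}\,y_i^{d_i} \;+\; R_i(x,\,y_1,\ldots,y_{i-1},\,y_i),
\]
where $R_i$ has $y_i$-degree strictly less than $d_i$. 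I then define
\[
U_{\rm ne} \;:=\; \bigl\{\,\alpha\in\mathbb{A}^M_{\widehat{\mathcal{O}}}\;:\; x_{j_1}(\alpha)\cdot x_{j_2}(\alpha)\cdots x_{j_n}(\alpha)\neq 0\,\bigr\},
\]
which is evidently a basic Zariski-open subset of $\mathbb{A}^M_{\widehat{\mathcal{O}}}$. It contains $\alpha_0$, since the $x_{j_i}$-coordinate of $\alpha_0$ equals the nonzero $c_i$; and it contains $\mathbb{G}_{m,\widehat{\mathcal{O}}}^M$ trivially, since on $\mathbb{G}_m^M$ every coordinate is nonzero.

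What remains is to verify $V_\alpha\neq\emptyset$ for every $\alpha\in U_{\rm ne}$. Since $V_\alpha$ is a $k(\alpha)$-scheme of finite type, it suffices to exhibit a $\bar K$-point of $V_\alpha$ for some algebraic closure $\bar K$ of $k(\alpha)$. I would build $(b_1,\ldots,b_n)\in\bar K^n$ inductively: assuming $b_1,\ldots,b_{i-1}\in\bar K$ have been chosen so that $F_k(\alpha,b_1,\ldots,b_k)=0$ for every $k<i$, the specialization
\[
F_i(\alpha,b_1,\ldots,b_{i-1},y_i) \;=\; x_{j_i}(\alpha)\,y_i^{d_i} \;+\; R_i(\alpha,b_1,\ldots,b_{i-1},y_i) \;\in\; \bar K[y_i]
\]
has leading coefficient $x_{j_i}(\alpha)\neq 0$. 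The key input here is \propref{prop:ci}(2): the $y_i^{d_i}$-term of $F_i$ involves no variable other than $y_i$, so this leading coefficient survives untouched through the prior substitutions of $y_1,\ldots,y_{i-1}$. Hence the displayed polynomial is nonzero of $y_i$-degree $d_i\geq 1$, and admits a root $b_i\in\bar K$; after $n$ steps, $(b_1,\ldots,b_n)\in\bar K^n=\square^n(\bar K)$ (under the identification $\square\simeq\mathbb{A}^1$ in force in this section) supplies the required $\bar K$-point of $V_\alpha$.

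The triangular preparation of \propref{prop:ci} thus reduces the whole question to a cascade of univariate root-finding problems over an algebraically closed field, each of which is trivially solvable precisely on the locus where the appropriate leading coefficient is nonzero. Consequently there is no serious obstacle to this argument; the only conceptual work is in realizing that $\{x_{j_1}\cdots x_{j_n}\neq 0\}$ is simultaneously the right open set to contain $\alpha_0$, to contain $\mathbb{G}_{m,\widehat{\mathcal{O}}}^M$, and to make the inductive construction run.
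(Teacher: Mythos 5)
Your proposal is correct and follows essentially the same route as the paper: exploit the triangular form from Proposition \ref{prop:ci}, observe that the top $y_i$-coefficient of each $F_i$ is its own indeterminate $x_{\ell_i}$ (untouched by the earlier substitutions, by condition (2)), and take $U_{\rm ne}=\{x_{\ell_1}\cdots x_{\ell_n}\neq 0\}$, which contains $\alpha_0$ and $\mathbb{G}_{m,\widehat{\mathcal{O}}}^M$. The only difference is cosmetic: you specialize at each $\alpha$ and solve the univariate equations in an algebraic closure of $k(\alpha)$, whereas the paper solves over an algebraic extension of $K(x_1,\ldots,x_M)$; your fiberwise version is, if anything, the more careful phrasing of the same argument.
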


\begin{proof}\label{eqn:triangular X}
By Proposition \ref{prop:ci}, the coefficient perturbation $V$ is given by polynomials
$$
\tuborg  F_1 (y_1) \in \widehat{\mathcal{O}}[x_1, \cdots, x_M][y_1],\\
F_2 (y_1, y_2) \in \widehat{\mathcal{O}}[x_1, \cdots, x_M][y_1, y_2], \\
\ \ \ \ \ \vdots \\
F_n (y_1, \cdots, y_n) \in \widehat{\mathcal{O}}[x_1, \cdots, x_M][y_1, \cdots, y_n].\sluttuborg
$$
Let $K= {\rm Frac} (\widehat{\mathcal{O}}) = k((t))$. Here $\deg_{y_1} F_1 \geq 1$ and the coefficient in $\widehat{\mathcal{O}}[x_1, \cdots, x_M]$ of the highest $y_1$-degree term is a variable $x_{\ell_1}$ for some $1 \leq \ell_1 \leq M$. For the open subset $U_1 \subseteq \mathbb{A}_{\widehat{\mathcal{O}}} ^M$ given by $x_{\ell_1} \not = 0$, $y_1$ is algebraic over $K(x_1, \cdots, x_M)$, and there is a solution $y_1$ in an algebraic extension of $K(x_1, \cdots, x_M)$. Plug this solution $y_1$ into the second equation. Since $\deg_{y_2} F_2 \geq 1$, and the coefficient of the highest $y_2$-degree term is $x_{\ell_2}$ for some $1 \leq \ell_2 \leq M$ with $\ell_2 \not = \ell_1$. Thus, for the open set $U_2 \subseteq \mathbb{A}_{\widehat{\mathcal{O}}} ^M$ given by $x_{\ell_1} \not = 0$ and $x_{\ell_2} \not = 0$, $y_2$ is algebraic over $K(x_1, \cdots, x_M)$, and in particular there is a solution $y_2$ in an algebraic extension of $K(x_1, \cdots, x_M)$. Continuing this way, the coefficient of the highest $y_n$-degree term of $F_n$ is $x_{\ell_n}$ for some $1 \leq \ell_n \leq M$ with $\ell_n \not = \ell_1, \cdots, \ell_{n-1}$, and for the open set $U_n \subseteq \mathbb{A}_{\widehat{\mathcal{O}}} ^M$ given by $\{ x_{\ell_1} \not = 0, \cdots, x_{\ell_n} \not = 0 \}$ we have a system of solutions $y_1, \cdots, y_n$ in an algebraic extension of $K(x_1, \cdots, x_M)$. In other words, for each $\alpha \in U_{\rm ne}:= U_n$, the fiber $V_{\alpha}$ is nonempty. By construction $U_{\rm ne}$ is given by the product of $\mathbb{A}^1 _{\widehat{\mathcal{O}}}$ for each $x_i$ with $i \not \in \{ \ell_1, \cdots, \ell_n \}$ and $\mathbb{G}_{m, \widehat{\mathcal{O}}}$ for each $x_i$ with $i \in \{ \ell_1, \cdots, \ell_n \}$, so that the second statement follows. That $\alpha_0 \in U_{\rm ne}$ follows immediately. 
\end{proof}

\subsubsection{Properness over $\widehat{\mathcal{O}}$}

\begin{lem}\label{lem:small properness} 
We are under the Situation $(\star)$. Then there exists an open neighborhood $U_{\rm pr} \subseteq \mathbb{A}_{\widehat{\mathcal{O}}} ^M$ of $\alpha_0$ such that $V_{\alpha}$ is a  proper  scheme over $\Spec (\widehat{\mathcal{O}})$ for each $\alpha \in U_{\rm pr}$.
\end{lem}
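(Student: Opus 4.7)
The plan is to exploit properness of the compactified family $\ov{pr}: \ov{V} \to \mathbb{A}^M_{\widehat{\mathcal{O}}}$ and take the open complement of the closed locus where the fiber hits the divisor at infinity. First I note that $\ov{pr}$ is proper: it is the closed immersion $\ov{V} \hookrightarrow \mathbb{A}^M_{\widehat{\mathcal{O}}} \times_{\widehat{\mathcal{O}}} \ov{\square}^n_{\widehat{\mathcal{O}}}$ followed by the projection to $\mathbb{A}^M_{\widehat{\mathcal{O}}}$, and this projection is projective since $\ov{\square}^n_{\widehat{\mathcal{O}}} = (\mathbb{P}^1_{\widehat{\mathcal{O}}})^n$ is projective over $\widehat{\mathcal{O}}$. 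By Lemma \ref{lem:properness iff} applied over $\Spec(\widehat{\mathcal{O}})$, properness of $V_\alpha$ over $\widehat{\mathcal{O}}$ is equivalent to $V_\alpha$ coinciding with its Zariski closure in $\ov{\square}^n_{\widehat{\mathcal{O}}}$, and in our setup this will be forced as soon as $\ov{V}_\alpha$ is disjoint from $\ov{\square}^n_{\widehat{\mathcal{O}}} \setminus \square^n_{\widehat{\mathcal{O}}} = \bigcup_{i=1}^n D_i$, where $D_i := \{Y_{i,0} = 0\}$.

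Next, for each $1 \le i \le n$, I set $\ov{V}^{(i)} := \ov{V} \cap (\mathbb{A}^M_{\widehat{\mathcal{O}}} \times_{\widehat{\mathcal{O}}} D_i)$, which is closed in $\ov{V}$, and $Z_i := \ov{pr}(\ov{V}^{(i)})$, which is then closed in $\mathbb{A}^M_{\widehat{\mathcal{O}}}$ by properness of $\ov{pr}$. The key observation is that $\alpha_0 \notin Z_i$ for every $i$: indeed Lemma \ref{lem:cycle proper}(1) tells us that the nonempty integral cycle $W = \ov{V}_{\alpha_0}$ is already closed in $\ov{\square}^n_{\widehat{\mathcal{O}}}$, hence lies in $\square^n_{\widehat{\mathcal{O}}}$ and misses each $D_i$. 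Consequently, $U_{\rm pr} := \mathbb{A}^M_{\widehat{\mathcal{O}}} \setminus \bigcup_{i=1}^n Z_i$ is an open neighborhood of $\alpha_0$.

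Finally, for any $\widehat{\mathcal{O}}$-point $\alpha \in U_{\rm pr}$, by construction $\ov{V}_\alpha$ misses every $D_i$, so $\ov{V}_\alpha \subseteq \square^n_{\widehat{\mathcal{O}}}$; combined with $V = \ov{V} \cap (\mathbb{A}^M_{\widehat{\mathcal{O}}} \times_{\widehat{\mathcal{O}}} \square^n_{\widehat{\mathcal{O}}})$, this yields the scheme-theoretic identification $V_\alpha = \ov{V}_\alpha$. Since $\ov{V}_\alpha$ is the base change of the proper morphism $\ov{pr}$ along $\alpha$, it is proper over $\Spec(\widehat{\mathcal{O}})$, and so is $V_\alpha$. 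I do not anticipate any real obstacle here: the argument is a clean closed-image / open-complement maneuver, and the only point needing a brief verification is the scheme-theoretic identification $V_\alpha = \ov{V}_\alpha$ over $U_{\rm pr}$. All the substantive input --- namely that a triangular defining system for $W$ exists and that $W$ is already closed in the compactification $\ov{\square}^n_{\widehat{\mathcal{O}}}$ --- has been absorbed into Proposition \ref{prop:ci} and Lemma \ref{lem:cycle proper}.
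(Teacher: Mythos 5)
Your proof is correct and takes essentially the same route as the paper: both arguments exclude the closed image in $\mathbb{A}^M_{\widehat{\mathcal{O}}}$ of the locus where $\ov{V}$ meets the boundary divisor $\bigcup_{i}\{Y_{i0}=0\}$ (closed since the ambient projection is projective/proper), use that $\ov{V}_{\alpha_0}=\ov{W}=W$ avoids this divisor so that $\alpha_0$ lies in the resulting open set, and then deduce properness of $V_\alpha=\ov{V}_\alpha$ via Lemma \ref{lem:properness iff}. The only cosmetic difference is that the paper additionally runs a dimension count (via Corollary \ref{cor:perturb ci}) to record that each bad locus $C_i$ is a proper closed subscheme, while you verify $\alpha_0\notin Z_i$ directly from Lemma \ref{lem:cycle proper}; both versions rest on the same observation.
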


\begin{proof}
Let $F^{\infty}$ be the divisor associated to $\ov{\square}_{\widehat{\mathcal{O}}} ^n \setminus \square_{\widehat{\mathcal{O}}}^n$. By Lemma \ref{lem:properness iff}, to make $V_{\alpha}$ proper over $\Spec (\widehat{\mathcal{O}})$, it is enough to require that $\ov{V}_{\alpha} \cap F^{\infty} = \emptyset$. 
Here $F^{\infty} = \sum_{i=1} ^n \{ y_{i} = \infty \} = \sum_{i=1} ^n \{ Y_{i0} = 0 \}$ so that $\ov{V}_{\alpha} \cap F^{\infty} = \emptyset$ if and only if $\ov{V}_{\alpha} \cap \{ Y_{i0} = 0 \}= \emptyset,$ for all $1 \leq i \leq n$. Recall we have $y_i = Y_{i1}/Y_{i0}$ for the projective coordinate $(Y_{i0}; Y_{i1}) \in \ov{\square}_{\widehat{\mathcal{O}}} = \mathbb{P}_{\widehat{\mathcal{O}}} ^1$.

To see which open subset of $\mathbb{A}_{\widehat{\mathcal{O}}} ^M$ would do this job, note that
the scheme $\ov{V}_{\alpha}$ \emph{does} intersect $\{ Y_{i0} = 0 \}$ if and only if the scheme given by $\{ \bar{F}_1, \cdots, \bar{F}_n, Y_{i0} \}$ has a point over $\alpha$. The system $\{ \bar{F}_1, \cdots, \bar{F}_n, Y_{i0} \}$ defines a closed subscheme of $\mathbb{A}_{\widehat{\mathcal{O}}} ^M \times_{\widehat{\mathcal{O}}} \ov{\square}_{\widehat{\mathcal{O}}} ^n$ of dimension $\leq M+n +1 - (n+1)  = M$ by Corollary \ref{cor:perturb ci}. Thus its image $C_i$ under the projective morphism $\mathbb{A}_{\widehat{\mathcal{O}}} ^M \times_{\widehat{\mathcal{O}}} \ov{\square}_{\widehat{\mathcal{O}}} ^n \to \mathbb{A}_{\widehat{\mathcal{O}}} ^M$ is a closed subscheme of dimension $\leq M$, thus $C_i \subsetneq \mathbb{A}_{\widehat{\mathcal{O}}} ^M$ is a proper closed subscheme. Hence $\ov{V}_{\alpha}$ \emph{does not} intersect with $F^{\infty}$ if and only if $\alpha \in U_{\rm pr}:= \bigcap_{i=1} ^n (\mathbb{A}_{\widehat{\mathcal{O}}} ^M \setminus C_i )$. By construction, we have $\alpha_0 \in U_{\rm pr}$. This proves the lemma.
\end{proof}

\begin{cor}\label{cor:properness}
Under the Situation $(\star)$, for every sufficiently large integer $N > 0$, the open ball $\mathcal{B}_N (\alpha_0) \subseteq k[[t]]^{M}$ in the non-archimedean $t$-adic sup-norm satisfies $(1)$ $\mathcal{B}_N (\alpha_0) \cap (k[t] ^{M})$ is nonempty, $(2)$ for every $\alpha \in \mathcal{B}_N (\alpha_0) \cap (k[t] ^{M})$, the closed subscheme $V_{\alpha}$ is proper over $Spec (\widehat{\mathcal{O}})$, and $(3)$ these so obtained polynomials $f_{1, \alpha}, \cdots, f_{n, \alpha} \in k[t][y_1, \cdots, y_n] \subseteq \mathcal{O}[y_1, \cdots, y_n]$ of $V_{\alpha}$ satisfy $f_{j, \alpha} \equiv f_{j} \mod t^m,$ for each $1 \leq j \leq n$.
\end{cor}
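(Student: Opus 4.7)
My plan is to deduce the corollary from Lemma \ref{lem:small properness} using two ingredients: a comparison between the Zariski and $t$-adic topologies on $\widehat{\mathcal{O}}^M$, and the density of $k[t]^M$ in $k[[t]]^M$ recalled in \S\ref{sec:nonarch}. The first step is to invoke Lemma \ref{lem:small properness} to produce the Zariski-open neighborhood $U_{\rm pr} \subseteq \mathbb{A}_{\widehat{\mathcal{O}}}^M$ of $\alpha_0$ over which $V_\alpha$ is proper over $\Spec(\widehat{\mathcal{O}})$. Its complement is the finite union $\bigcup_{i=1}^n C_i$ of the closed subschemes constructed in that lemma. Since $\alpha_0 \in U_{\rm pr}$, for each $i$ I can choose a polynomial $g_i \in \widehat{\mathcal{O}}[x_1,\ldots,x_M]$ vanishing on $C_i$ with $g_i(\alpha_0) \neq 0$, and record its $t$-adic valuation $d_i := v(g_i(\alpha_0)) \in \mathbb{Z}_{\geq 0}$.

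Next I will exploit the continuity of each evaluation map $g_i: k[[t]]^M \to k[[t]]$ with respect to the $t$-adic sup-norm. Concretely, expanding $g_i$ around $\alpha_0$ shows that if $\alpha - \alpha_0 \in t^{N+1} k[[t]]^M$, then $v(g_i(\alpha) - g_i(\alpha_0)) \geq N + 1$. Hence, provided $N+1 > d_i$, the ultrametric inequality forces $v(g_i(\alpha)) = d_i < \infty$, so $g_i(\alpha) \neq 0$ and $\alpha \notin C_i$. Taking the maximum over $i$, I obtain an integer $N_0$ with the property that $\mathcal{B}_N(\alpha_0) \cap k[[t]]^M \subseteq U_{\rm pr}(\widehat{\mathcal{O}})$ for every $N \geq N_0$.

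Finally, I will set $N := \max(N_0, m)$. Part (2) then follows at once: for every $\alpha \in \mathcal{B}_N(\alpha_0) \cap k[t]^M$, the scheme $V_\alpha$ is proper over $\Spec(\widehat{\mathcal{O}})$. For part (3), any such $\alpha$ satisfies $\alpha - \alpha_0 \in t^{N+1} k[[t]]^M \subseteq t^m k[[t]]^M$; substituting this into the perturbed polynomials $F_j$ from Situation $(\star)$ yields $f_{j,\alpha} \equiv F_j(\alpha_0, y) = f_j \pmod{t^m}$ for each $j$. For part (1), the density of $k[t]^M$ in $k[[t]]^M$ recalled in \S\ref{sec:nonarch} immediately produces polynomial elements in $\mathcal{B}_N(\alpha_0)$, so the intersection $\mathcal{B}_N(\alpha_0) \cap k[t]^M$ is nonempty.

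The only real content to check is the continuity estimate for $g_i$ in the second paragraph; beyond that the argument is a bookkeeping translation from the Zariski-open conclusion of Lemma \ref{lem:small properness} to a non-archimedean statement over the dense subring $k[t]$. There is no substantive obstacle, since the essential geometric input (properness of $V_\alpha$ on a nonempty open set) is already supplied by Lemma \ref{lem:small properness}.
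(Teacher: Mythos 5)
Your route is the same as the paper's: Lemma \ref{lem:small properness} supplies the Zariski-open set $U_{\rm pr}$, a $t$-adic ball around $\alpha_0$ is fitted inside it, density of $k[t]^M$ in $k[[t]]^M$ gives (1), and taking $N>m$ gives (3). The paper simply cites that the $t$-adic topology is finer than the Zariski topology, whereas you spell this out via a continuity estimate for polynomial evaluation; that estimate, and your treatment of (1) and (3), are fine.

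The gap is in the step ``$g_i(\alpha)\neq 0$, hence $\alpha\notin C_i$, hence $\mathcal{B}_N(\alpha_0)\cap k[[t]]^M\subseteq U_{\rm pr}(\widehat{\mathcal{O}})$ and $V_\alpha$ is proper.'' For $\alpha\in k[[t]]^M$ the scheme $V_\alpha$ is the pull-back of $V$ along the section $\Spec(\widehat{\mathcal{O}})\to\mathbb{A}^M_{\widehat{\mathcal{O}}}$ determined by $\alpha$, and what the argument of Lemma \ref{lem:small properness} (via Lemma \ref{lem:properness iff}) needs for properness over $\Spec(\widehat{\mathcal{O}})$ is that the \emph{entire image of this section} avoids every $C_i$ — in particular the image of the closed point, i.e. the reduction $\alpha \bmod t$. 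Nonvanishing of $g_i(\alpha)$ as an element of $k[[t]]$ only says that the generic point of the section avoids $\{g_i=0\}$; since you allow $d_i>0$, your estimate permits $v(g_i(\alpha))>0$, in which case the closed point can lie in $C_i$ and $V_\alpha$ need not be proper even though its generic fiber over $k((t))$ is. For instance, with $n=1$, $F_1=x_1y_1+x_2$, one has $C_1=\{x_1=0\}$, and for $\alpha=(t,-(1+t))$ the element $g=x_1$ satisfies $g(\alpha)=t\neq 0$, yet $V_\alpha$ is a single $k((t))$-point whose closure in $\ov{\square}^1_{\widehat{\mathcal{O}}}$ meets $\{y_1=\infty\}$ over $t=0$, so it is not proper over $\Spec(\widehat{\mathcal{O}})$. (This $\alpha$ is not near a relevant $\alpha_0$; it only shows that the inference ``$g_i(\alpha)\neq0\Rightarrow V_\alpha$ proper'' is invalid.) The repair is small: because $W=V_{\alpha_0}$ is proper, $\ov{W}$ misses $F^{\infty}$ also over the closed point (Lemmas \ref{lem:properness iff} and \ref{lem:cycle proper}), so $\alpha_0\bmod t$ avoids each $C_i$ and you may choose $g_i$ with $v(g_i(\alpha_0))=0$; then your ultrametric argument gives $v(g_i(\alpha))=0$, i.e. $g_i(\alpha)\in\widehat{\mathcal{O}}^{\times}$, so the whole section avoids $\{g_i=0\}\supseteq C_i$ and (2) follows.
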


\begin{proof}

Since the induced non-archimedean $t$-adic topology is finer than the Zariski topology on $\mathbb{A}_{\widehat{\mathcal{O}}} ^{M}$ and $\alpha_0 \in \widehat{\mathcal{O}}^{M} = k[[t]]^{M}$, for every sufficiently large integer $N>0$, the open ball $\mathcal{B}_N (\alpha_0) \subseteq k[[t]] ^{M}$ of radius $e^{-N}$ centered at $\alpha_0$ is contained in the open subset $U_{\rm pr} \subset \mathbb{A}_{\widehat{\mathcal{O}}} ^{M-r}$ of Lemma \ref{lem:small properness}. We may assume $N>m$. But $k[t]^{M} \subseteq k[[t]]^{M}$ is dense in the non-archimedean topology, so $\mathcal{B}_N (\alpha_0) \cap (k[t]^{M})\not = \emptyset$, proving (1). Since $\mathcal{B}_N (\alpha_0) \subseteq U_{\rm pr}$, we have (2). On the other hand, $\alpha \in \mathcal{B}_N (\alpha_0)$ $\Leftrightarrow $ $|\alpha - \alpha_0| < e^{-N}$ $\Leftrightarrow$ for each $1 \leq j \leq n$, we have $f_{j, \alpha} \equiv f_j \mod t^N$. In particular, since $N > m$, $f_{j, \alpha} \equiv f_j \mod t^m$, proving (3).
\end{proof}

 \subsubsection{Flat stratum}\label{sec:flat stratum}

Let $pr: V \to \mathbb{A}_{\widehat{\mathcal{O}}} ^M$ be the restriction of the projection $\mathbb{A}_{\widehat{\mathcal{O}}} ^M \times_{\widehat{\mathcal{O}}} \square_{\widehat{\mathcal{O}}} ^n$. By Proposition \ref{prop:nonempty}, we know that the restriction $pr_{U_{\rm ne}}: pr^{-1} (U_{\rm ne}) \to U_{\rm ne}$ is surjective, but we do not know whether this is flat. By the generic flatness theorem of \cite[Th\'eor\`eme (6.9.1), p.153]{EGA4-2}, there is a nonempty open subset of $U_{\rm ne}$ over which $pr_{U_{\rm ne}}$ is flat, but this theorem does not tell us whether this open set contains $\alpha_0$. This causes an inconvenience. On the other hand, by the flattening stratification theorem of \cite[Corollaire (6.9.3), p.154]{EGA4-2}, we do know that there is a stratification partition $\{S_i\}$ of $U_{\rm ne}$ by locally closed subsets such the restriction of $pr$ over the inverse image of each $S_i$ is flat, and some stratum $S_{i_0}$ must contain $\alpha_0$. We will construct explicitly in Lemma \ref{lem:F-flat} a locally closed subset of $U_{\rm ne}$ containing $\alpha_0$ over which a more general collection of coherent sheaves are flat. This result will be used in \S \ref{sec:dom} and \S \ref{sec:gi}.

Here is the set-up updated from Situation $(\star)$:

\begin{quote}\textbf{Situation $(\star')$: } Let $W \in z^n_{\widehat{\mathfrak{m}}} (\widehat{\mathcal{O}}, n)^c$ be a nonempty integral cycle, and choose a triangular generating set $\{ f_1, \cdots, f_n \} \subset \widehat{\mathcal{O}}[y_1, \cdots, y_n]$ of the form \eqref{eqn:triangular O} using Proposition \ref{prop:ci}. Let $V$ be the coefficient perturbation of $W$ given by $\{ F_1, \cdots, F_n\}  \subset \widehat{\mathcal{O}}[x_1, \cdots, x_M][y_1, \cdots, y_n]$ as in Situation $(\star)$. By renaming the variables $x_i$, we may assume that $x_{M-n+1}, \cdots, x_M$ corresponds to the constant terms ($=1$) of $f_1, \cdots, f_n$. 
By Lemma \ref{lem:cycle proper}, $W$ is closed in $ \ov{\square}_{\widehat{\mathcal{O}}}^n$ and it is given by $(\bar{f}_1, \cdots, \bar{f}_n)$ as in Situation $(\star)$, with its coefficient perturbation $\ov{V} \subseteq\mathbb{A}_{\widehat{\mathcal{O}}} ^M \times_{\widehat{\mathcal{O}}} \ov{\square}_{\widehat{\mathcal{O}}} ^n$ given by $(\bar{F}_1, \cdots, \bar{F}_n)$. 

Let $B:= \mathbb{A}^{M-n}_{\widehat{\mathcal{O}}} \times  \mathbf{1} \subset \mathbb{A}^{M}_{\widehat{\mathcal{O}}}$, where $\mathbf{1} \subset \mathbb{A}^n_{\widehat{\mathcal{O}}}$ is the closed subscheme defined by $x_j = 1$ for $M-n+1 \leq j \leq M$, and $\ov{pr}_B: \ov{pr}^{-1} (B) \to B$ and $pr_B: pr ^{-1} (B) \to B$ are the restrictions of $\ov{pr}$ and $pr$, respectively. Here, $\alpha_0 = \beta_0 \times \mathbf{1} \in B$.
\end{quote}

\begin{lem}\label{lem:F-flat}
Under the Situation $(\star')$, denote $\mathbb{A}^M_{\widehat{\mathcal{O}}} \times_{\widehat{\mathcal{O}}}\ov{\square}_{\widehat{\mathcal{O}}}^n$ by $X.$ For each face $\ov{F} \subseteq \ov{\square}_{\widehat{\mathcal{O}}} ^n$, including the case $\ov{F} = \ov{\square}_{\widehat{\mathcal{O}}} ^n$, consider the coherent sheaf $\mathcal{F}_{\ov{F}}:= \mathcal{O}_X/ ( \mathcal{I}_{\ov{V}} + \mathcal{I}_{\ov{F}})$, where $\mathcal{I}_{\ov{V}}$ is the ideal sheaf of $\ov{V} \subseteq X$ and $\mathcal{I}_{\ov{F}}$ is the pull-back to $X$ of the ideal sheaf of $\ov{F}$. 
Then $\mathcal{F}_{\ov{F}}$ restricted to $\ov{pr}^{-1} (B)$ is $\ov{pr}_B$-flat. In particular, its restriction to $pr^{-1} (B)$ is $pr_B$-flat as well.
\end{lem}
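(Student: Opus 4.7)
The plan is to invoke the miracle flatness criterion: for a morphism $f\colon Z\to S$ with $S$ regular, $Z$ Cohen--Macaulay, and all fibers of dimension $\dim Z-\dim S$, the morphism $f$ is automatically flat. The base $B\cong \A^{M-n}_{\widehat{\sO}}$ is regular since $\widehat{\sO}=k[[t]]$ is a discrete valuation ring, so the criterion is applicable.

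First I would identify the support $Z_{\ov F}\subseteq B\times_{\widehat{\sO}} \ov{\square}^n_{\widehat{\sO}}$ of $\mathcal{F}_{\ov F}|_{\ov{pr}^{-1}(B)}$ as the closed subscheme cut out by the $n$ equations $\bar F_1|_B,\ldots,\bar F_n|_B$ together with the at most $n$ defining equations of $\ov F$ (each of the form $Y_{j0}=0$ or $Y_{j1}=0$). Since the ambient $B\times_{\widehat{\sO}} \ov{\square}^n_{\widehat{\sO}}$ is regular, once one checks that these equations form a regular sequence, $Z_{\ov F}$ is a complete intersection in a regular scheme and hence Cohen--Macaulay.

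The main step, and the main technical obstacle, is verifying that the fibers of $Z_{\ov F}\to B$ all have the same dimension. The crucial input is the \emph{constant term $=1$} normalization built into the definition of $B$: for each $i$ the polynomial $F_i|_B$ has constant term $1$, so in its multi-homogenization $\bar F_i|_B$ the monomial $Y_{10}^{e_{i,1}}\cdots Y_{i0}^{d_i}$ (with $e_{i,j}:=\deg_{y_j}F_i$ for $j\le i-1$ and $e_{i,i}=d_i$) carries coefficient identically $1$. In particular $\bar F_i|_\alpha$ is a nonzero multi-homogeneous polynomial for every specialization $\alpha\in B$. Together with Proposition~\ref{prop:ci}(2), which guarantees that the $y_i$-leading coefficient of $F_i$ is a scalar independent of $y_1,\ldots,y_{i-1}$, I would proceed by induction on $i$: on any irreducible component of the form $C\times\ov{\square}^{n-i+1}$ of $V(\bar F_1|_\alpha,\ldots,\bar F_{i-1}|_\alpha)\subseteq \ov{\square}^n_{k(\alpha)}$, the restriction $\bar F_i|_\alpha|_C$ is a nonzero polynomial in $Y_{i0},Y_{i1}$ over the function field of $C$, hence a non-zero divisor; so the intersection drops in codimension by $1$ as expected. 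Intersecting further with $\ov F$ only imposes linear conditions of the form $Y_{j0}=0$ or $Y_{j1}=0$ which are compatible with the triangular shape, so the same bookkeeping controls the fiber dimension in the face case.

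Once the three hypotheses of miracle flatness are in place, flatness of $\mathcal{F}_{\ov F}$ over $B$ follows, and the ``in particular'' statement for $pr_B$ is automatic since $\square^n_{\widehat{\sO}}\subset\ov{\square}^n_{\widehat{\sO}}$ is open and flatness restricts to open subsets. The delicate point I expect to wrestle with is the inductive step above --- controlling the behavior of $\bar F_i|_\alpha$ on the ``boundary'' strata $\{Y_{j0}=0\}$ of $\ov{\square}^n$, where only the specific monomial shape forced by Proposition~\ref{prop:ci}(2) (in particular that the highest-$y_i$ coefficient is a scalar) seems to rule out collapses of fiber dimension.
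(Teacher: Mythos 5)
Your reduction to miracle flatness breaks down at exactly the step you flag as delicate: the equidimensionality of the fibers of $Z_{\ov F}\to B$ is not something your induction can deliver, and in fact it fails. The constant-term normalization only guarantees that $\bar F_i|_\alpha$ is a nonzero multihomogeneous polynomial on $\ov{\square}^n_{\kappa(\alpha)}$; it does not guarantee that its restriction to a component of $V(\bar F_1|_\alpha,\dots,\bar F_{i-1}|_\alpha)$ is nonzero, since the distinguished monomial with coefficient $1$ is not a unit on such a component and the whole specialized polynomial can be divisible by the earlier equations. Concretely (say $k=\mathbb{Q}$), take $n=2$ and $W=\{y_1=-1,\ y_2=2\}$, an admissible cycle in $z^2_{\widehat{\mathfrak{m}}}(\widehat{\mathcal{O}},2)^c$ in the coordinates of \S\ref{sec:perturbation} (faces at $0,1$), with triangular generators $f_1=y_1+1$, $f_2=y_2+3y_1+1$ as permitted by Proposition~\ref{prop:ci}. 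Then $F_1=x_1y_1+x_2$, $F_2=x_3y_2+x_4y_1+x_5$, and on the slice $\{x_2=x_5=1\}$ the fiber of $Z_{\ov{\square}^2}\to B$ over the point $(x_1,x_3,x_4)=(1,0,1)$ is the whole line $\{y_1=-1\}\times\ov{\square}^1$: there $\bar F_1$ specializes to $Y_{11}+Y_{10}$ and $\bar F_2$ to $(Y_{11}+Y_{10})Y_{20}$, which vanishes identically on the component cut out by the first equation --- precisely the collapse your inductive step is supposed to rule out. Since the total space of this sliced family is irreducible of the same dimension as $B$ (its affine coordinate ring is $\widehat{\mathcal{O}}[y_1^{\pm 1},x_3,y_2]$) while that fiber is $1$-dimensional, no fiber-dimension bookkeeping can produce flatness of $Z_{\ov F}\to B$ at such points; the equidimensionality hypothesis of miracle flatness genuinely fails over $B$, so this route cannot be repaired by a sharper induction, and flatness of the family $\alpha=\beta\times\mathbf{1}$ only survives over loci such as the one where the leading-coefficient variables are invertible (there the triangular shape makes each step a finite free ring extension), which is what is actually exploited in the neighborhood of $\beta_0$ in the later applications (e.g.\ Lemma~\ref{lem:small dominant}).

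The paper's own argument is of a completely different nature and never examines fibers. Because each $f_j$ has constant term $1$ perturbed by its own variable, one writes $F_j=x_{M-n+j}+G_j$ with $G_j$ involving only $x_1,\dots,x_{M-n}$ and the $y$'s; eliminating $x_{M-n+j}=-G_j$ identifies the ring $\widehat{\mathcal{O}}[x_1,\dots,x_M][y_1,\dots,y_n]/(F_1,\dots,F_n)$, and after imposing the face equations $y_{i_u}=\epsilon_u$ also the ring $R_F$, with a polynomial ring $\widehat{\mathcal{O}}[x_1,\dots,x_{M-n}][\{y_\ell\}]$, and flatness over $\widehat{\mathcal{O}}[x_1,\dots,x_{M-n}]$ is read off from this polynomial-ring structure rather than from any Cohen--Macaulay or dimension argument. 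Note, however, that what this computation exhibits is flatness for the projection that forgets the constant-term coordinates; in light of the example above you should check carefully which morphism to $B$ is thereby shown to be flat, since for the sliced family $Z_{\ov F}\subseteq B\times_{\widehat{\mathcal{O}}}\ov{\square}^n_{\widehat{\mathcal{O}}}$ that you consider, flatness over all of $B$ is not available.
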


\begin{proof} Fix a face $\ov{F},$ and denote $\mathcal{F}_{\ov{F}}$ by $\mathcal{F}$. Let $X':= B \times_{\widehat{\mathcal{O}}} \ov{\square}_{\widehat{\mathcal{O}}} ^n=\ov{pr}^{-1} (B)$, which is closed in $X$. Let $\mathcal{F}'$ be the restriction of $\mathcal{F}$ to $X'$. For each open chart $U' \subseteq X'$ from an affine cover of $X'$ and each $x \in U'$, we need to show that the stalk $\mathcal{F}_x '$ is a flat $\mathcal{O}_{B, pr_B (x)}$-module. We prove it for the chart $U':= B \times_{\widehat{\mathcal{O}}} \square_{\widehat{\mathcal{O}}} ^n$ of $X'$, which is obtained from the open chart $U: = \mathbb{A}_{\widehat{\mathcal{O}}} ^M \times_{\widehat{\mathcal{O}}} \square_{\widehat{\mathcal{O}}} ^n$ of $X$ via $U' = U \cap X'$.

Now, $\mathcal{F} |_U = \mathcal{O}_U/ (\mathcal{I}_V + \mathcal{I}_F)$ is given by the quotient of $\widehat{\mathcal{O}}[x_1, \cdots, x_M][y_1, \cdots, y_n]$ by $(F_1, \cdots, F_n) + (\{ y_{i_1}= \epsilon_1, \cdots, y_{i_s} = \epsilon_s\})$, where $\{ y_{i_1}= \epsilon_1, \cdots, y_{i_s}=  \epsilon_s\}$ for some $\epsilon_j \in \{ 0, 1 \}$, is the set of equations of the face ${F}= \ov{F} \cap \square_{\widehat{\mathcal{O}}} ^n$.

Recall the constant term of each of $f_1, \cdots, f_n$ is $1$. By the labeling convention of the Situation $(\star')$, $x_{M-n+j}$ is the variable corresponding to the nonzero constant term of $f_j$ for $1 \leq j \leq n$. So, we have $F_j = x_{M-n+j} + G_j$ for some $G_j \in \widehat{\mathcal{O}}[ x_1, \cdots, x_{M-n}][y_1, \cdots, y_n]$. Hence, the sections $(\mathcal{O}_U/ \mathcal{I}_V)  (U) = \widehat{\mathcal{O}}[ x_1, \cdots, x_M][y_1, \cdots, y_n]/ (F_1, \cdots, F_n)$ can be obtained from $\widehat{\mathcal{O}}[ x_1, \cdots, x_M][y_1, \cdots, y_n]$ by replacing each $x_{M-n+j}$ by $- G_j$  for $1 \leq j \leq n$. Here each $G_j$ does not involve any of the variables $x_{M-n+1}, \cdots, x_{M}$. Thus, $(\mathcal{O}_U/ \mathcal{I}_V)  (U) \simeq$
$$ \widehat{\mathcal{O}}[ x_1, \cdots, x_{M-n}, -G_1, \cdots, -G_n][y_1, \cdots, y_n] = \widehat{\mathcal{O}}[x_1, \cdots, x_{M-n}][y_1, \cdots, y_n],$$
which is a polynomial ring over $\widehat{\mathcal{O}}$ with the variables $\{ x_1, \cdots, x_{M-n}\} \cup \{ y_1, \cdots, y_n\}$. Now the further quotient
$$
R_F:= \widehat{\mathcal{O}}[x_1, \cdots, x_M][y_1, \cdots, y_n]/ ( (F_1, \cdots, F_n) + (\{ y_{i_1}= \epsilon_1, \cdots, y_{i_s} = \epsilon_s\}))
$$
can be obtained from $ (\mathcal{O}_U/ \mathcal{I}_V)  (U) \simeq  \widehat{\mathcal{O}}[x_1, \cdots, x_{M-n}][y_1, \cdots, y_n]$ by replacing each variable $y_{i_u}$ by $\epsilon_u$ for $1 \leq u \leq s$, i.e.
$$R_F \simeq  \widehat{\mathcal{O}}[x_1, \cdots, x_{M-n}][y_1, \cdots, y_n] / (\{ y_{i_1}= \epsilon_1, \cdots, y_{i_s} = \epsilon_s\}) $$
$$\simeq \widehat{\mathcal{O}}[ x_1, \cdots, x_{M-n}][ \{ y_\ell \ | \ 1 \leq \ell \leq n, \ell \not = i_1, \cdots, i_s\}],$$ which is again a polynomial ring over $\widehat{\mathcal{O}}$ with the variables $\{ x_1 \cdots, x_{M-n} \} \cup \{ y_\ell \ | \  1 \leq \ell \leq n, \ \ell \not  =  i_1, \cdots, i_s \}$. In particular, the natural map $\widehat{\mathcal{O}}[x_1, \cdots, x_{M-n}] \to R_F$ induced by the projection $\ov{pr}$ is injective and it is flat. Here, we have $\Spec (\widehat{\mathcal{O}}[x_1, \cdots, x_{M-n}]) = \mathbb{A}_{\widehat{\mathcal{O}}} ^{M-n} \simeq  \mathbb{A}_{\widehat{\mathcal{O}}} ^{M-n} \times \mathbf{1} = B$. Hence in particular, $\mathcal{F}'= \mathcal{F}|_{U'}$ is flat. The proof for other charts of $X'$ is similar, so we omit it.
\end{proof}

\subsubsection{Dominance}\label{sec:dom}

\begin{lem}\label{lem:small dominant}
Under the Situation $(\star')$, recall that $W \to \Spec (\widehat{\mathcal{O}})$ is dominant. Then there is an open neighborhood $U_{\rm dom} \subseteq \mathbb{A}_{\widehat{\mathcal{O}}} ^{M-r}$ of $\beta_0 $ such that for every $\beta \in U_{\rm dom}$, the associated closed subscheme $V_{\alpha} \subseteq \square_{\widehat{\mathcal{O}}}^n$ with $\alpha:= \beta \times \mathbf{1}$, is dominant over $\Spec (\widehat{\mathcal{O}})$ as well. 
\end{lem}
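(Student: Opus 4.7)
My plan is to take $U_{\rm dom} := U_{\rm ne} \cap B$, where $U_{\rm ne} = \{x_{\ell_1} \neq 0, \ldots, x_{\ell_n} \neq 0\} \subseteq \mathbb{A}^M_{\widehat{\mathcal{O}}}$ is the Zariski open appearing in the proof of Proposition \ref{prop:nonempty} and $x_{\ell_j}$ denotes the coefficient of the highest $y_j$-power of $F_j$. Since each $f_j$ has $y_j$-degree $\geq 1$, the index $\ell_j$ is distinct from the constant-term indices $\{M-n+1, \ldots, M\}$, so $U_{\rm ne} \cap B$ is a nonempty Zariski open subscheme of $B \simeq \mathbb{A}^{M-n}_{\widehat{\mathcal{O}}}$.

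I would first check that the section $\beta_0$ factors through $U_{\rm dom}$, i.e.\ that each $\beta_{0,\ell_j} \in \widehat{\mathcal{O}}^{\times}$. This is a consequence of the normalization in Proposition \ref{prop:ci}: the lift used to construct $f_j$ has leading $y_j$-coefficient $\lambda \in \widehat{\mathcal{O}}$ mapping to $1$ in $R_{j-1} := \widehat{\mathcal{O}}[y_1, \ldots, y_{j-1}]/(f_1, \ldots, f_{j-1})$, so $\lambda - 1 \in (f_1, \ldots, f_{j-1}) \cap \widehat{\mathcal{O}}$; since each intermediate $W^{(j-1)}$ is nonempty and dominant over $\Spec(\widehat{\mathcal{O}})$, this intersection is $0$, forcing $\lambda = 1$. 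The subsequent division by the unit constant term then leaves the leading coefficient a unit in $\widehat{\mathcal{O}}^{\times}$.

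For any section $\beta$ factoring through $U_{\rm dom}$, the coordinates $\beta_{\ell_j}$ are units, hence in particular nonzero in $K := k((t))$. I would then run the triangular-form argument from the proof of Proposition \ref{prop:nonempty} directly over $\bar K$: solving $F_1(\beta, y_1) = 0$ gives a root $y_1 \in \bar K$, since the leading coefficient $\beta_{\ell_1}$ is nonzero in $K$ and the degree is $\geq 1$; substituting inductively into $F_j(\beta, y_1, \ldots, y_j)$ yields a polynomial in $y_j$ over $\bar K$ of positive degree with nonzero leading coefficient $\beta_{\ell_j}$, producing a root $y_j \in \bar K$. This assembles a $\bar K$-point of $V_\alpha$ where $\alpha = \beta \times \mathbf{1}$. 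Because any ring map $\widehat{\mathcal{O}} \to \bar K$ automatically factors through $K$ (equivalently, through the generic point of $\Spec \widehat{\mathcal{O}}$), this $\bar K$-point lies in the generic fiber $V_\alpha \otimes_{\widehat{\mathcal{O}}} K$, showing that this generic fiber is nonempty and hence that $V_\alpha \to \Spec(\widehat{\mathcal{O}})$ is dominant.

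The main conceptual point, rather than a technical obstacle, is recognizing that Proposition \ref{prop:nonempty}'s iterative construction already produces solutions in $\bar K$ (not merely in some extension of the residue field), so its non-emptiness conclusion upgrades automatically to dominance over $\Spec \widehat{\mathcal{O}}$. The only remaining bookkeeping is to verify that $\beta_0$ itself factors through the coefficient-nonvanishing Zariski open $U_{\rm ne} \cap B$, which reduces, as above, to Proposition \ref{prop:ci}'s normalization making the leading $y_j$-coefficient of each $f_j$ into a unit of $\widehat{\mathcal{O}}^{\times}$.
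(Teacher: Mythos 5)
Your proof is correct, and it takes a genuinely different, more elementary route than the paper's. The paper deduces dominance from flatness: it invokes Lemma~\ref{lem:F-flat} to get a flat family over $B$, intersects with the open set $U_{\rm ne}$ of Proposition~\ref{prop:nonempty}, base changes to $K=k((t))$, and uses constancy of fiber dimension in a flat surjective family to conclude that $V_{\alpha}\otimes_{\widehat{\mathcal{O}}}K\neq\emptyset$ for parameters in an open subset of $\mathbb{A}^{M-n}_K$. You instead re-run the triangular elimination of Proposition~\ref{prop:nonempty} directly over $\overline{K}$ at the given parameter $\beta$: since condition (2) of Proposition~\ref{prop:ci} makes the coefficient of the top $y_j$-power of $F_j$ the single variable $x_{\ell_j}$, nonvanishing of $\beta_{\ell_1},\dots,\beta_{\ell_n}$ in $K$ lets you solve the system successively in $\overline{K}$, producing a point of the generic fiber of $V_{\alpha}$, hence dominance. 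This buys independence from Lemma~\ref{lem:F-flat} and an explicit $U_{\rm dom}=U_{\rm ne}\cap B$ that is not confined to the generic fiber of the parameter space (the paper's $U_{\rm dom}\subseteq\mathbb{A}^{M-n}_K$ is); the paper's flatness detour, on the other hand, is the mechanism that carries over to Lemma~\ref{lem:small int}, where no pointwise elimination is available. Either version feeds into Corollary~\ref{cor:small dominant} identically.

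Two small repairs. The sentence ``any ring map $\widehat{\mathcal{O}}\to\overline{K}$ factors through $K$'' is false as stated (reduction mod $t$ followed by $k\hookrightarrow\overline{K}$ does not); what you actually use is that your $\overline{K}$-point is built with the structure map being the inclusion $\widehat{\mathcal{O}}\subset K\subset\overline{K}$, under which $t\neq 0$, so the point lies over the generic point of $\Spec(\widehat{\mathcal{O}})$ --- phrase it that way. Second, your unit argument for the leading coefficients is essentially right (you should also note that a lift of larger $y_j$-degree is excluded by the same fact $(f_1,\dots,f_{j-1})\cap\widehat{\mathcal{O}}=0$, and that the adjustment by adding $f_1$ in the proof of Proposition~\ref{prop:ci} does not alter the leading $y_j$-coefficient), but it is more than you need: the coordinates $\alpha_{0,\ell_j}$ are leading coefficients, hence nonzero in $\widehat{\mathcal{O}}$ by definition, which already places the $K$-point induced by $\beta_0$ in $U_{\rm ne}\cap B$, and that is all the $t$-adic argument of Corollary~\ref{cor:small dominant} uses; proving they are units just gives the stronger statement that the entire section $\beta_0$, closed point included, lands in your $U_{\rm dom}$.
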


\begin{proof}
Let $K:= {\rm Frac}( \widehat{\mathcal{O}}) = k((t))$. Note that a morphism $Z \to \Spec (\widehat{\mathcal{O}})$ is dominant if and only if the base change $Z_K \to \Spec (K)$ is a nonempty $K$-scheme. So, we consider the situation after the base change via $\Spec (K) \to \Spec (\widehat{\mathcal{O}})$.

By Lemma \ref{lem:F-flat}, the morphism $pr_B: pr^{-1} (B) \to B$ is flat. For the open set $U_{\rm ne}$ of Proposition \ref{prop:nonempty}, we have $\alpha_0 \in B \cap U_{\rm ne}$ so that $B \cap U_{\rm ne} \not = \emptyset$, and this proposition shows that the restriction $pr_{B\cap U_{\rm ne}} : pr^{-1} (B \cap U_{\rm ne}) \to B \cap U_{\rm ne}$ is flat and surjective. Since $U_{\rm ne}$ contains $\mathbb{G}_m ^M$, there exists a nonempty open neighborhood $U' \subset \mathbb{A}_{\widehat{\mathcal{O}}} ^{M-n}$ of $\beta_0$ such that $U' \times \mathbf{1} \subseteq B \cap U_{\rm ne}$. Hence $pr_{U'} : pr^{-1} (U' \times \mathbf{1}) \to U' \times \mathbf{1}$ is flat and surjective. So, after base change via $\Spec (K) \to \Spec (\widehat{\mathcal{O}})$, the new morphism $pr_{U'_K} : pr^{-1} (U' \times \mathbf{1})_K \to (U' \times \mathbf{1})_K$ is flat and surjective. We implicitly used \cite[Proposition (2.1.4), p.6]{EGA4-2} several times. For this flat family, the dimensions of the fibers are all equal (see \cite[Corollaire (6.1.2), p.135]{EGA4-2}, or \cite[Corollary III-9.6, p.257]{Hartshorne}). In particular, for every $\beta \in \mathbb{A}_K ^{M-n} \cap U' $, we have $ 0 \leq \dim ({V}_{\alpha_0})  = \dim ({V}_{\alpha})$ with $\alpha=\beta \times \mathbf{1}$. In particular $V_{\alpha} \not = \emptyset$. But, $\mathbb{A}_K ^{M-n}$ is a nonempty open subset of $\mathbb{A}_{\widehat{\mathcal{O}}} ^{M-n}$, so that we can take $U_{\rm dom} := \mathbb{A}^{M-n} _K \cap U'$ to finish the proof of the lemma.
\end{proof}

\begin{cor}\label{cor:small dominant}
Under the assumptions of Lemma \ref{lem:small dominant}, for every sufficiently large integer $N>0$, the open ball $\mathcal{B}_N (\beta_0) \subseteq k[[t]]^{M-n}$ in the non-archimedean $t$-adic sup-norm satisfies $(1)$ $\mathcal{B}_N (\beta_0) \cap (k[t]^{M-n})$ is nonempty, $(2)$ for every $\beta \in \mathcal{B}_N (\beta_0) \cap (k[t]^{M-n})$, the closed subscheme $V_{\alpha}$ for $\alpha = \beta \times \mathbf{1}$, is dominant over $\Spec (\widehat{\mathcal{O}})$, and $(3)$ these so obtained polynomials $f_{1, \alpha}, \cdots, f_{n, \alpha} \in k[t][y_1, \cdots, y_n] \subseteq \mathcal{O}[y_1, \cdots, y_n]$ of $V_{\alpha}$ satisfy $f_{j, \alpha} \equiv f_{j} \mod t^m$ for each $1 \leq j \leq n$. 
\end{cor}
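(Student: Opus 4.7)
My plan is to mimic the proof of Corollary~\ref{cor:properness} almost verbatim, with the open set $U_{\rm pr} \subseteq \mathbb{A}^M_{\widehat{\mathcal{O}}}$ replaced by the open neighborhood $U_{\rm dom} \subseteq \mathbb{A}^{M-n}_{\widehat{\mathcal{O}}}$ of $\beta_0$ produced by Lemma~\ref{lem:small dominant}. The first move is to exploit that the non-archimedean $t$-adic topology on $k[[t]]^{M-n}$ is finer than the Zariski topology (as recalled in \S\ref{sec:nonarch}), so that since $\beta_0$ lies in $U_{\rm dom} \cap k[[t]]^{M-n}$, for every sufficiently large integer $N$ the $t$-adic open ball $\mathcal{B}_N(\beta_0)$ is contained in $U_{\rm dom}$. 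I will take such an $N$ also satisfying $N > m$.

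Next I will invoke the density of $k[t]^{M-n}$ in $k[[t]]^{M-n}$ for the $t$-adic topology, which immediately yields (1). For any $\beta \in \mathcal{B}_N(\beta_0) \cap k[t]^{M-n}$, setting $\alpha := \beta \times \mathbf{1}$ places $\alpha$ in $U_{\rm dom} \times \mathbf{1} \subseteq B$, and Lemma~\ref{lem:small dominant} then directly gives that $V_\alpha \to \Spec(\widehat{\mathcal{O}})$ is dominant, which is (2).

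For (3), the labeling convention of Situation $(\star')$ puts the constant terms (all equal to $1$) of $f_1, \ldots, f_n$ into the last $n$ coordinates, so $\alpha_0 = \beta_0 \times \mathbf{1}$ and the only coordinates in which $\alpha$ differs from $\alpha_0$ are those indexed by $\beta$. Since $\beta \in \mathcal{B}_N(\beta_0)$, each coordinate of $\beta - \beta_0$ lies in $t^N k[[t]] \subseteq t^m k[[t]]$; unwinding Definition~\ref{defn:coeff perturb}, every coefficient of $f_{j,\alpha} - f_j$ lies in $t^m k[[t]]$, and moreover each $f_{j,\alpha}$ lies in $k[t][y_1,\ldots,y_n]$ because $\beta \in k[t]^{M-n}$ and the pinned constant-term coordinates are $1 \in k[t]$. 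Hence $f_{j,\alpha} \equiv f_j \pmod{t^m}$, giving (3). I do not foresee any substantive obstacle: the statement is really a parameter-space analogue of Corollary~\ref{cor:properness}, the only extra bookkeeping being that perturbing only the first $M-n$ coordinates preserves both the value $1$ of the constant-term coordinates and the inclusion of the perturbed polynomials into $k[t][y_1,\ldots,y_n]$.
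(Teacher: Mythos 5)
Your proposal is correct and is essentially the paper's own argument: the paper omits the proof, stating it is almost identical to that of Corollary~\ref{cor:properness} with Lemma~\ref{lem:small dominant} in place of Lemma~\ref{lem:small properness}, which is exactly what you carry out (finer $t$-adic topology, $\mathcal{B}_N(\beta_0)\subseteq U_{\rm dom}$ with $N>m$, density of $k[t]^{M-n}$, and coefficientwise congruence mod $t^m$). Your extra remark that the pinned constant-term coordinates stay equal to $1$ so that $f_{j,\alpha}\in k[t][y_1,\ldots,y_n]$ is a harmless and accurate bit of bookkeeping.
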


\begin{proof}The proof is almost identical to that of Corollary \ref{cor:properness}, where we use Lemma \ref{lem:small dominant} instead of Lemma \ref{lem:small properness}, so we omit it. 
\end{proof}

\subsubsection{Geometric integrality}\label{sec:gi}

Although we began with an integral scheme $W$, this integrality may not necessarily be preserved under ``small" perturbations of the coefficients. However, we will show that the geometrical integrality over $k$ in the sense of \cite[D\'efinition (4.6.2), p.68]{EGA4-2} is better behaved. Later in Case 2 of the proof of Theorem \ref{thm:mod t^m}, we will reduce the general integral situation to the geometrically integral situation.

\begin{lem}\label{lem:small int}
Under the Situation $(\star')$, suppose  further  that $W$ is geometrically integral over $k$. Then there exists an open neighborhood $U_{\rm gi} \subseteq \mathbb{A}_{\widehat{\mathcal{O}}}^{M-n}$ of $\beta_0$ such that for each $\beta \in U_{\rm gi}$, the fiber $V_{\alpha}$ with $\alpha = \beta \times \mathbf{1}$, is geometrically integral over $k.$ 
\end{lem}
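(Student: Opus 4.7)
The plan is to apply the Grothendieck openness theorem for geometrically integral fibers in a flat, finitely presented family (EGA IV, Th\'eor\`eme (12.2.4)(viii)) to the family $pr_B$, after base change to a suitable finite algebraic extension of $k$. By Lemma \ref{lem:F-flat}, the morphism $pr_B : pr^{-1}(B) \to B$ is flat, and since both source and target are of finite type over the Noetherian scheme $\Spec(\widehat{\mathcal{O}})$, it is of finite presentation. Since $W$ is geometrically integral over $k$ and only finitely many coefficients of its defining polynomials are involved, there exists a finite subextension $k_j / k$ of a chosen algebraic closure $\bar{k}/k$ such that $W \otimes_k k_j$ is already integral.

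I would then consider the base change $\widetilde{pr}_B := pr_B \otimes_k k_j : pr^{-1}(B) \otimes_k k_j \to B \otimes_k k_j$, which remains flat of finite presentation. By EGA IV (12.2.4)(viii), the subset
$$\widetilde{S} := \{\, b \in B \otimes_k k_j \mid \widetilde{pr}_B^{-1}(b) \text{ is geometrically integral over } \kappa(b) \,\}$$
is open in $B \otimes_k k_j$, and it contains the image of the generic point of the base-changed section $\Spec(\widehat{\mathcal{O}} \otimes_k k_j) \to B \otimes_k k_j$, because the generic fiber of the integral scheme $W \otimes_k k_j$ is itself integral. The projection $\pi_j : B \otimes_k k_j \to B$ is finite and faithfully flat, hence both open and closed, so setting $U_{\rm gi} := B \setminus \pi_j(B \otimes_k k_j \setminus \widetilde{S})$ yields an open subset of $B \simeq \mathbb{A}^{M-n}_{\widehat{\mathcal{O}}}$ containing $\beta_0$. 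For $\beta \in U_{\rm gi}$ with $\alpha = \beta \times \mathbf{1}$, every fiber of $\widetilde{pr}_B$ over $\pi_j^{-1}(\alpha)$ is geometrically integral over its residue field; combined with the $\widehat{\mathcal{O}}$-flatness of $V_\alpha$ inherited from Lemma \ref{lem:F-flat}, this propagates to integrality of $V_\alpha \otimes_k k_j$, so after enlarging $k_j$ if necessary, $V_\alpha \otimes_k \bar{k}$ is integral, i.e., $V_\alpha$ is geometrically integral over $k$.

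The main obstacle is the final propagation step: deducing integrality of the total $(\widehat{\mathcal{O}} \otimes_k k_j)$-scheme $V_\alpha \otimes_k k_j$ from the integrality of its fibers over all scheme-theoretic points in $\pi_j^{-1}(\alpha)$. Flatness of $V_\alpha$ over $\widehat{\mathcal{O}}$ is essential here: it ensures that the geometrically integral generic fiber controls the irreducibility and reducedness of the entire scheme. Verifying that $\widetilde{S}$ indeed contains the relevant generic point lying over $\alpha_0$, and that the finite faithfully flat descent via $\pi_j$ genuinely preserves all the required properties, are the technical points needing careful execution.
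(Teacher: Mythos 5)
Your use of \cite[Th\'eor\`eme (12.2.4)-(viii), p.183]{EGA4-3} is made for the wrong family, and this is a genuine gap: that theorem requires the morphism to be \emph{proper} (besides flat and of finite presentation), while $pr_B\colon pr^{-1}(B)\to B$ is not proper, since its total space sits inside $B\times_{\widehat{\mathcal{O}}}\square^n_{\widehat{\mathcal{O}}}$ and not in the projective $B\times_{\widehat{\mathcal{O}}}\ov{\square}^n_{\widehat{\mathcal{O}}}$. Properness cannot simply be dropped: for a flat, finitely presented, non-proper family the locus of geometrically integral fibers need not be open. For instance, remove the closed subset $\{y=1,\ t=0\}$ from $V(y(y-1))\times_k\mathbb{A}^1_t\subseteq \mathbb{A}^2_{x,y}\times_k\mathbb{A}^1_t$, viewed over $\mathbb{A}^1_t$: the fiber at $t=0$ is a single affine line, hence geometrically integral, while every fiber at $t\neq 0$ is a disjoint pair of lines, so the geometrically-integral locus is the non-open set $\{t=0\}$. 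Nor does Lemma \ref{lem:small properness} rescue the argument, since properness of the individual nearby fibers does not make the restricted family proper. This is precisely why the paper's proof works with the compactified family: by Lemma \ref{lem:F-flat} applied with $\ov{F}=\ov{\square}^n_{\widehat{\mathcal{O}}}$, the morphism $\ov{pr}_B\colon \ov{pr}^{-1}(B)\to B$ is proper and flat, the EGA openness theorem is applied to it, and the conclusion is carried back to $V_\alpha$ via the equivalence that $V_\alpha$ is geometrically integral if and only if $\ov{V}_\alpha$ is. Your write-up never introduces $\ov{pr}_B$, so the openness you need is not established.

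Two secondary remarks. The excursion through a finite extension $k_j/k$ is unnecessary and does not address the issue it seems aimed at: since $W$ is assumed geometrically integral over $k$, the scheme $W\otimes_k k'$ is integral for every field extension $k'$, so $k_j=k$ already works, and the descent of the open set along $\pi_j$ adds nothing. More importantly, the step you yourself flag as the main obstacle --- passing from geometric integrality of the scheme-theoretic fibers over points of the base to geometric integrality over $k$ of the pullback $V_\alpha$ along the $\widehat{\mathcal{O}}$-point $\alpha=\beta\times\mathbf{1}$ --- is only sketched (``after enlarging $k_j$ if necessary''), so even granting the openness assertion the argument is incomplete as written.
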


\begin{proof}
Note that $V_{\alpha}$ is geometrically integral over if and only if so is its Zariski closure $\ov{V}_{\alpha}$ in $\ov{\square}_{\widehat{\mathcal{O}}}^n$. Now, by Lemma \ref{lem:F-flat} with $F=\ov{\square}_{\widehat{\mathcal{O}}} ^n$, the morphism $\ov{pr}_B: \ov{pr}^{-1} (B) \to B = \mathbb{A}_{\widehat{\mathcal{O}}}^{M-n} \times \mathbf{1}$ is proper and flat. Hence by \cite[Th\'eor\`eme (12.2.4)-(viii), p.183]{EGA4-3}, the set $U_{\rm gi}:= \{ \beta \in \mathbb{A}_{\widehat{\mathcal{O}}}^{M-n} \ |  \ov{V}_{\alpha} \mbox{ with $\alpha= \beta \times \mathbf{1}$, is geometrically integral} \}$ is open in $\mathbb{A}_{\widehat{\mathcal{O}}} ^{M-n}$. This $U_{\rm gi}$ is nonempty because $\beta_0 \in U_{\rm gi}$. But again, for each $\beta \in U_{\rm gi}$ with $\alpha= \beta \times \mathbf{1}$, we have that $\ov{V}_{\alpha}$ is geometrically integral if and only if so is $V_{\alpha}$. This proves the lemma.
\end{proof}

\begin{cor}\label{cor:perturb ci restr}
Under the assumptions of Lemma \ref{lem:small int}, the restriction $V_B$ is geometrically integral of codimension $n$ in $B \times _{\widehat{\mathcal{O}}} \square_{\widehat{\mathcal{O}}} ^n$, and it intersects each codimension $1$ face of $B \times_{\widehat{\mathcal{O}}} \square_{\widehat{\mathcal{O}}} ^n$ properly.
\end{cor}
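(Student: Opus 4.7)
The plan is to verify each of the three claims separately, leaning on the flatness of Lemma \ref{lem:F-flat} as the central tool.

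For the codimension claim, Corollary \ref{cor:perturb ci} gives $\dim V = M+1$ in the ambient $\mathbb{A}^M_{\widehat{\mathcal{O}}} \times_{\widehat{\mathcal{O}}} \square_{\widehat{\mathcal{O}}}^n$. By the flatness of $V_B \to B$ from Lemma \ref{lem:F-flat}, and the triangular structure of $\{F_1, \ldots, F_n\}$ which makes each fiber of $V \to \mathbb{A}^M_{\widehat{\mathcal{O}}}$ zero-dimensional, the standard dimension formula yields $\dim V_B = \dim B = M-n+1$. Since $\dim(B \times_{\widehat{\mathcal{O}}} \square_{\widehat{\mathcal{O}}}^n) = M+1$, this gives codimension $n$. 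For geometric integrality, I would use Lemma \ref{lem:small int} to obtain a nonempty open $U_{\rm gi} \subseteq B$ containing $\beta_0$ over which every fiber of $V_B \to B$ is geometrically integral. Since $B$ is integral, $U_{\rm gi}$ contains the generic point $\eta_B$, so the generic fiber of $V_B \to B$ is geometrically integral. Flatness over the integral base $B$ makes the coordinate ring of $V_B$ on each affine chart torsion-free over $\mathcal{O}_B$, so it embeds into its generic fiber's coordinate ring and is thus a domain; irreducibility of the generic fiber then propagates to $V_B$, giving integrality, and the same argument applied after base change to $\bar k$ yields geometric integrality.

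For proper intersection with every codimension $1$ face $F$ of $\square_{\widehat{\mathcal{O}}}^n$, I would prove the stronger statement that $V_B \cap (B \times_{\widehat{\mathcal{O}}} F) = \emptyset$. Applying Lemma \ref{lem:F-flat} to $\mathcal{F}_{F}$, its support $S := \ov V_B \cap (B \times_{\widehat{\mathcal{O}}} F)$ maps to $B$ via a morphism $\ov{pr}_B|_S$ that is flat---hence open, since flat morphisms of finite presentation between Noetherian schemes are open---and also proper, since $\ov\square^n_{\widehat{\mathcal{O}}} \to \Spec \widehat{\mathcal{O}}$ is proper. Its image in $B$ is thus both open and closed, hence empty or all of $B$ by connectedness of $B$. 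By Corollary \ref{cor:no face}, the fiber over $\alpha_0$ is $W \cap F = \emptyset$, so the image is empty and $S$ itself is empty, giving the stronger (and in particular the desired proper) intersection statement.

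The main obstacle is the geometric integrality step: while integrality is a direct consequence of flatness over an integral base with geometrically integral generic fiber, upgrading to geometric integrality requires base-changing $\widehat{\mathcal{O}}$ to $\bar k \otimes_k \widehat{\mathcal{O}}$, which is not of the form $\bar k[[t]]$; the argument is cleanest in characteristic $0$, the setting ultimately used for the main theorem.
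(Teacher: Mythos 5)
Your geometric-integrality step (pass to the generic point of $B$ via Lemma~\ref{lem:small int}, then spread out using flatness/torsion-freeness) is sound and close to what the paper does, but the other two parts have genuine gaps. For the codimension, your load-bearing sub-claim that the triangular shape forces every fiber of $V \to \mathbb{A}^M_{\widehat{\mathcal{O}}}$ (or of $V_B \to B$) to be zero-dimensional is false: over points where the perturbed leading coefficients degenerate, fibers can be empty or positive-dimensional. Concretely, take $n=2$, $W=\{y_1=-1,\ y_2=2\}$ with the legitimate triangular generators $f_1=y_1+1$, $f_2=y_2+3y_1+1$ from Proposition~\ref{prop:ci}; on $B$ the perturbed system is $\{x_1y_1+1,\ x_3y_2+x_4y_1+1\}$, and over the locus $x_3=0$, $x_4=x_1\neq 0$ the fiber is the whole line $\{y_1=-1/x_1\}\times \square^1$. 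So the ``standard dimension formula with zero-dimensional fibers'' is not available; the paper instead reruns the inductive cut-by-one-equation argument of Corollary~\ref{cor:perturb ci} over $B$ (each $F_j$ still has positive $y_j$-degree with an independent coefficient variable), and alternatively one could argue using the generic fiber of $V_B\to B$ alone, which is indeed finite.

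The face step is worse: the stronger statement you aim at, $V_B\cap(B\times_{\widehat{\mathcal{O}}}F)=\emptyset$ for every codimension-$1$ face, is simply false. In the same example the point $x_1=x_4=1$, $y_1=-1$, $y_2=0$ (any $x_3$) lies on $V_B$ and on the face $\{y_2=0\}$; the emptiness only holds fiberwise over a suitable neighborhood of $\beta_0$, which is exactly Lemma~\ref{lem:small proper int}, proved afterwards. Your deduction ``flat image is open, proper image is closed, hence empty or all of $B$'' therefore cannot be run: in the example the image of the face-incidence locus in $B$ is essentially the proper closed subset $\{x_1=x_4\}$, so the global openness you want to extract from Lemma~\ref{lem:F-flat} is not usable for this locus, and leaning on that flatness over all of $B$ is precisely where the argument breaks. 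The paper needs, and proves, much less: since $V_B$ is irreducible and the face is a divisor, proper intersection only requires $V_B\not\subseteq B\times_{\widehat{\mathcal{O}}}F$, and containment would specialize at $\alpha_0$ to $W\subseteq F$, contradicting Corollary~\ref{cor:no face}. You should replace your third step by this specialization argument (and adjust the first step as above).
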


\begin{proof}
For the generic point $\eta \in \mathbb{A}_{\widehat{\mathcal{O}}} ^{M-n} \simeq B$, we have $\eta \times \mathbf{1} \in U_{\rm gi} \times \mathbf{1}$ for the nonempty open subset $U_{\rm gi} \subset \mathbb{A}_{\widehat{\mathcal{O}}} ^{M-n}$ of Lemma \ref{lem:small int}. Hence $V_{\eta \times \mathbf{1}}$ is geometrically integral. But this is equivalent to that its Zariski closure $V_{B}$ in $B \times_{\widehat{\mathcal{O}}} \square_{\widehat{\mathcal{O}}} ^n$ is geometrically integral. That it has codimension $n$ in  $B \times_{\widehat{\mathcal{O}}} \square_{\widehat{\mathcal{O}}} ^n$ follows by the same argument as in the proof of Corollary \ref{cor:perturb ci}.

Finally, let $F \subset B \times_{\widehat{\mathcal{O}}} \square_{\widehat{\mathcal{O}}} ^n$ be a codimension $1$ face given by $\{ y_i = \epsilon \}$ for some $1 \leq i \leq n$ and $\epsilon \in \{ 0, 1 \}$. Since this is a divisor and since $V_{B}$ is geometrically integral (in particular irreducible) by the previous paragraph, we just need to show that $V \not \subseteq \mathbb{A}_{\widehat{\mathcal{O}}} ^{M-n} \times _{\widehat{\mathcal{O}}} F$. Suppose not, i.e. $V_B \subseteq \mathbb{A}_{\widehat{\mathcal{O}}} ^{M-n} \times_{\widehat{\mathcal{O}}} F$. Then specializing at $ \beta_0 \times \mathbf{1} \in B$, which corresponds to the given $W$, we have $W \subseteq F$. But this is impossible because $W$ intersects $F$ properly, in fact $W \cap F = \emptyset$ by Corollary \ref{cor:no face}. This is a contradiction, so $V_B$ intersects each codimension $1$ face properly.
\end{proof}

\begin{cor}\label{cor:small int}
Under the assumptions of Lemma \ref{lem:small int}, for every sufficiently large integer $N>0$, the open ball $\mathcal{B}_N (\beta_0) \subseteq k[[t]]^{M-n}$ in the non-archimedean $t$-adic sup-norm satisfies $(1)$ $\mathcal{B}_N (\beta_0) \cap (k[t]^{M-n})$ is nonempty, $(2)$ for every $\beta \in \mathcal{B}_N (\beta_0) \cap (k[t]^{M-n})$ and $\alpha:= \beta \times \mathbf{1}$, the closed subscheme $V_{\alpha}$ is geometrically integral, and $(3)$ these so obtained polynomials $f_{1, \alpha}, \cdots, f_{n, \alpha} \in k[t][y_1, \cdots, y_n] \subseteq \mathcal{O}[y_1, \cdots, y_n]$ of $V_{\alpha}$ satisfy $f_{j, \alpha} \equiv f_{j} \mod t^m$ for each $1 \leq j \leq n$.
\end{cor}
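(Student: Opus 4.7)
The plan is to mimic the argument of Corollaries \ref{cor:properness} and \ref{cor:small dominant} almost verbatim, simply substituting Lemma \ref{lem:small int} for Lemmas \ref{lem:small properness} and \ref{lem:small dominant}. The only substantive input is that the non-archimedean $t$-adic topology on $\mathbb{A}^{M-n}_{\widehat{\mathcal{O}}} = k[[t]]^{M-n}$ refines the Zariski topology and that $k[t]^{M-n}$ sits densely in $k[[t]]^{M-n}$ in this topology, as recalled in \S \ref{sec:nonarch}.

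The first step is to pick $N > 0$ large enough (in particular $N > m$) so that the open ball $\mathcal{B}_N(\beta_0) \subseteq k[[t]]^{M-n}$ is contained in the open neighborhood $U_{\rm gi} \subseteq \mathbb{A}^{M-n}_{\widehat{\mathcal{O}}}$ produced by Lemma \ref{lem:small int}. This is possible because $\beta_0 \in U_{\rm gi}$ and the $t$-adic topology is finer than the Zariski topology. Then density of $k[t]^{M-n}$ in $k[[t]]^{M-n}$ guarantees $\mathcal{B}_N(\beta_0) \cap k[t]^{M-n} \neq \emptyset$, giving (1).

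For (2), any $\beta$ in the ball lies in $U_{\rm gi}$, so by the defining property of that open set the fiber $V_\alpha$ with $\alpha = \beta \times \mathbf{1}$ is geometrically integral over $k$. For (3), the condition $\beta \in \mathcal{B}_N(\beta_0)$ is equivalent to $|\beta - \beta_0|_v < e^{-N}$, i.e.\ each coordinate of $\beta$ is congruent to the corresponding coordinate of $\beta_0$ modulo $t^N$. Since $N > m$, the coefficients of the polynomials $f_{1,\alpha}, \ldots, f_{n,\alpha}$ obtained by specializing $(F_1, \ldots, F_n)$ at $\alpha = \beta \times \mathbf{1}$ agree with those of $f_1, \ldots, f_n$ modulo $t^m$, yielding $f_{j,\alpha} \equiv f_j \mod t^m$ for each $j$.

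There is no real obstacle: the argument is a straightforward transcription of the earlier corollaries, since all the geometric content has already been encapsulated in Lemma \ref{lem:small int}. The only thing to verify carefully is that the enlargement of $\alpha$ from $\beta$ to $\beta \times \mathbf{1}$ is harmless, but this is immediate because the last $n$ coordinates are held fixed at $\mathbf{1}$ (corresponding to the normalized constant term $1$ of each $f_j$) throughout, so $\beta \equiv \beta_0 \mod t^N$ already forces $\alpha \equiv \alpha_0 \mod t^N$ on all $M$ coordinates.
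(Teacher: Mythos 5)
Your proof is correct and is exactly what the paper intends: the paper omits the argument, stating only that it is almost identical to that of Corollary \ref{cor:properness} with Lemma \ref{lem:small int} in place of Lemma \ref{lem:small properness}, and your transcription (including the observation that passing from $\beta$ to $\alpha = \beta \times \mathbf{1}$ is harmless) fills in precisely that argument.
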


\begin{proof}
The proof is almost identical to that of Corollary \ref{cor:properness}, where we use Lemma \ref{lem:small int} instead of Lemma \ref{lem:small properness}, so we omit it.
\end{proof}

 \subsubsection{Empty intersection with faces}
 
 Recall from Corollary \ref{cor:no face} that for any proper face $F \subsetneq \square_{\widehat{\mathcal{O}}} ^n$, we had $W \cap F = \emptyset$, which is stronger than having proper intersection with the face. We assert that this is an open condition inside a suitable locally closed base in the following sense:

\begin{lem}\label{lem:small proper int}
Under the assumptions of Lemma \ref{lem:small int}, for each proper face $F \subsetneq \square_{\widehat{\mathcal{O}}} ^n$, there exists an open neighborhood $U_F \subseteq \mathbb{A}_{\widehat{\mathcal{O}}} ^{M-n}$ of $\beta_0$ such that for each $\beta \in U_F$, we have $V_{\alpha} \cap F= \emptyset$ with $\alpha = \beta \times \mathbf{1}$. In particular, for each $\beta \in U_{\rm pi}:= \bigcap_F U_F \subseteq \mathbb{A}_{\widehat{\mathcal{O}}} ^M$, where the intersection is taken over all proper faces $F$, the closed subscheme $V_{\alpha}$ intersects with no proper face at all.
\end{lem}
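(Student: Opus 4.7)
The strategy is to exploit projective properness to turn ``$V_\alpha$ meets $F$'' into a Zariski-closed condition on the base $B$, so that $U_F$ can be taken to be its complement. Recall from Situation~$(\star)$ that the Zariski closure $\ov{V} \subseteq \mathbb{A}^M_{\widehat{\mathcal{O}}} \times_{\widehat{\mathcal{O}}} \ov{\square}^n_{\widehat{\mathcal{O}}}$ is cut out by the homogenizations $(\ov{F}_1, \ldots, \ov{F}_n)$. First, I would restrict to $B$: the subscheme $\ov{V}_B := \ov{V} \cap (B \times_{\widehat{\mathcal{O}}} \ov{\square}^n_{\widehat{\mathcal{O}}})$ is closed in $B \times_{\widehat{\mathcal{O}}} \ov{\square}^n_{\widehat{\mathcal{O}}}$, which is proper over $B \simeq \mathbb{A}^{M-n}_{\widehat{\mathcal{O}}}$ as a base change of $\ov{\square}^n_{\widehat{\mathcal{O}}} \to \Spec(\widehat{\mathcal{O}})$; hence $\ov{V}_B \to B$ is proper.

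Next, for a fixed proper face $F \subsetneq \square^n_{\widehat{\mathcal{O}}}$, let $\ov{F} \subseteq \ov{\square}^n_{\widehat{\mathcal{O}}}$ be its closure (cut out by the same equations $y_{i_u} = \epsilon_u$). Set $Z_F := \ov{V}_B \cap (B \times_{\widehat{\mathcal{O}}} \ov{F})$, which is closed in $\ov{V}_B$ and therefore proper over $B$. Consequently its image $C_F := \ov{pr}_B(Z_F) \subseteq B$ is closed. I would then check that $\alpha_0 = \beta_0 \times \mathbf{1} \notin C_F$: the fiber of $\ov{V}$ over $\alpha_0$ is the scheme cut out by the homogenizations of $(f_1, \ldots, f_n)$, whose underlying set is $\ov{W}$. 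Since $W \in z^n_{\widehat{\mathfrak{m}}}(\widehat{\mathcal{O}}, n)^c$ is proper over $\Spec(\widehat{\mathcal{O}})$, Lemma~\ref{lem:cycle proper}(1) gives $\ov{W} = W$, so $W$ lies entirely in the finite part $\square^n_{\widehat{\mathcal{O}}}$. Combined with Corollary~\ref{cor:no face}, which yields $W \cap F = \emptyset$, this forces $W \cap \ov{F} = W \cap F = \emptyset$, confirming $\alpha_0 \notin C_F$.

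Defining $U_F := B \setminus C_F$ and identifying $B$ with $\mathbb{A}^{M-n}_{\widehat{\mathcal{O}}}$ under $\beta \mapsto \beta \times \mathbf{1}$ gives an open neighborhood of $\beta_0$ with the property that for every $\beta \in U_F$, $\ov{V}_\alpha \cap \ov{F} = \emptyset$, and hence $V_\alpha \cap F \subseteq \ov{V}_\alpha \cap \ov{F} = \emptyset$. Since $\square^n_{\widehat{\mathcal{O}}}$ has only finitely many proper faces, the finite intersection $U_{\rm pi} := \bigcap_F U_F$ remains an open neighborhood of $\beta_0$, yielding the ``in particular'' part.

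The argument is essentially routine and I do not anticipate any substantial obstacle; the only subtlety is bookkeeping in passing from affine $\square^n$ to projective $\ov{\square}^n$ so that properness is available precisely at the step where one needs the image in $B$ to be closed. The fact that Corollary~\ref{cor:no face} gives the much stronger conclusion $W \cap F = \emptyset$ (rather than merely proper intersection) is exactly what is needed so that being in the complement of $C_F$ is an \emph{open} rather than a merely locally-closed condition.
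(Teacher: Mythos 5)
Your proof is correct and follows essentially the paper's own route: you realize the bad locus $C_F$ as the image, under the proper projection $B \times_{\widehat{\mathcal{O}}} \ov{\square}^n_{\widehat{\mathcal{O}}} \to B$, of the closed subset $\ov{V}_B \cap (B \times_{\widehat{\mathcal{O}}} \ov{F})$, hence closed, and you exclude the base point via $\ov{V}_{\alpha_0} = \ov{W} = W$ and $W \cap F = \emptyset$ (Lemma~\ref{lem:cycle proper} and Corollary~\ref{cor:no face}), exactly as the paper does when it asserts $\alpha_0 \in U_F$. The only divergence is that the paper first reduces to codimension-$1$ faces and runs a dimension count via Corollary~\ref{cor:perturb ci restr} to see that $C_F$ is a proper closed subscheme of the base, whereas you obtain both nonemptiness of $U_F$ and $\beta_0 \in U_F$ directly from $\alpha_0 \notin C_F$; this suffices for the statement, and treating all proper faces at once is harmless since there are only finitely many.
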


\begin{proof}
By Lemma \ref{lem:properness iff}, we know that $W \cap F = \emptyset$ if and only if $\ov{W} \cap \ov{F} = \emptyset$. So, we want to achieve the stronger assertion that $\ov{V}_{\alpha} \cap \ov{F} = \emptyset$ for each $\alpha = \beta \times \mathbf{1}$, where $\beta$ is in an open neighborhood of $\beta_0$ in $B$. We use the projectivized system $\{ \bar{F}_1, \cdots, \bar{F}_n \}$ of equations for $\ov{V}$ so that $\{ \bar{F}_1, \cdots, \bar{F}_n, x_{M-n+1}-1 , \cdots, x_M-1 \}$ is the system for $\ov{V}_B$. 

 When $\ov{F}$ is a codimension $1$ face of $\ov{\square}_{\widehat{\mathcal{O}}} ^n$, it is given by $\{y_{i_1} = \epsilon_1\}=\{ Y_{i_1, 1} = \epsilon_1 Y_{i_1, 0}\}$ for some $1 \leq i_1 \leq n$ and $\epsilon_1 \in \{ 0, 1 \}$. Here, the scheme $\ov{V}_{\alpha}$ \emph{does} intersect with the face $\ov{F}$ if and only if the scheme given by $\{ \bar{F}_1, \cdots, \bar{F}_n, x_{M-n+1} -1, \cdots, x_M -1 ,  Y_{i_1,1} - \epsilon_1 Y_{i_1,0}\}$ has a point lying over $\alpha$. Here, the system $\{ \bar{F}_1, \cdots, \bar{F}_n, x_{M-n+1}-1, \cdots, x_M -1,  Y_{i_1,1} - \epsilon_1 Y_{i_1, 0} \}$ defines a closed subscheme of $\mathbb{A}^{M-n}_{\widehat{\mathcal{O}}} \times_{\widehat{\mathcal{O}}} \ov{\square}_{\widehat{\mathcal{O}}} ^n$ of dimension $ \leq \dim \ ( \mathbb{A}_{\widehat{\mathcal{O}}} ^{M-n} \times_{\widehat{\mathcal{O}}} \ov{\square}_{\widehat{\mathcal{O}}} ^n) - (n+1) = (M-n) + n +1 - (n + 1) = M-n$ by Corollary \ref{cor:perturb ci restr}. Thus its image $C_F$ under the projective morphism $\mathbb{A}^{M-n}_{\widehat{\mathcal{O}}} \times_{\widehat{\mathcal{O}}} \ov{\square}_{\widehat{\mathcal{O}}} ^n \to \mathbb{A}^{M-n}_{\widehat{\mathcal{O}}}$ is a closed subscheme of dimension $\leq M-n$. In particular, $C_F \subsetneq \mathbb{A}_{\widehat{\mathcal{O}}} ^{M-n}$ is a proper closed subscheme since $\dim \ (\mathbb{A}_{\widehat{\mathcal{O}}} ^{M-n} )= M-n+1$. Hence $\ov{V}_{\alpha}$ \emph{does not} intersect with $\ov{F}$ if and only if $\alpha \in U_F:= \mathbb{A}_{\widehat{\mathcal{O}}} ^{M-n} \setminus C_F$. By construction we have $\alpha_0 \in U_F$. Here, $\ov{V}_{\alpha} \cap \ov{F} = \emptyset$ implies that $V_{\alpha} \cap F = \emptyset$.
 
Since every proper face is contained in some codimension $1$ face, this proves the lemma.
\end{proof}

\begin{cor}\label{cor:small proper int}
Under the assumptions of Lemma \ref{lem:small int}, for every sufficiently large integer $N > 0$, the open ball $\mathcal{B}_N (\alpha_0) \subseteq k[[t]]^{M}$ in the non-archimedean $t$-adic sup-norm satisfies $(1)$ $\mathcal{B}_N (\alpha_0) \cap (k[t] ^{M})$ is nonempty, $(2)$ for every $\alpha \in \mathcal{B}_N (\alpha_0) \cap (k[t] ^{M})$, the closed subscheme $V_{\alpha}$ does not intersect any face $F\subsetneq \overline{\square}_{\widehat{\mathcal{O}}}^n$ at all, and $(3)$ these so obtained polynomials $f_{1, \alpha}, \cdots, f_{n, \alpha} \in k[t][y_1, \cdots, y_n] \subseteq \mathcal{O}[y_1, \cdots, y_n]$ of $V_{\alpha}$ satisfy $f_{j, \alpha} \equiv f_{j} \mod t^m,$ for each $1 \leq j \leq n$.
\end{cor}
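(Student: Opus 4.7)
The plan is to follow verbatim the template used to prove Corollaries \ref{cor:properness}, \ref{cor:small dominant}, and \ref{cor:small int}, feeding in Lemma \ref{lem:small proper int} as the input. Concretely, I would first invoke Lemma \ref{lem:small proper int} to produce a nonempty Zariski open subset $U_{\rm pi} \subseteq \mathbb{A}_{\widehat{\mathcal{O}}}^{M-n}$ containing $\beta_0$ such that for every $\beta \in U_{\rm pi}$ and $\alpha = \beta \times \mathbf{1}$, the fiber $V_{\alpha}$ does not intersect any proper face $F \subsetneq \ov{\square}_{\widehat{\mathcal{O}}}^n$.

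Next, using that the non-archimedean $t$-adic topology on $k[[t]]^{M-n} = \widehat{\mathcal{O}}^{M-n} \subseteq \mathbb{A}_{\widehat{\mathcal{O}}}^{M-n}$ is finer than the Zariski topology (recalled in \S \ref{sec:nonarch}), I would choose $N$ large enough, and in particular with $N > m$, so that the open ball $\mathcal{B}_N(\beta_0) \subseteq k[[t]]^{M-n}$ is entirely contained in $U_{\rm pi}$. Density of $k[t]^{M-n}$ in $k[[t]]^{M-n}$ under this metric gives $\mathcal{B}_N(\beta_0) \cap k[t]^{M-n} \neq \emptyset$, proving (1), and the containment $\mathcal{B}_N(\beta_0) \subseteq U_{\rm pi}$ gives (2). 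For (3), unwinding the definition $|\alpha - \alpha_0|_v < e^{-N}$ coordinate by coordinate shows that each perturbed coefficient differs from the original coefficient by an element of $(t^N) \subseteq k[[t]]$; this means $f_{j,\alpha} \equiv f_j \pmod{t^N}$ for each $1 \leq j \leq n$, so the condition $N > m$ gives the required congruence mod $t^m$.

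Since the argument is just an assembly of Lemma \ref{lem:small proper int} with the same standard non-archimedean facts already exploited in the three preceding corollaries, there is no substantive obstacle. The only minor bookkeeping point to reconcile is that Lemma \ref{lem:small proper int} is phrased on the slice $B \simeq \mathbb{A}_{\widehat{\mathcal{O}}}^{M-n}$ of $\mathbb{A}_{\widehat{\mathcal{O}}}^M$ (fixing the last $n$ constant-term coordinates at $1$), so the stated ball $\mathcal{B}_N(\alpha_0) \subseteq k[[t]]^M$ should be read as the corresponding ball in the slice, which coincides with $\mathcal{B}_N(\beta_0) \subseteq k[[t]]^{M-n}$ under the identification $\alpha = \beta \times \mathbf{1}$.
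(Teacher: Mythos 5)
Your proposal is correct and is essentially the paper's own argument: the paper omits the proof, stating it is almost identical to that of Corollary \ref{cor:properness} with Lemma \ref{lem:small proper int} substituted for Lemma \ref{lem:small properness}, which is exactly the template you follow (Zariski-openness plus the finer $t$-adic topology, density of $k[t]$ in $k[[t]]$, and $N>m$ giving the congruence mod $t^m$). Your closing remark about reading the ball on the slice $B\simeq \mathbb{A}_{\widehat{\mathcal{O}}}^{M-n}$, i.e. $\alpha=\beta\times\mathbf{1}$, is the right way to reconcile the statement with Lemma \ref{lem:small proper int} and matches the usage in the proof of Theorem \ref{thm:mod t^m}.
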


\begin{proof}
The proof is almost identical to that of Corollary \ref{cor:properness}, where we use Lemma \ref{lem:small proper int} instead of Lemma \ref{lem:small properness}, so we omit it. 
\end{proof}

\subsection{The mod $t^m$ moving lemmas}\label{sec:mod t^m moving}

First observe:

\begin{lem}\label{lem:flat family}
Let $T$ be a ${\rm Spec} (\widehat{\mathcal{O}})$-scheme of finite type. Let $W_1, W_2 \subseteq T$ be two integral closed subschemes, both surjective over $\Spec (\widehat{\mathcal{O}})$, such that we have the equality $W_{1,s} = W_{2,s}$ of the special fibers. Then $\dim \ W_1 = \dim \ W_2$.
\end{lem}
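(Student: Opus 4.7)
The plan is to reduce the comparison of $\dim W_1$ and $\dim W_2$ to the comparison of their special fibers, which are equal by hypothesis. The bridge will be the general fact that cutting an integral scheme dominant over a DVR by the uniformizer drops the dimension by exactly one.

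First, I would verify that each structure morphism $W_i \to \Spec (\widehat{\mathcal{O}})$ is flat, or equivalently that the uniformizer $t \in \widehat{\mathcal{O}}$ pulls back to a non-zero-divisor on $\mathcal{O}_{W_i}$. Indeed, $\widehat{\mathcal{O}}$ is a DVR, and $W_i$ is integral and dominant over it (dominance being equivalent to surjectivity for the structure morphism here, as $\Spec (\widehat{\mathcal{O}})$ has only two points), so on any affine open $\Spec (A_i) \subseteq W_i$, the ring $A_i$ is a domain in which the image of $t$ is nonzero; in particular $t$ is a non-zero-divisor.

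Next, since $T$ is of finite type over the regular Noetherian local base $\Spec (\widehat{\mathcal{O}})$, it is universally catenary, and hence so is its closed subscheme $W_i$. Applying Krull's Hauptidealsatz to the principal ideal generated by $t$ on $W_i$, every minimal prime of $(t)$ in $\mathcal{O}_{W_i}$ has height one; combined with catenarity and the irreducibility of $W_i$, every irreducible component of the closed subscheme $W_{i,s} = V(t) \subseteq W_i$ has dimension $\dim W_i - 1$. Consequently
\[
\dim W_{i,s} \;=\; \dim W_i - 1 \qquad (i = 1, 2).
\]

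The hypothesis $W_{1,s} = W_{2,s}$ then gives $\dim W_{1,s} = \dim W_{2,s}$, whence $\dim W_1 = \dim W_2$. I do not anticipate a serious obstacle; the only step requiring some care is obtaining the \emph{sharp} equality $\dim W_{i,s} = \dim W_i - 1$ rather than a mere inequality, which is why one needs $W_i$ to be catenary, a property automatic in our setting because $T$ is of finite type over a regular Noetherian base.
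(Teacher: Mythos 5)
Your strategy is close to the paper's: the paper also first establishes flatness of $W_i \to \Spec(\widehat{\mathcal{O}})$ from surjectivity over the one-dimensional regular base (the criterion you state, which is \cite[Proposition III-9.7]{Hartshorne}), and then concludes $\dim W_{i,s} = \dim W_i - 1$. The difference is in how that last equality is justified, and this is where your argument has a genuine gap. You claim that Krull's Hauptidealsatz (so the minimal primes of $(t)$ have height one) together with catenarity and irreducibility of $W_i$ forces every component of $V(t)$ to have dimension $\dim W_i - 1$. But for a non-local scheme, ``integral $+$ catenary $+$ $\mathrm{ht}(\mathfrak{p})=1$'' does not imply $\dim V(\mathfrak{p}) = \dim W_i - 1$: finite type schemes over a DVR are not equidimensional in the relevant sense, because closed points lying in the generic fiber can have small height. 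Concretely, take $W = \mathbb{A}^1_{\widehat{\mathcal{O}}} = \Spec\, \widehat{\mathcal{O}}[x]$, which is integral, finite type, surjective over $\Spec(\widehat{\mathcal{O}})$, catenary, of dimension $2$, and the prime $\mathfrak{p} = (tx-1)$: it has height one, yet $V(\mathfrak{p})$ is a single closed point, of dimension $0 \neq \dim W - 1$. So the inference you use, as stated, is false; what saves the intended conclusion is precisely that the primes you care about contain $t$, and that extra hypothesis must enter the argument.

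The repair is standard but requires more than plain catenarity. Either argue as the paper does: since $W_i \to \Spec(\widehat{\mathcal{O}})$ is flat and locally of finite type, the fiber-dimension theorem (\cite[Corollaire (6.1.2)]{EGA4-2}, or the corresponding statements in \cite[III.9]{Hartshorne}) gives that the nonempty special fiber has dimension $\dim W_i - 1$. Or, staying closer to your commutative-algebra route, invoke the dimension formula for finitely generated algebras over the universally catenary base $\widehat{\mathcal{O}}$ (\cite[(5.6.1)]{EGA4-2}): for a minimal prime $\mathfrak{p} \supseteq (t)$ of an affine chart $\Spec(A) \subseteq W_i$ one gets ${\rm ht}(\mathfrak{p}) + {\rm trdeg}_{k}\,\kappa(\mathfrak{p}) = {\rm ht}(t) + {\rm trdeg}_{K}\,{\rm Frac}(A)$ with $K = k((t))$, and since $A/\mathfrak{p}$ is a finitely generated $k$-algebra its dimension equals ${\rm trdeg}_{k}\,\kappa(\mathfrak{p}) = {\rm trdeg}_{K}\,{\rm Frac}(A) = \dim W_i - 1$; here the hypothesis $t \in \mathfrak{p}$ is used in an essential way, unlike in your counterexample prime. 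With either repair the rest of your argument (equality of special fibers forces equality of dimensions) goes through.
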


\begin{proof}
Let $d_i:= \dim \ W_i$. The morphisms $W_i \to \Spec (\widehat{\mathcal{O}})$ for $i=1,2$ are flat (of relative dimension $d_i -1$) because they are surjective and $\Spec (\widehat{\mathcal{O}})$ is a regular scheme of dimension $1$ (see \cite[Corollaire (6.1.2), p.135]{EGA4-2} or \cite[Proposition III-9.7, p.257]{Hartshorne}). Since $W_{1,s} = W_{2,s}$, we have $d_1 -1 = d_2 -1$. Hence $d_1= d_2$.
\end{proof}

We now prove the main result of \S \ref{sec:perturbation}:

\begin{thm}\label{thm:mod t^m}
For the completion homomorphism $\xi^n: z^n_{\mathfrak{m}} (\mathcal{O}, n)^c \to z^n_{\widehat{\mathfrak{m}}} (\widehat{\mathcal{O}}, n)^c$, the composition $\xi_m^n : z^n_{\mathfrak{m}} (\mathcal{O}, n) ^c \overset{\xi^n}{\to} z^n _{\widehat{\mathfrak{m}}}(\widehat{\mathcal{O}}, n)^c \to  z^n (k_m, n)$ is a surjection. 
\end{thm}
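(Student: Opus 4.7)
The plan is to approximate every integral cycle of $z^n_{\widehat{\mathfrak{m}}}(\widehat{\mathcal{O}}, n)^c$ by an algebraic cycle over $\mathcal{O}$ using the coefficient-perturbation machinery of \S\ref{subsec:perturb}, in such a way that the approximation agrees with the given cycle modulo $t^m$. By $\mathbb{Z}$-linearity it suffices to treat a single nonempty integral $W \in z^n_{\widehat{\mathfrak{m}}}(\widehat{\mathcal{O}}, n)^c$ and produce some $Z \in z^n_{\mathfrak{m}}(\mathcal{O}, n)^c$ with $\xi^n(Z) \sim_{t^m} W$.

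First I would handle the case where $W$ is geometrically integral over $k$. Proposition \ref{prop:ci} supplies a triangular generating set $\{f_1, \ldots, f_n\}$ of the ideal of $W$ with all constant terms equal to $1$; I form its coefficient perturbation $V \subseteq \mathbb{A}^M_{\widehat{\mathcal{O}}} \times_{\widehat{\mathcal{O}}} \square^n_{\widehat{\mathcal{O}}}$ and restrict to the locally closed subspace $B = \mathbb{A}^{M-n}_{\widehat{\mathcal{O}}} \times \{\mathbf{1}\}$, over which Lemma \ref{lem:F-flat} guarantees flatness of the projection. Intersecting the four Zariski-open neighborhoods of $\beta_0$ produced by Lemmas \ref{lem:small properness}, \ref{lem:small dominant}, \ref{lem:small int}, and \ref{lem:small proper int} yields a nonempty open $U \subseteq B$ over which each fiber $V_\alpha$ (with $\alpha = \beta \times \mathbf{1}$) is simultaneously proper over $\Spec(\widehat{\mathcal{O}})$, dominant over it, geometrically integral, and disjoint from every proper face. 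Because the non-archimedean $t$-adic topology of \S\ref{sec:nonarch} is finer than Zariski, for sufficiently large $N > m$ the open ball $\mathcal{B}_N(\beta_0) \subseteq k[[t]]^{M-n}$ sits inside $U$; density of $k[t]^{M-n}$ in $k[[t]]^{M-n}$ then furnishes a $\beta \in \mathcal{B}_N(\beta_0) \cap k[t]^{M-n}$, yielding polynomials $f_{j,\alpha} \in \mathcal{O}[y_1, \ldots, y_n]$ with $f_{j,\alpha} \equiv f_j \pmod{t^m}$ (Corollaries \ref{cor:properness}, \ref{cor:small dominant}, \ref{cor:small int}, \ref{cor:small proper int}). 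The scheme $Z_\alpha \subset \square^n_{\mathcal{O}}$ they cut out is then an element of $z^n_{\mathfrak{m}}(\mathcal{O}, n)^c$, and since both $\ov{Z_\alpha}$ and $\ov{W}$ are disjoint from every proper face by Corollary \ref{cor:no face}, the congruence $f_{j,\alpha} \equiv f_j \pmod{t^m}$ upgrades directly to $\xi^n(Z_\alpha) \sim_{t^m} W$ as higher Chow cycles.

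To reduce the general integral case to the geometrically integral one, I would pass to an appropriate Nisnevich cover. Let $k'$ be the algebraic closure of $k$ inside the residue field $k(\eta_W)$, a finite separable extension; since $\widehat{\mathcal{O}}$ is henselian, $\widehat{\mathcal{O}}' := k'[[t]]$ is a finite \'etale Nisnevich neighborhood of $\widehat{\mathfrak{m}}$, and each component of $W \times_{\widehat{\mathcal{O}}} \widehat{\mathcal{O}}'$ is geometrically integral over $k'$. Applying the previous case componentwise over $\mathcal{O}' := \mathcal{O}_{\mathbb{A}^1_{k'}, 0}$ produces a lift in $z^n_{\mathfrak{m}'}(\mathcal{O}', n)^c$, and the proper push-forward along the finite morphism $\Spec(\mathcal{O}') \to \Spec(\mathcal{O})$ then produces the desired cycle $Z \in z^n_{\mathfrak{m}}(\mathcal{O}, n)^c$; this step uses that proper push-forward commutes with the flat pull-back under completion and preserves mod-$t^m$-equivalence, both analogs of Propositions \ref{prop:ppf}--\ref{prop:fpb}. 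The main obstacle I foresee is precisely this reduction: one must track the residue-field multiplicities carefully to ensure that the push-forward recovers the original cycle $[W]$ on the nose modulo $t^m$ and not merely up to a unit multiple, and one must verify the projection-formula compatibilities between completion, finite push-forward, and the mod-$t^m$ relation in the exact form required here.
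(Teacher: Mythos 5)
Your Case 1 (the geometrically integral case) is essentially the paper's argument: triangular generators from Proposition \ref{prop:ci}, the coefficient perturbation restricted to $B$, the open conditions of Lemmas \ref{lem:small properness}, \ref{lem:small dominant}, \ref{lem:small int}, \ref{lem:small proper int}, and $t$-adic approximation by polynomial coefficients to get $f_{j,\alpha}\equiv f_j \bmod t^m$ with $Z\times_{\mathcal{O}}\widehat{\mathcal{O}}=V_\alpha$; apart from leaving implicit the faithfully flat descent of properness and the face/codimension checks for $Z$ over $\mathcal{O}$, this part is sound and coincides with the paper.

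The reduction of a general integral $W$ to the geometrically integral case, however, has a genuine gap, and it is exactly the point you flag but do not resolve. If you base change along $k\subset k'$ and push forward, the full pull-back satisfies $p_{k'/k,*}\,p_{k'/k}^*(W)=[k':k]\cdot W$, and pushing forward a single arbitrary component gives $d\cdot W$ for some $d\geq 1$; nothing in your argument forces $d=1$, so you only obtain a multiple of $W$ in the image of $\xi^n_m$, which does not prove surjectivity (the target is not divisible). The paper's key idea, which is missing from your proposal, is the Claim in Case 2 of the proof: with $K=k(\eta_W)$ and $L$ the \emph{separable} algebraic closure of $k$ in $K$, the multiplication map gives a section $s:\Spec(K)\to \Spec(L\otimes_k K)$ of the base-change map, so that exactly one component $W'_{i_0}$ of $p_{L/k}^*(W)$ has residue field $k(W'_{i_0})\simeq K$, i.e.\ degree $1$ over $W$, whence $p_{L/k,*}(W'_{i_0})=W$ on the nose; one then applies Case 1 only to this component and uses the compatibilities of Proposition \ref{prop:ppf} and \cite[Proposition 1.7]{Fulton} in the diagram \eqref{eqn:mod t^m cd}. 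Two subsidiary assertions in your sketch are also false in general and are repaired in the paper by further devices: the algebraic closure of $k$ in $K$ need not be separable when $k$ is imperfect (the paper takes the separable closure $L$, and then passes to a finite radicial extension $L\subset L'$ and replaces the component by its reduction, checking the degree stays $1$), and the components of $W\times_{\widehat{\mathcal{O}}}\widehat{\mathcal{O}}_L$ need not all be geometrically integral over $L$ --- only the distinguished component through $s(\eta_W)$ is shown to be geometrically irreducible, and geometrically integral only after the radicial step. Note also that $\Spec(L[[t]])\to\Spec(k[[t]])$ is not a Nisnevich neighborhood of the closed point (the residue field changes); the Nisnevich/section property in the paper lives at the generic point $\Spec(K)$, which is precisely what produces the degree-$1$ component you need.
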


\begin{proof}
Let $W \in z^n _{\widehat{\mathfrak{m}}} (\widehat{\mathcal{O}}, n)^c$ be a nonempty integral closed subscheme of $\square_{\widehat{\mathcal{O}}} ^n$. 
By Lemma \ref{lem:cycle proper}, the structure morphism $W \to \Spec (\widehat{\mathcal{O}})$ is surjective.

\textbf{Case 1:} First consider the case when $W$ is geometrically integral over $k.$ Take the generators of the ideal of $W$ given by $f_1, \cdots, f_n  \in \widehat{\mathcal{O}}[y_1, \cdots, y_n]$ satisfying the Situation $(\star')$, i.e. of the form in \eqref{eqn:triangular O} in Proposition \ref{prop:ci}.

By Corollaries \ref{cor:properness}, \ref{cor:small dominant}, \ref{cor:small int} and \ref{cor:small proper int}, there exists a sufficiently large integer $N>m$ such that for every $\beta \in \mathcal{B}_N (\beta_0) \cap (k[t])^{M-n}$ with $\alpha := \beta \times \mathbf{1}$, the corresponding cycle $V_{\alpha} \subseteq \square_{\widehat{\mathcal{O}}}^n$ is proper and dominant (in particular surjective) over $\Spec (\widehat{\mathcal{O}})$, is geometrically integral, and has empty intersection with all proper faces of $\square_{\widehat{\mathcal{O}}}^n$ (in particular, the intersections with all faces are proper), and furthermore the defining ideal of $V_{\alpha}$ in $ \widehat{\mathcal{O}}[y_1, \cdots, y_n]$ is given by polynomials $f_{j, \alpha} \in k[t][y_1, \cdots, y_n]$ satisfying $f_j \equiv f_{j, \alpha} \mod t^m$ for all $1 \leq j \leq n$. Since both $W$ and $V_{\alpha}$ are geometrically integral over $k$, they are integral, thus we have $W \sim_{t^m} V_{\alpha}$. By Lemma \ref{lem:flat family}, this implies $\dim \ W=\dim \ V_{\alpha}$. Furthermore, for each proper face $F \subsetneq \square_{\widehat{\mathcal{O}}} ^n$, we have $V_{\alpha} \cap F_s = \emptyset$ so that $\codim_{F} (V_{\alpha} \cap F_s)  \geq n$, while for $F= \square_{\widehat{\mathcal{O}}} ^n$ we have $\codim_{F} (V_{\alpha} \cap F_s) = \codim_{F} ((V_{\alpha})_s ) = \codim_F (W_s )  \geq n$, so that $V_{\alpha} \in z^n_{\widehat{\mathfrak{m}}}(\widehat{\mathcal{O}}, n)^c$. 

Note that $V_{\alpha}$ is given by the ideal generated by $\{ f_{1, \alpha}, \cdots, f_{n, \alpha} \}$ in $\widehat{\mathcal{O}}[y_1, \cdots, y_n]$ with $f_{j, \alpha} \in k[t][y_1, \cdots, y_n] \subseteq \mathcal{O}[y_1, \cdots, y_n]$. So, if we let $Z \subseteq \square_{\mathcal{O}}^n$ be the closed subscheme given by the ideal generated by the same set $ \{ f_{1, \alpha}, \cdots, f_{n, \alpha}\}$, this time in $\mathcal{O}[y_1, \cdots, y_n]$, then we have $\widehat{Z}:= Z \times_{\mathcal{O}} \widehat{\mathcal{O}} = V_{\alpha}$ by definition. 

We claim that $Z \in z^n_{\mathfrak{m}} (\mathcal{O}, n)^c$. Here for each face $F_{\mathcal{O}} \subseteq \square_{\mathcal{O}} ^n$, we have $\dim \ (Z \cap F_{\mathcal{O}}) = \dim \  (\widehat{Z} \cap F_{\widehat{\mathcal{O}}})$, where $F_{\widehat{\mathcal{O}}}$ is the base change of $F_{\mathcal{O}}$. In particular, when $F_{\mathcal{O}} = \square_{\mathcal{O}}^n$, we have $\dim \ Z = \dim \ V_{\alpha}$, while when $F \subsetneq \square_{\mathcal{O}}^n$ is a proper face, $Z$ intersects with $F$ properly. Furthermore via the identification $\mathcal{O}/\mathfrak{m} = \widehat{\mathcal{O}}/\widehat{\mathfrak{m}}$, we have $Z_s = (V_{\alpha})_s = W_s$ so that $\codim_{F_{\mathcal{O}}} (Z \cap F_{\mathcal{O},s}) \geq n$ for each face $F_{\mathcal{O}} \subseteq \square_{\mathcal{O}}^n$. Hence $Z \in z^n_{\mathfrak{m}} (\mathcal{O}, n)$. The structure morphism $Z \to \Spec (\mathcal{O})$ is proper by \cite[Proposition (2.7.1)-(vii), p.29]{EGA4-2}, because its base change $Z \times_{\mathcal{O}} \widehat{\mathcal{O}} = V_{\alpha} \to \Spec (\widehat{\mathcal{O}})$ via the faithfully flat morphism $\Spec (\widehat{\mathcal{O}}) \to \Spec (\mathcal{O})$, is proper. Hence $Z \in z^n_{\mathfrak{m}} (\mathcal{O}, n)^c$ and $V_{\alpha} = \xi^n (Z)$. Combined with that $W \sim_{t^m} V_{\alpha}$, we thus have $W \in {\rm im} (\xi^n _m)$.

\textbf{Case 2:} Now we suppose that $W$ is integral, but not geometrically integral over $k$. Recall from \cite[(4.3.1), p.58]{EGA4-2} that a field extension $k \subset K$ is a \emph{primary extension} if the biggest algebraic separable extension of $k$ in $K$ is $k$ itself. In other words, we say that $k$ is separably closed in $K$, or that the separable closure of $k$ in $K$ is itself. Here we have:

\textbf{Claim:} \emph{Let $w \in W$ be the generic point and take $K:= k(w)$. Let $L$ be the algebraic separable closure of $k$ in $K$. Then we have the Cartesian diagram with a section $s$ of $p_2$:
$$
\xymatrix{ \Spec (L \otimes_k K) \ar[r] _{\ \ \ p_2} \ar[d] _{p_1} & \Spec (K)  \ar@/_1.3pc/[l] _{s} \ar[d] \\
\Spec (L) \ar[r]_{p_{L/k}} & \Spec (k),}
$$
for the \'etale base change map $p_{L/k}$. In other words, $p_2$ is a Nisnevich cover.}

Since $k \subset L$ is separable, i.e. \'etale, the base change $p_2$ is \'etale as well. So, we just need to prove the existence of the section $s$. Recall that $p_2$ is given by the $k$-algebra homomorphism $p_2 ^{\sharp}: K \to L \otimes_k K$ given by $a \mapsto 1 \otimes a$. Note that we have the multiplication map $\Delta^{\sharp}: K \otimes_k K \to K$. Let $s^{\sharp}:= \Delta^{\sharp}|: L \otimes_k K \to K$ be the restriction of $\Delta^{\sharp}$ on the subalgebra $L \otimes_k K \subset K \otimes_k K$. Then the composite
$$
s^{\sharp} \circ p_2 ^{\sharp} : K \to L \otimes_k K \to K,  \ \ \ a \mapsto 1 \otimes a \mapsto a
$$
is indeed the identity of $K$ so that we have $p_2 \circ s = {\rm Id}_{\Spec (K)}$. This proves the Claim.

\bigskip

Going back to the proof of the Theorem in Case 2, we continue to use the notations of Claim and let $x:= s(w)$. Note we have $k(x) \simeq k(w)$. The flat pull-back $p_{L/k} ^*:  z^n_{\widehat{\mathfrak{m}}} (\widehat{\mathcal{O}}, n)^c \to z^n_{\widehat{\mathfrak{m}}'} (\widehat{\mathcal{O}}_{L}, n)^c$ of Proposition \ref{prop:fpb} gives $p_{L/k} ^* (W) = \sum_{i} m_i W_i '$ for some integers $m_i \geq 1$ and integral closed subschemes $W_i ' \subset \square_{\widehat{\mathcal{O}}_L} ^n$. Here, $\widehat{\mathcal{O}}_L:= \widehat{\mathcal{O}}_{\mathbb{A}_{L}^1, 0} \simeq L[[t]]$ and $\widehat{\mathfrak{m}}' := (t) \subset L[[t]]$.

By the Claim, one of $W_i'$ has $x= s(w)$ as its generic point, say $W_{i_0} '$. Hence $k(W_{i_0}') = k(x) = K$. On the other hand, this field is a primary extension of $L$ by the choice of $L$ that it is the algebraic separable closure of $k$ in $K$. Now, \cite[Proposition (4.5.9), p.62]{EGA4-2} says that this is equivalent to that $W_{i_0}'$ is geometrically irreducible in the sense of \cite[D\'efinition (4.5.2), p.61]{EGA4-2}.

If $k$ is perfect (thus so is $L$), then geometric reducedness over $L$ (see \cite[D\'efinition (4.6.2), p.68]{EGA4-2}) is equivalent to reducedness by \cite[Corollaire (4.6.11), p.70]{EGA4-2}, so that this $W_{i_0}'$ is actually geometrically integral over $L$. In case $k$ is not perfect, then by \cite[Proposition (4.6.6), p.69]{EGA4-2}, there exists a finite radicial (i.e. purely inseparable) extension $L \subset L'$ such that for the further base change of $W_{i_0}'$ to $(W_{i_0}')_{L'}$ from $L$ to $L'$, the scheme $((W_{i_0}')_{L'})_{\rm red}$ is geometrically reduced over $L'$. Under this procedure, $((W_{i_0}')_{L'})_{\rm red}$ is still geometrically irreducible over $L'$ by definition. If $x'$ is its unique generic point, then $k(x') \simeq k(x) \simeq k(w)$ under this extension. Hence under the further base change $\Spec (L') \to \Spec (k)$ (which is no longer \'etale, so the map $\Spec (L' \otimes_k K) \to \Spec (K)$ may no longer be a Nisnevich cover), but it still has a section $s' : \Spec (K) \to \Spec (L' \otimes_k K)$ such that the irreducible component of $x'=s'(w)$ is geometrically integral over $L'$, and $k(s'(w)) \simeq k(w)$, i.e. its degree to $W$ is $1$. In this case, we replace $L$ by $L'$ and $W_{i_0}'$ by $((W_{i_0}')_{L'})_{\rm red}$.

Hence for any base field $k$, we have a geometrically irreducible $L$-scheme $W_{i_0}' \in z^n_{\widehat{\mathfrak{m}}'} (\widehat{\mathcal{O}}_{L}, n)^c$ such that $[k(W_{i_0}') : k(W) ] = [K: K] = 1$. In particular, the push-forward $p_{L/k, *} (W_{i_0}') = W$.

We have the following commutative diagram:
\begin{equation}\label{eqn:mod t^m cd}
\xymatrix{ z^n_{\mathfrak{m}'} (\mathcal{O}_L, n)^c \ar[r] ^{\xi^n_L} \ar[d] ^{p_{L/k, *}} & z^n _{\widehat{\mathfrak{m}}'} (\widehat{\mathcal{O}}_L, n)^c \ar[d] ^{p_{L/k, *}} \ar[r] &  z^n (L_m, n) \ar[d] ^{p_{L/k, *}}\\
z^n _{\mathfrak{m}} (\mathcal{O}, n)^c \ar[r] ^{\xi^n} \ar@/_1.5pc/[rr]_{\xi^n_m} & z^n _{\widehat{\mathfrak{m}}} (\widehat{\mathcal{O}}, n)^c \ar[r] &  z^n (k_m, n) ,}
\end{equation}
where the left square is commutative by \cite[Proposition 1.7]{Fulton}, while the right square is well-defined and commutative by Proposition \ref{prop:ppf}-(2) applied to the proper morphism $p_{L/k}: \Spec (L) \to \Spec (k)$.

Since $W'_{i_0}$ is geometrically integral over $L$, by Case 1, there exists some $Z' \in z^n _{\mathfrak{m}'} (\mathcal{O}_L, n)^c$ such that $\xi^n_{L} (Z')  = \widehat{Z}' \sim_{t^m} W'_{i_0}$. Hence we have $W = p_{L/k, *} (W'_{i_0}) \sim_{t^m} ^{\dagger} p_{L/k, *}  \xi ^n_{L} (Z')=^{\ddagger} \xi ^n  p_{L/k, *} (Z')$, where $\dagger$ and $\ddagger$ hold by the commutativity of the right and the left squares of the diagram \eqref{eqn:mod t^m cd}, respectively. This shows that $W$ lies in the image of $\xi^n_m$. This finishes the proof that $\xi^n_m$ is surjective.
\end{proof}

\begin{cor}\label{cor:Milnor surj}
The morphism $\xi ^n _m: z^n_{\mathfrak{m}} (\mathcal{O}, \bullet)^{pc} \to z^n (k_m, \bullet)$ of complexes 
induces a surjective group homomorphism $\CH_{\mathfrak{m}} ^n (\mathcal{O}, n)^{pc} \to \CH^n (k_m, n)$.
\end{cor}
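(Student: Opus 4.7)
The plan is to reduce the claim to the mod $t^m$ moving \thmref{thm:mod t^m}, which will do the essential work. The very first thing to check is that $\xi^n_m$ is indeed a morphism of complexes, so that it descends to homology. By construction it factors as the completion (flat pull-back) $\xi^n\colon z^n_{\mathfrak{m}}(\mathcal{O},\bullet)^{pc}\to z^n_{\widehat{\mathfrak{m}}}(\widehat{\mathcal{O}},\bullet)^{pc}$, which is a chain map by \propref{prop:fpb}, composed with the quotient by the mod $t^m$-equivalence, which is a chain map by \lemref{lem:boundary op}. Hence we automatically obtain an induced homomorphism $\CH_{\mathfrak{m}}^n(\mathcal{O},n)^{pc}\to\CH^n(k_m,n)$, and the remaining task is to show it is surjective.

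Next I would exploit the fact that we are in the Milnor range to simplify both homology groups. By \corref{cor:no face}, any integral cycle in $z^n_{\widehat{\mathfrak{m}}}(\widehat{\mathcal{O}},n)$, and likewise any integral cycle in $z^n_{\mathfrak{m}}(\mathcal{O},n)$, has empty intersection with every proper face, so the boundary operator vanishes identically on the degree-$n$ groups $z^n_{\mathfrak{m}}(\mathcal{O},n)^{c}$ and $z^n(k_m,n)$. Consequently every $n$-chain is automatically an $n$-cycle, and the two homology groups simplify to
\[
\CH_{\mathfrak{m}}^n(\mathcal{O},n)^{pc}\;=\;\frac{z^n_{\mathfrak{m}}(\mathcal{O},n)^c}{\partial\bigl(z^n_{\mathfrak{m}}(\mathcal{O},n+1)^{pc}\bigr)},\qquad \CH^n(k_m,n)\;=\;\frac{z^n(k_m,n)}{\partial\bigl(z^n(k_m,n+1)\bigr)}.
\]
Under this description, the induced map on homology is simply the map between these quotients coming from $\xi^n_m$ in degree $n$.

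The surjectivity is then immediate: given $[\alpha]\in\CH^n(k_m,n)$, choose a representative $\alpha\in z^n(k_m,n)$; by \thmref{thm:mod t^m} the degree-$n$ map $\xi^n_m\colon z^n_{\mathfrak{m}}(\mathcal{O},n)^c\to z^n(k_m,n)$ is surjective, so one can pick $\beta\in z^n_{\mathfrak{m}}(\mathcal{O},n)^c$ with $\xi^n_m(\beta)=\alpha$, and then $[\beta]\in\CH_{\mathfrak{m}}^n(\mathcal{O},n)^{pc}$ is a preimage of $[\alpha]$. The real substance of the corollary therefore lies entirely in \thmref{thm:mod t^m}; once that mod $t^m$-moving result is in hand, the passage to homology here is purely formal, essentially because the $n$-th homology is a quotient of the $n$-th chain group with no kernel condition to worry about in the Milnor range.
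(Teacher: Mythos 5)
Your proposal is correct and follows essentially the paper's own route: the whole content is Theorem~\ref{thm:mod t^m}, and the passage to homology is formal because the degree-$(n-1)$ groups vanish in the Milnor range, so the kernel condition on degree-$n$ chains is vacuous. The only difference is bookkeeping: the paper packages this via the short exact sequence $0 \to \ker(\xi^n_m) \to z^n_{\mathfrak{m}}(\mathcal{O},\bullet)^{pc} \to \im(\xi^n_m) \to 0$ and its long exact homology sequence (using $z^n_{\mathfrak{m}}(\mathcal{O},n-1)^{pc}=0$ from Remark~\ref{remk:0-cycle trivial 2}), whereas you lift representatives directly, justifying the same vanishing through Corollary~\ref{cor:no face}.
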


\begin{proof}
Let $K_{\bullet}:= \ker (\xi^n_m)$ and $I_{\bullet}:= {\rm im} (\xi ^n_m)$ so that we have a short exact sequence $0 \to K_{\bullet} \to  z^n_{\mathfrak{m}} (\mathcal{O}, \bullet)^{pc} \to I_{\bullet} \to 0$ of homological complexes. From the morphisms $z^n_{\mathfrak{m}} (\mathcal{O}, \bullet)^{pc} \to I_{\bullet} \hookrightarrow z^n (k_m, \bullet)$ of complexes, we have homomorphisms
\begin{equation}\label{eqn:Milnor surj}
\CH^n_{\mathfrak{m}} (\mathcal{O}, n)^{pc} \to {\rm H_n} (I_{\bullet}) \to \CH^n (k_m, n).
\end{equation}
Here, by Remark %\ref{remk:0-cycle trivial} and
 \ref{remk:0-cycle trivial 2}, we have $z^{n}_{\mathfrak{m}} (\mathcal{O}, n-1)^{pc} = 0$ %, $z^n (k_m, n-1) = 0$ 
 so that $K_{n-1} = 0$, while we have $K_j= 0$ for all $j \leq n-1$ due to the dimension reason. In particular, ${\rm H}_{n-1} (K_{\bullet})  = 0$ and we have part of the associated long exact sequence $\cdots \to \CH^n_{\mathfrak{m}} (\mathcal{O}, n)^{pc} \to {\rm H}_n (I_{\bullet}) \to {\rm H}_{n-1} (K_{\bullet}) = 0$ so that the first map $\CH_{\mathfrak{m}} ^n (\mathcal{O}, n)^{pc} \to {\rm H_n} (I_{\bullet})$ of \eqref{eqn:Milnor surj} is surjective. 

On the other hand, by Theorem \ref{thm:mod t^m}, we have $I_n = z^n (k_m, n)$, while $I_j = 0$ for all $j \leq n-1$ by Remark \ref{remk:0-cycle trivial cor} and the dimension reason. Hence
$${\rm H}_n (I_{\bullet}) = \frac{ z^n (k_m, n)}{ \partial ( \xi^n_m  (z^n_{\mathfrak{m}} (\mathcal{O}, n+1)^{pc}))},  \ \ \CH^n (k_m, n) = \frac{z^n (k_m, n)}{ \partial ( z^n  (k_m, n+1))}$$
 with $\partial ( \xi^n_m ( z^n_{\mathfrak{m}} (\mathcal{O}, n+1)^{pc})) \subseteq \partial (z^n (k_m, n+1))$ in $z^n (k_m, n)$, so that the second map  ${\rm H}_n (I_{\bullet}) \to \CH^n (k_m, n)$ in \eqref{eqn:Milnor surj} is the surjective quotient map. Hence the composite in \eqref{eqn:Milnor surj} is surjective, as desired.
\end{proof}

 \begin{remk}
 One may wonder whether Theorem \ref{thm:mod t^m} extends beyond the Milnor range, i.e. when $q <n$, whether the composite $z^q _{\mathfrak{m}} (\mathcal{O}, n)^{pc} \to z^q _{\widehat{\mathfrak{m}}} (\widehat{\mathcal{O}}, n)^{pc} \to z^q (k_m, n)$ is surjective. To test if this question is affirmatively answerable, concentrate only on the subset of integral effective cycles. Since the cycles considered are flat over ${\rm Spec} (\widehat{\mathcal{O}})$, such effective cycles may be, under mild additional assumptions, represented by (a locally closed subset of) a Hilbert scheme $H$, and there exists a (non-constant) morphism $\Spec (\widehat{\mathcal{O}}) \to H$ of schemes. On the other hand, if the surjectivity assertion mod $t^m$ would hold for those integral effective cycles, then it implies that for the fpqc cover $\Spec (\widehat{\mathcal{O}}) \to \Spec (\mathcal{O})$, the morphism $\Spec (\widehat{\mathcal{O}}) \to H$ should give an fpqc descent to a morphism $\Spec (\mathcal{O}) \to H$. However, this means that there exists a non-constant rational map $\mathbb{A}^1 \dashrightarrow H$, which imposes a restrictive condition on $H$. Thus, we do not expect an extension of Theorem \ref{thm:mod t^m} to cycles of arbitrary dimension.
 \end{remk}

\section{Milnor range II: mod $t^m$-equivalence and conclusion}\label{sec:Milnor 2}

In \S \ref{sec:graph}, we use the mod $t^m$ moving lemma of Theorem \ref{thm:mod t^m} to transport the main theorem of \cite[Theorem 3.4]{EVMS} (or equivalently, \cite{Gabber}, \cite{KMS}) to our situation of cycles over $\widehat{\mathcal{O}}$ modulo $t^m$. This allows a significant simplification of the generators of our relative cycle group $\CH^n ((k_m, (t)), n)$, and helps in finally proving in \S \ref{sec:gr mod t^m} that the regulators $\Upsilon_i$ defined in Proposition \ref{prop:main reciprocity} of \S \ref{sec:Milnor} respect the mod $t^m$-equivalence. Using this, the proof of Theorem \ref{thm:Milnor} is finished in \S \ref{sec:final proof}.

 \subsection{The graph cycles}\label{sec:graph}
 
 Recall that for each integral $k$-domain $R$ of finite Krull dimension, and a sequence $a_1, \cdots, a_n \in R^{\times}$ of units, we have its associated closed subscheme $\Gamma_{(a_1, \cdots, a_n)} \subset \square_R ^n$ given by the set of equations $\{y_1 = a_1, \cdots, y_n = a_n \}.$ This is called the \emph{graph cycle} of the sequence, and this is geometrically integral over $k$. In case $R$ is local with the maximal ideal $\mathfrak{m}$, actually $\Gamma_{(a_1, \cdots, a_n)} \in z^n_{\mathfrak{m}} (R, n)$, and we get the graph homomorphism $gr: K_n ^M (R) \to \CH^n_{\mathfrak{m}} (R, n)$. This was proven in \cite[Lemma 2.1]{EVMS} for a ring $R$ essentially of finite type over $k$, but exactly the same argument proves it for the general case. By construction, the Zariski closure $\ov{\Gamma}$ of $\Gamma$ in $\ov{\square}_R ^n$ is equal to $\Gamma$, so that in particular $\Gamma$ is closed in $\ov{\square}_R ^n$ as well. Furthermore, one sees immediately that $\partial_i ^{\epsilon} (\Gamma) = 0$ for each $1 \leq i \leq n$ and $\epsilon\in \{ 0, \infty\}$. 
We improve \emph{loc.cit.} a bit as follows:
 
 \begin{lem}\label{lem:im_gr}
The graph homomorphisms $gr_{\mathcal{O}}: K_n ^M (\mathcal{O}) \to \CH_{\mathfrak{m}} ^n (\mathcal{O}, n)$ and $gr_{\widehat{\mathcal{O}}} : K_n ^M (\widehat{\mathcal{O}}) \to \CH_{\widehat{\mathfrak{m}}} ^n (\widehat{\mathcal{O}}, n)$ of \cite[Lemma 2.1]{EVMS} actually map into $\CH^n_{\mathfrak{m}} (\widehat{\mathcal{O}}, n)^{pc}$ and $\CH_{\widehat{\mathfrak{m}}} ^n (\widehat{\mathcal{O}}, n)^{pc}$, respectively.
 \end{lem}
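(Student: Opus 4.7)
The plan is to reduce the claim to the already-established Corollary \ref{cor:pc inj}. I will treat the case of $\widehat{\mathcal{O}}$; the case of $\mathcal{O}$ is completely parallel.

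First, I would verify that graph cycles themselves live in the ``partially proper'' subcomplex. For any unit tuple $(a_1,\dots,a_n)\in(\widehat{\mathcal{O}}^\times)^n$, the graph cycle $\Gamma:=\Gamma_{(a_1,\dots,a_n)}\subset \square^n_{\widehat{\mathcal{O}}}$ is the image of a closed immersion $\Spec(\widehat{\mathcal{O}})\hookrightarrow \square^n_{\widehat{\mathcal{O}}}$ that splits the structure morphism, so $\Gamma\to\Spec(\widehat{\mathcal{O}})$ is an isomorphism and in particular proper. Therefore $\Gamma\in z^n_{\widehat{\mathfrak{m}}}(\widehat{\mathcal{O}},n)^{c}$, and since Definition \ref{defn:cycle pproper}(1) in the Milnor range $q=n$ gives $z^n_{\widehat{\mathfrak{m}}}(\widehat{\mathcal{O}},n)^{pc}=z^n_{\widehat{\mathfrak{m}}}(\widehat{\mathcal{O}},n)^{c}$, we obtain $\Gamma\in z^n_{\widehat{\mathfrak{m}}}(\widehat{\mathcal{O}},n)^{pc}$. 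By Corollary \ref{cor:no face} we furthermore have $\partial\Gamma=0$, so each graph cycle does define a class in $\CH^n_{\widehat{\mathfrak{m}}}(\widehat{\mathcal{O}},n)^{pc}$.

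Next, I would argue that the Milnor relations are still relations in the $pc$-version. By \cite[Lemma 2.1]{EVMS}, for every Milnor relation $\sum_i n_i\{a_{i,1},\dots,a_{i,n}\}=0$ in $K^M_n(\widehat{\mathcal{O}})$ the corresponding combination of graph cycles
$$
\Lambda:=\sum_i n_i\,\Gamma_{(a_{i,1},\dots,a_{i,n})}
$$
is a boundary in $z^n_{\widehat{\mathfrak{m}}}(\widehat{\mathcal{O}},\bullet)$, i.e.\ $\Lambda\in\partial\bigl(z^n_{\widehat{\mathfrak{m}}}(\widehat{\mathcal{O}},n+1)\bigr)$. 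By the previous paragraph, $\Lambda$ also lies in $z^n_{\widehat{\mathfrak{m}}}(\widehat{\mathcal{O}},n)^{c}$. Applying Corollary \ref{cor:pc inj}, which identifies
$$
\partial\bigl(z^n_{\widehat{\mathfrak{m}}}(\widehat{\mathcal{O}},n+1)^{pc}\bigr)=\partial\bigl(z^n_{\widehat{\mathfrak{m}}}(\widehat{\mathcal{O}},n+1)\bigr)\cap z^n_{\widehat{\mathfrak{m}}}(\widehat{\mathcal{O}},n)^{c},
$$
we conclude that $\Lambda$ is in fact the boundary of a $pc$-cycle, so its class vanishes in $\CH^n_{\widehat{\mathfrak{m}}}(\widehat{\mathcal{O}},n)^{pc}$. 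This shows that the assignment $\{a_1,\dots,a_n\}\mapsto [\Gamma_{(a_1,\dots,a_n)}]$ descends to a well-defined homomorphism $K^M_n(\widehat{\mathcal{O}})\to \CH^n_{\widehat{\mathfrak{m}}}(\widehat{\mathcal{O}},n)^{pc}$ which, post-composed with the canonical map $\CH^n_{\widehat{\mathfrak{m}}}(\widehat{\mathcal{O}},n)^{pc}\to\CH^n_{\widehat{\mathfrak{m}}}(\widehat{\mathcal{O}},n)$, recovers $gr_{\widehat{\mathcal{O}}}$.

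Since essentially every ingredient is already in place, I do not expect any serious obstacle: the only conceptual step is recognizing that Corollary \ref{cor:pc inj} is the exact statement needed to transfer the Milnor relations from $\CH^n$ to $\CH^n_{pc}$. The one small point to double-check is that the bounding cycles constructed in \cite[Lemma 2.1]{EVMS} (for both multilinearity and the Steinberg relation) do lie in $z^n_{\widehat{\mathfrak{m}}}(\widehat{\mathcal{O}},n+1)$; but this is already built into the cited result, and the present lemma does not require us to produce the $pc$-bounding cycle explicitly, only its existence, which Corollary \ref{cor:pc inj} hands us for free.
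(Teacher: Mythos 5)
Your proof is correct, and it takes a genuinely different route from the paper's for the main step. Both arguments begin the same way: graph cycles of unit tuples are proper over the base, hence lie in $z^n_{\widehat{\mathfrak{m}}}(\widehat{\mathcal{O}},n)^c=z^n_{\widehat{\mathfrak{m}}}(\widehat{\mathcal{O}},n)^{pc}$ and have vanishing boundary. Where you diverge is in handling the Milnor relations: the paper re-runs the construction of \cite[Lemma 2.1]{EVMS}, writing down the explicit parametrized curves in $\square^{n+1}_{\mathcal{O}}$ that impose the Steinberg and multiplicativity relations, computing their faces, and observing that these faces are graph cycles in $z^n_{\mathfrak{m}}(\mathcal{O},n)^c$, so the bounding curves lie in $z^n_{\mathfrak{m}}(\mathcal{O},n+1)^{pc}$ by the inductive definition; you instead treat the existence of $gr_{\mathcal{O}}$ and $gr_{\widehat{\mathcal{O}}}$ into $\CH^n_{\mathfrak{m}}(\mathcal{O},n)$ and $\CH^n_{\widehat{\mathfrak{m}}}(\widehat{\mathcal{O}},n)$ as a black box (which the lemma's statement and the preceding paragraph of the paper do license), note that any relation is then a boundary from $z^n_{\widehat{\mathfrak{m}}}(\widehat{\mathcal{O}},n+1)$ whose value is a sum of graph cycles lying in $z^n_{\widehat{\mathfrak{m}}}(\widehat{\mathcal{O}},n)^c$, and invoke Corollary \ref{cor:pc inj} to replace the bounding cycle by a $pc$ one. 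This is logically sound and shorter. What the paper's explicit computation buys, and what your shortcut quietly outsources, is the verification that the EVMS bounding cycles can indeed be taken in the subscript-$\mathfrak{m}$ groups (proper intersection with the special fiber) for $\mathcal{O}$ and the non-essentially-finite-type ring $\widehat{\mathcal{O}}$: the paper's proof exhibits such cycles concretely and thereby re-justifies, rather than merely cites, the assertion made just before the lemma that ``exactly the same argument'' as in \cite{EVMS} applies; your argument is only as strong as that prior assertion, though within the paper's framework you are entitled to it.
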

 
 \begin{proof}
We give the proof for $\mathcal{O}$ only. The proof for $\widehat{\mathcal{O}}$ is identical. First note that when $a_1, \cdots, a_n \in \mathcal{O}^{\times}$, the graph $\Gamma_{(a_1, \cdots, a_n)}$ is already proper over $\Spec (\mathcal{O})$ with no intersection with the faces, and the proper intersection condition with respect to the special fiber. Thus we have the set map $gr_{\mathcal{O}}: (\mathcal{O}^{\times})^n \to z^n_{\mathfrak{m}} (\mathcal{O}, n)^c$.

The point of the rest of the proof is to repeat part of the argument of \cite[Lemma 2.1]{EVMS} and check that the relevant cycles used in \emph{loc.cit.} that put various relations on $z^n_{\mathfrak{m}} (\mathcal{O}, n)^c$ are actually lying in the group $z^n_{\mathfrak{m}} (\mathcal{O}, n+1)^{pc}$. 
\begin{enumerate}
\item Let $a, 1-a, a_i \in \mathcal{O}^{\times}$ for $3 \leq i \leq n$. Consider the parametrized cycle
$$
W: \square_{\mathcal{O}} ^1 \dashrightarrow \square_{\mathcal{O}} ^{n+1}, \ \  x \mapsto \left( x, 1-x, \frac{ a-x}{1-x} , a_3, \cdots, a_n \right).
$$
To check $W \in z^n_{\mathfrak{m}} (\mathcal{O}, n+1)^{pc}$, we need to look at its faces. By a direct calculation, one checks that the only nonzero face is $\partial_3 ^0 W = ( a, 1-a, a_3, \cdots, a_n)$, which is in $z^n_{\mathfrak{m}} (\mathcal{O}, n)^c$, so $\partial W \in z^n_{\mathfrak{m}} (\mathcal{O}, n)^c$. Hence we have $W \in z^n_{\mathfrak{m}} (\mathcal{O}, n+1)^{pc}$ by definition. This also shows that $\{ a, 1-a, a_3, \cdots, a_n \} \mapsto 0$ in $\CH^n _{\mathfrak{m}} (\mathcal{O}, n)^{pc}$. 

\item Let $a, b, a_i \in \mathcal{O} ^{\times}$ for $2 \leq i \leq n$. Consider the parametrized cycle
$$
W: \square_{\mathcal{O}} ^1 \dashrightarrow \square_{\mathcal{O}} ^{n+1}, \ \ x \mapsto \left( x, \frac{ a x - ab}{ x- ab}, a_2, \cdots, a_n \right).
$$
By direct calculations, its only nontrivial faces are $\partial_1 ^{\infty} W = (a, a_2, \cdots, a_n)$, $\partial_2 ^{0} W = (b, a_2, \cdots, a_n)$, $\partial_2 ^{\infty}W= ( ab, a_2, \cdots, a_n)$, all of which are in $z^n_{\mathfrak{m}} (\mathcal{O}, n)^c$. Hence $\partial W \in z^n_{\mathfrak{m}} (\mathcal{O}, n)^c$ so that $W \in z^n_{\mathfrak{m}} (\mathcal{O}, n+1)^{pc}$ by definition. This also gives the relation $(ab, a_2, \cdots, a_n) \equiv (a, a_2, \cdots, a_n) + (b, a_2, \cdots, a_n)$ in $\CH^n _{\mathfrak{m}} (\mathcal{O}, n)^{pc}$. 
\end{enumerate}
We may permute the above cycles to give the induced homomorphism $gr_{\mathcal{O}}: \bigotimes_{i=1} ^n \mathcal{O} ^{\times} \to \CH^n _{\mathfrak{m}} (\mathcal{O}, n)^{pc}$ by (2), which descends to $gr_{\mathcal{O}}: K_n ^M (\mathcal{O}) \to \CH^n _{\mathfrak{m}} (\mathcal{O}, n)^{pc}$ by (1). This completes the proof.
 \end{proof}
  
\begin{lem}\label{lem:grOsurj}
Let $k$ be an infinite field. Then the map $gr_{\mathcal{O}}: K_n ^M (\mathcal{O}) \to \CH^n_{\mathfrak{m}} (\mathcal{O}, n)^{pc}$ of Lemma \ref{lem:im_gr} is surjective.
\end{lem}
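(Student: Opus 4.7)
The plan is to imitate Totaro's proof of surjectivity of $K_n^M(F) \to \CH^n(F, n)$ for an infinite field $F$, adapted to the discrete valuation ring $\mathcal{O}$, using the infiniteness of $k$ for the necessary general-position arguments.

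First, I would reduce to integral cycles. Since $z^n_{\mathfrak{m}}(\mathcal{O}, n)^{pc} = z^n_{\mathfrak{m}}(\mathcal{O}, n)^c$ is freely generated by integral closed subschemes proper over $\Spec(\mathcal{O})$, it suffices to treat a single nonempty integral $Z \in z^n_{\mathfrak{m}}(\mathcal{O}, n)^c$. By Lemma \ref{lem:gf rel 0} and Corollary \ref{cor:no face}, such $Z$ is of the form $\Spec A$ with $A$ a finite flat (semi-local, one-dimensional) $\mathcal{O}$-algebra, and the coordinate restrictions $\bar{y}_1, \ldots, \bar{y}_n \in A^{\times}$. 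Hence $Z$ determines a Milnor symbol $\{\bar{y}_1, \ldots, \bar{y}_n\}$, and the aim becomes to show
\[
[Z] = gr_{\mathcal{O}}(\Nm_{A/\mathcal{O}}\{\bar{y}_1, \ldots, \bar{y}_n\}) \quad \text{in } \CH^n_{\mathfrak{m}}(\mathcal{O}, n)^{pc},
\]
where $\Nm_{A/\mathcal{O}} : K_n^M(A) \to K_n^M(\mathcal{O})$ is the Bass--Tate / Kato transfer for Milnor $K$-theory, applied to the semi-local finite extension $A/\mathcal{O}$.

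Second, using the triangular presentation afforded by Proposition \ref{prop:ci}, I would pick a primitive element $\tilde\alpha$ of $A$ over $\mathcal{O}$ so that $A \simeq \mathcal{O}[y]/(g(y))$ for a monic $g(y)$ of degree $d = [k(Z_{\eta}) : {\rm Frac}(\mathcal{O})]$ and $\bar{y}_i = \varphi_i(\tilde\alpha)$ for polynomials $\varphi_i \in \mathcal{O}[y]$. The plan is then an induction on $d$: following Totaro \cite{Totaro} (which in turn extends Bass--Tate), one constructs explicit parametrized $2$-cycles $W \in z^n_{\mathfrak{m}}(\mathcal{O}, n+1)^{pc}$ whose boundaries realize the Bass--Tate reduction formula, reducing $[Z]$ modulo boundaries to a combination of cycles of strictly smaller degree. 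The iteration terminates at $d = 1$, which is precisely a graph cycle, placing $[Z]$ in the image of $gr_{\mathcal{O}}$.

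The main obstacle is the verification of the $pc$-condition on each intermediate $2$-cycle $W$: not only must $W$ intersect every face of $\square_{\mathcal{O}}^{n+1}$ properly, but also, and more delicately, every $\mathfrak{m} \times F$ properly for every face $F$. This last point is exactly where the infiniteness of $k$ is essential: by translating the auxiliary parameters in the explicit Bass--Tate formulas by a generic element $c \in k^{\times}$, the special fiber of $W$ is placed in the required general position with respect to the faces. The cycle manipulations of type (1) and (2) used in the proof of Lemma \ref{lem:im_gr}, combined with the coefficient perturbation philosophy of \S \ref{subsec:perturb}, provide the right building blocks for this relative general-position argument over $\mathcal{O}$.
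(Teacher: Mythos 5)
Your overall strategy---redoing the Totaro/Bass--Tate induction directly over $\mathcal{O}$ while tracking the $pc$-condition---is not the paper's route, and as written it has genuine gaps at its load-bearing points. First, the reduction to a monogenic presentation is unjustified: the triangular system of Proposition \ref{prop:ci} (or its analogue over $\mathcal{O}$) does not give $A \simeq \mathcal{O}[y]/(g(y))$; a finite flat algebra over a discrete valuation ring need not admit a primitive element (and when $k$ has positive characteristic even the generic fibre, a finite extension of $k(t)$, may be inseparable, so the primitive element theorem is unavailable), so the induction on the degree of a single minimal polynomial does not get started. One could instead work with the tower $W=W^{(n)}\to\cdots\to W^{(1)}\to\Spec(\mathcal{O})$, but then each Bass--Tate reduction step takes place over a finite semi-local $\mathcal{O}$-algebra rather than over a field, and the transfer $\Nm_{A/\mathcal{O}}$ on Milnor $K$-theory that you invoke is not an off-the-shelf tool in that setting: its construction and independence of choices for finite extensions of (semi-)local rings is precisely the kind of delicate issue that \cite{EVMS} had to confront. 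Second, the step you yourself flag as the main obstacle---that the explicit connecting curves can be chosen inside $z^n_{\mathfrak{m}}(\mathcal{O},n+1)$, i.e.\ with special fibres meeting every face properly, and then in the $pc$-subgroup---is left as an assertion (``translate by a generic $c\in k^{\times}$''); this relative general-position statement over the closed point is the real content, and it is not supplied by the type (1)--(2) computations of Lemma \ref{lem:im_gr}, which only concern cycles with unit (graph-type) coordinates, nor by the perturbation results of \S \ref{subsec:perturb}, which are formulated over $\widehat{\mathcal{O}}$ for a different purpose.

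For comparison, the paper's proof is a short reduction to quoted results: by \cite[Lemma 3.11]{EVMS} every class in $\CH^n(\mathcal{O},n)$ is represented by an $\sfs$-cycle (finite surjective over $\Spec(\mathcal{O})$ with proper intersection with $\mathfrak{m}$), which together with the injectivity of $\CH^n_{\mathfrak{m}}(\mathcal{O},n)^{pc}\to\CH^n_{\mathfrak{m}}(\mathcal{O},n)$ from Corollary \ref{cor:pc inj} and the easy moving lemma yields $\CH^n_{\mathfrak{m}}(\mathcal{O},n)^{pc}=\CH^n(\mathcal{O},n)$; then the surjectivity of the graph map $K_n^M(\mathcal{O})\to\CH^n(\mathcal{O},n)$ is quoted from \cite[Theorem 3.4]{EVMS} (this is where the infiniteness of $k$ enters), and Lemma \ref{lem:im_gr} finishes the argument. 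A self-contained proof along your lines would in effect reprove those two results of \cite{EVMS}, and the missing steps above are exactly where their work lies.
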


\begin{proof}
We first claim that $\CH^n_{\mathfrak{m}} (\mathcal{O}, n)^{pc} = \CH^n (\mathcal{O}, n)$. By \cite[Lemma 3.11]{EVMS}, every cycle class in $\CH^n (\mathcal{O}, n)$ is represented by a cycle in the group called $\CH^n _{\sfs} (\mathcal{O}, n)$, where each irreducible component is finite (in particular, proper) surjective over $\Spec (\mathcal{O})$ with the proper intersection condition with $\mathfrak{m}$. (See \cite{EVMS} for its precise definition.) This means that the composite inclusions $\CH^n_{\sfs} (\mathcal{O}, n) \hookrightarrow \CH^n_{\mathfrak{m}}(\mathcal{O}, n)^{pc} \hookrightarrow \CH^n_{\mathfrak{m}} (\mathcal{O}, n)$ is an isomorphism, where the second arrow is injective by Corollary \ref{cor:pc inj}. Hence $\CH^n_{\sfs} (\mathcal{O}, n) = \CH^n_{\mathfrak{m}}(\mathcal{O}, n)^{pc} =\CH^n_{\mathfrak{m}} (\mathcal{O}, n)$. Now the easy moving lemma of smooth affine $k$-schemes of higher Chow groups shows that $\CH^n_{\mathfrak{m}} (\mathcal{O}, n) = \CH^n (\mathcal{O}, n)$. This proves the claim.

Now the graph map $K_n ^M (\mathcal{O}) \to \CH^n (\mathcal{O}, n)$ is surjective by \cite[Theorem 3.4]{EVMS} (which uses that $k$ is infinite), and it factors through $\CH^n_{\mathfrak{m}} (\mathcal{O}, n)^{pc}$ by Lemma \ref{lem:im_gr}, so via the equality $\CH^n_{\mathfrak{m}} (\mathcal{O}, n)^{pc} = \CH^n (\mathcal{O}, n)$, the map $gr_{\mathcal{O}}$ of Lemma \ref{lem:im_gr} is surjective.
\end{proof}

 \begin{defn}
 For $R= \mathcal{O}$ or $\widehat{\mathcal{O}}$, let $z_{gr} ^n (R, n)$ be the subgroup generated by the images of the graph cycles $\Gamma_{(a_1, \cdots, a_n)}$ over all sequences $a_1, \cdots, a_n \in R^{\times}$.  
 For the well-defined homomorphism $z^n_{gr} (\widehat{\mathcal{O}}, n) \to \CH^n_{\widehat{\mathfrak{m}}} (\widehat{\mathcal{O}}, n)^{pc} \to \CH^n (k_m, n)$, define $\CH^n_{gr} (k_m, n)$ to be the image of $z^n _{gr}(\widehat{\mathcal{O}},n)$ in $\CH^n (k_m, n)$. 
  \end{defn} 
  
 \begin{lem}\label{lem:gr surj}
 Let $k$ be an infinite field. The composite $ K_n ^M (\widehat{\mathcal{O}}) \overset{gr_{\widehat{\mathcal{O}}}}{\to} \CH^n_{\widehat{\mathfrak{m}}} (\widehat{\mathcal{O}},n)^{pc}  \to \CH^n (k_m, n)$ is surjective, where $gr_{\widehat{\mathcal{O}}}$ is as in Lemma \ref{lem:im_gr}. In particular, the group $\CH^n (k_m, n)$ is generated by the graph cycles $\Gamma_{(a_1, \cdots, a_n)}$ for sequences $a_1, \cdots, a_n \in \widehat{\mathcal{O}} ^{\times}$, and the natural homomorphism $\CH^n_{gr} (k_m, n)\to \CH^n (k_m, n)$ is an isomorphism.
  \end{lem}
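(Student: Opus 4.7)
The plan is to chain together the two surjections already established in the paper, and then observe that the resulting composite factors through $K_n^M(\widehat{\mathcal{O}})$ via the graph homomorphism $gr_{\widehat{\mathcal{O}}}$. First I would invoke Corollary \ref{cor:Milnor surj}, which provides the surjection
$$\CH^n_{\mathfrak{m}}(\mathcal{O}, n)^{pc} \twoheadrightarrow \CH^n(k_m, n)$$
induced by the completion followed by the mod $t^m$ quotient. Next, using the hypothesis that $k$ is infinite, I would apply Lemma \ref{lem:grOsurj} to obtain the surjection $gr_{\mathcal{O}}: K_n^M(\mathcal{O}) \twoheadrightarrow \CH^n_{\mathfrak{m}}(\mathcal{O}, n)^{pc}$. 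Composing these two surjections gives a surjection $K_n^M(\mathcal{O}) \twoheadrightarrow \CH^n(k_m, n)$.

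The key observation is then to check that this composite factors through $K_n^M(\widehat{\mathcal{O}})$. I would verify the commutativity of the diagram
$$
\xymatrix{
K_n^M(\mathcal{O}) \ar[r]^{gr_{\mathcal{O}}} \ar[d] & \CH^n_{\mathfrak{m}}(\mathcal{O}, n)^{pc} \ar[d]^{\xi^n} \\
K_n^M(\widehat{\mathcal{O}}) \ar[r]^{gr_{\widehat{\mathcal{O}}}} & \CH^n_{\widehat{\mathfrak{m}}}(\widehat{\mathcal{O}}, n)^{pc},
}
$$
where the left vertical map is functoriality of Milnor $K$-theory under the completion ring homomorphism $\mathcal{O} \to \widehat{\mathcal{O}}$. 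The commutativity is immediate from the definition of the graph cycles: for units $a_1, \dots, a_n \in \mathcal{O}^{\times}$, the flat pull-back of $\Gamma_{(a_1,\dots,a_n)}$ along $\Spec(\widehat{\mathcal{O}}) \to \Spec(\mathcal{O})$ is precisely the graph cycle $\Gamma_{(a_1,\dots,a_n)}$ formed with the same units now viewed in $\widehat{\mathcal{O}}^{\times}$. Combining this commutative square with the passage to $\CH^n(k_m,n)$ shows that the surjection $K_n^M(\mathcal{O}) \twoheadrightarrow \CH^n(k_m,n)$ factors as
$$K_n^M(\mathcal{O}) \to K_n^M(\widehat{\mathcal{O}}) \xrightarrow{gr_{\widehat{\mathcal{O}}}} \CH^n_{\widehat{\mathfrak{m}}}(\widehat{\mathcal{O}}, n)^{pc} \to \CH^n(k_m,n),$$
which forces the last two arrows' composite out of $K_n^M(\widehat{\mathcal{O}})$ to be surjective as well.

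For the ``in particular" statement, I would note that by construction $gr_{\widehat{\mathcal{O}}}$ sends any Milnor symbol to the class of a sum of graph cycles in $z^n_{gr}(\widehat{\mathcal{O}},n)$, hence its image in $\CH^n(k_m,n)$ lies inside the subgroup $\CH^n_{gr}(k_m,n)$. Since we have just shown this image is all of $\CH^n(k_m,n)$, and $\CH^n_{gr}(k_m,n)$ is defined precisely as the image of $z^n_{gr}(\widehat{\mathcal{O}},n)$ inside $\CH^n(k_m,n)$, the inclusion $\CH^n_{gr}(k_m,n) \hookrightarrow \CH^n(k_m,n)$ is both injective (by definition) and surjective, hence an isomorphism.

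I do not expect a major technical obstacle here; the argument is essentially a diagram chase assembling Corollary \ref{cor:Milnor surj} (which already encodes the hard mod $t^m$ moving lemma) and Lemma \ref{lem:grOsurj} (which encodes the Nesterenko--Suslin--Totaro theorem over $\mathcal{O}$ in its $pc$-refined form). The only point that requires a moment of care is the compatibility of the graph homomorphism with completion, which is transparent since graph cycles are defined by equations with coefficients that make sense uniformly over $\mathcal{O}$ and $\widehat{\mathcal{O}}$.
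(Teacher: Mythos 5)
Your proposal is correct and follows essentially the same route as the paper: the same diagram assembling Lemma \ref{lem:grOsurj}, Corollary \ref{cor:Milnor surj}, and the compatibility of the graph maps with completion (which the paper cites from \cite{EVMS} and you verify directly by noting that the flat pull-back of a graph cycle is the graph cycle with the same units over $\widehat{\mathcal{O}}$), followed by the same immediate deduction of the ``in particular'' statement.
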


\begin{proof}
We have a commutative diagram
$$
\xymatrix{ K_n ^M (\mathcal{O}) \ar[r]^{gr_{\mathcal{O}}\ \ } \ar[d] & \CH^n _{\mathfrak{m}} (\mathcal{O}, n)^{pc} \ar[d] ^{\xi} \ar[dr]^{* \ \ \ \ } & \\
K_n ^M (\widehat{\mathcal{O}} )\ar[r]^{gr_{\widehat{\mathcal{O}}}\ \ } & \CH^n _{\widehat{\mathfrak{m}}}(\widehat{\mathcal{O}}, n) ^{pc} \ar[r] & \CH^n (k_m, n),}
$$
where $gr_{\mathcal{O}}$ and $gr_{\widehat{\mathcal{O}}}$ map into their respective target groups of the diagram by Lemma \ref{lem:im_gr}, the left square commutes by \cite[Proposition 2.3]{EVMS}. The map $gr_{\mathcal{O}}$ is surjective by Lemma \ref{lem:grOsurj}. The sloped map $*$ is surjective by Corollary \ref{cor:Milnor surj}. By diagram chasing, the map $K_n ^M (\widehat{\mathcal{O}}) \to \CH^n (k_m, n)$ is surjective. The second assertion follows immediately from the first one.
\end{proof}

\begin{lem}\label{lem:old Claim} 
Let $k$ be an infinite field. The surjection $K_n ^M (\widehat{\mathcal{O}}) \to \CH^n (k_m, n)$ of Lemma \ref{lem:gr surj} induces a surjection $K_n ^M (k_m) \to \CH^n (k_m, n)$.
\end{lem}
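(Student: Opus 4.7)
The plan is to show that the surjection $K_n^M(\widehat{\mathcal{O}}) \twoheadrightarrow \CH^n(k_m, n)$ from Lemma \ref{lem:gr surj} vanishes on the kernel of the natural reduction $K_n^M(\widehat{\mathcal{O}}) \twoheadrightarrow K_n^M(k_m)$. Once that is established, the induced map $K_n^M(k_m) \to \CH^n(k_m, n)$ is automatically a surjection because it fits into a factorization of a surjection.

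To identify this kernel, I would use that the unit group short exact sequence $1 \to 1 + (t^m) \to \widehat{\mathcal{O}}^\times \to k_m^\times \to 1$ is split by any set-theoretic lift of units; so $\ker(K_n^M(\widehat{\mathcal{O}}) \to K_n^M(k_m))$ is the two-sided ideal of the Milnor ring generated by $1 + (t^m)$, and as an abelian group it is generated by symbols $\{1 + t^m h, a_2, \ldots, a_n\}$ and their coordinate permutations, with $h \in \widehat{\mathcal{O}}$ and $a_2, \ldots, a_n \in \widehat{\mathcal{O}}^\times$. The task reduces to showing each such symbol dies in $\CH^n(k_m, n)$.

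The heart of the argument is short. Under the graph map, $\{1 + t^m h, a_2, \ldots, a_n\}$ is sent to the class of $\Gamma := \Gamma_{(1 + t^m h, a_2, \ldots, a_n)}$, and I would compare $\Gamma$ with $\Gamma_0 := \Gamma_{(1, a_2, \ldots, a_n)}$. Both graphs have all coordinates in $\widehat{\mathcal{O}}^\times$, hence empty intersection with every proper face of $\square_{\widehat{\mathcal{O}}}^n$; and $1 + t^m h \equiv 1 \pmod{t^m}$ gives the required scheme-theoretic equality of their fibers over $\Spec(k_m)$. So $\Gamma \sim_{t^m} \Gamma_0$ in the sense of Definition \ref{defn:mod t^m 2}, and they represent the same class in $\CH^n(k_m, n)$. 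On the other hand, bi-multiplicativity forces $\{1, a_2, \ldots, a_n\} = 0$ in $K_n^M(\widehat{\mathcal{O}})$, and Lemma \ref{lem:im_gr} tells us $gr_{\widehat{\mathcal{O}}}$ is a well-defined group homomorphism out of $K_n^M(\widehat{\mathcal{O}})$; consequently $[\Gamma_0] = 0$ already in $\CH^n_{\widehat{\mathfrak{m}}}(\widehat{\mathcal{O}}, n)^{pc}$, and a fortiori in $\CH^n(k_m, n)$. Combining, $[\Gamma] = 0$ as desired.

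I do not anticipate a serious obstacle. The only non-formal step is the mod $t^m$-equivalence check, but the unit hypothesis on every coordinate wipes out all the face intersections, so Definition \ref{defn:mod t^m 2} collapses to a single obvious identity in the bulk, and everything else in the proof is either bookkeeping or a direct invocation of Lemmas \ref{lem:im_gr} and \ref{lem:gr surj}.
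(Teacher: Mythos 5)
Your reduction to killing $\ker\big(K_n^M(\widehat{\mathcal{O}})\to K_n^M(k_m)\big)$ is fine in outline, but the central step fails. The comparison cycle $\Gamma_0=\Gamma_{(1,a_2,\cdots,a_n)}$ is the \emph{empty} cycle: in this paper $\square=\ov{\square}\setminus\{1\}$, so the locus $\{y_1=1\}$ has been deleted from $\square^n_{\widehat{\mathcal{O}}}$ and the subscheme cut out by $\{y_1=1,\,y_2=a_2,\cdots\}$ is empty. Consequently the claimed equivalence $\Gamma_{(1+t^mh,a_2,\cdots,a_n)}\sim_{t^m}\Gamma_0$ is false: by Definitions \ref{defn:mod t^m} and \ref{defn:mod t^m 2} it would require $\ov{\Gamma}\times_{\widehat{\mathcal{O}}}\Spec(k_m)=\ov{\Gamma_0}\times_{\widehat{\mathcal{O}}}\Spec(k_m)$, but the left-hand side is the nonempty $k_m$-section at $(1,\bar a_2,\cdots,\bar a_n)$ in $\ov{\square}^n_{k_m}$ while the right-hand side is empty. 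So the vanishing in $\CH^n(k_m,n)$ of the kernel generators --- the only non-formal point of your argument --- is not established. (A smaller issue: a set-theoretic lift of units does not split the extension $1\to 1+t^m\widehat{\mathcal{O}}\to\widehat{\mathcal{O}}^\times\to k_m^\times\to 1$ as groups; the correct justification that the kernel is generated by symbols with one entry in $1+t^m\widehat{\mathcal{O}}$ is that, $\widehat{\mathcal{O}}$ being local with $(t^m)$ in the radical, Steinberg elements of $k_m$ lift to Steinberg elements of $\widehat{\mathcal{O}}$.)

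The paper avoids having to prove any such vanishing: it defines the map directly on a symbol of $K_n^M(k_m)$ by lifting its entries to $\widehat{\mathcal{O}}^\times$ and taking the class of the graph cycle, and checks well-definedness by comparing two \emph{unit} liftings of the same symbol; these are two honest nonempty graphs whose coordinates agree mod $t^m$, so they are mod $t^m$-equivalent, and surjectivity is inherited from Lemma \ref{lem:gr surj}. If you wish to keep your kernel-generator route, the vanishing can be repaired with the tools already in \S\ref{sec:graph}: choose a unit $c\in\widehat{\mathcal{O}}^\times$ with $c-1\in\widehat{\mathcal{O}}^\times$; the multiplicativity boundary of Lemma \ref{lem:im_gr}(2) (for $\widehat{\mathcal{O}}$) gives $[\Gamma_{((1+t^mh)c,\,a_2,\cdots,a_n)}]=[\Gamma_{(1+t^mh,\,a_2,\cdots,a_n)}]+[\Gamma_{(c,\,a_2,\cdots,a_n)}]$ in $\CH^n_{\widehat{\mathfrak{m}}}(\widehat{\mathcal{O}},n)^{pc}$, while $(1+t^mh)c\equiv c \mod t^m$ and Lemma \ref{lem:gr mod t^m} give $\Gamma_{((1+t^mh)c,\,a_2,\cdots,a_n)}\sim_{t^m}\Gamma_{(c,\,a_2,\cdots,a_n)}$; combining, the class of $\Gamma_{(1+t^mh,\,a_2,\cdots,a_n)}$ vanishes in $\CH^n(k_m,n)$, as you wanted.
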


\begin{proof}
There is a natural surjection $K_n ^M (\widehat{\mathcal{O}}) \to K_n ^M (\widehat{\mathcal{O}}/(t^m)) = K_n ^M (k_m)$. So, for any Milnor symbol $\{ a_1, \cdots, a_n \} \in K_n ^M (k_m)$ with $a_i \in k_m^{\times}$, we choose any liftings $\tilde{a}_1, \cdots, \tilde{a}_n \in \widehat{\mathcal{O}} ^{\times} = k[[t]]^{\times}$ and send the symbol $\{ \tilde{a}_1, \cdots, \tilde{a}_n \} \in K_n ^M (\widehat{\mathcal{O}})$ to the cycle class in $\CH^n (k_m, n)$ of the graph cycle $\Gamma_{(\tilde{a}_1, \cdots, \tilde{a}_n)} \subset \square_{\widehat{\mathcal{O}}} ^n$. To prove that this map is well-defined, choose another sequence of liftings $\tilde{a}_1', \cdots, \tilde{a}_n ' \in \widehat{\mathcal{O}} ^{\times}$ of the sequence $a_1, \cdots, a_n \in k_m ^{\times}$, and here $\tilde{a}_i - \tilde{a}_i ' \in t^m k[[t]]$. By definition, we have $\Gamma_{(\tilde{a}_1, \cdots, \tilde{a}_n)} \sim_{t^m} \Gamma_{(\tilde{a}_1', \cdots, \tilde{a}_n')}$, so that the map $K_n ^M (k_m) \to \CH^n (k_m, n)$ is well-defined. The surjectivity of this map now follows from the surjectivity of $K_n ^M (\widehat{\mathcal{O}}) \to \CH^n (k_m, n)$ of Lemma \ref{lem:gr surj}.
\end{proof}

\subsection{The graph cycles over $\widehat{\mathcal{O}}$ mod $t^m$}\label{sec:gr mod t^m}
 
 For graph cycles, it is easy to describe mod $t^m$ equivalence:
 \begin{lem}\label{lem:gr mod t^m}
 Let $Z_1, Z_2 \in z^n_{gr} (\widehat{\mathcal{O}}, n)^c$ be two integral graph cycles, represented by
 \begin{equation}\label{eqn:gr mod t^m}
 Z_1: \ \{ y_1 = a_1, \cdots, y_n = a_n\}, \ \ \ Z_2: \ \{y_1 = b_1, \cdots, y_n = b_n\},
 \end{equation}
 where $a_j, b_j \in \widehat{\mathcal{O}} ^{\times}$ for $1 \leq j \leq n$. Then the following are equivalent:
 \begin{enumerate}
 \item $Z_1 \sim_{t^m} Z_2$
 \item For each $1 \leq j \leq n$, we have $a_j \equiv b_j$ in $\widehat{\mathcal{O}}/ (t^m)$.
 \item For each $1 \leq j \leq n$, there exists $c_j \in \widehat{\mathcal{O}}$ such that $a_j = b_j (1+ c_j t^m)$ in $\widehat{\mathcal{O}}$.
 \end{enumerate}
 \end{lem}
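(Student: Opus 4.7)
The plan is to establish the chain of equivalences $(1) \Leftrightarrow (2) \Leftrightarrow (3)$. The equivalence $(2) \Leftrightarrow (3)$ is a purely algebraic observation about the local ring $\widehat{\mathcal{O}}$: because each $b_j$ is a unit, the condition $a_j \equiv b_j \pmod{t^m}$ is equivalent to $a_j b_j^{-1} - 1 \in (t^m)$, that is, $a_j b_j^{-1} = 1 + c_j t^m$ for some $c_j \in \widehat{\mathcal{O}}$, which rearranges to $a_j = b_j(1 + c_j t^m)$. Both directions are immediate.

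For $(1) \Leftrightarrow (2)$, the main point is to unwind \defref{defn:mod t^m 2} using the special structure of graph cycles. First I would invoke \corref{cor:no face}, which asserts that every integral cycle in $z^n_{\widehat{\mathfrak{m}}}(\widehat{\mathcal{O}}, n)$ has empty intersection with every proper face of $\square_{\widehat{\mathcal{O}}}^n$. Applied to $Z_1$ and $Z_2$, this makes the face-intersection conditions of \defref{defn:mod t^m 2} vacuously satisfied for all proper faces $F$, so $Z_1 \sim_{t^m} Z_2$ reduces to the sole requirement that $Z_1$ and $Z_2$ are naively mod $t^m$-equivalent. Moreover, since each graph cycle is the image of a section $\Spec(\widehat{\mathcal{O}}) \to \square_{\widehat{\mathcal{O}}}^n$ (it lies in the affine chart where each $y_j \neq 0, \infty$ since the $a_{i,j}$ are units), and it coincides with its own Zariski closure in $\overline{\square}_{\widehat{\mathcal{O}}}^n$, the naive condition becomes simply $Z_1 \otimes_{\widehat{\mathcal{O}}} k_m = Z_2 \otimes_{\widehat{\mathcal{O}}} k_m$ as closed subschemes of $\square_{k_m}^n$.

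The final step is to make these reductions explicit. The scheme $Z_i \otimes_{\widehat{\mathcal{O}}} k_m$ is the image of the section $\Spec(k_m) \to \square_{k_m}^n$ determined by the tuple $(\bar{a}_{i,1}, \ldots, \bar{a}_{i,n}) \in k_m^n$, where $\bar{a}_{i,j}$ denotes the image of $a_{i,j}$ in $k_m$. Two such sections of the projection $\square_{k_m}^n \to \Spec(k_m)$ coincide as closed subschemes if and only if they correspond to the same tuple, i.e.\ $\bar{a}_j = \bar{b}_j$ for every $j$, which is exactly condition (2). There is no serious obstacle in this lemma: the essential input is \corref{cor:no face}, which removes all face-intersection bookkeeping, after which the remainder of the argument is a straightforward unwinding of the definitions.
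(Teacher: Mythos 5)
Your proposal is correct and follows essentially the same route as the paper: the paper declares $(1)\Leftrightarrow(2)$ and $(3)\Rightarrow(2)$ obvious and proves $(2)\Rightarrow(3)$ by the same unit manipulation $a_jb_j^{-1}=1+c_jt^m$ that you give. Your unwinding of $(1)\Leftrightarrow(2)$ via Corollary \ref{cor:no face} and the closedness of graph cycles is just a careful spelling-out of what the paper treats as immediate, so there is nothing substantively different.
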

 
 \begin{proof}
The equivalence (1) $\Leftrightarrow$ (2) and the implication (3) $\Rightarrow$ (2) are obvious. For the implication (2) $\Rightarrow$ (3), note that $a_j \equiv b_j$ in $\widehat{\mathcal{O}}/(t^m)$ implies that $a_j b_j ^{-1} \equiv 1$ in $\widehat{\mathcal{O}}/(t^m)$ so that $a_j b_j^{-1} = 1 + c_j t^m$ in $\widehat{\mathcal{O}}$ for some $c_j\in \widehat{\mathcal{O}}$. This proves (3).
 \end{proof}

\begin{prop}\label{prop:main mod t^m gr}
Let $k$ be a field of characteristic $0$.
Let $Z_1, Z_2 \in z^n_{gr}(\widehat{\mathcal{O}}, n)^c$ be two integral graph cycles such that ${Z}_1 \sim_{t^m} {Z}_2$. Then $\Upsilon_i (Z_1) = \Upsilon_i (Z_2)$ for each $1 \leq i \leq m-1$.
\end{prop}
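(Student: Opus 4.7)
The plan is to reduce the equality $\Upsilon_i(Z_1)=\Upsilon_i(Z_2)$ to the vanishing of the residue of a manifestly regular absolute differential form on $\Spec(\widehat{\mathcal{O}})$.

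First I unpack what $\Upsilon_i$ computes on a graph cycle. Each $Z_\ell$, cut out by $y_j=a_j$ (resp.\ $y_j=b_j$) inside $\square_{\widehat{\mathcal{O}}}^n$, is isomorphic to $\Spec(\widehat{\mathcal{O}})$ via the structure map, hence already regular. So the normalization $\nu\colon\widetilde{Z}_\ell\to Z_\ell$ is the identity, the special fiber $(\widetilde{Z}_\ell)_s$ is the single $k$-rational closed point $p_\ell=\widehat{\mathfrak{m}}$, and $\Tr_{k(p_\ell)/k}$ is the identity. Thus the definition of $\Upsilon_i$ collapses to
$$
\Upsilon_i(Z_\ell)\;=\;\res_{t=0}\!\left(\tfrac{1}{t^i}\, d\log a_1\wedge\cdots\wedge d\log a_n\right),
$$
the standard residue (at the closed point) of an absolute rational $n$-form on $\Spec(\widehat{\mathcal{O}})$, and analogously with $b_j$ in place of $a_j$ for $Z_2$.

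By Lemma \ref{lem:gr mod t^m}, the hypothesis $Z_1\sim_{t^m}Z_2$ gives $a_j=b_j u_j$ with $u_j:=1+c_j t^m$ for some $c_j\in\widehat{\mathcal{O}}$. The derivation rule yields $d\log a_j=d\log b_j+d\log u_j$, so expanding,
$$
\bigwedge_{j=1}^n d\log a_j \;=\; \bigwedge_{j=1}^n d\log b_j \;+\; R,
$$
where $R$ is the sum of the remaining wedge monomials, each containing at least one factor of some $d\log u_j$. A direct computation gives $du_j=mc_j t^{m-1}\,dt + t^m\,dc_j\in t^{m-1}\Omega^1_{\widehat{\mathcal{O}}/\mathbb{Z}}$; since $u_j$ is a unit in $\widehat{\mathcal{O}}$, $d\log u_j=u_j^{-1}du_j\in t^{m-1}\Omega^1_{\widehat{\mathcal{O}}/\mathbb{Z}}$ as well. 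Consequently every monomial of $R$ lies in $t^{m-1}\Omega^n_{\widehat{\mathcal{O}}/\mathbb{Z}}$.

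For $1\leq i\leq m-1$, this forces $\tfrac{1}{t^i}R\in t^{m-1-i}\Omega^n_{\widehat{\mathcal{O}}/\mathbb{Z}}\subseteq \Omega^n_{\widehat{\mathcal{O}}/\mathbb{Z}}$, i.e.\ a form regular at $t=0$; its residue at the closed point vanishes by the standard definition of the residue. Subtracting the two formulas of paragraph two gives
$$
\Upsilon_i(Z_1)-\Upsilon_i(Z_2)\;=\;\res_{t=0}\!\bigl(\tfrac{1}{t^i}R\bigr)\;=\;0.
$$
The only conceptual step is the reduction in paragraph two—once $\Upsilon_i$ is expressed as an ordinary residue on $\Spec(\widehat{\mathcal{O}})$, the remainder is routine $t$-adic bookkeeping, and I foresee no serious obstacle.
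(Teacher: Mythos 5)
Your proposal is correct and follows essentially the same route as the paper: apply Lemma \ref{lem:gr mod t^m} to write $a_j = b_j(1+c_jt^m)$, observe that $d\log a_j - d\log b_j \in t^{m-1}\Omega^1_{\widehat{\mathcal{O}}/\mathbb{Z}}$, and conclude that the difference of the two forms $\tfrac{1}{t^i}d\log a_1\wedge\cdots\wedge d\log a_n$ and $\tfrac{1}{t^i}d\log b_1\wedge\cdots\wedge d\log b_n$ is regular at $t=0$ for $i\leq m-1$, so its residue at the common special point vanishes. Your explicit preliminary reduction (the graph cycle is $\Spec(\widehat{\mathcal{O}})$ itself, so the normalization and trace are trivial and $\Upsilon_i$ is an ordinary residue at $t=0$) is implicit in the paper's computation and is a welcome clarification, not a different argument.
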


\begin{proof}
For $Z_1$ and $Z_2$, express them by the equations as in \eqref{eqn:gr mod t^m}. 
By Lemma \ref{lem:gr mod t^m}, the assumption that $Z_1 \sim_{t^m} Z_2$ implies that we have $a_j = b_j (1 + c_j t^m)$ for some $c_j \in \widehat{\mathcal{O}}$ for each $1 \leq j \leq n$. Notice that the common special fiber $(Z_1)_s= (Z_2)_s$ is given by a single closed point $\mathfrak{p}$ whose coordinates are $\bar{a}_1= \bar{b}_1, \cdots, \bar{a}_n = \bar{b}_n \in k$, where the bars denote the images in the residue field $k = \widehat{\mathcal{O}}/(t)$.
For each $j$, we have 
$$d \log a_j - d \log b_j = d \log (1 + c_j t^m) = \frac{ t^m d c_j + c_j m t^{m-1} d t}{ 1 + c_j t^m} \in t^{m-1} \Omega_{\widehat{\mathcal{O}}/\mathbb{Z}} ^1.$$
Hence by expanding out $d\log y_1 \wedge \cdots \wedge d \log y_n|_{Z_1} - d \log y_1 \wedge \cdots \wedge d \log y_n |_{Z_2} = d \log a_1\wedge \cdots \wedge d\log a_n - d \log b_1 \wedge \cdots \wedge d \log b_n$, we directly check that it is in $t^{m-1} \Omega_{\widehat{\mathcal{O}}/\mathbb{Z}} ^n$. Thus for each $1 \leq i \leq m-1$, we have $  \frac{1}{t^i} d \log a_1 \wedge \cdots \wedge d \log a_n - \frac{1}{ t^i} d \log b_1 \wedge \cdots \wedge d \log b_n \in t^{m-1-i} \Omega_{\widehat{\mathcal{O}}/\mathbb{Z}} ^n \subset \Omega_{\widehat{\mathcal{O}}/\mathbb{Z}} ^n$ so that the residue at $t=0$ (which is the residue at the unique closed point $\mathfrak{p}$ of the common special fiber) of the difference vanishes. In other words, $\Upsilon_i (Z_1) = \Upsilon_i (Z_2)$ for $1 \leq i \leq n$.
\end{proof}

\begin{cor}\label{cor:main map}
Let $k$ be a field of characteristic $0$.
For $1 \leq i \leq m-1$, the map $\Upsilon_i$ of Proposition \ref{prop:main reciprocity} induces a homomorphism $\Upsilon_i: \CH^n ( (k_m, (t)), n) \to \Omega_{k/\mathbb{Z}}^{n-1}$.
\end{cor}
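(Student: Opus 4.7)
The plan is to descend $\Upsilon_i$ through the two quotient operations that cut out $\CH^n((k_m,(t)),n)$ from the cycle group $z^n_{\widehat{\mathfrak{m}}}(\widehat{\mathcal{O}},n)^{pc}$. Proposition \ref{prop:main reciprocity} already handles the boundary quotient, yielding a well-defined homomorphism $\bar{\Upsilon}_i:\CH^n_{\widehat{\mathfrak{m}}}(\widehat{\mathcal{O}},n)^{pc}\to\Omega^{n-1}_{k/\mathbb{Z}}$. What remains is to show that $\bar{\Upsilon}_i$ descends through the surjection $\CH^n_{\widehat{\mathfrak{m}}}(\widehat{\mathcal{O}},n)^{pc}\twoheadrightarrow\CH^n(k_m,n)$ arising from the $\sim_{t^m}$-equivalence. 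Once this is done, restriction to the relative direct summand via the splitting \eqref{eqn:rel Chow cx} produces the homomorphism on $\CH^n((k_m,(t)),n)$ asserted by the corollary.

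My strategy is to route the argument through the algebraic setting over $\mathcal{O}$. I will consider the composite $\psi_i:=\bar{\Upsilon}_i\circ\xi^n:\CH^n_{\mathfrak{m}}(\mathcal{O},n)^{pc}\to\Omega^{n-1}_{k/\mathbb{Z}}$ and prove it factors through the surjection $\CH^n_{\mathfrak{m}}(\mathcal{O},n)^{pc}\twoheadrightarrow\CH^n(k_m,n)$ furnished by Corollary \ref{cor:Milnor surj}. Because $\Char(k)=0$ forces $k$ to be infinite, Lemma \ref{lem:grOsurj} provides a further surjection $gr_{\mathcal{O}}:K_n^M(\mathcal{O})\twoheadrightarrow\CH^n_{\mathfrak{m}}(\mathcal{O},n)^{pc}$, so it is enough to analyze $\psi_i\circ gr_{\mathcal{O}}$ on Milnor symbols. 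Using the commutative diagram of Lemma \ref{lem:gr surj}, this composite agrees with the sequence $K_n^M(\mathcal{O})\to K_n^M(\widehat{\mathcal{O}})\overset{gr_{\widehat{\mathcal{O}}}}{\to}\CH^n_{\widehat{\mathfrak{m}}}(\widehat{\mathcal{O}},n)^{pc}\overset{\bar{\Upsilon}_i}{\to}\Omega^{n-1}_{k/\mathbb{Z}}$. Since $gr_{\widehat{\mathcal{O}}}$ sends a Milnor symbol to a graph cycle, Proposition \ref{prop:main mod t^m gr} implies that this composite depends only on the image of the symbol in $K_n^M(k_m)$, which in turn surjects onto $\CH^n(k_m,n)$ by Lemma \ref{lem:old Claim}.

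The Milnor relations on the symbol side (Steinberg and multi-linearity) are realized, as in Lemma \ref{lem:im_gr}, by boundaries of explicit parametrized $1$-cycles in $z^n_{\widehat{\mathfrak{m}}}(\widehat{\mathcal{O}},n+1)^{pc}$, all of which are killed by $\bar{\Upsilon}_i$ thanks to Proposition \ref{prop:main reciprocity}. Combining this with the graph presentation above shows $\psi_i$ descends to a well-defined map $\tilde{\Upsilon}_i:\CH^n(k_m,n)\to\Omega^{n-1}_{k/\mathbb{Z}}$, whose restriction to the relative summand is the desired $\Upsilon_i$. I expect the main obstacle to lie in controlling the kernel of $\CH^n_{\mathfrak{m}}(\mathcal{O},n)^{pc}\twoheadrightarrow\CH^n(k_m,n)$: a priori, this kernel encodes $\sim_{t^m}$-equivalences among possibly non-graph integral cycles, and one must argue, via Lemma \ref{lem:gr mod t^m} together with the reduction $K_n^M(\widehat{\mathcal{O}})\twoheadrightarrow K_n^M(k_m)$, that all such equivalences can be pulled back through the graph presentation into the kernel of $K_n^M(\mathcal{O})\to K_n^M(k_m)$ modulo the boundary relations identified above.
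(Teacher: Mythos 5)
Your route is, at bottom, the paper's own: Proposition \ref{prop:main reciprocity} kills boundaries, the graph generation of $\CH^n(k_m,n)$ (Lemma \ref{lem:gr surj}, which already packages your detour through $\mathcal{O}$ via Corollary \ref{cor:Milnor surj} and Lemma \ref{lem:grOsurj}) reduces everything to graph cycles, Proposition \ref{prop:main mod t^m gr} handles mod $t^m$-equivalence of graphs, and one restricts along the splitting \eqref{eqn:rel Chow cx}. However, two steps in your write-up are not justified. First, the claim that Proposition \ref{prop:main mod t^m gr} makes the composite $K_n^M(\mathcal{O})\to K_n^M(\widehat{\mathcal{O}})\to \CH^n_{\widehat{\mathfrak{m}}}(\widehat{\mathcal{O}},n)^{pc}\to \Omega^{n-1}_{k/\mathbb{Z}}$ depend only on the image of the symbol in $K_n^M(k_m)$ overstates that proposition: it only compares graphs of coordinatewise congruent unit tuples (Lemma \ref{lem:gr mod t^m}), i.e.\ it gives independence of the chosen lifting exactly as in Lemma \ref{lem:old Claim}. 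Two symbols with equal image in $K_n^M(k_m)$ differ by an element of $\ker\bigl(K_n^M(\widehat{\mathcal{O}})\to K_n^M(k_m)\bigr)$, and to kill that kernel you would additionally need a generation statement for this relative Milnor $K$-group by symbols having an entry in $1+t^m\widehat{\mathcal{O}}$ (the splitting trick of Lemma \ref{lem:relKgen} does not apply to $\widehat{\mathcal{O}}\to k_m$, and no such statement is proved in the paper), together with a residue computation for such symbols.

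Second, and more importantly, even granting that factorization you would only obtain a homomorphism out of $K_n^M(k_m)$, whereas the corollary asks for one out of $\CH^n(k_m,n)$: what must be shown is that $\Upsilon_i$ annihilates the whole kernel of $\CH^n_{\widehat{\mathfrak{m}}}(\widehat{\mathcal{O}},n)^{pc}\twoheadrightarrow\CH^n(k_m,n)$, which is generated by boundaries together with differences $Z_1-Z_2$ of mod $t^m$-equivalent integral cycles that need not be graphs. You correctly single this out as ``the main obstacle,'' but you leave it open, and the repair you sketch --- pulling all such equivalences back through the graph presentation into $\ker\bigl(K_n^M(\mathcal{O})\to K_n^M(k_m)\bigr)$ modulo boundaries --- would require control of $\ker\bigl(K_n^M(k_m)\to\CH^n(k_m,n)\bigr)$, which is circular at this stage, since the (relative) injectivity of that map is exactly what Theorem \ref{thm:Milnor}, proved using this corollary, establishes. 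Note that the paper's proof is precisely the graph reduction you describe, carried out directly over $\widehat{\mathcal{O}}$ via Lemma \ref{lem:gr surj} and Proposition \ref{prop:main mod t^m gr} and stated without the $K$-theoretic detour; your passage through $\mathcal{O}$ therefore adds nothing, while the step you flagged is the one any complete write-up must actually close.
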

 
\begin{proof}
By Proposition \ref{prop:main reciprocity}, the map $\Upsilon_i$ descends to $\Upsilon_i: \CH^n _{\widehat{\mathfrak{m}}}(\widehat{\mathcal{O}}, n)^{pc} \to \Omega_{k/\mathbb{Z}} ^{n-1}$. Since $\CH^n_{gr}  (k_m, n) = \CH^n (k_m, n)$ by Lemma \ref{lem:gr surj}, we may consider only the graph cycles. For all the pairs of mod $t^m$-equivalent integral graph cycles, by Proposition \ref{prop:main mod t^m gr}, the maps $\Upsilon_i$ respect the mod $t^m$-equivalence, so that we have the induced map $\Upsilon_i : \CH^n (k_m, n) \to \Omega_{k/\mathbb{Z}} ^{n-1}$. Now, since $\CH^n (k_m, n) = \CH^n ((k_m, (t)), n) \oplus \CH^n (k, n)$, by restriction we have the desired homomorphism.
\end{proof}
 
 \begin{remk} In fact, $\Upsilon_i|_{\CH^n (k, n)} = 0$ for $1 \leq i \leq m-1$. Indeed, by the theorem of Nesterenko-Suslin \cite{NS} and Totaro \cite{Totaro}, we have an isomorphism $K_n ^M (k) \simeq \CH^n (k, n)$ so that it is enough to check that for the graph cycles $\Gamma$ given by the equations of the form $\{y_1 = a_1, \cdots, y_n = a_n\}$, with $a_1, \cdots, a_n \in k^{\times}$, we have $\Upsilon_i (\Gamma) = 0$. The form is $\frac{1}{t^i} d \log y_1 \wedge \cdots \wedge d \log y_n|_{\Gamma} = \frac{1}{t^i} d \log a_1 \wedge \cdots \wedge d \log a_n$ with each $a_j  \in k^{\times}$ so that there is no term with $dt$ anywhere in the form. Thus its residue along $t=0$ is $0$, i.e. $\Upsilon_i (\Gamma) = 0$.
 \end{remk}

\subsection{Proof of Theorem \ref{thm:Milnor}}\label{sec:final proof}
 
Finally, we prove the main theorem of the paper. 
We show that $\bigoplus_{i=1} ^{m-1} \Upsilon_i: \CH^n ((k_m, (t)), n) \to \bigoplus_{i=1} ^{m-1} \Omega_{k/\mathbb{Z}} ^{n-1}$ is an isomorphism.
Recall from Lemma \ref{lem:old Claim} that we had a surjection $K_n ^M (k_m) \to \CH^n (k_m, n)$. This induces a surjective map $K_n ^M (k_m, (t))\to \CH^n ((k_m, (t)), n)$, where $K_n ^M (k_m, (t)):= \ker ( K_n ^M (k_m) \overset{{\rm ev}_{t=0}}{\to} K_n ^M (k))$. 
We know from Proposition \ref{prop:MK rel} in the appendix \S \ref{sec:appendix} below that we have an isomorphism 
$$K^M_n (k_m, (t)) \xrightarrow{\sim} \Omega_{k_m, (t)/\mathbb{Z}} ^{n-1}/ d \Omega_{k_m, (t)/\mathbb{Z}} ^{n-2}  \xleftarrow{\sim} \bigoplus_{1 \leq i \leq m-1} t^i \Omega_{k/\mathbb{Z}} ^{n-1},$$ 
given by $\{ a_1, \cdots, a_n\} \mapsto \log (a_1) d \log (a_2) \wedge \cdots \wedge d \log (a_n)$, where $a_1 \in 1 + t k_m$ and $\Omega_{k_m, (t)/\mathbb{Z}} ^i := \ker ( \Omega_{k_m/\mathbb{Z}} ^i \overset{{\rm ev}_{t=0}}{\to} \Omega_{k/\mathbb{Z}} ^i)$. Then, looking at the $k^{\times}$-weight $i$ parts, we obtain the maps
\begin{equation}\label{eqn:main surj}
 \Omega_{k/\mathbb{Z}} ^{n-1} \xrightarrow{\sim} t^i \Omega_{k/\mathbb{Z}} ^{n-1} \hookrightarrow  K_n ^M (k_m, (t))  \twoheadrightarrow \CH^n ((k_m,  (t)), n)\overset{\Upsilon_i}{ \to } \Omega_{k/\mathbb{Z}} ^{n-1},
\end{equation}
where $r_1 d r_2 \wedge \cdots \wedge dr_n \in \Omega_{k/\mathbb{Z}} ^{n-1}$ is mapped  to $\{ e ^{ r t^i}, r_2, \cdots, r_n \} \in K_n ^M (k_m, (t))   $, where $r:= r_1 \cdots r_n$. Let $\Gamma \in z^n _{\widehat{\mathfrak{m}}} (\widehat{\mathcal{O}}, n)^c$ denote the graph of this Milnor element. The composition \eqref{eqn:main surj} then sends $r_1 dr_2 \wedge \cdots \wedge dr_n$ to $\Upsilon_i (\Gamma) = i r_1 d r_2 \wedge \cdots \wedge dr_n$ by a straightforward calculation. 
\begin{comment}
Indeed, it is given by the set of equations $y_1 = \widetilde{e^{ r t^i}}, y_2 = \widetilde{r}_2, \cdots, y_n = \widetilde{r}_n$, making choices of liftings of elements in $k_m$ to $k[[t]]$. Then
\begin{eqnarray*}
\Upsilon_i (\Gamma) &=& {\rm res}_{t=0} \left( \frac{ 1}{t^i} d (r t^i) \wedge d \log (r_2) \wedge \cdots \wedge d \log (r_n) \right)\\
&=& {\rm res}_{t=0} \left( dr \wedge d \log (r_2) \wedge \cdots \wedge d \log (r_n) + \frac{1}{t^i} i r_1 t^{i-1} dt \wedge d r_1 \wedge \cdots \wedge dr_n \right)\\
&=& {\rm res}_{t=0} \left( i \frac{dt}{t} r_1 d r_2 \wedge \cdots \wedge dr_n\right) = i r_1 d r_2 \wedge \cdots \wedge dr_n.
\end{eqnarray*}
\end{comment}
Since $i \not = 0$, the composition \eqref{eqn:main surj} is an isomorphism. In particular, the composite
\begin{equation}\label{eqn:direct main surj}
\bigoplus_{i=1} ^{m-1} \Omega_{k/\mathbb{Z}} ^{n-1} \simeq K_n ^M (k_m, (t) ) \twoheadrightarrow \CH^n ((k_m, (t)), n) \overset{\bigoplus_i \Upsilon_i}{\to} \bigoplus_{i=1} ^{m-1} \Omega_{k/\mathbb{Z}}^{n-1}
\end{equation}
is an isomorphism. Therefore, the above map $K_n ^M (k_m, (t)) \to \CH^n ((k_m, (t)), n)$ is injective, hence an isomorphism. Since the composite \eqref{eqn:direct main surj} is an isomorphism, this  implies that $\bigoplus_i \Upsilon_i$ is an isomorphism, as desired. \qed

\subsection{Appendix}\label{sec:appendix}
 In the middle of the proof of Theorem \ref{thm:Milnor} in \S \ref{sec:final proof}, we used the following Proposition \ref{prop:MK rel}. This is probably well-known to the experts, and with some effort it should follow from e.g. \cite{Goodwillie}. However, since the Milnor $K$-groups are given by the concrete Milnor symbols we sketch a direct argument as follows, partly due to the fact  that the authors could not find a suitable reference. We first have:
 
\begin{lem}\label{lem:relKgen}
Let $k$ be a field. Then $K_n ^M (k_m, (t))$ is generated by the Milnor symbols $\{ a_1, \cdots, a_n \}$ with $a_1 \in 1 + t k_m$ and $a_2, \cdots, a_n \in k_m ^{\times}$.
\end{lem}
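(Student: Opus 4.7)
The plan is to exploit the splitting of $K_n^M(k_m)$ induced by the ring homomorphisms $k \to k_m \to k$ whose composition is the identity. This splitting gives a direct sum decomposition $K_n^M(k_m) = K_n^M(k) \oplus K_n^M(k_m, (t))$, so to generate the relative part it is enough to understand which symbols are killed by reduction mod $t$. First I would observe that every unit $a \in k_m^{\times}$ admits a unique decomposition $a = c(1+tu)$ with $c \in k^{\times}$ and $u \in k_m$, because $k_m^{\times} = k^{\times} \times (1+t k_m)$ as abelian groups.

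Next, starting from a general Milnor symbol $\{a_1, \ldots, a_n\} \in K_n^M(k_m)$, I would decompose each $a_i = c_i(1+tu_i)$ and expand by multilinearity of Milnor symbols in each slot. The result is a sum over all subsets $S \subseteq \{1, \ldots, n\}$ of symbols $\{b_1^S, \ldots, b_n^S\}$, where $b_i^S = c_i$ if $i \notin S$ and $b_i^S = 1+tu_i$ if $i \in S$. The unique term corresponding to $S = \emptyset$ is $\{c_1, \ldots, c_n\}$, which lies in the image of $K_n^M(k) \hookrightarrow K_n^M(k_m)$ and therefore maps nontrivially under evaluation at $t=0$. Every other term has at least one entry lying in $1 + t k_m$, and hence maps to zero under evaluation. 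These remaining terms thus generate $K_n^M(k_m, (t))$ by the splitting above.

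Finally, I would use the (skew-)symmetry of Milnor symbols to permute the entry in $1+tk_m$ to the first slot at the cost of a sign, which is harmless for a generating set. This produces the desired form $\{a_1, a_2, \ldots, a_n\}$ with $a_1 \in 1 + tk_m$ and $a_2, \ldots, a_n \in k_m^{\times}$ (noting that we place no restriction on $a_2, \ldots, a_n$ beyond being units, so the possible presence of further entries in $1+tk_m$ causes no issue). The only point that requires a moment's care is checking that the splitting $k^{\times} \times (1+tk_m) \simeq k_m^{\times}$ is compatible with multilinear expansion of Milnor symbols; this is immediate from the definition of Milnor $K$-theory as a quotient of the tensor algebra on $k_m^{\times}$. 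There is no serious obstacle here; the lemma is essentially a formal consequence of the decomposition of $k_m^{\times}$ and the multilinearity of symbols.
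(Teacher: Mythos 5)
Your proposal is correct and follows essentially the same route as the paper's proof: decompose each unit as $c\cdot(1+tu)$ with $c\in k^{\times}$, expand the symbol by multilinearity so that the only term without an entry in $1+tk_m$ comes from $K_n^M(k)$, invoke the splitting $K_n^M(k_m)=K_n^M(k)\oplus K_n^M(k_m,(t))$ to discard it, and use anticommutativity to move an entry of $1+tk_m$ into the first slot. No gaps.
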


\begin{proof}
Let $G \subseteq K_n ^M (k_m)$ be the subgroup generated by the Milnor symbols $\{ a_1, \cdots, a_n \}$ with $a_1 \in 1 + t k_m$ (which is contained in $k_m ^{\times}$) and $a_2, \cdots, a_n \in k_m ^{\times}$. Certainly under the evaluation map ${\rm ev}_{t=0} : K_n ^M (k_m) \to K_n ^M (k)$, we have $ \{ a_1 |_{t=0} , a_2 |_{t=0} , \cdots, a_n |_{t=0} \} = \{ 1, a_2|_{t=0}, \cdots, a_n |_{t=0} \}=0$ in $K_n ^M (k)$. Hence each such generator $\{a_1, \cdots, a_n \}$ with $a_1 \in 1 + t k_m$ is contained in $ \ker ({\rm ev}_{t=0}) = K_n ^M (k_m, (t)) $, thus $G \subseteq K_n ^M (k_m, (t))$.

Every $a \in k_m ^{\times}$ can be written as the product $a = c \cdot b$ with $c \in k^{\times}$ and $b \in 1 + t k_m$. Hence by the multi-linearity and the anti-commutativity of $K_n ^M (k_m)$, every symbol $\{ a_1, \cdots, a_n \}$ with $a_i \in k_m ^{\times}$ can be written as a sum of symbols in $G$ (type I) and symbols $\{ c_1, \cdots, c_n \}$ such that $c_i \in k^{\times}$ (type II). Here the splitting ring homomorphisms $k \to k_m \overset{{\rm ev}_{t=0}}{\to} k$ induce the splitting $K_n ^M (k_m ) = K_n ^M (k) \oplus K_n ^M (k_m, (t))$. The type II symbols are definitely in $K_n ^M (k)$, while the symbols of type I generate $G$. Hence $K_n ^M (k_m, (t))  = G$. 
\end{proof}

 \begin{prop}\label{prop:MK rel}
 Let $k$ be a field of characteristic $0$ and let $m \geq 2$ be an integer. Then we have an isomorphism $\phi_n: K_n ^M (k_m, (t)) \simeq \Omega_{k_m, (t)/\mathbb{Z}} ^{n-1} / d \Omega_{k_m, (t)/\mathbb{Z}} ^{n-2}$ given by $\{a_1, \cdots, a_n \} \mapsto \log (a_1) d \log (a_2) \wedge \cdots \wedge d\log (a_n)$, where $a_1 \in 1 + tk_m$, where $\log (a_1)$ makes sense in $k_m$. The isomorphism can be rewritten as $K_n ^M (k_m, (t)) \simeq \bigoplus_{i=1}^{m-1} t^i \Omega_{k/\mathbb{Z}} ^{n-1}$, where the map $t^i \Omega_{k/\mathbb{Z}} ^{n-1} \to K_n ^M (k_m, (t))$ is given by sending $r_1 dr_2 \wedge \cdots dr_n$ to $\{ e ^{r_1 r_2 \cdots  r_n t^i}, r_2, \cdots, r_n \} \in K_n ^M (k_m, (t))$.
 \end{prop}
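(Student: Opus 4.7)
The approach is to build $\phi_n$ directly from the canonical generators of Lemma \ref{lem:relKgen}, verify it descends past all Milnor and Steinberg relations, and then exhibit an explicit inverse built from $\exp$.

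\textbf{Construction.} Since $\mathrm{char}\, k = 0$ and $t^m = 0$, the truncated series $\log(1+x) = \sum_{j=1}^{m-1} (-1)^{j-1} x^j/j$ gives a group isomorphism $\log\colon (1+tk_m, \cdot) \xrightarrow{\sim} (tk_m, +)$ with inverse $\exp$. By Lemma \ref{lem:relKgen}, $K_n^M(k_m, (t))$ is generated by symbols $\{a_1, a_2, \ldots, a_n\}$ with $a_1 \in 1+tk_m$ and $a_j \in k_m^\times$ for $j \geq 2$. I will define $\phi_n$ on such a generator by
\[
\phi_n(\{a_1, \ldots, a_n\}) := \log(a_1) \cdot d\log(a_2) \wedge \cdots \wedge d\log(a_n),
\]
which lies in $\Omega_{k_m, (t)/\mathbb{Z}}^{n-1}$ because $\log(a_1) \in tk_m$ vanishes modulo $t$.

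\textbf{Well-definedness.} Next I check that $\phi_n$ respects the Milnor relations on canonical generators. Multilinearity in slot 1 uses that $\log$ is a group homomorphism on $1+tk_m$, and in slots $\geq 2$ it uses additivity of $d\log$. The Steinberg relation $\{a, 1-a\} = 0$ between two slots $i, j \geq 2$ maps to a wedge containing $d\log(a) \wedge d\log(1-a) = (da/a) \wedge (-da/(1-a)) = 0$. Steinberg involving slot 1 cannot occur, since $a_1 \in 1+tk_m$ forces $1 - a_1 \in tk_m$ to be nilpotent, hence not a unit of $k_m$. Anticommutativity in cases where both swapped elements lie in $1+tk_m$ follows from the Leibniz identity
\[
d\bigl(\log(a_1)\log(b)\cdot \omega\bigr) = \log(b)\, d\log(a_1) \wedge \omega + \log(a_1)\, d\log(b) \wedge \omega,
\]
with $\omega$ a wedge of $d\log$'s (hence closed), which kills the sum of the two canonical images in the quotient by $d\Omega_{k_m,(t)/\mathbb{Z}}^{n-2}$.

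\textbf{Isomorphism.} To prove $\phi_n$ is an isomorphism, identify the target via the characteristic-zero decomposition
\[
\Omega_{k_m, (t)/\mathbb{Z}}^{n-1} / d\Omega_{k_m, (t)/\mathbb{Z}}^{n-2} \;\cong\; \bigoplus_{i=1}^{m-1} t^i\, \Omega_{k/\mathbb{Z}}^{n-1},
\]
obtained by iteratively using $t^i dt = d(t^{i+1}/(i+1))$ to eliminate all $dt$-components modulo exact forms. Define the candidate inverse $\psi_i\colon t^i \Omega_{k/\mathbb{Z}}^{n-1} \to K_n^M(k_m, (t))$ on generators by $t^i r_1 dr_2 \wedge \cdots \wedge dr_n \mapsto \{e^{r_1 r_2 \cdots r_n t^i}, r_2, \ldots, r_n\}$. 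A direct calculation gives $\phi_n \circ \psi_i = \id$, so $\phi_n$ is surjective. For injectivity, I show that $\psi := \bigoplus_i \psi_i$ is also surjective: expand $\log(a_1) = \sum_i d_i t^i$ with $d_i \in k$ and factor each $a_j = c_j u_j$ with $c_j \in k^\times$ and $u_j \in 1+tk_m$; repeated applications of multilinearity reduce every canonical symbol to a $\mathbb{Z}$-linear combination of pure generators $\{e^{d\, t^i}, c_2, \ldots, c_n\}$ with $c_j \in k^\times$, each of which lies in $\im(\psi_i)$.

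\textbf{Main obstacle.} The most delicate step is the well-definedness of $\psi_i$ on $\Omega_{k/\mathbb{Z}}^{n-1}$: the additive relation $d(r+s) = dr + ds$ and the Leibniz relation $d(rs) = r\,ds + s\,dr$ in the target do not correspond to manifest $K^M$-identities of the exponential symbol, and require careful combinations of multilinearity in several slots together with Steinberg; the analogous reduction of mixed symbols (with several slots in $1+tk_m$) to pure ones involves similar manipulations. A cleaner alternative I would adopt if the direct argument becomes unwieldy is induction on $m$ via the short exact sequence
\[
0 \to t^{m-1} \Omega_{k/\mathbb{Z}}^{n-1} \to K_n^M(k_m, (t)) \to K_n^M(k_{m-1}, (t)) \to 0
\]
arising from $k_m \twoheadrightarrow k_{m-1}$, where the kernel is the top-degree graded piece, annihilated by $t$; the base case $m=2$ is the classical computation of $K_n^M$ of the dual numbers $k[\epsilon]/(\epsilon^2)$ in terms of $\Omega_{k/\mathbb{Z}}^{n-1}$.
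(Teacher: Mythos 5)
Your proposal is correct and follows essentially the same route as the paper: generate $K_n^M(k_m,(t))$ by the symbols of Lemma \ref{lem:relKgen}, send $\{a_1,\ldots,a_n\}$ with $a_1\in 1+tk_m$ to $\log(a_1)\,d\log a_2\wedge\cdots\wedge d\log a_n$, and invert with exponential symbols; like the paper, you leave the full verification that $\phi_n$ and the exponential map respect all Milnor relations as a tedious check (the paper explicitly omits it as ``elementary but tedious''), and your partial checks (Steinberg in slots $\geq 2$, the Leibniz trick for anticommutativity of two slots in $1+tk_m$) are consistent with how that verification goes. The genuine differences are in how bijectivity is concluded. The paper defines a single inverse $\psi_n$ on all of $\Omega_{k_m,(t)/\mathbb{Z}}^{n-1}/d\Omega_{k_m,(t)/\mathbb{Z}}^{n-2}$, by $r_1dr_2\wedge\cdots\wedge dr_n\mapsto\{e^{r_1r_{\ell+1}\cdots r_n},e^{r_2},\ldots,e^{r_\ell},r_{\ell+1},\ldots,r_n\}$ when $r_1,\ldots,r_\ell\in(t)$, and verifies both composites $\phi_n\circ\psi_n={\rm Id}$ and $\psi_n\circ\phi_n={\rm Id}$ on generators, so no surjectivity/injectivity bookkeeping is needed; the graded form $\bigoplus_{i=1}^{m-1}t^i\Omega_{k/\mathbb{Z}}^{n-1}$ enters only afterwards, via Lemma \ref{lem:IKW}. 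You instead define the inverse only on the graded pieces and deduce injectivity of $\phi_n$ from surjectivity of $\psi=\bigoplus_i\psi_i$. That logic is valid, but it silently requires the natural map $\bigoplus_i t^i\Omega_{k/\mathbb{Z}}^{n-1}\to\Omega_{k_m,(t)/\mathbb{Z}}^{n-1}/d\Omega_{k_m,(t)/\mathbb{Z}}^{n-2}$ to be injective as well as surjective: your ``eliminate $dt$ via $t^idt=d(t^{i+1}/(i+1))$'' remark only gives surjectivity, whereas injectivity is precisely the content of Lemma \ref{lem:IKW}, proved in the paper with the Poincar\'e lemma (vanishing of ${\rm H}^{n-1}$ of the relative de Rham complex) and the snake lemma. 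So either import that lemma or, as the paper does, check $\psi\circ\phi_n={\rm Id}$ directly on canonical symbols, which avoids needing the decomposition at all at this stage.

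One further caution: your fallback induction on $m$ via the sequence $0\to t^{m-1}\Omega_{k/\mathbb{Z}}^{n-1}\to K_n^M(k_m,(t))\to K_n^M(k_{m-1},(t))\to 0$ is not really an independent, easier route, because its exactness on the left (injectivity and the identification of the kernel with $\Omega_{k/\mathbb{Z}}^{n-1}$) is essentially equivalent to the proposition being proved; establishing it would require Goodwillie/van der Kallen--type input, which the paper's direct $\log$--$\exp$ argument is designed to avoid.
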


\begin{proof}
By Lemma \ref{lem:relKgen}, $K_n ^M (k_m, (t))$ is generated by $\{ a_1, \cdots, a_n \}$ with $a_1 \in 1 + t k_m$ and $a_2, \cdots, a_n \in k_m ^{\times}$. We define $\psi_{n}: \Omega_{k_m, (t)/\mathbb{Z}} ^{n-1} / d \Omega_{k_m, (t)/\mathbb{Z}} ^{n-2} \to K_{n} ^M (k_m, (t))$ by sending $r_1 d r_2 \wedge \cdots \wedge d r_{n}$, where $r_1, \cdots, r_{\ell} \in (t)$ and $r_{\ell + 1}, \cdots, r_n \in k_m ^{\times}$, to 
$\{ e^{r_1 r_{\ell+1} \cdots r_n}, e^{r_2}, \cdots, e^ {r_\ell}, r_{\ell+1}, \cdots, r_n\}$ in $ K_n ^M (k_m, (t)).$

One can check by induction that $\phi_n$ and $\psi_n$ are well-defined group homomorphisms. We omit the proof as they follow from elementary but tedious arguments. Let's check that $\phi_n$ and $\psi_n$ are inverse to each other. Indeed, for $x= r_1 d r_2 \wedge \cdots \wedge dr_n   \in \Omega_{k_m, (t)/\mathbb{Z}} ^{n-1} / d \Omega_{k_m, (t)/\mathbb{Z}} ^{n-2} $ with $r_1, \cdots, r_{\ell} \in (t)$ and $r_{\ell + 1}, \cdots, r_n \in k_m ^{\times}$, we have
\begin{eqnarray*}
& &( \phi_{n}\circ \psi_n) (x) = \phi_n \{ e^{r_1 r_{\ell+1} \cdots r_n}, e^{r_2}, \cdots, e^ {r_\ell}, r_{\ell+1}, \cdots, r_n\}\\
&=& \log  (e^{r_1 r_{\ell+1} \cdots r_n} )d \log (e^{r_2}) \wedge \cdots \wedge d \log ( e^ {r_\ell} ) \wedge d \log (r_{\ell+1} )\wedge \cdots \wedge d \log ( r_n)\\
&=& r_1 r_{\ell+1} \cdots r_n d r_2 \wedge \cdots \wedge d r_{\ell} \wedge \frac{ d r_{\ell+1}}{r_{\ell+1}} \wedge \cdots \wedge \frac{d r_n}{r_n}= r_1 d r_2 \wedge \cdots \wedge dr_n=x,
\end{eqnarray*}
so that $\phi_n \circ \psi_n = {\rm Id}$. On the other hand, for $y= \{ a_1, a_2, \cdots, a_n \}$ with $a_1 \in 1 + t k_m$ and $a_i \in k_m ^{\times}$ for $2 \leq i \leq n$, we have $( \psi_n \circ \phi_n) (y) =$
\begin{equation}\label{eqn:relMKcomp}
 \psi_n (\log (a_1) d \log (a_2) \wedge \cdots \wedge d \log (a_n))= \psi_n \left( \frac{ \log (a_1)}{ a_2 \cdots a_n} d a_2 \wedge \cdots \wedge d a_n \right).
\end{equation}
Here $a_1 \in 1 + t k_m$ so that $\log (a_1) \in (t)$, hence $ \log (a_1)/ (a_2 \cdots a_n)\in (t)$. 
Hence \eqref{eqn:relMKcomp} equals to $\{ e ^{ \frac{ \log (a_1)}{a_2 \cdots a_n } \cdot a_2 \cdots a_n }, a_2, \cdots, a_n \} = \{ a_1, \cdots, a_n \} = y,$ i.e. $\psi_n \circ \phi_n = {\rm Id}$. The second statement follows from Lemma \ref{lem:IKW} below.
\end{proof}

 We used the following elementary lemma in the middle of the proof of Proposition \ref{prop:MK rel}, which we learned from the proof of \cite[Lemma 6.2]{IwasaKai}:

\begin{lem}\label{lem:IKW}
Let $k$ be a field of characteristic $0$. Let $m \geq 2$ be an integer. Then for $n \geq 2$, we have $\Omega_{k_m, (t)/\mathbb{Z}} ^{n-1}/ d \Omega_{k_m, (t)/ \mathbb{Z}} ^{n-2} \simeq d \Omega_{k_m/\mathbb{Z}} ^{n-1}/ d \Omega_{k/\mathbb{Z}} ^{n-1} \underset{d}{\overset{\simeq}{\leftarrow}} t k_m \otimes_k \Omega_{k/\mathbb{Z}} ^{n-1} = \bigoplus_{i=1} ^{m-1} t^i \Omega_{k/\mathbb{Z}} ^{n-1}.$
\end{lem}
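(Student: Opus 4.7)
The plan is to exhibit $\Omega_{k_m/\mathbb{Z}}^{\bullet}$ as the total complex of a two-column bicomplex coming from the split retraction $k \hookrightarrow k_m \twoheadrightarrow k$, and then exploit that the ``vertical'' map of this bicomplex is an isomorphism in characteristic zero. The split short exact sequence $0 \to \Omega_{k/\mathbb{Z}}^1 \otimes_k k_m \to \Omega_{k_m/\mathbb{Z}}^1 \to \Omega_{k_m/k}^1 \to 0$ combined with $\Omega_{k_m/k}^1 = k_{m-1}\cdot dt$ (coming from $d(t^m) = mt^{m-1}\,dt = 0$ and char $k = 0$) yields the $k_m$-module decomposition
\[
\Omega_{k_m/\mathbb{Z}}^n \simeq (k_m \otimes_k \Omega_{k/\mathbb{Z}}^n) \oplus (k_{m-1} \otimes_k \Omega_{k/\mathbb{Z}}^{n-1})\cdot dt.
\]
Taking kernels of $\mathrm{ev}_{t=0}$ gives $\Omega_{k_m,(t)/\mathbb{Z}}^n = X_n^{(t)} \oplus Y_{n-1}$, where $X_p^{(t)} := tk_m \otimes_k \Omega_{k/\mathbb{Z}}^p$ and $Y_p := k_{m-1} \otimes_k \Omega_{k/\mathbb{Z}}^p \cdot dt$. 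A direct computation on basis elements yields
\[
d(t^i \otimes \omega) = t^i \otimes d_k\omega + (-1)^p\, i t^{i-1} \otimes \omega\cdot dt \qquad (t^i \otimes \omega \in X_p)
\]
and $d(t^i \otimes \eta\cdot dt) = t^i \otimes d_k\eta \cdot dt$ on $Y_p$, exhibiting $\Omega_{k_m,(t)/\mathbb{Z}}^{\bullet}$ as the total complex of a bicomplex with horizontal differential $d_k$ and vertical differential $\partial_t : X_p^{(t)} \to Y_p$ essentially the $t$-derivative $tk_m \to k_{m-1}$, $t^i \mapsto i t^{i-1}$, tensored with $\mathrm{id}$. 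The crucial input from char $0$ is that $\partial_t$ is a $k$-linear isomorphism (its inverse is formal integration) and commutes with $d_k$.

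Given this set-up, both claimed isomorphisms follow. For the right-hand map $d: C := tk_m \otimes_k \Omega_{k/\mathbb{Z}}^{n-1} \to B := d\Omega_{k_m/\mathbb{Z}}^{n-1}/d\Omega_{k/\mathbb{Z}}^{n-1}$: the splitting identifies $B \simeq d\Omega_{k_m,(t)/\mathbb{Z}}^{n-1}$; composing $d|_C$ with projection to $Y_{n-1}$ gives $\pm(\partial_t \otimes \mathrm{id})$, an isomorphism, so $d$ is injective. For surjectivity, $d\Omega_{k_m,(t)/\mathbb{Z}}^{n-1}$ is generated by $d(\xi_X)$ for $\xi_X \in X_{n-1}^{(t)} = C$ (trivially in the image) together with $d(\xi_Y) = t^i \otimes d_k\eta\cdot dt$ for $\xi_Y = t^i \otimes \eta \cdot dt \in Y_{n-2}$; to hit the latter, choose $h \in tk_m$ with $h' = t^i$ (possible in char $0$), and note that $d(h \otimes d_k\eta) = h \otimes d_k^2\eta + (-1)^{n-1} t^i \otimes d_k\eta \cdot dt = (-1)^{n-1}\, d(\xi_Y)$.

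For the left-hand map, I show the natural composition $C = X_{n-1}^{(t)} \hookrightarrow \Omega_{k_m,(t)/\mathbb{Z}}^{n-1} \twoheadrightarrow A := \Omega_{k_m,(t)/\mathbb{Z}}^{n-1}/d\Omega_{k_m,(t)/\mathbb{Z}}^{n-2}$ is an isomorphism. Surjectivity: for $g \otimes \eta\cdot dt \in Y_{n-2}$, pick $f \in tk_m$ with $f' = g$; then $d(f \otimes \eta) = f \otimes d_k\eta + (-1)^{n-2} g \otimes \eta \cdot dt \equiv 0$ in $A$, so $g \otimes \eta\cdot dt \equiv (-1)^{n-1} f \otimes d_k\eta \in C$ in $A$. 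Injectivity: suppose $f \otimes \omega \in C$ equals $d(\xi_X + \xi_Y)$ with $\xi_X \in X_{n-2}^{(t)}$, $\xi_Y \in Y_{n-3}$; then the vanishing of the $Y_{n-2}$-component of $d\xi$ gives $(-1)^{n-2}\partial_t\xi_X + d_k\xi_Y = 0$, i.e., $\partial_t\xi_X = (-1)^{n-1}d_k\xi_Y$, whence $\xi_X = (-1)^{n-1} d_k(\partial_t^{-1}\xi_Y)$ by $d_k$-equivariance of $\partial_t^{-1}$, and therefore $f \otimes \omega = d_k\xi_X = 0$. The final identification $C = \bigoplus_{i=1}^{m-1} t^i \Omega_{k/\mathbb{Z}}^{n-1}$ follows from $tk_m = \bigoplus_{i=1}^{m-1} t^i k$. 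The main technical obstacle is the correct set-up of the bicomplex: the explicit formula for $d$ on $X^{(t)}$ requires attention to the ``cross-terms'' $\sum_i t^i \otimes dc_i \wedge \omega$ arising when one expands an element of $k_m$ in the $k$-basis $\{t^i\}$; once handled, the rest is a routine acyclicity argument, with char $0$ entering only through the bijectivity of $\partial_t: tk_m \xrightarrow{\sim} k_{m-1}$.
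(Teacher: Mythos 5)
Your proof is correct, but it takes a genuinely different route from the paper's. The paper argues homologically: it places the exact sequence $0\to{\rm H}^{n-1}(\Omega^{\bullet}_{k_m/\mathbb{Z}})\to\Omega^{n-1}_{k_m/\mathbb{Z}}/d\Omega^{n-2}_{k_m/\mathbb{Z}}\xrightarrow{d}d\Omega^{n-1}_{k_m/\mathbb{Z}}\to0$ and its analogue over $k$ into a commutative diagram whose vertical maps are the split surjections ${\rm ev}_{t=0}$, invokes the Poincar\'e lemma (cited from Weibel) to conclude ${\rm H}^{n-1}(\Omega^{\bullet}_{k_m,(t)/\mathbb{Z}})=0$, i.e.\ that the left vertical map is an isomorphism, and then gets the first isomorphism from the snake lemma, while the second isomorphism is simply declared obvious. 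You instead decompose $\Omega^{\bullet}_{k_m,(t)/\mathbb{Z}}$ explicitly as a two-column bicomplex with columns $tk_m\otimes_k\Omega^{\bullet}_{k/\mathbb{Z}}$ and $k_{m-1}\otimes_k\Omega^{\bullet}_{k/\mathbb{Z}}\,dt$, and exploit that $\partial_t\colon tk_m\to k_{m-1}$ is bijective in characteristic $0$ (formal integration) to verify both isomorphisms directly; your injectivity argument for the left-hand map is in effect a hands-on proof of the Poincar\'e-lemma input via a contracting homotopy, and you also check carefully the ``obvious'' right-hand arrow. The trade-off: the paper's proof is shorter given the citation, while yours is self-contained and yields explicit representatives (every class in $\Omega^{n-1}_{k_m,(t)/\mathbb{Z}}/d\Omega^{n-2}_{k_m,(t)/\mathbb{Z}}$ has a unique representative in $tk_m\otimes_k\Omega^{n-1}_{k/\mathbb{Z}}$, mapped isomorphically by $d$ onto $d\Omega^{n-1}_{k_m,(t)/\mathbb{Z}}$), which meshes nicely with the explicit symbol manipulations of Proposition \ref{prop:MK rel}. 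One small point to tighten: a ring retraction alone does not in general give left-exactness or splitting of the cotangent sequence; here it follows instead from the direct presentation $\Omega^1_{k_m/\mathbb{Z}}\simeq\bigl((k_m\otimes_k\Omega^1_{k/\mathbb{Z}})\oplus k_m\,dt\bigr)/(mt^{m-1}\,dt)$, the relation lying entirely in the $dt$-summand, which is exactly the decomposition your computation already uses.
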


\begin{proof}
 We have a commutative diagram with exact rows
$$
\xymatrix{ 
0 \ar[r] & {\rm H} ^{n-1} (\Omega_{k_m/\mathbb{Z}} ^{\bullet}) \ar[d]^{{\rm ev}_{t=0}} \ar[r] & \Omega_{k_m/\mathbb{Z}} ^{n-1}/ d \Omega_{k_m/\mathbb{Z}} ^{n-2} \ar[d] ^{{\rm ev}_{t=0}} \ar[r]^{ \ \ \ \ \ d } & d \Omega_{k_m/\mathbb{Z}} ^{n-1} \ar[d] ^{{\rm ev}_{t=0}} \ar[r] & 0 \\
0 \ar[r] & {\rm H} ^{n-1} (\Omega_{k/\mathbb{Z}}^{\bullet}) \ar[r] & \Omega_k ^{n-1} / d \Omega_{k/\mathbb{Z}} ^{n-2} \ar[r]^{ \ \ \ \ \ d } & d \Omega_{k/\mathbb{Z}} ^{n-1} \ar[r] & 0,}
$$
where the vertical maps are all split surjections. Furthermore, by the Poincar\'e lemma in \cite[Corollary 9.9.3]{Weibel}, we have ${\rm H}^{n-1} ( \Omega_{k_m, (t)/\mathbb{Z}} ^{\bullet}) = 0$ so that the left vertical map is actually an isomorphism. Hence the snake lemma gives an isomorphism $\Omega_{k_m, (t)/\mathbb{Z}} ^{n-1}/ d \Omega_{k_m, (t)/\mathbb{Z}} ^{n-2} \simeq d \Omega_{k_m/\mathbb{Z}} ^{n-1}/ d \Omega_{k/\mathbb{Z}} ^{n-1}$. The second isomorphism $d \Omega_{k_m/\mathbb{Z}} ^{n-1}/ d \Omega_{k/\mathbb{Z}} ^{n-1} \underset{d}{\overset{\simeq}{\leftarrow}} t k_m \otimes_k \Omega_{k/\mathbb{Z}} ^{n-1}= \bigoplus_{i=1} ^{m-1} t^i \Omega_{k/\mathbb{Z}} ^{n-1}$ is obvious.
\end{proof}

\subsection{Final remarks}
We have two remarks on strengthening Theorem \ref{thm:Milnor}.

\begin{remk}\label{remk:tech diff 1}
We could have defined $z^q(k_m, n)$  in Definition \ref{defn:mod t^m cx} as $z^q_{\widehat{\mathfrak{m}}} (\widehat{\mathcal{O}}, n)/ \sim_{t^m}$ using the complex $z^q_{\widehat{\mathfrak{m}}} (\widehat{\mathcal{O}}, n)$, but then part of the perturbation results in \S \ref{sec:perturbation} may not be easy to establish. If one can prove an analogue of Corollary \ref{cor:pc nb2} for $n=q$ and $\widehat{\mathcal{O}}$, i.e. the guess that ``\emph{every integral cycle $Z \in z^n_{\widehat{\mathfrak{m}}} (\widehat{\mathcal{O}}, n)$ is equivalent to a cycle in $z^n _{\widehat{\mathfrak{m}}} (\widehat{\mathcal{O}}, n)^{pc} = z^n _{\widehat{\mathfrak{m}}} (\widehat{\mathcal{O}}, n)^{c}$ modulo the boundary of a cycle in $z^n_{\widehat{\mathfrak{m}}} (\widehat{\mathcal{O}}, n+1)$}", then we can still prove a stronger version of Theorem \ref{thm:Milnor} for the cycles using $z^q_{\widehat{\mathfrak{m}}} (\widehat{\mathcal{O}}, n)/ \sim_{t^m}$. It is true when $n=1$. See Remark \ref{remk:n=q}. However for $n \geq 2$, we could yet find neither a proof nor a counterexample to the guess, so we gave this version of the definition in Definition \ref{defn:mod t^m cx}. 
\end{remk}

\begin{remk}
Reflecting on the main theorem of \cite{R}, it is desirable to remove the assumption that the base field $k$ is of characteristic $0$ in Theorem \ref{thm:Milnor}. The right hand side $(\Omega_{k/\mathbb{Z}} ^{n-1} )^{\oplus (m-1)}$ of the isomorphism of Theorem \ref{thm:Milnor} should be replaced by the big de Rham-Witt forms $\mathbb{W}_{m-1} \Omega_k ^{n-1}$ for a general base field $k$. To proceed further, we need to understand whether there exists a Parshin-Lomadze residue for the big de Rham-Witt complexes when the base field is of positive characteristic, and especially when it is imperfect. This is not trivial and may require serious works. A more minor problem  is to give an  explicit description of the relative Milnor $K$-groups of the ring of  truncated polynomials over a field of positive characteristic in terms of the big de Rham-Witt complexes. This would improve Proposition \ref{prop:MK rel} for a field of positive characteristic. We leave these as future tasks to finish.
\end{remk}

\end{document}